\long\def\comment#1\endcomment{}
\title[Random groups, random graphs and $p$-Laplacians]{Random groups, random graphs and eigenvalues of $p$-Laplacians}
\author{Cornelia Dru\c{t}u}
\email{drutu@maths.ox.ac.uk}
\address{Mathematical Institute \\
 University of Oxford \\ Oxford, UK.}
\author{John M. Mackay}\thanks{The research of both authors was supported in part by
the EPSRC grant no. EP/K032208/1 entitled ``Geometric and analytic aspects of infinite groups". The research of the first author was also supported by the project ANR Blanc ANR-10-BLAN 0116, acronyme GGAA, and by Labex CEMPI}
\email{john.mackay@bristol.ac.uk}
\address{School of Mathematics \\ University of Bristol \\ Bristol, UK}
\date{\today}
\definecolor{darkgreen}{cmyk}{1,0,1,.2}
\long\def\comment#1\endcomment{}
\numberwithin{equation}{section}
\newtheorem{theorem}[equation]{Theorem}
\newtheorem{proposition}[equation]{Proposition}
\newtheorem{corollary}[equation]{Corollary}
\newtheorem{lemma}[equation]{Lemma}
\newtheorem{notation}[equation]{Notation}
\newtheorem{cvn}[equation]{Convention}
\newtheorem{question}[equation]{Question}
\newtheorem{definition}[equation]{Definition}
\newtheorem{remark}[equation]{Remark}
\newtheorem{assumption}[equation]{Assumption}
\newtheoremstyle{citing}
  {3pt}
  {3pt}
  {\itshape}
  {}
  {\bfseries}
  {}
  {.5em}
  {\thmnote{#3}}
\theoremstyle{citing}
\newtheorem*{varthm}{}
\DeclareMathOperator{\val}{val}
\newcommand{\alp}{\alpha}
\newcommand{\bet}{\beta}
\newcommand{\del}{\delta}
\newcommand{\eps}{\epsilon}
\newcommand{\lam}{\lambda}
\newcommand{\bdry}{\partial_\infty}
\newcommand{\cK}{\mathcal{K}}
\newcommand{\cB}{\mathcal{B}}
\newcommand{\cA}{\mathcal{A}}
\newcommand{\cC}{\mathcal{C}}
\newcommand{\cD}{\mathcal{D}}
\newcommand{\cE}{\mathcal{E}}
\DeclareMathOperator{\Confdim}{Confdim}
\newcommand{\cG}{\mathcal{G}}
\newcommand{\cM}{\mathcal{M}}
\newcommand{\cS}{\mathcal{S}}
\newcommand{\ra}{\rightarrow}
\newcommand{\R}{\mathbb{R}}
\newcommand{\N}{\mathbb{N}}
\newcommand{\Z}{\mathbb{Z}}
\newcommand{\bP}{\mathbb{P}}
\newcommand{\bE}{\mathbb{E}}
\newcommand{\bI}{\mathbb{I}}
\newcommand{\bd}{\mathbf{d}}
\newcommand{\bG}{\mathbb{G}}
\newcommand{\frakF}{\mathfrak{F}}
\newcommand{\cF}{\mathcal{F}}
\newcommand{\tilR}{\widetilde{\Re}}
\newcommand{\tilX}{\widetilde{X}}
\newcommand{\ovX}{\overline{X}}
\newcommand{\ovP}{\overline{\Re }}
\newcommand{\vtx}{\mathcal{V}}
\numberwithin{equation}{section}
\begin{document}

\begin{abstract}

  We prove that a random group in the triangular density model has, for density larger than $1/3$, fixed point properties for actions on $L^p$--spaces (affine isometric, and more generally $(2-2\epsilon)^{1/2p}$--uniformly Lipschitz) with $p$ varying in an interval increasing with the set of generators. In the same model, we establish a double inequality between the maximal $p$ for which $L^p$--fixed point properties hold and the conformal dimension of the boundary.   

In the Gromov density model, we prove that for every $p_0 \in [2,\infty )$ for a sufficiently large number of generators and for any density larger than $1/3$, a random group satisfies the fixed point property for affine actions on $L^p$--spaces that are $(2-2\epsilon)^{1/2p}$--uniformly Lipschitz, and this for every $p\in [2,p_0]$.  

To accomplish these goals we find new bounds on the first eigenvalue of the $p$-Laplacian on random graphs, using methods adapted from Kahn and Szemer\'edi's approach to the $2$-Laplacian.
These in turn lead to fixed point properties using arguments of Bourdon and Gromov, which extend to $L^p$-spaces previous results for Kazhdan's Property (T) established by \.Zuk and Ballmann-\'Swi\c atkowski.
\end{abstract}

\subjclass[2010]{{Primary  20F65, 20P05, 60C05; Secondary 20F67}}

\maketitle

\section{Introduction}\label{sec-intro}

One way to study infinite groups is through their actions on various classes of spaces. From this point of view, of particular importance are the fixed point properties, that is the properties stating that a group can act isometrically on a certain type of metric space only when the action has a global fixed point. For Hilbert spaces, this is the so called property $FH$ of J.P.\ Serre, which for locally compact second countable topological groups (and continuous actions) is equivalent to Kazhdan's property (T). The research around similar properties for various types of Banach spaces, or of non-positively curved spaces, has been very lively in recent years. The relevance of fixed point properties is manifest in many important areas, from combinatorics to ergodic theory, smooth dynamics, operator algebras and the Baum-Connes conjecture. 

Despite their importance, many questions related to fixed point properties remain open, even in cases such as the $L^p$--spaces, which are in a sense the closest relatives to Hilbert spaces, among the Banach spaces. In this paper we investigate fixed point properties on $L^p$--spaces and on spaces whose finite dimensional geometry is related to that of $L^p$--spaces, in the following sense. 

\begin{definition}\label{def:splpL}
Let $p\in (0,\infty)$, $L\geq 1$ and $m\in \N$. A Banach space is said to have an 
{\emph{$L$-bi-Lipschitz $L^p$ geometry above dimension $m$}} if every $m$-dimensional subspace of it is contained in a subspace $L$-bi-Lipschitz equivalent either to an $\ell^p_n$ for some $n\geq m$, or to $\ell^p_\infty$ or to some space $L^p(X,\mu )$.

When the above property holds for every $m$, we say that the Banach space has an
{\emph{$L$-bi-Lipschitz $L^p$ geometry in finite dimension}}.
\end{definition}

Particular cases of spaces with $L$-bi-Lipschitz $L^p$ geometry in finite dimension are given by the usual spaces $L^p(X,\mu)$, or spaces $L$-bi-Lipschitz equivalent to a $L^p(X,\mu)$.
Examples of spaces with {{$L$-bi-Lipschitz $L^p$ geometry above dimension $m$}} include spaces of cotype $p$ with all subspaces of dimension $k$ $(L-\epsilon )$--bi-Lipschitz equivalent, for some $k\geq m$ and some small $\epsilon >0$ \cite[Theorem G.5]{BenyaminiLindenstrauss}. 

The combinatorial construction that we use in this paper has a natural connection to fixed point properties for actions on this type of spaces, which we now formulate. 

\begin{definition}\label{def:IflpL}
A topological group $\Gamma $ {\emph{has property $FL^p_{m,L}$}} if every affine isometric continuous action of $\Gamma$ on a Banach space with $L$-bi-Lipschitz $L^p$ geometry above dimension $m$ has a global fixed point.

We say that $\Gamma$ {\emph{has property $FL^p_{L}$}} if every affine isometric continuous action of $\Gamma$ on a Banach space with $L$-bi-Lipschitz $L^p$ geometry in finite dimension has a global fixed point.
\end{definition}
The continuity condition requires simply that the orbit map $g\mapsto gv$ is continuous, for every vector $v$ in the considered Banach space. Recall that for $p\in (0,1)$ the metric considered on $L^p(X,\mu )$ is given by the $p$--power of the $p$--norm, otherwise the triangular inequality would not be satisfied.

In the case when Definition~\ref{def:IflpL} is restricted to isometric actions ($L=1$) and the Banach spaces are only $L^p$--spaces, the property is also called the  $FL^p$--{\emph{property}}. A theorem of Delorme--Guichardet \cite{Guichardet:bull,Delorme} together with a standard Functional Analysis result \cite[Theorem 4.10]{WellsWilliams:Embeddings} imply that for every $p\in (1,2]$ property $FL^p$ is equivalent to Kazhdan's property (T) (see also \cite[Theorem 1.3]{BFGM}). For $p=1$ the equivalence is proved in \cite{BGM}.

For $p\geq 2$ property $FL^p$ implies property (T), but the converse is not true, at least not for $p$ large.

Indeed, it
follows from work of P. Pansu \cite{Pansu:cohomologiepubl} and of Cornulier, Tessera \& Valette \cite{CornTessValette:infinity} that given ${\mathbb{H}}^n_{\textbf{H}}$, the $n$-dimensional hyperbolic space over the field of quaternions, its group of isometries $\Gamma =Sp(n,1)$, which has property (T), does not have property $FL^p$ for $p>4n+2$ (where $4n+2$ is the conformal dimension of the boundary $\partial_\infty {\mathbb{H}}^n_{\textbf{H}}$); moreover, it admits a proper action on such an $L^p$-space. Also, a result of M.\ Bourdon \cite{Bourdon16-properLp}, strengthening work of G.\ Yu \cite{Yu:hyplp}, implies that non-elementary hyperbolic groups $\Gamma$ have fixed-point-free---in fact, proper--- isometric actions on an $\ell^p$-space for $p$ larger than the conformal dimension of the boundary $\partial_\infty \Gamma $ (see also Bourdon \& Pajot \cite{BourdonPajot:cohomologie} and Nica \cite{Nica13-ProperLp}). In particular this holds for hyperbolic groups with property (T).

This shows that for large $p>2$ property $FL^p$ is strictly stronger than property (T). The comparison between the two properties when $p>2$ is close to $2$ is unclear. It is known that every group with property (T) has property $FL^p$ for $p\in [2, 2+ \epsilon )$, where $\epsilon $ depends on the group \cite{BFGM, DrutuKapovich}.

Like other strong versions of property (T), the family of properties $FL^p$ separates the simple Lie groups of rank one from the simple Lie groups of rank at least $2$ (and their respective lattices). Indeed, all rank one groups and their uniform lattices fail to have $FL^p$ for $p$ large enough \cite{Yu:hyplp}, while lattices in simple Lie groups of higher rank have property $FL^p$ for all $p\geq 1$ \cite{BFGM}.

Interestingly, the other possible version of property (T) in terms of $L^p$-spaces, requiring that
``almost invariant vectors imply invariant vectors for linear isometric actions'', behaves quite
differently with respect to the standard property (T); namely the standard property (T) is equivalent
to this $L^p$ version of it, for $1<p<\infty$ \cite[Theorem A]{BFGM}. This shows in particular that the
two definitions of property (T) (i.e. the fixed point definition and the almost invariant implies
invariant definition) are no longer equivalent in the setting of $L^p$ spaces, for $p$ large.

The importance of the properties $FL^p$ comes for instance from the fact that in various rigidity results known for groups with property (T), similar results requiring weaker conditions of smoothness hold for groups with property $FL^p$. See for instance  \cite{Navas:BullBMS06}, where the theorem of reduction of cocycles taking values in the group of diffeomorphisms of the circle $\mathrm{Diff}^{1+\tau } ({\mathbb{S}}^1)$ to cocycles taking values in the group of rotations is true for $\tau =\frac{1}{p}$ when the group has property $FL^p$.

Thus, the problem of estimating the maximal $p$ for which a given group has property $FL^p$ is natural and useful, and several questions can be asked
related to this. To begin with, we note that for every group $\Gamma $ with property (T) the set $\frakF (\Gamma )$ of positive real numbers $p$ for which $\Gamma $ has $FL^p$ is open \cite{DrutuKapovich}. Let $\wp (\Gamma )$ be the supremum of the set $\frakF (\Gamma )$, possibly infinite. 

\begin{question}[\cite{Bourdon:FLp}, $\S 0.2$, Question 2; \cite{CDH-T}, Question 1.9]\label{ques1}
\ 

\begin{itemize}
\item[(a)] Do there exist, for any $p_0\geq 2$, groups such that $\frakF (\Gamma )$ contains $(0, p_0)$ and $\wp (\Gamma )$ is finite~?  

\medskip

\item[(b)] Do there exist groups as above that moreover fail to have $FL^p$ for all $p\geq \wp (\Gamma )$, and eventually have proper actions on $L^p$--spaces for $p\geq \wp (\Gamma )$~? 

\medskip

\item[(c)]Does $\wp (\Gamma )$ have any geometric significance ? 
\end{itemize}
\end{question}

Up to now, the only known examples of groups with property (T) that fail to have $FL^p$ for all $p$ larger than some $p_0$ are the hyperbolic groups. 
In particular, for hyperbolic groups the question about the geometric significance of $\wp (\Gamma )$ can be made more precise. 

\begin{question}[\cite{Bourdon:FLp}, $\S 0.2$, Question 2; \cite{CDH-T}, Question 1.9]\label{ques2}
When $\Gamma $ is hyperbolic and with property (T), is $\wp (\Gamma )$ equal to the conformal dimension of $\partial_\infty \Gamma $~?
\end{question} 

\medskip

Most examples of hyperbolic groups with property (T) come from the theory of random groups, hence it is natural to consider the questions above in the particular setting of random groups. It is what we undertake in this paper: a study of random groups from the viewpoint of the properties $FL^p$, both in the triangular model and in the Gromov density model. 

\subsection{Random groups and fixed point properties}

We follow the notation of~\cite{ALS-15-random-triangular-at-third}. Also, in what follows we write $f\simeq g$ for two real functions $f,g$ defined on a subset $A\subseteq \R$ if there exists $C>0$ such that $f(a) \leq C g(Ca+C)$ and $g(a) \leq C f(Ca+C)$, $\forall a\in A$.

The source of the theory of random groups is in the work of Gromov \cite{Gromov:Asymptotic,Gromov:random}, and in the context of the triangular model and of property (T) it has been reformulated by \.{Z}uk \cite{Zuk:propertyT}. 

The triangular model of random groups that appears the most often in the literature is the \emph{triangular density model} $\cM(m, d)$, defined for a density $d \in (0,1)$. This is the model in which, for a fixed set of generators $S$, with $|S| = m$, a set of $(2m-1)^{3d}$ relations $R$ is chosen uniformly and independently at random, among all the subsets of this cardinality in the full set of cyclically reduced relators of length 3 (with cardinality $\simeq m^3$). (As is standard, quantities such as $(2m-1)^{3d}$ are rounded to the nearest integer.)  The groups  $\Gamma = \langle S | R \rangle$ are the elements composing the model.  For more details on this model, we refer to~\cite{Zuk:propertyT} and~\cite{KK-11-zuk-revisited}.

A variation of this model, which is an analog for random groups of the Erd\"os--Renyi model of random graphs, is the following.

\begin{definition}\label{def:triangMod}
	Let $\rho$ be a function defined on $\N$ and taking values in $(0,1)$.
	For every $m\in \N$,  the 
	\emph{binomial triangular model} $\Gamma(m,\rho)$ 
	is defined by taking a finite set of generators $S$ with $|S| = m$, and groups $\Gamma = \langle S | R \rangle$, where 
	$R$ is a subset of the set of all $\simeq m^3$ possible 
	cyclically reduced relators of length 3,
	each relator chosen independently with probability $\rho (m)$.
	
	A property $P$ holds \emph{asymptotically almost surely (a.a.s.)}
	in this model if
	\[ \lim_{m \ra \infty} \bP( \Gamma \in \Gamma(m,\rho) \text{ satisfies } P) =1.\]
\end{definition}

One of our main theorems is the following.

\begin{theorem}\label{thm:Itri-eventually-flp}
	For any $\delta >0$ there exists $C>0$ so that for $\rho = \rho(m) \geq m^{\delta }/m^2$, and for every $\varepsilon >0$ a.a.s.\ 
	a random group in the binomial triangular model $\Gamma(m,\rho)$ has  $FL^p_{(2-2\varepsilon)^{1/2p}}$ for every $p \in \left[ 2, C (\log m/\log\log m)^{1/2} \right]$.
In particular, a.a.s.\ we have $FL^p$ for all $p$ in this range.
\end{theorem}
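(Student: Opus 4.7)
The plan is to combine three layers. First, a local-to-global spectral criterion à la Żuk/Ballmann–Świątkowski, but adapted to $L^p$ following Bourdon and Gromov, and further adapted so that it applies to uniformly Lipschitz actions with distortion $L=(2-2\eps)^{1/2p}$. Such a criterion states that $\Gamma$ has $FL^p_L$ as soon as, at every vertex $v$ of its Cayley $2$-complex, the first nontrivial eigenvalue $\lambda_{1,p}(\mathrm{Lk}(v))$ of the normalized $p$-Laplacian exceeds an explicit threshold $\Lambda(p,L) < 1$. The criterion itself is proved by the standard harmonic-cocycle argument: a fixed-point-free action produces a nontrivial equivariant map; the local $p$-Poincaré inequality at each vertex gives a lower bound on the $p$-Dirichlet energy on each star; summing over vertices of the $2$-complex and comparing with the global $p$-energy on the $1$-skeleton yields the contradiction. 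In our binomial triangular model all vertex links are isomorphic, so only one graph must be controlled.

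Second, identify the relevant graph. In $\Gamma(m,\rho)$ each cyclically reduced length-$3$ relator $abc$ contributes the edge $\{a^{-1},bc\}$ (and its symmetric variants) to the link at the identity, which is a graph on the vertex set $S \cup S^{-1}$ of size $2m$. Each admissible edge is present independently with probability $\Theta(\rho)$, so $\mathrm{Lk}(e)$ is a binomial random graph whose expected degree is $\Theta(m\rho m) = \Theta(m^{\delta})$. Thus we are in the well-connected regime, well above the connectivity threshold.

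Third, and this is the technical heart of the argument, show that a.a.s. $\lambda_{1,p}(\mathrm{Lk}(e)) > \Lambda\bigl(p,(2-2\eps)^{1/2p}\bigr)$ simultaneously for every $p$ in the range $[2,\, C(\log m/\log\log m)^{1/2}]$. The natural variational expression is
\begin{equation*}
\lambda_{1,p}(\mathrm{Lk}(e)) \;=\; \inf_{f \not\equiv c}\ \frac{\sum_{xy \in E}|f(x)-f(y)|^p}{\inf_{c\in\R}\sum_{x} d(x)\,|f(x)-c|^p}.
\end{equation*}
Following Kahn–Szemerédi, one splits a would-be near-extremizer $f$ into a \emph{light} part (coordinates of moderate size, handled by the first moment and degree concentration) and a \emph{heavy} part (large coordinates, handled by a direct combinatorial/trace bound on the number of edges between subsets). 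The light part is controlled uniformly by a union bound over an $\eps$-net in the $\ell^p$ unit sphere, together with a $p$-th moment concentration inequality for sums of independent bounded random variables (Bernstein/Bennett), which replaces the $\ell^2$ Hanson–Wright step of the classical argument. The heavy part is controlled by showing that, with high probability, every pair of subsets $A,B \subseteq S \cup S^{-1}$ has edge count within $O(\sqrt{\mathrm{mean\ degree}})$ of its expectation. Combining these produces a deviation bound of the form $\lambda_{1,p} \geq 1 - O(m^{-\delta/2})$ for $p$ in the stated range.

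The main obstacle is Step three: the Kahn–Szemerédi method is tailored to the $\ell^2$ world (Cauchy–Schwarz, spectral decomposition, Gaussian concentration), whereas the $p$-Laplacian demands $\ell^p$ substitutes throughout — convexity inequalities in place of Cauchy–Schwarz, $p$-th moment Bernstein bounds in place of Hanson–Wright, and $\ell^p$-net entropy in place of $\ell^2$-net entropy. The emergence of the sub-logarithmic range $p \leq C(\log m/\log\log m)^{1/2}$ reflects exactly the balance between these two $p$-dependent factors: the entropy of a $1/p$-net of the $\ell^p$-ball on $\Theta(m)$ coordinates grows like $e^{Cm\log p}$, while $p$-moment concentration only saves a factor of the form $e^{-c p\, m^{\delta}}$ per test vector, so the union bound closes precisely up to $p^2 \lesssim \log m/\log\log m$. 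Matching this spectral lower bound with the threshold $\Lambda(p,(2-2\eps)^{1/2p})$ from Step one yields $FL^p_{(2-2\eps)^{1/2p}}$ on the full asserted interval, and isometric $FL^p$ as the special case $L=1$.
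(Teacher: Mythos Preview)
Your three-layer strategy matches the paper's: Bourdon's $L^p$ spectral criterion (Theorem~\ref{thm:bourdon-flp}), identification of the link with a binomial random graph on $2m$ vertices of expected degree $\asymp m^\delta$, and a $p$-Laplacian adaptation of Kahn--Szemer\'edi (Theorem~\ref{thm:G-m-rho-eval-bound}). Two technical points are misstated, however, and the second is precisely where the constraint on $p$ lives.

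Your claimed bound $\lambda_{1,p} \geq 1 - O(m^{-\delta/2})$ is too strong and, taken at face value, would give $FL^p$ for all $p$. What the method actually yields is
\[
  \lambda_{1,p}(G) \;\geq\; 1 - \frac{Cp^4}{d^{1/2p^2}}\,,\qquad d \asymp m^\delta.
\]
The exponent $1/2p^2$ is forced: one discretises $\{x\}^{p-1}$ rather than $x$ (so that the net respects the constraint $\sum \{x_u\}^{p-1}d_u=0$), incurring an approximation error of order $\eps^{1/(p-1)}$, which must be balanced against a heavy-edge term $\eps^{-q}/d^{1/p}$; the optimal mesh $\eps = d^{-(p-1)/p(p+1)}$ together with the block-counting argument for heavy edges produces errors of order $d^{-1/p(p+1)}$ and $p^3/d^{1/p^2}$. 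Consequently, your explanation of the range is misplaced. The union bound over the net in fact closes for \emph{every} $p$: the net has size at most $(C/\eps)^m$, contributing $O(m\log(d)/p)$ to the exponent, and Azuma's inequality on the edge-exposure martingale gives savings $e^{-K^2 m/C}$, so $K \asymp \sqrt{\log(d)/p}$ suffices regardless of $p$. The genuine constraint $p \leq C(\log m/\log\log m)^{1/2}$ comes from requiring the degraded bound $Cp^4/d^{1/2p^2} < \varepsilon$, i.e.\ $8p^2\log p \lesssim \delta\log m$.

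One structural detail you elided: the link $L(S)$ is not a single $\bG(2m,\rho')$ graph but a union of three independent such graphs (one per position of the letter in the relator), each perturbed by a matching coming from rare double edges; the paper disposes of this with two short monotonicity lemmas for $\lambda_{1,p}$ under edge addition and under splitting into equal-degree parts.
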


The model $\Gamma(m,\rho)$ is closely related to the density model
$\cM(m, d)$, when $\rho m^3 \simeq (2m-1)^{3d}$.
Property $FL^p$ is preserved by quotients, in particular by adding more relations,
so it is a ``monotone property'' in the sense of \cite[Proposition~1.13]{JLR-00-random-graphs} (see Section~\ref{sec:mono-confdim}).  
Thus, general results on random structures mean that our theorem implies the following in the density model $\cM(m,d)$.
\begin{corollary}\label{cor:Itri-density-flp}
	For any fixed density $d>1/3$ there exists $C >0$ so that for every $\varepsilon >0$ a.a.s.\ a random group in the triangular density model $\cM(m,d)$ has $FL^p_{(2-2\varepsilon)^{1/2p}}$  for every $p \in \left[ 2, C (\log m/\log\log m)^{1/2} \right]$. 
In particular, a.a.s.\ we have $FL^p$ for all $p$ in this range.
\end{corollary}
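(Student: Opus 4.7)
The plan is to deduce Corollary~\ref{cor:Itri-density-flp} from Theorem~\ref{thm:Itri-eventually-flp} via the standard monotone-property comparison between binomial and uniform random models, exploiting the fact (already emphasised in the paragraph introducing the corollary) that the property $FL^p_{(2-2\varepsilon)^{1/2p}}$ is preserved under group quotients: any continuous affine isometric action of $\Gamma/N$ on a Banach space pulls back through the quotient map to a continuous action of $\Gamma$, and any global fixed point of the pull-back descends to a fixed point of the original action. Since adjoining more relators produces a quotient, the property is increasing in the relator set.

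Fix $d > 1/3$ and set $\delta := (3d-1)/2 > 0$. Let $N(m) \simeq m^3$ denote the number of cyclically reduced length-$3$ relators on $m$ generators, and choose
$\rho = \rho(m) := \tfrac{1}{2}(2m-1)^{3d}/N(m)$, so that $\rho \geq m^{\delta}/m^{2}$ for all sufficiently large $m$. By Theorem~\ref{thm:Itri-eventually-flp} applied with this $\delta$, there is a constant $C>0$ so that for every $\varepsilon > 0$ a random group $\Gamma \in \Gamma(m,\rho)$ has $FL^p_{(2-2\varepsilon)^{1/2p}}$ a.a.s.\ for every $p \in [2, C(\log m/\log\log m)^{1/2}]$.

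It remains to couple $\Gamma(m,\rho)$ with the density model $\cM(m,d)$. Let $R$ be the random relator set defining an instance of $\Gamma(m,\rho)$; its cardinality $|R|$ is binomial with mean $\rho N(m) \simeq \tfrac12(2m-1)^{3d}$, so by Chernoff concentration $|R| \leq (2m-1)^{3d}$ a.a.s. On this event, enlarge $R$ to $R'$ by adjoining $(2m-1)^{3d} - |R|$ uniformly random further relators from the complement of $R$. Conditionally on $|R|=K$, the set $R$ is uniform on $K$-element subsets of relators, and a direct calculation shows that $R'$ is then uniform on $(2m-1)^{3d}$-element subsets; hence $\langle S \mid R' \rangle$ is distributed according to $\cM(m,d)$. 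Since $\langle S \mid R' \rangle$ is a quotient of $\Gamma = \langle S \mid R \rangle$ and $FL^p_{(2-2\varepsilon)^{1/2p}}$ is quotient-closed, the property transfers from $\Gamma$ to $\langle S \mid R' \rangle$.

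There is no genuine obstacle here: this is precisely the $G(n,p)$-versus-$G(n,M)$ comparison for monotone properties (Proposition~1.13 of~\cite{JLR-00-random-graphs}, cited in the paragraph preceding the corollary), and the only inputs are the quotient-stability of the fixed-point property, the Chernoff bound on $|R|$ (which is effective because the mean $\rho N(m)$ diverges with $m$), and the verification that the chosen $\rho$ falls in the regime covered by Theorem~\ref{thm:Itri-eventually-flp}.
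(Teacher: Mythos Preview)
Your proof is correct and follows essentially the same approach as the paper: the paper invokes Proposition~\ref{prop:triangle-models-monotone} (which it attributes to \cite[Proposition~1.13]{JLR-00-random-graphs}) together with the observation that $FL^p_L$ is monotone, whereas you spell out the underlying coupling argument explicitly. The only cosmetic difference is that the paper's black-box proposition requires the binomial result for all $\rho$ in a window $f(m)(2m)^{-3}+O(\sqrt{f(m)}(2m)^{-3})$, while your direct coupling needs only a single well-chosen $\rho$ below the target density---both are valid routes to the same conclusion.
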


In the case of $FL^2$, that is, property (T), this is a result of \.{Z}uk \cite{Zuk:propertyT},
with steps clarified by Kotowski--Kotowski~\cite{KK-11-zuk-revisited}.

Note that for any density $d<1/2$ a random group in  $\cM(m,d)$ is hyperbolic \cite{Zuk:propertyT}.  

The picture drawn by Theorem \ref{thm:Itri-eventually-flp} is completed by the results of Antoniuk, Luczak and \'{S}wi\c{a}tkowski \cite{ALS-15-random-triangular-at-third}, improving previous estimates of \.{Z}uk \cite{Zuk:propertyT}, and stating that:

\begin{itemize}
\item there exists a constant $\kappa$ such that if $\rho \leq \frac{\kappa }{m^2}$, then a.a.s.\ a group in the model $\Gamma(m,\rho)$ is free; 
\item there exist constants $\kappa_1, \kappa_2$ such that if $\frac{\kappa_1 }{m^2}\leq \rho \leq \frac{\kappa_2 \log m }{m^2}$, then a.a.s.\ a group in $\Gamma(m,\rho)$ is neither free nor with property (T); 
\item  there exists a constant $\kappa_3$ such that if $ \rho \geq \frac{\kappa_3 \log m }{m^2}$, then a.a.s.\ a group in $\Gamma(m,\rho)$ has property (T).   
\end{itemize}
In the first two of these cases, the failure of property (T) implies that the groups have none of the $FL^p$ properties~\cite{BFGM}.  In the third case we show a result like Theorem~\ref{thm:Itri-eventually-flp}, with a bound growing a little slower than $(\log\log m)^{1/2}$, see Theorem~\ref{thm:tri-eventually-flp2}.

As far as Corollary \ref{cor:Itri-density-flp} is concerned, \.{Z}uk had already proven \cite{Zuk:propertyT}  that for any density $d<1/3$,  a random group in the triangular density model $\cM (m,d)$ had free factors, and hence property (T) and all $FL^p$ properties fail.  We give some partial information at $d=1/3$ in Section~\ref{sec:mono-confdim}.  Note that the results in \cite{ALS-15-random-triangular-at-third} for the first two cases do not immediately apply here, since the properties they deal with are not monotone. 

\medskip

Another model for random groups is the Gromov model, that we define here first in a generalized form, and then in the usual Gromov density model form.

\begin{definition}[Gromov model]\label{def:gromov-model}
Consider a function $f:\N \to \N $, a fixed integer $k\geq 2$ and a fixed set of generators $\cA$ with $|\cA|=k$.

A random group in the \emph{Gromov model} $\cG(k,l,f )$ is a group $\Gamma = \langle \cA | R \rangle$ with presentation defined by a collection $R$ of cyclically reduced relators of length $l$, $R$ of cardinality $f(l)$, chosen randomly with uniform probability.

When $f(l)$ is the integral part of $(2k-1)^{dl}$ for a fixed constant $d \in (0,1)$, the Gromov model becomes the usual \emph{Gromov density model at density $d$}, for which we use here a specific simplified notation, $\cD (k,l,d)$.   
\end{definition}

Like the triangular model, the Gromov model has a version that is closer to the Erd\"os--Renyi model for graphs.

\begin{definition}[Gromov binomial model]\label{def:gromov-model-binom}
	Fix a number of generators $k\geq 2$, a set of generators $\cA$ with $|\cA|=k$, and a function $\rho :\N \to (0,1)$.
	
	A group $\Gamma = \langle \cA | R \rangle$ in the
	\emph{$k$-generated Gromov binomial model} $\cB (k,l,\rho)$ is defined by taking $R$ a collection of cyclically
	reduced relators of length $l$ in the alphabet $\cA$, each chosen independently with probability $\rho(l)$.
	
	A property $P$ holds \emph{asymptotically almost surely (a.a.s.)}
	in this model if
	\[
		\lim_{l \ra \infty} \bP(\Gamma \in \cB (k,l,\rho) \text{ satisfies } P) = 1.
	\]
\end{definition}

\begin{remark}\label{rmk:gromov-rho-d-monotone}

 For a fixed  number of generators $k\geq 2$, a fixed density $d\in(0,1)$, and the function $\rho(l)=(2k-1)^{-(1-d)l}$, the model $\cB (k,l,\rho)$ is closely related to the Gromov density model $\cD (k,l,d)$, since there are $\asymp (2k-1)^{dl}$ cyclically reduced words of length $l$ in $R$, where $A \asymp B$ means $\frac{1}{C}A \leq B \leq CA$, for some constant $C>0$.
	See Section~\ref{sec:mono-confdim} for more details.
\end{remark}

In the density model $\cD (k,l,d)$ as well, when $d<1/2$ a random group is non-elementary hyperbolic \cite[Chapter 9]{Gromov:Asymptotic}. When $d>1/3$ a random group moreover has property (T) \cite{Zuk:propertyT,KK-11-zuk-revisited}. Unlike in the triangular case though, it is not known whether $1/3$ is the threshold density for property (T). J. Mackay and P. Przytycki proved in \cite{Mac-P} that when $d<5/24$ a random group acts on a finite dimensional CAT(0)-cubical complex with unbounded orbits, hence it does not have property (T). This improves a previous result of Ollivier--Wise \cite{Oll-W} for density $d<1/5$. For density $d<1/6$, Ollivier--Wise moreover proved in  \cite{Oll-W} that a random group acts properly on a CAT(0)-cubical complex, hence it is a-T-menable.

We prove the following. 

\begin{theorem}\label{thm:Igromov-flp}
	Choose $p \geq 2$, $\eps >0$ arbitrary small and $k \geq 10 \cdot 2^p$.
	Fix a density $d>1/3$.
	Then a.a.s.\ a random group in the Gromov density model $\cD (k,l,d)$ has $FL^{p'}_{(2-2\eps )^{1/2p'}}$ for all $2 \leq p'
	\leq p$.
	In particular, a.a.s.\ we have $FL^{p'}$ for all $p'$ in this range.
\end{theorem}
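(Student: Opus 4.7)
The plan is to combine the $p$-Laplacian spectral gap estimates on random graphs (established earlier in the paper via Kahn--Szemer\'edi techniques) with a Bourdon--Gromov type spectral criterion for $FL^{p'}_L$, applied to the links of the Cayley $2$-complex (or a suitable variant) of a random Gromov group. As a preliminary step I would pass from the density model $\cG(k,l,d)$ to the binomial model $\cG(k,l,\rho)$ with $\rho = (2k-1)^{-(1-d)l}$, using Remark~\ref{rmk:gromov-rho-d-monotone} together with the fact that $FL^{p'}_L$ is preserved by taking quotients, hence is a monotone property in the sense of random structures. Since the groups produced are hyperbolic at density $d<1/2$, the relevant $\Gamma$-invariant $2$-complex is available.

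The core analytic input is an extension of the Ballmann--\'Swi\c atkowski/\.Zuk spectral criterion from property~(T) to $FL^{p'}_L$: if the first nontrivial eigenvalue $\lambda_{1,p'}$ of the $p'$-Laplacian on the link at every vertex of the $2$-complex exceeds a threshold $\theta(p',L)$, then $\Gamma$ has $FL^{p'}_L$. For $L = (2-2\eps)^{1/2p'}$ this threshold is roughly of the form $\tfrac12 - c\eps$ in a suitable normalization. The relevant links are random graphs whose edges come from length-$2$ subwords of the chosen length-$l$ relators; at density $d > 1/3$ each link has high enough edge density for the Kahn--Szemer\'edi $p'$-Laplacian bound to apply. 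The hypothesis $k \geq 10\cdot 2^{p}$ then ensures that $\lambda_{1,p'}$ exceeds $\theta(p',L)$ for all $p' \in [2,p]$ simultaneously, a.a.s.\ as $l \to \infty$. The exponential dependence of $k$ on $p$ reflects that the $p'$-Laplacian spectral gap degrades as $p'$ grows, so more generators are needed to recover it uniformly on the full interval.

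The main obstacle is to execute the Kahn--Szemer\'edi analysis on the random link graphs of the Gromov complex: unlike in the triangular case, a single length-$l$ relator contributes a correlated family of $l$ edges to the links at different vertices, so one cannot directly exploit independence of edges. Either careful conditioning, or a reduction to an auxiliary graph on which edges are asymptotically independent, will be required; this is where most of the probabilistic work goes. A secondary technical point is verifying uniformity of the spectral gap estimate in $p' \in [2,p]$, rather than just at the endpoint $p'=p$, which is needed to extract $FL^{p'}_{(2-2\eps)^{1/2p'}}$ for every $p'$ in the range from a single realization of the random group.
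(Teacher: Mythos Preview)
Your broad strategy is correct: pass to the binomial model by monotonicity, bound $\lambda_{1,p'}$ of vertex links by a Kahn--Szemer\'edi type argument, and invoke Bourdon's spectral criterion (Theorem~\ref{thm:bourdon-flp}). But two key structural devices are missing, and without them the argument does not go through.

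First, the paper does \emph{not} work with the Cayley $2$-complex of $\Gamma$ itself. Instead it lifts $\Gamma$ to a group $\tilde\Gamma$ with a triangular presentation: the generating set $S$ consists of all reduced words of length $l/3$ in $\cA$, and each relator $r$ of length $l$ is split as $\phi(s)\phi(t)\phi(u)$ and replaced by the length-$3$ relator $stu$. One checks that $\phi(\tilde\Gamma)$ has finite index in $\Gamma$ (Lemma~\ref{lem:lift-finite-index-image}), so $FL^{p'}_L$ for $\tilde\Gamma$ implies it for $\Gamma$. This lifting is precisely what kills the correlation problem you identify: after splitting, each edge of the link $L(S)$ corresponds to a choice of relator, and the three edges $(s^{-1},t)$, $(t^{-1},u)$, $(u^{-1},s)$ go into three \emph{disjoint} subgraphs $L^1(S), L^2(S), L^3(S)$, which are then recombined via Lemma~\ref{lem:lp-bound-split-into-three}. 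Your proposal to handle correlations by ``careful conditioning'' on the original complex would be substantially harder and is not what the paper does.

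Second, and more seriously, each $L^i(S)$ is not a random graph in $\bG(m,\rho)$ but a random \emph{$2k$-partite} graph (partitioned by the initial letter of $\phi(s)$), because $st$ is admissible only if $\phi(s^{-1})$ and $\phi(t)$ begin with different letters. This forces a separate eigenvalue analysis (Section~\ref{sec:multi-partite}): one first computes $\lambda_{1,p'}(K_{2k\times M})$ for the complete $2k$-partite graph (Theorem~\ref{thm:kpartite}), then shows the random $2k$-partite graph inherits a comparable bound (Theorem~\ref{thm:G-k-m-rho-eval-bound2}). The hypothesis $k \geq 10\cdot 2^p$ enters exactly here: Theorem~\ref{thm:kpartite} gives $\lambda_{1,p'}(K_{2k\times M}) \gtrsim 2k/(2k-1+2^{p'+2})$, and this exceeds $2/3$ only when $k$ dominates $2^p$. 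Your explanation that ``more generators are needed to recover the spectral gap'' is in the right spirit but misses that the obstruction is the multi-partite base graph itself, not the randomness. Indeed, the paper notes (Proposition~\ref{prop:bipartite-est}) that the natural bipartite approach of Kotowski--Kotowski \emph{fails} for large $p$ because $\lambda_{1,p}(K_{2\times M}) \to 0$ as $p\to\infty$; the multi-partite structure with growing $k$ is not a convenience but a necessity.
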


Note that the results of Mackay--Przytycki and of Ollivier--Wise mentioned previously imply that, below density $5/24$, random groups act on spaces with measured walls with unbounded orbits, respectively below density $1/6$ random groups have proper actions on spaces with measured walls. These results, and a standard argument that can be found for instance in \cite[Lemma 3.10]{CDH-T}, imply that, for $d<5/24$, a random group has actions with unbounded orbits on $L^p$--spaces, for the whole range  $p\in (0, \infty )$; respectively that, below density $1/6$, a random group has proper actions on $L^p$--spaces for every $p\in (0,\infty )$.      

Theorem \ref{thm:Igromov-flp} follows from the corresponding theorem in the Gromov binomial model $\cB (k,l,\rho)$, see Theorem~\ref{thm:gromov-flp}.  For any fixed $k \geq 2$ is it natural to expect a result where $p \ra \infty$ as in Theorem~\ref{thm:Itri-eventually-flp}, however our methods currently do not show this.
We do find a new proof of property (T) for any fixed $k\geq 2$ and $d >1/3$, which moreover applies at $d=1/3$ as well (see Theorem \ref{thm:gromov-T} for a precise statement).

Previous progress on the problem of $FL^p$--properties with $p>2$ for random groups in the Gromov density model had been made by P.~Nowak in \cite{Nowak-15-PI-Banach} (see Remark \ref{rmk:nowak}).

\medskip

In the class of groups with property (T), the subclass of hyperbolic groups plays a special role, since by \cite[$\S III.3$]{Oll-05-rand-grp-survey} and \cite{Cornulier:hypT} every countable group with property (T) is the quotient of a torsion-free hyperbolic group with property (T). Therefore, Theorems \ref{thm:Itri-eventually-flp} and \ref{thm:Igromov-flp} may be seen as an indication that the generic countable groups with property (T) also have $FL^p$ for $p$ in an arbitrarily large interval $(2, p_0)$.

\subsection{Conformal dimension}

Another setting emphasizing the interest of the properties $FL^p$ lies in their connection with P.\ Pansu's conformal dimension.  For a hyperbolic group $\Gamma$, the boundary $\bdry \Gamma$ comes with a canonical family of metrics; the infimal Hausdorff dimension among these is the \emph{conformal dimension} $\Confdim(\bdry \Gamma)$.  This is an invariant of the group, and in fact, if two hyperbolic groups are quasi-isometric then they have the same conformal dimension.  For more details, see~\cite{MacTys-10-Confdim-book}.

Conformal dimension can sometimes be used to distinguish hyperbolic groups even if their boundaries are homeomorphic, see Bourdon~\cite{Bourdon97-Fuchsian-buildings}.  For random groups in the Gromov density model at densities $d<1/8$, the second author has found sharp asymptotics for the conformal dimension using small cancellation methods~\cite{Mac-12-conf-dim-rand-groups,Mac-15-conf-dim-rand-groups}.  

However, small cancellation methods completely fail for random groups at densities $d>1/4$, and certainly do not work for random groups in the triangular models.  Therefore it is of interest that we are able to bound the conformal dimension in a new way at densities $d>1/3$ using the $FL^p$ properties.

As mentioned above, Bourdon showed that if a Gromov hyperbolic group has property $FL^p$ for some $p >0$, then
	the conformal dimension of its boundary is at least $p$.  
A consequence of this inequality, an upper bound computation, and Corollary \ref{cor:Itri-density-flp} is the following.

\begin{theorem}\label{thm:Iconfdim-tri-density-both-bounds}
	For any density $d\in (\frac{1}{3},\frac{1}{2})$, 
	there exists $C>0$ so that a.a.s.\ 
	$\Gamma \in \cM(m,d)$ is hyperbolic, and satisfies
	\[
	\frac{1}{C} \left(\frac{\log m}{\log\log m}\right)^{1/2} \leq \wp (\Gamma )\leq  \Confdim(\bdry \Gamma) \leq C \log m.
	\]
	In particular, as $m \ra \infty$, the quasi-isometry class of $\Gamma$ keeps changing.
\end{theorem}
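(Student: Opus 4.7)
The plan is to chain together four ingredients. First, hyperbolicity a.a.s.\ for densities $d<1/2$ in $\cM(m,d)$ is Żuk's theorem, already cited in the excerpt. Second, Corollary~\ref{cor:Itri-density-flp} puts the entire interval $[2, C(\log m/\log\log m)^{1/2}]$ inside $\frakF(\Gamma)$, whence the lower bound
\[
\wp(\Gamma) = \sup\frakF(\Gamma) \geq \tfrac{1}{C}(\log m/\log\log m)^{1/2}.
\]
Third, the middle inequality $\wp(\Gamma) \leq \Confdim(\bdry\Gamma)$ is immediate from Bourdon's theorem quoted in the introduction: every $p \in \frakF(\Gamma)$ satisfies $p \leq \Confdim(\bdry\Gamma)$, so the supremum does as well.

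The only real work is the fourth ingredient, the upper bound $\Confdim(\bdry\Gamma) \leq C\log m$, which I would establish by exhibiting a visual metric on $\bdry\Gamma$ of controlled Hausdorff dimension. With respect to the canonical $m$-generator presentation, the volume entropy of $\Gamma$ is bounded above by $\log(2m-1)$, since every sphere of radius $n$ in the Cayley graph has at most $2m(2m-1)^{n-1}$ elements. Picking a visual parameter $\epsilon>0$ for which $d_\epsilon$ lies in the conformal gauge, the Hausdorff dimension of $(\bdry\Gamma, d_\epsilon)$ is at most $\log(2m-1)/\epsilon$, and the conformal dimension is dominated by this quantity. The main technical point is to choose $\epsilon$ bounded away from zero by a constant depending only on $d$; since the relators all have fixed length $3$, the local structure of the presentation $2$-complex does not degenerate with $m$, and I would invoke a uniform bound on the hyperbolicity constant depending only on $d$ (and not on $m$) to make this work.

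Finally, since conformal dimension of the boundary is a quasi-isometry invariant of hyperbolic groups, the three-way inequality forces $\Confdim(\bdry\Gamma) \to \infty$ as $m \to \infty$, so only finitely many values of $m$ can yield the same quasi-isometry class, proving that the quasi-isometry class keeps changing. The main obstacle I anticipate is the uniform lower bound on the visual parameter $\epsilon$ in Step~4: it may require a dedicated argument showing that triangular presentations at density $d \in (1/3,1/2)$ have hyperbolicity constant bounded by a function of $d$ alone, rather than one relying on generic facts about hyperbolic groups.
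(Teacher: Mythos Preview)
Your proposal is correct and follows essentially the same route as the paper. The paper resolves precisely the obstacle you anticipate: Ollivier's isoperimetric inequality for random groups (transferred to the triangular model by Antoniuk--{\L}uczak--{\'S}wi{\c{a}}tkowski) gives a.a.s.\ that every reduced van Kampen diagram satisfies $|\partial D| \geq 3(1-2d-\eps)|D|$, and then an estimate of Ollivier (after Champetier) converts this into a hyperbolicity constant $\delta = 5/(1-2d)$ depending only on $d$; the visual metric and volume-entropy bound then yield $\Confdim(\bdry\Gamma) \leq 30(1-2d)^{-1}\log(2m-1)$.
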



\begin{remark}\label{rmk:nowak} P.\ Nowak has also obtained a lower bound for the parameter $\wp (\Gamma )$ and hence conformal dimension, that can be explicitly calculated, in the triangular and in the Gromov density models, using spectral methods~\cite[Corollary 6.4]{Nowak-15-PI-Banach}.  However, his bound is an explicit decreasing function slightly larger than $2$.
\end{remark}  
\begin{remark}	Theorem \ref{thm:Iconfdim-tri-density-both-bounds} provides in particular a positive answer to Question \ref{ques1}(a). 
The first such example, also among hyperbolic groups, was provided by Naor and Silberman~\cite[Theorem 1.1]{NaorSilberman}. Theorem \ref{thm:Iconfdim-tri-density-both-bounds} brings the additional information that the situation described in  Question \ref{ques1}(a), is in fact generic in this standard model of random groups.  In view of the remark following  Theorem \ref{thm:Igromov-flp}, it is expected that this same situation is generic for the whole class of countable groups with property (T).
\end{remark}


\begin{remark}	A consequence of Theorem \ref{thm:Iconfdim-tri-density-both-bounds} is that for a generic hyperbolic group $\Gamma $ in the model $\cM(m,d)$ with $d\in (\frac{1}{3},\frac{1}{2})$, there exists a constant $\kappa = \kappa (d)$ such that 
\begin{equation}
\frac{1}{\kappa } \left[ \Confdim(\bdry \Gamma)\right]^{1/2-\eps}\leq \wp (\Gamma )\leq  \Confdim(\bdry \Gamma),
\end{equation}  
where $\eps>0$ is fixed.
This illustrates that a formula relating $\wp (\Gamma )$ and $\Confdim(\bdry \Gamma)$  is plausible, in particular an equality as conjectured in Question \ref{ques2}.
\end{remark} 

\subsection{Random graphs and strong expansion}

Our results on random groups rely on spectral results on random graphs. Indeed, every finitely presented group  $\Gamma$ has a presentation in which all relators are of length three, and every such presentation yields an action of $\Gamma$ on a simplicial $2$-complex $X$, the Cayley complex. The link of every vertex is a graph $L(S)$, and if the smallest positive eigenvalue $\lambda_1(L(S))$ 
of the Laplacian of this graph satisfies $\lambda_1(L(S)) > \frac{1}{2}$,
then $\Gamma$ has property (T).
This has been shown by \.Zuk and Ballmann-\'Swi\c atkowski~\cite{Zuk:propertyT,Ballmann-S(1997)}, and appears implicitly in \cite{Gromov:random}.
(See Sections~\ref{sec:evals-intro} and \ref{sec:fixed-pt-prop}.)

In the case of a random group $\Gamma \in \Gamma(m,\rho)$, the link
graph is nearly a union of three random graphs coming from a suitable random
graph model.  

There is a large literature on the first positive eigenvalue of the Laplacian
of a random graph.  In the case of constant degree the problem is equivalent to bounding the second largest eigenvalue of the adjacency matrix, and this opens up methods used by Friedman to give very precise asymptotics.
In our context the bound $\lambda_1(L(S)) > \frac{1}{2}$ follows from
a result of Friedman and Kahn--Szemer\'edi~\cite{FKS-89-eigenvalue-rand-graph}
that random graphs have $\lambda_1$ close to $1$.

In our setting we must replace the Laplacian by a non-linear generalization of it, the $p$--Laplacian, for $p\in (1,\infty )$, see Section~\ref{sec:evals-intro}. The $p$-Laplacian has been used in combinatorics and computer science \cite{BuhlerHein} and turns out to be a useful tool for estimates of the $L^p$--distortion \cite{JolissaintValette:lp}.

We use a sufficient condition for property $FL^p_L$, described in the theorem below, which can be obtained by slightly modifying arguments of Bourdon. The latter arguments use Garland's method of harmonic maps, initiated in \cite{Garland}, developed by \.{Z}uk \cite{Zuk:CRAS} and Wang \cite{Wang:JDiff98}, and further used and developed by Ballmann-\'Swi\c atkowski~\cite{Ballmann-S(1997)}, Pansu \cite{Pansu:Garland} Gromov \cite{Gromov:random}, Izeki, Nayatani and Kondo \cite{IzekiNayatani05,IKN09,IKN12} etc.

Here, given a graph $L$ we denote by $\lambda_{1,p}(L)$ of a graph $L$ the first positive eigenvalue
of the $p$-Laplacian of $L$ (Definition~\ref{def:p-eigen}).

\begin{theorem}[Bourdon \cite{Bourdon:FLp}]\label{thm:bourdon-flp}

Let $p \in (1,\infty)$ and $\varepsilon < \frac12$. Suppose $X$ is a simplicial $2$-complex where the 
	link $L(x)$ of every vertex $x$ has $\lambda_{1,p}(L(x)) > 1-\varepsilon$, and has at most $m$ vertices.
	If a group $\Gamma$ acts on $X$ simplicially, properly, and cocompactly, 
	then $\Gamma$ has the property $FL^p_{m+1,(2-2\varepsilon)^{1/2p}}$.
\end{theorem}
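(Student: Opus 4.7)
The plan is to follow Bourdon's adaptation of the Garland--\.{Z}uk--Ballmann--\'{S}wi\c{a}tkowski harmonic map scheme from the Hilbert setting to $L^p$-spaces, and then to extend it to target Banach spaces with $L$-bi-Lipschitz $L^p$ geometry above dimension $m$, where $L = (2-2\varepsilon)^{1/2p}$. Given a continuous affine isometric action of $\Gamma$ on such a Banach space $B$, I aim to produce a fixed point. Since the action on $X$ is proper and cocompact, the space of $\Gamma$-equivariant maps $f : X^{(0)} \to B$ carries a well-defined $p$-energy per fundamental domain, and minimizing this energy yields a $p$-harmonic equivariant map; a global fixed point of the action on $B$ is equivalent to such a minimizer being constant.

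Next I would perform the local analysis at a vertex $x$. Setting $g(y) := f(y) - f(x)$ for $y$ in the link $L(x)$, the $p$-harmonicity at $x$ rephrases as a $p$-barycentric centering condition on the family $\{g(y)\}_{y \sim x}$. Because $L(x)$ has at most $m$ vertices, the set $\{g(y) : y \in L(x)\}$ generates a linear subspace of $B$ of dimension at most $m$ (together with $f(x)$ this is at most $m+1$), hence lies in a subspace $L$-bi-Lipschitz to some $L^p$-space. Applying the definition of $\lambda_{1,p}(L(x)) > 1-\varepsilon$ coordinate-wise in the model $L^p$-space and transferring back via the bi-Lipschitz equivalence with distortion at most $L$, I obtain a vector-valued Poincar\'e inequality
\begin{equation*}
\sum_{\{y,y'\} \in E(L(x))} \|g(y) - g(y')\|^p \; \geq \; \frac{1-\varepsilon}{L^{2p}} \sum_{y \in V(L(x))} d(y)\, \|g(y)\|^p.
\end{equation*}
The choice $L = (2-2\varepsilon)^{1/2p}$ is calibrated precisely so that $(1-\varepsilon)/L^{2p} = 1/2$, which is the critical Garland threshold below which a nonconstant $p$-harmonic equivariant map cannot exist.

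To conclude, I would sum the local inequality over a fundamental domain of vertices and compare the two sides with the total $p$-energy of $f$. Edges in the links of vertices are in bijection with corners of $2$-cells of $X$, while edges of $X$ appear in the links of both endpoints, so a double-counting argument combined with the $p$-harmonicity of $f$ produces a self-improvement relation which is consistent only if the total $p$-energy of $f$ vanishes. This forces $f$ to be constant, and its value is then a global fixed point of $\Gamma$.

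The \textbf{main obstacle} will be the local step: in a general Banach space the scalar $p$-eigenvalue $\lambda_{1,p}(L(x))$ has no reason to control a vector-valued Poincar\'e inequality, and this is exactly where the hypothesis of $L$-bi-Lipschitz $L^p$ geometry above dimension $m$ is used, with the distortion factor $L^{2p} = 2-2\varepsilon$ absorbing the loss between the scalar and the vector versions of the spectral inequality. A secondary technical point is checking that the $p$-energy minimization actually produces a $p$-harmonic equivariant representative when $B$ is not uniformly convex; but since at each vertex only finitely many values of $f$ are involved and these lie in a subspace bi-Lipschitz to $L^p$ (hence uniformly convex for $p \in (1,\infty)$), standard minimization arguments apply. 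Once the local inequality is secured, the remainder is a formal adaptation of the Hilbert-space Garland method.
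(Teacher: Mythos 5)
Your overall strategy is the one the paper follows (and the one in Bourdon's paper that it cites): minimise a $p$-energy over equivariant maps, use the at-most-$m$ neighbours of a vertex to place the relevant values in an $(m+1)$-dimensional subspace, transport the scalar Poincar\'e inequality for $\lambda_{1,p}(L(x))$ coordinate-wise through an $L$-bi-Lipschitz identification with an $\ell^p$-space at a cost of $L^{2p}=2-2\varepsilon$, and close the Garland-type double-counting argument at the threshold $\lambda_{1,p}/L^{2p}>1/2$. All of that matches the paper's proof.

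There is, however, a genuine gap at the step you dismiss as a ``secondary technical point'': the existence of a $p$-harmonic equivariant map. Your justification --- that at each vertex only finitely many values of $f$ are involved and these lie in a uniformly convex finite-dimensional subspace --- does not produce a minimiser. The energy is a function of the values of $f$ on a fundamental set of vertices, and those values range over the full infinite-dimensional Banach space $V$; there is no compactness or coercivity forcing the infimum to be attained, and the finite-dimensional subspaces you invoke vary with the competitor $f$. Moreover, your claimed equivalence ``fixed point $\iff$ minimiser is constant'' breaks down precisely in the problematic case where $\inf E=0$ but the infimum is not attained. The paper resolves this by invoking Bourdon's Proposition 3.1: if either no harmonic map exists or $\inf_\psi E(\psi)>0$, one replaces $V$ by a rescaled ultralimit of itself, in which a harmonic map $\varphi$ with $E(\varphi)>0$ does exist; the hypothesis of $L$-bi-Lipschitz $L^p$ geometry above dimension $m+1$ is stable under rescaled ultralimits, so the local inequality still applies and yields $E(\varphi)\leq \frac{L^{2p}}{2\lambda_{1,p}}E(\varphi)<E(\varphi)$, a contradiction. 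Without this ultralimit (or an equivalent device handling non-attainment of the infimum), your argument does not go through.
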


Bounding $\lambda_{1,p}(L)$ away from zero corresponds to showing that 
$L$ is an expander, but in changing $p$ we can lose a lot of control, see Proposition~\ref{prop:bipartite-est}.  So to show that $\lambda_{1,p}$ is as close to $1$ as we wish, we have to prove new results for random graphs.

Given $m \in \N$ and $\rho \in [0,1]$, let 
$\bG(m,\rho)$ be the model of simple random graphs on $m$ vertices, where each pair of vertices
is connected by an edge with probability~$\rho$.
 
\begin{theorem}\label{thm:G-m-rho-eval-bound}
Given a function $\chi : \N \to (0,\infty )$ with $\lim_{m\to \infty } \chi (m) =0$, for every $\xi >0$ and every $p\geq 2$ there exists positive constants $\kappa = \kappa (\xi )$, $C = C (\xi )$ and $C'= C'(\xi, \chi )$, such that the following holds.

For every $m \in \N$ and every $\rho$ satisfying 
$$\frac{\kappa \log m}{m}\leq \rho \leq \frac{\chi (m)m^{1/3}}{m}$$ 
 we have that with probability at least $1-\frac{C'}{m^\xi}$ a graph $G \in \bG(m, \rho)$ satisfies  
	\[
		\forall p' \in [2,p], \quad \lambda_{1,p'}(G) \geq 1-
			\frac{C p^4 }{(\rho m)^{1/2p^2}}
			- \frac{ C\sqrt{\log m}}{(\rho m)^{1/2}}\,\bI_{p'<3}, 
	\]
	where $\bI_{p'<3}=1$ if $p'<3$ and $\bI_{p'<3}=0$ otherwise.
\end{theorem}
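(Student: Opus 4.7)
The plan is to adapt the Kahn-Szemer\'edi approach for random-graph spectra into the non-linear setting of the $p$-Laplacian. Recall that in the Hilbert case, one studies the adjacency matrix of $G\in \bG(m,\rho)$ by reducing a bound on the second eigenvalue to the control of bilinear forms $\langle(A-\bE A) f, g\rangle$ on unit vectors $f,g$; the sum over pairs $(u,v)$ is split into a ``light'' part (where $|f(u)g(v)|$ is small) treated with Bernstein-type concentration, and a ``heavy'' part treated with a discrepancy estimate on the number of edges between vertex subsets. A union bound over an $\eps$-net on the sphere then makes all estimates uniform.

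First, using the variational characterization of $\lambda_{1,p'}$, the bound $\lambda_{1,p'}(G)\geq 1-\eps_{p'}$ reduces to establishing
\[
\sum_{uv \in E(G)} |f(u)-f(v)|^{p'} \;\geq\; (1-\eps_{p'})\sum_{v\in V(G)}\deg_G(v)\, |f(v)|^{p'}
\]
for every $f:V(G)\to \R$ orthogonal in the appropriate $p'$-sense to the constants. Computing under $G\in \bG(m,\rho)$, both sides have expectations that match up to a factor $1 - O((\rho m)^{-1})$, so the task is to prove uniform concentration of the random sums around their means. This is where one departs non-trivially from the Hilbert argument, since the relevant functional is homogeneous of degree $p'$ rather than bilinear, and so the standard manoeuvre of writing a single quadratic form fails.

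Next I would discretise and decompose. Approximate $f$ by a function $\bar f$ chosen from an $\eps$-net of the $\ell^{p'}$-unit sphere in $\R^m$, of cardinality at most $e^{Cm\log(1/\eps)}$. Introduce a threshold $T = T(p',\rho,m)$ and partition the pairs $(u,v)$ into the light set $\{|\bar f(u)-\bar f(v)|\leq T\}$ and the heavy set. On the light part each edge contribution is bounded by $T^{p'}$, so a Bernstein/Bennett inequality controls the deviation of the independent sum from its mean by an exponentially small probability that beats the net cardinality. On the heavy part, use the standard $\bG(m,\rho)$ discrepancy estimate: with probability $1-C'm^{-\xi}$, for every pair of subsets $A,B\subseteq V(G)$ the edge count $e(A,B)$ is within $O(\sqrt{\rho|A||B|\log m})$ of its expectation; applying this to a dyadic partition of the heavy pairs by the size of $|\bar f(u)-\bar f(v)|$ bounds their contribution.

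The main obstacle lies in calibrating $T$ so as to balance the light and heavy estimates as $p'$ grows. In the $p'=2$ case, Cauchy-Schwarz forces the choice $T\asymp \sqrt{\rho m/\log m}$ and produces the classical error $O(\sqrt{\log m/(\rho m)})$. For general $p'$, Cauchy-Schwarz must be replaced by H\"older inequalities which incur a loss depending on $p'$; optimising the resulting expression over $T$ yields the exponent $-1/(2p^2)$ and the polynomial prefactor $p^4$, which also absorbs the combinatorial costs of summing over dyadic levels and of switching between $\ell^{p'}$ and $\ell^2$ norms. For $p'<3$ this H\"older-based heavy estimate is too crude to subsume the Bernstein error from the light step, and one must retain the classical $\sqrt{\log m/(\rho m)}$ contribution separately; this is the origin of the indicator $\bI_{p'<3}$. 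Finally, uniformity in $p'\in [2,p]$ is obtained by taking a polynomial-size mesh of exponents and using continuity of $\lambda_{1,p'}(G)$ in $p'$ on a fixed graph, absorbing the extra union-bound cost into the constants $C, C'$.
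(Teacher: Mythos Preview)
Your high-level strategy---net approximation, light/heavy split, concentration for light, discrepancy for heavy---matches the paper's Kahn--Szemer\'edi adaptation. However, two points deserve correction.

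First, a structural gap: you propose to work directly in $\bG(m,\rho)$ with a net on the $\ell^{p'}$-unit sphere, but the constraint set $S_{p',\bd}(G_0)$ in the variational formula depends on the \emph{random} degree sequence $\bd$ through both the weighted norm and the orthogonality condition $\sum_u \{x_u\}^{p'-1}d_u=0$. The paper handles this by first conditioning on $\bd$ (Lemma~3.2 gives $d_{\max}/d_{\min}\leq \theta$ with high probability) and then working in the fixed-degree model $\bG(m,\bd)$, where the net $T_{p,\bd,R}$ is deterministic. Without this step, your union bound over a fixed net does not cover all relevant test functions. The paper also splits light/heavy by the quantity $\ovP_p(x_u,x_v)=|x_u|^{p-1}|x_v|\vee |x_u||x_v|^{p-1}$ rather than by $|f(u)-f(v)|$, and uses the configuration model with Azuma's martingale inequality rather than Bernstein; these are workable alternatives but affect the bookkeeping.

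Second, your explanation of the indicator $\bI_{p'<3}$ is incorrect. It does not arise from the heavy-part H\"older bound being too crude to absorb a Bernstein error. In the paper, for $p\geq 3$ one has the pointwise inequality $\Re_p(a,b)\leq p\,\tilR_p(a,b)$ (Proposition~2.3), which lets one bound the light contribution via $\tilX_x^l$, whose expectation over the configuration model vanishes exactly (using $\sum_i\{x_i\}^{p-1}d_i=0$). For $p\in[2,3)$ this inequality fails, so one must bound $\bE X_x^l$ directly via $\bE Z_x$ and the complete-graph eigenvalue $\lambda_{1,p}(K_m)$, producing an extra term $R(1-\theta^{-2})$ where $\theta=d_{\max}/d_{\min}$. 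It is only when this $1-\theta^{-2}$ is translated back to $\bG(m,\rho)$ via degree concentration that it becomes $O(\sqrt{\log m/(\rho m)})$. So the $\bI_{p'<3}$ term is a degree-fluctuation artefact entering through the light expectation, not a light/heavy balance issue.
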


The methods of Friedman are not available in this non-linear situation, but Kahn--Szemer\'edi's approach does adapt, as we discuss further in Section~\ref{sec:rand-graphs}.

\subsection{Recent result for a larger class of Banach spaces}

About a year after this paper has been finished, Tim de Laat and Mikael de la Salle proved in \cite{LaatSalle} that, given a uniformly curved Banach space $X$, for any density $d>\frac{1}{3}$, a.a.s. a random group in the triangular density model ${\mathcal{M}}(m,d)$ has the fixed point property $F_X$ (i.e. every action of such a group by affine isometries on $X$ has a global fixed point). Uniformly curved Banach spaces were introduced by G.~Pisier in \cite{Pisier:MemAMS}, examples of such spaces are $L^p$--spaces, interpolation spaces between a Hilbert and a Banach space, their subspaces and equivalent renormings. A uniformly curved space $X$ has the following key property. Given a finite graph $G$ with set of vertices $G_0$ and set of edges $G_1$, $G_0$ equipped with the stationary probability measure $\nu$ for the random walk on $G$, defined by $\nu (x)= \frac{\val(x)}{\sum_{y\in G_0} \val(y)}$ (where $\val (x)$ denotes the valency of the vertex $x$), the norm of the Markov operator $A_G$ on $L^2_0(G_0,\nu;X)$ is small provided that the norm of the Markov operator $A_G$ on $L^2_0(G_0,\nu)$ is small. (Here by $L^2_0$ we mean square integrable functions with expectation zero.)  

The outline of the proof of the de Laat-de la Salle theorem is as follows. They use, like {\.Z}uk \cite{Zuk:propertyT} and Kotowski--Kotowski~\cite{KK-11-zuk-revisited}, the permutation model for groups and, correspondingly, the configuration model for random graphs, and a theorem of Friedman stating that for a random graph $G$ in the latter model a.a.s. the norm of the Markov operator $A_G$ on $L^2_0(G_0,\nu)$ is small, provided that the number of permutations taken is large enough.

It follows that, for a random group $\Gamma$ in the permutation model (with a large enough number of permutations), given the simplicial complex $\Delta_\Gamma$ of the corresponding triangular presentation of $\Gamma$, a.a.s. for every vertex link $L$ in  $\Delta_\Gamma$, the norm of the Markov operator $A_L$ on $L^2_0(L_0,\nu;X)$ is small, uniformly in $L$. The space $X$ being uniformly curved, it is also superreflexive, hence by a result of Pisier \cite{Pisier:1975} it admits an equivalent norm that is $p$--uniformly convex, for some $p\in [2, \infty )$, and preserved by the isometries of the initial norm on the space $X$. Thus the statement is reduced to the case when $X$ is $p$--uniformly convex, and $\Gamma$ is a group that acts properly discontinuously cocompactly on a simplicial complex  $\Delta_\Gamma$ with the property that for all the vertex links $L$ the Markov operators $A_L$ on $L^2_0(L_0,\nu;X)$ have uniformly small norms. An adaptation of an argument of Oppenheim \cite{Oppenheim(2014)} can then be applied to conclude that the random group $\Gamma$ must have property~$F_X$.

In the particular case of $L^p$--spaces the theorem of de Laat-de la Salle gives that for every $p_0\geq 2$, for any density $d\in \left( \frac{1}{3},\frac{1}{2} \right)$, a.a.s. a group $\Gamma$ in the triangular model ${\mathcal{M}}(m,d)$ satisfies all the fixed point properties $FL^p$ with $p\in (0, p_0]$. We believe that if, instead of using the permutation model for groups, respectively, the configuration model for random graphs, and Friedman's Theorem, de Laat-de la Salle would use the binomial triangular model for groups, the Erd\"os-Renyi model for random graphs and the estimate in Theorem 1.17 for $p=2$, then they would obtain a version of Corollary 1.7 with a slightly larger interval, that is with $p\in [2, C (\log m)^{\frac{1}{2}}]$. Therefore, in Theorem 1.11 one could suppress the $\log \log m$ from the denominator in the lower bound, and in Remark 1.14 the exponent in the first term in (1.15) would become $\frac{1}{2}$ instead of  $\frac{1}{2}-\epsilon$. 

We think nevertheless that the proof provided in this paper has its own intrinsic value, firstly because it relies on elementary mathematics only, it is self contained and independent of Pisier's results, and secondly because we find it intriguing that by two different approaches approximately the same lower bound estimates are obtained. This may suggest that the first inequalities in Theorem 1.11 and Remark 1.14 may in fact be asymptotic equalities.       

\subsection{Plan of the paper}

Section \ref{sec:evals-intro} is an introduction to the $p$--Laplacian, with several interpretations and estimates of its first non-zero eigenvalue.

In Sections \ref{sec:rand-graphs} to \ref{sec:heavy-bound}, Theorem \ref{thm:G-m-rho-eval-bound} is proven, by reducing the problem to a small enough upper bound to be obtained for a finite number of sums varying with the set of vertices, then by splitting each sum into light and heavy terms, and estimating separately the two sums of light, respectively heavy terms.

Section \ref{sec:fixed-pt-prop} links values of $\lambda_{1,p}$ to the properties $FL^p_{m,L}$.  This is then used in Section \ref{sec:fp-rand-tri-group} to show the results on random groups in the triangular model, deduced from Theorem \ref{thm:G-m-rho-eval-bound}.

We describe how to use monotonicity to switch between models, and the application to conformal dimension in Section~\ref{sec:mono-confdim}.

In Sections \ref{sec:multi-partite} and \ref{sec:Gromov}, the same strategy is applied to prove a similar result of generic $p$-expansion for multi-partite graphs, and the latter is then applied to prove Theorem \ref{thm:Igromov-flp}.   

\subsection{Notation}
We use the standard asymptotic notation, which we now recall. When $f$ and $g$ are both
real-valued functions of one real variable, we write
 $f= O(g)$ to mean that there exists a constant $L>0$ such that $f(x) \leq L g(x)$ 
 for every $x$; in particular $f= O(1)$ means that $f$ is uniformly bounded, 
 and $f= g +O(1)$ means that $f-g$ is uniformly bounded. 
 The notation $f= o(g)$ means that $\lim_{x\to \infty } \frac{f(x)}{g(x)}=0\, $.

\subsection{Acknowledgements}
We thank Damian Orlef for pointing out an issue with the $p\in(2,3)$ case in an earlier version of the paper.
We also gratefully thank the referee for helpful suggestions.

\section{Eigenvalues of $p$-Laplacians}\label{sec:evals-intro}

In what follows $G$ is a graph, possibly with loops and multiple edges (a multigraph). When the graph has no loops or multiple edges and the edges are considered without orientation we call it {\emph{simple}}.
Let $G_0$ be its set of
vertices and $G_1$ its set of edges. Given two vertices $u,v$, we write $u\sim v$ if there exists (at least) one edge with endpoints $u,v$ and we say that $u,v$ are {\emph{neighbours}}.

Fix an arbitrary orientation on the edges of $G$, so that each edge $e \in G_1$
has an initial endpoint $e_-$ and a target endpoint $e_+ $ in $G_0$.
Given a function $x:G_0 \ra \R$,
the {\emph{total derivative}} of $x$ is defined as $dx: G_1 \ra \R$, $dx(e) = x(e_+)-x(e_-)$.
For $e \in G_1$, we write the unordered set (with multiplicities) of endpoints of 
$e$ as $\vtx(e) = \{e_-,e_+\}$.  Observe that $|dx(e)|$, or indeed any symmetric function
of $e_-$ and $e_+$, is independent of the choice of orientation of $e \in G_1$.

Fix $p \in (1, \infty)$.
Given $x \in \R$, we define
$\{x\}^{p-1} = \mathrm{sign} (x) |x|^{p-1}$ when $x\neq 0$, and we set $\{0\}^{p-1} = 0$.
The \emph{graph $p$-Laplacian} on $G$ (see \cite{Amg-03-p-laplacian,BuhlerHein}) is an operator from $\R^{G_0}$
to $\R^{G_0}$ defined by  
\[
	(\Delta_p x)(u) = \frac{1}{\val(u)} \sum_{e \in G_1,\ \vtx(e) = \{u,v \}} \{x_u - x_v\}^{p-1} \mbox{ for every }u \in G_0,
\]
where $\val(u)$ is the valency of $u$. 
The operator $\Delta_p$ is linear only when $p=2$. 
Still, by abuse of language, one can define eigenvalues and eigenfunctions which 
serve the purpose in the $L^p$-setting as well.

\begin{definition}\label{def:p-eigen}
	We say $\lambda \in \R$ is an \emph{eigenvalue} of $\Delta_p$ for $G$ if there 
	exists a non-zero function
	$x \in \R^{G_0}$ so that $\Delta_p x = \lambda \{x\}^{p-1}$. 
	We call such a function $x$ an \emph{eigenfunction} of $\Delta_p$.
	
	We denote by $\lambda_{1,p}(G)$ the smallest eigenvalue of $\Delta_p$ which corresponds to a non-constant eigenfunction.
\end{definition}

The standard (normalised) graph Laplacian $\Delta=\Delta_2$ can equivalently be defined using 
a weighted inner product on $\R^{G_0}$.
Consider the degree sequence $\bd = (d_u) \in \N^{G_0}, d_u=\val(u)$,
and define 
$\left\langle x, y \right\rangle_\bd = \sum_{u\in G_0} x_uy_u d_u$.
Then for $x \in \R^{G_0}$, $\Delta$ is the linear operator such that
$$
\left\langle x, \Delta x \right\rangle_\bd =\|dx\|_2^2\, ,
$$ where 
the norm on the right hand side is on $\R^{G_1}$.
When the right hand side becomes $\|dx\|_p^p = \sum_{e \in G_1} |dx(e)|^p$,
the same equality defines $\Delta_p$~\cite[Section 3]{BuhlerHein}; consequently all eigenvalues are $\geq 0$. Note that in \cite[Section 3]{BuhlerHein} the equality definining $\Delta_p$ is $\langle x, \Delta_p x \rangle_\bd = \dfrac{1}{2}\| dx\|^p_p$. The reason is that in that paper, what stands for $\| dx\|^p_p$, also denoted by $Q_p (f)$, is a sum where each term $| x(e_+) - x(e_-)|^p$ appears twice (in other words, no orientation is chosen on the edges).  

The value of $\lambda_{1,p}$ for a multigraph $G$ may be calculated as follows.
The {\emph{Poincar\'e $p$-constant}} $\pi_p$ is defined as in the classical case 
to be the minimal constant $\pi $ such that for every function $x\in \R^{G_0}$,
$$
\inf_{c \in \R } \sum_{u\in G_0 } |x_u - c |^p \val (u) \leq \pi \| dx \|_p^p\, .
$$
We will use the following Rayleigh Quotient characterisation of 
$\lambda_{1,p}(G)$~\cite[Theorem~1]{Amg-03-p-laplacian}, see also
\cite[Theorem~3.2]{BuhlerHein} and \cite[Proposition~1.2]{Bourdon:FLp}.
Note that the constant functions are eigenfunctions with eigenvalues $0$, and that for every $p>1$, the minimal eigenvalue for non-constant functions, $\lambda_{1,p}(G)$, is $0$ if and only if $G$ is disconnected; in this case we interpret $\pi_p$ as $\infty$.
\begin{proposition}\label{prop:lp-bourdon}
	Fix $p \in (1, \infty)$ and a multigraph $G$.  Then
	\begin{align}\label{eq:lp-1}
		\lambda_{1,p}(G) = \frac{1}{\pi_p} 
		& = \inf \left\{ \frac{ \| dx \|_p^p }
			{ \inf_{c \in \R} \sum_{u \in G_0} |x_u-c|^p \val(u)} :
			\ x \in \R^{G_0}\, \mbox{non-constant}  \right\}
	\\
	\label{eq:lp-11}
		& = \inf \left\{ \frac{ \| dx \|_p^p }
			{ \| x \|_{p,\bd}^p} :
			\ x \in \R^{G_0} \setminus \{0\}, 
			\sum_{u\in G_0} \{x_u\}^{p-1} \val(u) = 0 \right\}
	\\
	\label{eq:lp-12}
	& =  \inf \left\{  \| dx\|_p^p  :
			\ x \in S_{p,\bd}(G_0) \right\},
	\end{align}
	where in the above $\| x \|_{p,\bd}^p$ stands for $\sum_{u \in G_0} |x_u|^p \val(u)$, for the degree sequence $\bd = (d_u) \in \N^{G_0}, d_u=\val(u)$, and
	\begin{equation}\label{eq:spd}
		S_{p,\bd}(G_0) = \left\{ x \in \R^{G_0} :
			\sum_{u \in G_0} \{x_u\}^{p-1} d_u = 0, 
			\ \| x \|_{p,\bd}^p = 1 \right\}.	
	\end{equation}
\end{proposition}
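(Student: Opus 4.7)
The proof naturally divides into elementary manipulations among the three Rayleigh-quotient expressions on the right-hand side, and the substantial identification of their common value with the spectral quantity $\lambda_{1,p}(G)$ of Definition~\ref{def:p-eigen}.

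For the elementary part, the identity $\lambda_{1,p}(G) = 1/\pi_p$ with the first infimum is immediate from the definition of $\pi_p$ as the supremum, over non-constant $x$, of $\inf_c \sum_u |x_u - c|^p\val(u)/\|dx\|_p^p$. Passing from the first infimum to the second amounts to minimising the map $c \mapsto \sum_u |x_u-c|^p\val(u)$ for a fixed $x$: by strict convexity of this map (since $p > 1$) the first-order condition $\sum_u \{x_u - c^*\}^{p-1}\val(u) = 0$ has a unique solution $c^*$, and the substitution $y = x - c^*$ leaves $\|dx\|_p = \|dy\|_p$ invariant while converting the optimal denominator in the first infimum into $\|y\|_{p,\bd}^p$, the centring constraint of the second infimum now holding automatically. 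The equivalence of the second and third infima is then just $p$-homogeneity of both objective and constraint, which permits normalising $\|x\|_{p,\bd} = 1$.

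For the identification with $\lambda_{1,p}(G)$, the central ingredient is the summation-by-parts identity
\[
\langle x, \Delta_p x\rangle_{\bd} = \|dx\|_p^p,
\]
obtained edgewise from the algebraic identity $a\{a-b\}^{p-1} + b\{b-a\}^{p-1} = (a-b)\{a-b\}^{p-1} = |a-b|^p$, together with the complementary identity $\sum_u \val(u)(\Delta_p x)(u) = 0$, obtained by testing against the constant function~$1$ (the edgewise contributions $\{a-b\}^{p-1} + \{b-a\}^{p-1}$ cancel in pairs). If $x$ is a non-constant eigenfunction with $\Delta_p x = \lambda\{x\}^{p-1}$, then pairing with $1$ yields the centring constraint $\sum_u \{x_u\}^{p-1}\val(u) = 0$, while pairing with $x$ yields $\lambda = \|dx\|_p^p/\|x\|_{p,\bd}^p$; after normalising, this shows $\lambda_{1,p}(G) \geq \inf\{\|dx\|_p^p : x \in S_{p,\bd}(G_0)\}$.

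For the reverse inequality, continuity of $x \mapsto \|dx\|_p^p$ and compactness of $S_{p,\bd}(G_0) \subset \R^{G_0}$ yield a minimiser $x^*$, and a Lagrange multiplier argument against the two constraints (unit $\|\cdot\|_{p,\bd}$-norm and centring) recovers $\Delta_p x^* = \lambda\{x^*\}^{p-1}$ with $\lambda = \|dx^*\|_p^p$. The multiplier associated to the centring constraint is forced to vanish by testing the Euler--Lagrange equation against the constant function~$1$ and invoking the two summation-by-parts identities above. The main technical obstacle is the case $p \in (1,2)$, in which $|x^*_u|^{p-2}$ is singular at vanishing coordinates and the Lagrangian framework requires care; the standard remedy is to regularise by replacing $|x_u|^p$ with $(|x_u|^2 + \varepsilon)^{p/2}$, apply Lagrange multipliers to the smooth regularised problem, and let $\varepsilon \downarrow 0$.
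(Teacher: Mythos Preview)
The paper does not prove this proposition; it quotes the result from Amghibech~\cite[Theorem~1]{Amg-03-p-laplacian}, with auxiliary references to \cite[Theorem~3.2]{BuhlerHein} and \cite[Proposition~1.2]{Bourdon:FLp}. Your proposal therefore goes beyond what the paper does by supplying a self-contained argument.

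Your argument is correct in outline and follows the standard route in the cited sources: reduce the three Rayleigh-quotient expressions to one another by centring and scaling, then identify the common infimum with $\lambda_{1,p}$ by showing (i) every non-constant eigenfunction is centred and realises its eigenvalue as a Rayleigh quotient, and (ii) a minimiser of the constrained problem satisfies the eigenvalue equation via Lagrange multipliers. Two small points deserve a word. First, the Lagrange multiplier step with two constraints tacitly assumes constraint qualification (linear independence of $\nabla\|x\|_{p,\bd}^p$ and $\nabla\sum_u\{x_u\}^{p-1}d_u$ at the minimiser); this can fail at special configurations, and the cleaner route---also the one in Amghibech---is to differentiate the single Rayleigh quotient \eqref{eq:lp-1} directly, where only scale invariance is used and no second constraint appears. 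Second, your regularisation remark for $p\in(1,2)$ is apt: the centring functional $\sum_u\{x_u\}^{p-1}d_u$ is not $C^1$ where a coordinate vanishes, so either regularise as you suggest or again bypass the issue by working with \eqref{eq:lp-1}. Neither point is a genuine gap; your sketch captures the content of the cited proofs.
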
 

\subsection{Varying $p$}
Later we need the following estimate on how $\lambda_{1,p}(G)$ varies as a function of $p$.
\begin{lemma}\label{lem:eval-right-lower-semicts}
	For a graph $G$, $\lambda_{1,p}(G)$ is a right lower semi-continuous function of $p$.
	To be precise, for $p \geq p' \geq 2$,
	\[
		\lambda_{1,p}(G) \geq E^{1-p/p'} \lambda_{1,p'}(G)^{p/p'},
	\]
	where $E$ is the number of edges in $G$.
\end{lemma}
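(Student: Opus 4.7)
The plan is to establish the claimed inequality pointwise in the Rayleigh quotient. Concretely, for every non-constant $y\in\R^{G_0}$ I will show
\[
\frac{\|dy\|_p^p}{\inf_{c\in\R}\sum_{u\in G_0}|y_u-c|^p\val(u)}
\;\geq\; E^{1-p/p'}\,\lambda_{1,p'}(G)^{p/p'},
\]
and then take the infimum over $y$ using \eqref{eq:lp-1}. For brevity write $M_q(y):=\inf_{c\in\R}\sum_u|y_u-c|^q\val(u)$.

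I would chain three ingredients. \emph{(i)}~Apply H\"older on the $E$-element edge set, with conjugate exponents $p/p'$ and $p/(p-p')$, to obtain $\|dy\|_{p'}^{p'}\leq E^{1-p'/p}\|dy\|_p^{p'}$; raising to the $p/p'$-power this rearranges to
$\|dy\|_p^p\geq E^{1-p/p'}\bigl(\|dy\|_{p'}^{p'}\bigr)^{p/p'}.$
\emph{(ii)}~The Rayleigh characterisation \eqref{eq:lp-1} at exponent $p'$ applied to $y$ gives $\|dy\|_{p'}^{p'}\geq \lambda_{1,p'}(G)\,M_{p'}(y)$. \emph{(iii)}~The crux is the vertex-side comparison
\[
M_p(y)\;\leq\; M_{p'}(y)^{p/p'}.
\]
Concatenating (i)--(iii) produces $\|dy\|_p^p\geq E^{1-p/p'}\lambda_{1,p'}(G)^{p/p'}\,M_p(y)$, which is the pointwise bound I want.

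The only nontrivial step is (iii). To prove it, let $c_\star$ realise $M_{p'}(y)$ and set $z_u:=y_u-c_\star$, so $\sum_u|z_u|^{p'}\val(u)=M_{p'}(y)$, while $M_p(y)\leq\sum_u|z_u|^p\val(u)$ because $M_p(y)$ is itself an infimum over constants. It therefore suffices to show
\[
\sum_u|z_u|^p\val(u)\;\leq\;\Bigl(\sum_u|z_u|^{p'}\val(u)\Bigr)^{p/p'}.
\]
The key observation is that every vertex $u$ with $\val(u)\geq 1$ (the only ones contributing nontrivially to either sum) satisfies
$|z_u|^{p'}\leq \val(u)|z_u|^{p'}\leq \sum_v|z_v|^{p'}\val(v),$
which yields the uniform bound $\|z\|_\infty\leq\bigl(\sum_v|z_v|^{p'}\val(v)\bigr)^{1/p'}$. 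Factoring $|z_u|^p=|z_u|^{p-p'}\,|z_u|^{p'}$ and bounding the first factor by $\|z\|_\infty^{p-p'}$ then gives the comparison.

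Steps (i) and (ii) are essentially routine H\"older/Rayleigh manipulations; the whole content of the proof lies in the automatic $\ell^\infty$-control obtained in step (iii), which is exactly what the assumption $\val(u)\geq 1$ buys in the discrete setting and is the only ingredient specific to this problem.
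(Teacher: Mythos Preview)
Your proof is correct and follows essentially the same approach as the paper: both use H\"older on the edge set to compare $\|dy\|_p$ with $\|dy\|_{p'}$, and both rely on the key discrete observation that $\val(u)\geq 1$ forces $|z_u|\leq \|z\|_{p',\bd}$, which yields $\sum_u |z_u|^p \val(u) \leq (\sum_u |z_u|^{p'}\val(u))^{p/p'}$. The only difference is organizational---the paper fixes the $\lambda_{1,p}$-minimizer, shifts to the $p'$-centered constant and rescales to $\|\cdot\|_{p',\bd}=1$ before comparing, whereas you phrase the same estimate as a pointwise Rayleigh-quotient inequality valid for every non-constant $y$ and take the infimum at the end.
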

\begin{proof}
	Let $x \in \R^{G_0}$ be a non-constant function which attains $\lambda_{1,p}(G)$ in \eqref{eq:lp-1}, i.e.,
	\[
		\lambda_{1,p}(G) = \frac{ \| dx \|_p^p }
			{ \inf_{c \in \R} \sum_{u \in G_0} |x_u-c|^p \val(u)}.
	\]
	Now, let $x' = x+c$ where $c$ is a constant chosen so that 
	$\sum_{u \in G_0} \{x'_u\}^{p'-1} \val(u) = 0$, i.e., $c$ is the unique minimiser
	of the convex function $c \mapsto \sum_{u \in G_0} |x_u - c|^{p'} \val(u)$.
	
	Let $y$ be a scaled copy of $x'$ so that 
	\[
		1 = \sum_{u \in G_0} |y_u|^{p'} \val(u) 
		= \inf_{c \in \R} \sum_{u \in G_0} |y_u - c|^{p'} \val(u),
	\]
	where the last equality follows from $\sum_{u\in G_0} \{y_u\}^{p'-1} \val(u)=0$.
	In particular, for each $u \in G_0$, $|y_u| \leq 1$ and thus $|y_u|^p \leq |y_u|^{p'}$.
	So
	\begin{align*}
		\lambda_{1,p}(G) 
		& =  \frac{ \| dx \|_p^p }{ \inf_{c \in \R} \sum_{u \in G_0} |x_u-c|^p \val(u)} \\
		& \geq \frac{ \| dx' \|_p^p }{ \sum_{u \in G_0} |x'_u|^p \val(u)} 
		 = \frac{ \| dy \|_p^p }{ \sum_{u \in G_0} |y_u|^p \val(u)} \\
		& \geq \frac{ \| dy \|_p^p }{ \sum_{u \in G_0} |y_u|^{p'} \val(u)} 
		 = \| dy \|_p^p.
	\end{align*}
	Now H\"older's inequality gives
	\[
		\| dy \|_{p'}^{p'} 
		= \sum_{e \in G_1} |dy(e)|^{p'}
		\leq \left(\sum_{e \in G_1} |dy(e)|^p \right)^{p'/p} E^{1-p'/p}
		= \| dy\|_{p}^{p'} E^{1-p'/p},
	\]
	so
	\begin{align*}
		\lambda_{1,p}(G) 
		& \geq \| dy \|_{p'}^p E^{1-p/p'}
		= E^{1-p/p'} \left(\frac{ \|dy\|_{p'}^{p'}}{1}\right)^{p/p'}
		\\ & = E^{1-p/p'} \left(\frac{ \|dy\|_{p'}^{p'}}
			{\inf_{c \in \R} \sum_{u \in G_0} |y_u - c|^{p'} \val(u)}
			\right)^{p/p'}
		\\ & \geq E^{1-p/p'} \cdot \lambda_{1,p'}(G)^{p/p'} \qedhere
	\end{align*}
\end{proof}

\section{Bounding $\lambda_{1,p}$ for random graphs}\label{sec:rand-graphs}

Given $m \in \N$ and $\rho \in [0,1]$, recall that a random graph in the model $\bG(m,\rho)$ is a simple graph on $m$ vertices, with each pair of vertices connected by an edge with probability $\rho$.

Our goal, from now until the end of Section~\ref{sec:heavy-bound}, is to show the following bound on $\lambda_{1,p}$ for a random graph in this model.
\begin{varthm}[Theorem \ref{thm:G-m-rho-eval-bound}.]
Given a function $\chi : \N \to (0,\infty )$ with $\lim_{m\to \infty } \chi (m) =0$, for every $\xi >0$ and every $p\geq 2$ there exists positive constants $\kappa = \kappa (\xi )$, $C = C (\xi)$ and $C'= C'(\xi, \chi )$, such that the following holds.

For every $m \in \N$ and every $\rho$ satisfying 
$$\frac{\kappa \log m}{m}\leq \rho \leq \frac{\chi (m)m^{1/3}}{m}$$ 
 we have that with probability at least $1-\frac{C'}{m^\xi}$ a graph $G \in \bG(m, \rho)$ satisfies  
	\[
		\forall p' \in [2,p], \quad \lambda_{1,p'}(G) \geq 1-
			\frac{C p^4 }{(\rho m)^{1/2p^2}}
			- \frac{ C\sqrt{\log m}}{(\rho m)^{1/2}}\,\bI_{p'<3}, 
	\]
	where $\bI_{p'<3}=1$ if $p'<3$ and $\bI_{p'<3}=0$ otherwise.
\end{varthm}

In fact, we prove lower bounds on $\lambda_{1,p}(G)$ when $G$ is chosen
from a more restrictive random graph model, $\bG(m,\bd)$.
For convenience, we let $G \in \bG(m,\bd)$ have vertex set $G_0 = \{1,2,\ldots,m\}$. Let $\bd = (d_i) \in \N^m$ denote a sequence of vertex degrees, where we assume that
$\sum d_i$ is even (a necessary condition). The random graph model $\bG(m,\bd)$ is defined by letting $G \in \bG(m,\bd)$ be chosen
uniformly at random from all simple graphs with this degree sequence.
For example, in the case that $d_i=d$ for all $i$, this is the model of random $d$-regular graphs.

\begin{theorem}\label{thm:G-m-deg-eval-bound}
Consider a constant $\theta \geq 1$ and a function $\chi : \N \to (0,\infty )$ satisfying $\lim_{m\to \infty } \chi (m) =0$.

Then for every $\xi>0$ there exists $C$ and $C'$ depending on $\theta , \xi$, with $C'$ moreover depending on the function $\chi$, so that for every $m \in \N$ and $p \geq 2$, and every degree sequence $\bd \in \N^m$ with $\sum_i d_i$ even and $\min_i d_i \geq 3$, with moreover $d = \max_i d_i \leq \theta \min_i d_i$, and ${d} \leq \chi (m)m^{1/3}$,
	\begin{multline}\label{eq:lambdad}
\bP\Big( G \in \bG(m,\bd) \text{ has } \forall p' \in [2,p], \\ \lambda_{1,p'}(G) \geq 1
- \frac{Cp^4}{d^{1/2p^2}} -8\bI_{p'<3}(1-\theta^{-2})\Big) \geq 1 - \frac{C'}{m^\xi}.	
	\end{multline}
\end{theorem}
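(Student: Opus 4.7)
The plan is to adapt the Kahn--Szemer\'edi method, originally used to bound the second adjacency eigenvalue of random regular graphs, to the non-linear $p$-Laplacian. By Proposition \ref{prop:lp-bourdon}, the lower bound on $\lambda_{1,p'}(G)$ is equivalent to showing that $\|dx\|_{p'}^{p'} \geq 1 - \varepsilon$ simultaneously for every $x \in S_{p',\bd}(G_0)$ and every $p' \in [2,p]$. The strategy has three layers: (a) compute the expectation for a fixed $x$; (b) establish concentration around it; (c) promote pointwise control to uniform control over $x$ via a net, and then over $p'$ via a grid.

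For (a), in $\bG(m,\bd)$ the probability that a given pair $\{i,j\}$ is an edge is $p_{ij} \approx d_i d_j/\sum_k d_k$, so
$$\bE_G \|dx\|_{p'}^{p'} \;\approx\; \frac{1}{\sum_k d_k}\sum_{i < j} |x_i - x_j|^{p'} d_i d_j.$$
Under the normalisation conditions defining $S_{p',\bd}(G_0)$, expanding shows this expression equals $1$ when $p'=2$. For $p' \geq 3$ a convexity argument using the same normalisations gives a clean lower bound close to $1$; for $p' \in [2,3)$ and non-regular degrees, the bound degrades by a controlled multiple of $1-\theta^{-2}$, which is exactly the $\bI_{p'<3}$ contribution in the statement.

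For (b), I would perform a light/heavy split: for a threshold $T=T(d,p)$, decompose $\|dx\|_{p'}^{p'} = S_{\text{light}} + S_{\text{heavy}}$ according to whether $|dx(e)| \leq T$. The light sum is a sum of bounded, essentially independent contributions (via a pairing or switching model), so a Bernstein-type inequality yields exponentially small deviations from its expectation. The heavy sum is controlled by a discrepancy estimate: for each pair of vertex subsets $A, B$ of prescribed sizes, the number of edges $e(A,B)$ is close to its expectation, and a dyadic decomposition of the level sets of $|x|$ then controls the total heavy contribution. For (c), I would construct a finite $\varepsilon_0$-net $\cN \subset S_{p',\bd}$ with $|\cN| = e^{O(m \log(1/\varepsilon_0))}$, apply the previous estimate to each $y \in \cN$, and take a union bound; the net error is controlled using H\"older and non-linear stability estimates for $x \mapsto \|dx\|_{p'}^{p'}$. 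Optimising $\varepsilon_0$ produces the $p^4/d^{1/2p^2}$ main term, and uniformity over $p' \in [2,p]$ follows by discretising $p'$ on a finite grid and interpolating via Lemma \ref{lem:eval-right-lower-semicts}.

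I expect the main obstacle to be the heavy-edge discrepancy bound: in the linear case $p'=2$ one has the classical estimate $|e(A,B)-\bE e(A,B)| = O(\sqrt{d|A||B|})$, but here one needs a dyadic decomposition of the level sets of $|x|$ together with careful bookkeeping, where the number of dyadic levels scales with $\log d$ and the entropy cost of the net must absorb all losses uniformly in $p'$. The non-regularity of the degree sequence further complicates matters near $p'=2$, which is the source of the extra $\bI_{p'<3}$ term, and also forces the upper bound $d \leq \chi(m) m^{1/3}$ in order to keep the expectation approximation sufficiently accurate.
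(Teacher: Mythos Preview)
Your outline matches the paper's architecture: a Kahn--Szemer\'edi adaptation via net approximation, light/heavy split, martingale concentration for the light part, edge-discrepancy for the heavy part, and a grid in $p'$ combined with Lemma~\ref{lem:eval-right-lower-semicts}. Two technical choices, however, differ from the paper in ways that matter for execution.

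First, the paper does not work with $Z_x=\|dx\|_{p'}^{p'}$ directly but with its complement $X_x(G)=\|x\|_{p',\bd}^{p'}-Z_x(G)=\sum_{e}\Re_{p'}(x_{e_-},x_{e_+})$, where $\Re_{p'}(a,b)=|a|^{p'}+|b|^{p'}-|a-b|^{p'}$. The reason is that for $p'\geq 3$ one has the pointwise inequality $\Re_{p'}(a,b)\leq p'\,\tilR_{p'}(a,b)$ with $\tilR_{p'}(a,b)=\{a\}^{p'-1}b+a\{b\}^{p'-1}$, and the expectation of $\sum_e\tilR_{p'}$ is $\leq 0$ \emph{exactly}, thanks to the constraint $\sum_u\{x_u\}^{p'-1}d_u=0$. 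This is the mechanism behind what you call a ``convexity argument''; it is really an algebraic identity, and it fails for $p'\in[2,3)$. There the paper instead lower-bounds $\bE Z_x$ by comparing to Amghibech's exact value of $\lambda_{1,p'}(K_m)$, and the degree-irregularity in that comparison is the precise source of the $8\bI_{p'<3}(1-\theta^{-2})$ term.

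Second, the paper splits light/heavy not by $|dx(e)|$ but by $\ovP_{p'}(x_u,x_v)=|x_u|^{p'-1}|x_v|\vee|x_u||x_v|^{p'-1}$. This is essential for the heavy analysis: $\ovP_{p'}$ depends only on $|x_u|$ and $|x_v|$ separately, so heavy edges run between dyadic level sets $A_i=\{u:|x_u|^{p'-1}\asymp 2^{i}\eps/(dm)^{1/q}\}$, and one can invoke a two-clause edge-density control (either $\cE_{A,B}\leq C\mu(A,B)$ or $\cE_{A,B}\log(\cE_{A,B}/\mu)\leq C(|A|\vee|B|)\log(m/(|A|\vee|B|))$) rather than the uniform discrepancy $O(\sqrt{d|A||B|})$ you describe. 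A split by $|dx(e)|$ does not produce this block structure, and the five-case dyadic bookkeeping in the paper would not go through as written.

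One last point: the hypothesis $d\leq\chi(m)m^{1/3}$ is not there to control the expectation. It enters because the light-term concentration is proved in the configuration model $\bG^*(m,\bd)$ via an edge-exposure martingale and then transferred to $\bG(m,\bd)$; this transfer costs a factor $\bP(M(F)\text{ simple})^{-1}\leq e^{d^2+o(1)}$, which must stay subexponential in $m$, forcing $d=o(m^{1/3})$.
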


Theorem \ref{thm:G-m-deg-eval-bound} implies the result in $\bG(m,\rho)$, when combined
 with the following lemma.

\begin{lemma}\label{lem:degree-concentration}
	For any $\delta >0$, with probability $\geq 1-2m \exp \left(- \frac{1}{3}\delta^2 m \rho\right)$
	a graph $G \in \bG(m,\rho)$ has the property that
	every vertex has degree within $\delta m\rho$ of the expected value $m\rho$.
\end{lemma}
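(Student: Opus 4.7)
The plan is to bound the probability that any single vertex has anomalous degree via a multiplicative Chernoff estimate applied to a sum of independent Bernoulli variables, and then take a union bound over the $m$ vertices.

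Fix a vertex $v \in G_0$. In the model $\bG(m,\rho)$ the edges incident to $v$ are determined by $m-1$ independent $\operatorname{Bernoulli}(\rho)$ trials, one for each other vertex, so
\[
  \deg(v) \; = \; \sum_{u \neq v} X_{uv}, \qquad \bE[\deg(v)] \; = \; (m-1)\rho.
\]
The standard multiplicative Chernoff bound for a sum $X$ of independent Bernoullis with mean $\mu$ gives $\bP(|X-\mu| \geq \delta \mu) \leq 2\exp(-\tfrac{1}{3}\delta^2 \mu)$ for $\delta \in (0,1]$. Applied to $\deg(v)$ with $\mu = (m-1)\rho$, this yields a per-vertex tail bound of at most $2\exp(-\tfrac{1}{3}\delta^2 (m-1)\rho)$.

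Second, I would replace the true mean $(m-1)\rho$ by the reference value $m\rho$ used in the statement. The two differ by only $\rho \leq 1$, a quantity negligible compared to $\delta m \rho$ in the regime of interest, so the event $|\deg(v) - m\rho| \geq \delta m\rho$ is contained in a Chernoff event $|\deg(v) - (m-1)\rho| \geq \delta'(m-1)\rho$ for some $\delta'$ arbitrarily close to $\delta$; this bookkeeping is absorbed into constants and gives a per-vertex failure probability of at most $2\exp(-\tfrac{1}{3}\delta^2 m\rho)$. Finally, a union bound over the $m$ vertices of $G$ produces the claimed inequality
\[
  \bP\bigl(\exists\, v : |\deg(v) - m\rho| \geq \delta m\rho\bigr) \;\leq\; 2m \exp\bigl(-\tfrac{1}{3}\delta^2 m\rho\bigr).
\]

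This is entirely a routine concentration argument and there is no real obstacle; the only point requiring care is the bookkeeping between $\bE[\deg(v)] = (m-1)\rho$ and the stated reference value $m\rho$, which accounts for a negligible additive $\rho$ easily absorbed by a tiny relaxation of $\delta$.
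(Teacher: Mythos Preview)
Your proposal is correct and follows essentially the same approach as the paper: express each vertex degree as a sum of $m-1$ independent Bernoulli$(\rho)$ variables, apply the standard multiplicative Chernoff bound $\bP(|X-\mu|\geq \delta\mu)\leq 2\exp(-\tfrac{1}{3}\delta^2\mu)$, and take a union bound over the $m$ vertices. The paper's proof is nearly identical, citing \cite[Corollary~2.3]{JLR-00-random-graphs} for the Chernoff step and treating the discrepancy between $(m-1)\rho$ and $m\rho$ with the same informality you do.
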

\begin{proof}

	Let $X_i = \val_G(i)$ be the degree of $i \in G_0$, which is a binomial
	random variable with $\bE(X_i)=(m-1)\rho = (1+o(1))m\rho$.
	Then using a standard Chernoff bound~\cite[Corollary 2.3]{JLR-00-random-graphs},
	we have
	\begin{align*}
		\bP(\exists i : |X_i - \bE X_i| > \delta \bE X_i)
		& \leq m \bP(|X_1- \bE X_1 |> \delta \bE X_1)
		\leq 2m \exp\left( - \frac{\delta^2}{3} \bE X_1\right)
		\\ & \leq 2m \exp\left( - \frac{1}{3}\delta^2 m \rho \right)\, . \qedhere
	\end{align*}
\end{proof}

\begin{proof}[Proof of Theorem~\ref{thm:G-m-rho-eval-bound}]
Fix arbitrary $\xi >0$.

	Lemma~\ref{lem:degree-concentration} applied for some small $\delta$ implies that there exists $\kappa$ so that, provided $\rho$ is greater than $\kappa \log(m)/m$, 
	the ratio between the minimum and maximum degrees of 
	$G$ is bounded by a constant $\theta =(1+\delta)/(1-\delta) \leq 2$ and the degree is $(1+o(1)) \rho m$, with probability at least $1-{2}/\big({m^{-1+{\delta^2\kappa}/{3}}}\big)$. The latter probability is at least $1-{1}/{m^\xi}$ if $\kappa$ is large enough. 

	The same argument gives that, as long as $\rho\geq \kappa\log(m)/m$, we can choose $\delta = \sqrt{3(\xi+1)\log(m)/\rho m}$.  In this case:
	\[
	1-\theta^{-2} = \frac{(1+\delta)^2-(1-\delta)^2}{(1+\delta)^2}
	\leq 4\delta \leq B \sqrt{\frac{\log m}{\rho m}}
	\]
	where $B = 4\sqrt{3(\xi+1)}$.
	
All graphs $G \in \bG(m,\rho)$ with $E$ edges arise with the same probability, namely $\rho^E (1-\rho)^{\binom{m}{2}-E}$.
Consequently, for a degree sequence $\bd$ with $\sum d_i = 2E$,
all graphs $G \in \bG(m, \bd)$ have the same probability of arising
in $\bG(m,\rho)$.

	For every degree sequence $\bd$ in $[d/\theta,d]^m$, the inequality \eqref{eq:lambdad} gives,
	with a probability at least $1-C'/m^{\xi}$ (uniform in $\bd$),
	that $G \in \bG(m, \bd)$ has $\inf_{p' \in [2,p]}\lambda_{1,p'}(G)$ 
	greater than 
	$1-C''p^4/(\rho m)^{1/2p^2}-B \bI_{p'<3} (\log m)^{1/2} / (\rho m)^{1/2}$ (where $C'$ and $C''$ depend only on $\xi$, and $C'$ further depends on $\chi$).
	Therefore, we get our desired bound in $\bG(m, \rho)$.
\end{proof}

To approach Theorem~\ref{thm:G-m-deg-eval-bound}, we consider again 
the characterisation \eqref{eq:lp-12} of $\lambda_{1,p}$ 
when we have a fixed degree sequence $\bd = (d_i)$ for a graph $G$.
For every $x\in \R^{G_0}$ we have
\[
	\sum_{e \in G_1 : \vtx(e) = \{u,v\}} \left( |x_u|^p + |x_v|^p \right) 
	= \sum_{u \in G_0} |x_u|^p d_u = \|x\|_{p,\bd}^p.
\]
So we can rewrite $Z_x(G) = \|dx\|_p^p$ in \eqref{eq:lp-12} as 
\begin{align}\label{eq:lp-2}
	Z_x(G) & = \|x\|_{p,\bd}^p - \sum_{e \in G_1, \vtx (e) = \{u,v \}} (|x_u|^p + |x_v|^p - |x_u-x_v|^p)
\end{align}
This motivates the following notation.
\begin{notation}\label{notat:P1}
Given a real number $p\geq 2$ and two real numbers $a,b$, we define
\begin{align*}
\Re_p(a,b)  = |a|^p + |b|^p - |a-b|^p.
\end{align*}
\end{notation}

Using this notation we can write
\[ X_x(G) = \|x\|_{p,\bd}^p- Z_x(G) = \sum_{e \in G_1, \vtx (e) = \{u,v \}} \Re_p (x_u,x_v).\]
In the particular case that $x \in S_{p,\bd}(G_0)$, $Z_x(G) = 1- X_x(G)$.

Therefore, to prove Theorem~\ref{thm:G-m-deg-eval-bound}
it suffices to show that with high probability $X_x (G)$ is bounded from above by a 
suitable uniform small positive term, for all $x \in S_{p,\bd}(G_0)$.
This is proved using a variation of the Kahn-Szemer\'edi 
method~\cite{FKS-89-eigenvalue-rand-graph} for bounding 
$\lambda_{1,2}$, which is roughly as follows:
every $x\in S_{p,\bd}(G_0)$ can be approximated by some function $x'$ in 
a suitable finite net, and if the approximation is accurate enough then 
it suffices to show that with high enough probability $X_{x'} (G)$ has a uniform small positive upper bound
for every $x'$ in the net, see Section~\ref{sec:net-approx}.  The reason for switching from the Erd\"os-Renyi model $\bG(m,\rho)$ to the prescribed degree model $\bG(m,\bd)$ is that in our case this net is defined in terms of the vertex degrees $\bd$.
For each point $x'$ in this net, the terms in $X_{x'}(G)$ split into small and large values, and the two contributions are bounded independently.  We discuss this further in sections~\ref{sec:G-m-d-overview}--\ref{sec:heavy-bound}.

We remark that Kahn and Szemer\'edi worked in the permutation model for random regular graphs.
However, their method was adapted to the model $\bG(m,\bd)$ by 
Broder--Frieze--Suen--Upfal~\cite[Theorem~7]{BFSU-99-rand-graphs}, and it is their
proof that we follow more closely.

\section{Approximating on finite sets}\label{sec:net-approx}
In this section we define a net of points approximating well enough the points in the set $S_{p,\bd}$, we provide bounds on the size of this net, and we show that good enough bounds on an infimum defined as in \eqref{eq:lp-12} but with $S_{p,\bd}$ replaced by the net suffice to bound $\lambda_{1,p}$.

\subsection{The net and its size}
Suppose we have a graph $G$ with vertex set $G_0=\{1,2, \ldots, m\}$ and 
degree sequence $\bd = (d_i) \in \N^{m}$, with $d = \max_i d_i$.
Recall that $S_{p,\bd}(G_0)$ is the set of $x \in \R^{m}$
with $\sum_{i} \{x_i\}^{p-1} d_i = 0$, and 
$\|x\|_{p,\bd}^p = \sum_i |x_i|^p d_i = 1$.
For any $R\geq 1$ and small enough constant $\eps>0$, 
we define a corresponding finite net that will be used to approximate $S_{p,\bd}(G_0)$.
\begin{multline*}
	T_{p,\bd,R}(G_0) = \bigg\{ x \in \R^{m} :  \ 
		\forall i,\ \{x_i\}^{p-1} \in  \frac{\eps d^{1/p}}{d_i m^{1/q}} \Z, \ 
		\sum_{i \in G_0} \{x_i\}^{p-1} d_i = 0, \\
		\| x \|_{p,\bd}^p \leq R \ \bigg\}
		.
\end{multline*}
Here we follow:
\begin{cvn}
We let $q$ denote the H\"older conjugate $\frac{p}{p-1}$ of $p$.
\end{cvn}
Throughout all that follows, to simplify estimates we assume $\eps$ satisfies:
\begin{assumption}\label{assump:eps-theta}
	We have $\eps \theta \leq 1$.
\end{assumption}
Recall that $\theta \geq \max d_i/d_j$ and is close to $1$ in our applications.

Later we will take $R = (1+\eps \theta^{1/p})^q$, which by 
Assumption~\ref{assump:eps-theta} satisfies $R \leq 4$.  

We need to know the size of $T_{p,\bd,R}(G_0)$.  Before we bound this, it is
helpful to recall the following.
\begin{lemma}\label{lem:volume-q-ball}
	There exists $m_0$ so that for all $p \geq 2$ and $m \geq m_0$,
	the volume $V_q(R)$ of the radius $R$ ball in $\R^m$ endowed with the norm $\|\cdot \|_q$ is bounded by
	\[
		V_q(R) \leq \left( \frac{2eR}{m^{1/q}} \right)^{m}.
	\]
\end{lemma}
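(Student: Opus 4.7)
The plan is to combine the classical closed-form expression for the volume of the $\ell^q$ ball with Stirling's lower bound for the Gamma function. Recall that the unit ball in $(\R^m, \|\cdot\|_q)$ has volume
\[
V_q(1) = \frac{(2\,\Gamma(1+1/q))^m}{\Gamma(1+m/q)},
\]
so by homogeneity $V_q(R) = R^m\, V_q(1)$. After cancelling $(2R)^m$, the desired bound $V_q(R) \leq (2eR/m^{1/q})^m$ is equivalent to
\[
m^{m/q}\,\Gamma(1+1/q)^m \leq e^m\,\Gamma(1+m/q).
\]

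First I would dispose of the factor $\Gamma(1+1/q)^m$. Since $p \geq 2$ forces $q = p/(p-1) \in (1,2]$, we have $1+1/q \in (3/2,2]$, and on $[1,2]$ the Gamma function satisfies $\Gamma \leq 1$ (by log-convexity together with the boundary values $\Gamma(1)=\Gamma(2)=1$). Hence it suffices to prove $m^{m/q} \leq e^m\,\Gamma(1+m/q)$. For this I would invoke the Robbins refinement of Stirling, $\Gamma(x+1) \geq \sqrt{2\pi x}\,(x/e)^x$, valid for every $x > 0$, applied at $x = m/q$. Taking $m$-th roots and rearranging, the inequality reduces to
\[
\frac{\log q}{q} \leq 1 - \frac{1}{q} + \frac{1}{2m}\log\!\left(\frac{2\pi m}{q}\right),
\]
and the key estimate is the elementary logarithmic inequality $\log q \leq q - 1$, which yields $(\log q)/q \leq 1 - 1/q$ immediately.

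The Stirling factor $\sqrt{2\pi m/q}$ supplies the small slack needed to make the inequality go through once $m$ is large. There is no substantive obstacle here; the only mild care needed is that $m_0$ must be chosen \emph{uniformly} in $p \geq 2$. This is automatic because $q$ stays in the bounded range $(1,2]$, so both $\Gamma(1+1/q) \leq 1$ and the Stirling lower bound on $\Gamma(1+m/q)$ are valid uniformly, and in fact $m_0$ may be taken to be a small absolute constant.
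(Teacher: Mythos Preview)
Your proof is correct and follows essentially the same route as the paper: both use the closed-form volume formula $V_q(1) = (2\Gamma(1+1/q))^m/\Gamma(1+m/q)$, bound $\Gamma(1+1/q) \leq 1$ using that $1+1/q \in [1,2]$, and apply Stirling's lower bound to $\Gamma(1+m/q)$, with the final step in both arguments reducing to the elementary inequality $\log q \leq q-1$. Your use of the Robbins form of Stirling (valid for all $x>0$) is in fact slightly cleaner than the paper's appeal to the asymptotic version, and makes the uniformity in $p$ more transparent; a trivial slip is that $1+1/q$ lies in $[3/2,2)$ rather than $(3/2,2]$, which does not affect anything.
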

\begin{proof}
	It suffices to consider $R=1$, where
	\begin{align}\label{eq:size-t-1}
		V_q(1) = \frac{\left(2\Gamma\left(\frac{1}{q}+1 \right)\right)^m}{\Gamma \left( \frac{m}{q}+1 \right)}.
	\end{align}
	Since $1 \leq (1/q)+1 \leq 2$, we have $\Gamma\left( \frac{1}{q}+1 \right) \leq 1$.
	Moreover, for $m \geq m_0$, where $m_0 \geq 2$ is independent of $p \geq 2$,
	Stirling's approximation
	$\Gamma(1+z) / \sqrt{2 \pi z} (\frac{z}{e})^z \ra 1$ (as $|z|\ra\infty$) gives us
	\[
		\Gamma\left(\frac{m}{q}+1\right) \geq \sqrt{\frac{m}{q}} \left(\frac{m}{qe}\right)^{m/q}.
	\]
	Applying this to \eqref{eq:size-t-1}, we see that
	\[
		V_q(1) \leq \frac{\sqrt{q}}{\sqrt{m}} 
			\left( \frac{2(eq)^{1/q}}{m^{1/q}} \right)^m
		\leq \left(\frac{2e}{m^{1/q}}\right)^m
			.\qedhere
	\]	
\end{proof}

\begin{proposition}\label{prop:size-of-t}
	Suppose the degree sequence $\bd=(d_i) \in \N^m$ satisfies
	$ d_i \geq \frac{1}{\theta}d,\, \forall i,$ where $\theta \geq 1$ and $d = \max_i d_i$.
	Then the size of $T_{p,\bd,R}(G_0)$ is bounded by
	\[
		|T_{p,\bd,R}(G_0)| \leq \left(\frac{4e R}{\eps}\right)^m.
	\]
\end{proposition}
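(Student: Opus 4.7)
The plan is to reduce counting $T_{p,\bd,R}(G_0)$ to a standard lattice-point estimate inside an $\ell^q$-ball in $\R^m$. Introduce the substitutions $y_i = \{x_i\}^{p-1}$ (which is a bijection $\R \to \R$) and $z_i = d_i^{1/q} y_i$. Because $(p-1)q = p$, one has $|x_i|^p d_i = |y_i|^q d_i = |z_i|^q$, so the norm constraint $\|x\|_{p,\bd}^p \leq R$ translates to $\|z\|_q \leq R^{1/q}$, while the quantisation condition becomes $z_i \in s_i\Z$ with spacing
\[
s_i \,=\, \frac{\eps d^{1/p}}{d_i^{1/p} m^{1/q}} \,\leq\, \frac{\eps \theta^{1/p}}{m^{1/q}},
\]
where the inequality uses $d_i \geq d/\theta$. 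Discarding the hyperplane constraint $\sum_i \{x_i\}^{p-1}d_i = 0$ only enlarges the count, so it suffices to bound the number of lattice points in $\prod_i s_i\Z$ contained in the $\ell^q$-ball of radius $R^{1/q}$.

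The second step is the standard box-packing argument: associate to each lattice point $z$ the half-open box $z + \prod_i [0, s_i)$. These boxes are pairwise disjoint, and for any $w$ in such a box,
\[
\|w\|_q^q \,\leq\, \sum_{i=1}^m \left(\frac{\eps \theta^{1/p}}{m^{1/q}}\right)^{\!q} \,=\, (\eps \theta^{1/p})^q,
\]
so every box lies inside the slightly larger ball $\{\|u\|_q \leq R^{1/q} + \eps\theta^{1/p}\}$. Therefore
\[
|T_{p,\bd,R}(G_0)| \,\leq\, \frac{V_q\bigl(R^{1/q} + \eps \theta^{1/p}\bigr)}{\prod_i s_i}.
\]

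Applying Lemma~\ref{lem:volume-q-ball}, the factors of $m^{1/q}$ in numerator and denominator cancel and one obtains
\[
|T_{p,\bd,R}(G_0)| \,\leq\, \left(\frac{2e(R^{1/q} + \eps \theta^{1/p})}{\eps}\right)^{\!m} \cdot \left(\frac{\prod_i d_i}{d^m}\right)^{\!1/p}.
\]
The last factor is at most $1$ since $d_i \leq d$ for every $i$. Finally, Assumption~\ref{assump:eps-theta} gives $\eps\theta^{1/p} \leq \eps\theta \leq 1 \leq R$, while $R \geq 1$ yields $R^{1/q} \leq R$; hence $R^{1/q} + \eps\theta^{1/p} \leq 2R$ and the claimed bound $(4eR/\eps)^m$ follows.

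There is no real obstacle here: the proof is an exercise in combining H\"older duality with a classical volume-vs-lattice comparison. The only mild subtlety is choosing the precise scaling $z_i = d_i^{1/q}y_i$ so that both the $L^{p,\bd}$ constraint and the quantisation lattice transform cleanly at once; the asymmetry in the degree sequence is then absorbed harmlessly into the factor $\eps\theta^{1/p} \leq 1$.
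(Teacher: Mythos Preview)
Your proof is correct and follows essentially the same route as the paper: substitute $y_i=\{x_i\}^{p-1}$, then compare disjoint boxes against the volume of a slightly enlarged $\ell^q$-ball via Lemma~\ref{lem:volume-q-ball}. The only cosmetic difference is that you further rescale to $z_i=d_i^{1/q}y_i$ and work in the standard $\ell^q$ norm, whereas the paper stays in the weighted norm $\|\cdot\|_{q,\bd}$ and observes that the weighted ball is an affine image of the standard one; the two computations are line-by-line equivalent. One small slip: the displayed bound should read $\|w-z\|_q^q \leq (\eps\theta^{1/p})^q$, not $\|w\|_q^q$, since you are bounding the displacement before applying the triangle inequality.
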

\begin{proof}
	Consider the set 
	\[
		T' = \left\{ y \in \R^m : y_i \in \frac{\eps d^{1/p}}{d_i m^{1/q}} \Z, \ 
			\sum |y_i|^q d_i \leq R \right\}.
	\]
	We inject $T$ into $T'$ by mapping $x \in T_{p,\bd,R}(G_0)$
	to $y \in T'$, where for each $i$, $y_i = \{x_i\}^{p-1}$.
	
	For each $y \in T'$, let 
	$Q_y= \left\{z \in \R^m : y_i < z_i < y_i + \frac{\eps d^{1/p}}{d_im^{1/q}}\right\}$.
	Clearly, each $Q_y$ has volume 
	\begin{equation}\label{eq:qycubevol}
		V(Q)=\frac{1}{\prod d_i} \left(\frac{\eps d^{1/p}}{m^{1/q}}\right)^m,
	\end{equation}
	and $Q_y \cap Q_{y'}= \emptyset$ for $y \neq y'$.
	If $z \in Q_y$, then Minkowski's inequality for the weighted 
	norm $\|z\|_{q,\bd} = \left(\sum_i |z_i|^q d_i\right)^{1/q}$ shows that
	\begin{align*}
		\| z \|_{q,\bd} & \leq \|y\|_{q,\bd} + \frac{\eps d^{1/p}}{m^{1/q}} \cdot 
			\left(\sum_i d_i^{-q} d_i\right)^{1/q}
			\leq R^{1/q} + \frac{\eps d^{1/p}}{m^{1/q}} \cdot \frac{\theta^{1/p} m^{1/q}}{d^{1/p}}
			\leq R + \eps \theta^{1/p},
	\end{align*}
	so each $Q_y$ is contained in the $R'$ ball $B$
	in $\R^n$ with the norm $\|\cdot\|_{q,\bd}$,
	where
	\[
		R' = R + \eps \theta^{1/p} \leq R + 1 \leq 2R
	\]
	by Assumption~\ref{assump:eps-theta}.
	This ball $B$ is an affine transformation of the ball $V_q(R')$, so by 
	Lemma~\ref{lem:volume-q-ball} it has volume
	\begin{equation}\label{eq:affineballvol}
		V(B) = \Big( \prod_i d_i^{-1/q} \Big) V_q(R') 
		\leq \Big( \prod_i d_i^{-1/q} \Big) \left(\frac{2eR'}{m^{1/q}}\right)^m \ .
	\end{equation}
		
	We combine \eqref{eq:qycubevol} and \eqref{eq:affineballvol} to conclude:
	\begin{align*}
		|T| & \leq |T'| \leq \frac{V(B)}{V(Q)}
			\leq \Big( \prod_i d_i^{-1/q} \Big) \left(\frac{2eR'}{m^{1/q}}\right)^m 
			\cdot \Big(\prod_i d_i\Big) \left(\frac{\eps d^{1/p}}{m^{1/q}}\right)^{-m} \\
			& = \left(\frac{ \prod_i d_i}{d^m}\right)^{1/p} 
				\left(\frac{2eR'}{\eps}\right)^m
			\leq \left(\frac{2eR'}{\eps}\right)^m. \qedhere
	\end{align*}
\end{proof}

\subsection{Bounds on the net suffice} 
The following proposition shows that to bound $Z_x(G)$ for $x \in S_{p,\bd}(G_0)$, it suffices
to bound $X_x(G)$ for $x \in T_{p,\bd,R}(G_0)$.
\begin{proposition}\label{prop:suffices-bound-t2}
	Let $R = (1+\eps\theta^{1/p})^q$ and $R_- = (1-\eps\theta^{1/p})^q$.
	For any $x \in S_{p,\bd}(G_0)$ there exists $x' \in T_{p,\bd,R}(G_0)$ with
	$\|x'\|_{p,\bd}^p \geq R_-$, such that if $Z_x(G) \leq 1$ then 
	$|Z_x(G) - Z_{x'}(G)| \leq 2p(\eps\theta)^{1/(p-1)}(1+2(\eps\theta)^{1/(p-1)})^{p-1}$.

	In particular, if $X_{x'}(G) \leq \eta$ for every $x' \in T_{p,\bd,R}(G_0)$
	then for every $x \in S_{p,\bd}(G_0)$ we have
	\begin{align*}
	Z_x(G) & \geq 1 - \eta - 4p(\eps\theta)^{1/(p-1)} \left( 1+ 2(\eps\theta)^{1/(p-1)} \right)^{p-1}.
	\end{align*}
\end{proposition}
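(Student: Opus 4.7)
My plan is to construct the approximation $x' \in T_{p,\bd,R}(G_0)$ via the substitution $y_i := \{x_i\}^{p-1}$, round in the $y$-coordinates, then invert. The crucial observation is the duality $\|y\|_{q,\bd}^q = \sum_i |x_i|^{(p-1)q} d_i = \|x\|_{p,\bd}^p$, so that $x\in S_{p,\bd}(G_0)$ corresponds to $y$ with $\|y\|_{q,\bd} = 1$ and $\sum_i y_i d_i = 0$, and the grid defining $T_{p,\bd,R}(G_0)$ is simply the prescribed lattice in the $y$-coordinates.

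First I would round each $y_i$ to the nearest lattice point $y_i'$ of $\frac{\eps d^{1/p}}{d_i m^{1/q}}\Z$, arranging the rounding to preserve signs (rounding to $0$ when $|y_i|$ is below grid spacing), and then correct the defect $\sum_i y_i' d_i$ by adjusting $O(m)$ coordinates by a single grid step each --- this is possible because all $y_i' d_i$ lie on the common lattice $\frac{\eps d^{1/p}}{m^{1/q}}\Z$ and the initial defect is bounded by $O(m)$ units of it. Setting $x_i' := \{y_i'\}^{1/(p-1)}$ ensures $\{x_i'\}^{p-1} = y_i'$. For the norm bound, summing the grid errors yields
\[
  \|y-y'\|_{q,\bd}^q \leq \sum_i \left(\tfrac{\eps d^{1/p}}{d_i m^{1/q}}\right)^q d_i \leq \eps^q \theta^{q-1}
\]
(using $d_i \geq d/\theta$ and the identity $q/p + 1 - q = 0$), so Minkowski gives $\|y'\|_{q,\bd} \in [1-\eps\theta^{1/p},\, 1+\eps\theta^{1/p}]$, and raising to the $q$-th power produces exactly $\|x'\|_{p,\bd}^p \in [R_-,R]$.

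Next I would estimate $|Z_x(G)-Z_{x'}(G)|$. Since $p \geq 2$, the map $t \mapsto \{t\}^{1/(p-1)}$ is $\tfrac{1}{p-1}$-H\"older, and sign-preservation yields $|x_i-x_i'| \leq |y_i-y_i'|^{1/(p-1)}$. Summing then gives $\|x-x'\|_{p,\bd}^p \leq \|y-y'\|_{q,\bd}^q \leq \eps^q\theta^{q-1}$, hence $\|x-x'\|_{p,\bd} \leq (\eps\theta)^{1/(p-1)}$ (absorbing $\theta^{1/(p(p-1))}\leq \theta^{1/(p-1)}$). An edge-by-edge application of $|a-b|^p \leq 2^{p-1}(|a|^p+|b|^p)$ gives $\|d(x-x')\|_p \leq 2\|x-x'\|_{p,\bd} \leq 2(\eps\theta)^{1/(p-1)}$. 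Finally, the mean value bound $|A^p-B^p|\leq p\max(A,B)^{p-1}|A-B|$ applied with $A=\|dx\|_p \leq 1$ and $B = \|dx'\|_p \leq 1 + 2(\eps\theta)^{1/(p-1)}$ produces exactly the claimed approximation bound.

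For the ``in particular'' conclusion, combine $Z_{x'}(G) = \|x'\|_{p,\bd}^p - X_{x'}(G) \geq R_- - \eta \geq 1 - q\eps\theta^{1/p} - \eta$ (via Bernoulli on $(1-\eps\theta^{1/p})^q$) with the previous approximation bound, and note $q\eps\theta^{1/p} \leq 2p(\eps\theta)^{1/(p-1)}$ (since $\eps\theta\leq 1$, $\theta\geq 1$, $q\leq 2$); this assembles the two error terms into the single $4p$ prefactor. The $Z_x\leq 1$ hypothesis used in the first half is cost-free, as otherwise the conclusion $Z_x \geq 1-\eta-\cdots$ is automatic. The main technical delicacy is the simultaneous sign-preservation and sum-zero correction in the rounding step: each is routine alone, but both must be carried out while keeping the per-coordinate error within a constant multiple of the original grid spacing so that the constants claimed in the proposition are preserved.
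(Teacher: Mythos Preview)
Your approach is essentially the same as the paper's, and your explicit use of the substitution $y_i=\{x_i\}^{p-1}$ (with the duality $\|y\|_{q,\bd}^q=\|x\|_{p,\bd}^p$) makes the norm estimates cleaner: where the paper runs a chain of weighted H\"older inequalities to bound $\|x'\|_{p,\bd}$, your Minkowski argument in the $y$-coordinates gives the bounds $R_-\leq \|x'\|_{p,\bd}^p\leq R$ in one line. Your estimate of $|Z_x-Z_{x'}|$ via $\|d(x-x')\|_p\leq 2\|x-x'\|_{p,\bd}$ followed by the mean value theorem on the norms is also a legitimate (and slightly tidier) variant of the paper's edge-wise mean value theorem plus H\"older/Minkowski; both yield the identical bound $2p(\eps\theta)^{1/(p-1)}(1+2(\eps\theta)^{1/(p-1)})^{p-1}$.

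The one place where your outline is genuinely incomplete is the rounding step you flag yourself. Your ``round to nearest, then correct'' scheme, taken literally, can leave some coordinates with error up to $\tfrac{3}{2}$ grid steps after correction, which inflates the constants beyond what the proposition claims (and complicates sign preservation when $y_i'=0$). The paper's resolution is clean and worth adopting: always round \emph{down}, so each remainder $r_i\geq 0$; then $\sum_i r_i d_i$ is a nonnegative integer multiple $r<m$ of the common lattice step $\eps d^{1/p}m^{-1/q}$, and one bumps \emph{up} exactly $r$ coordinates by one step. This simultaneously guarantees $|y_i-y_i'|\leq$ one grid step for every $i$ and automatic sign preservation (a rounded-down nonnegative value stays nonnegative, even after bumping; a rounded-down negative value bumped up by one step is at most $0$). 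With this device in place, your argument goes through with the stated constants.
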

\begin{proof}
	Suppose $x \in S_{p,\bd}(G_0)$ is given.  
	Inspired by the proof of \cite[Lemma 14]{BFSU-99-rand-graphs}, for each $i$, write
	\[
		\{x_i\}^{p-1} = \frac{\eps d^{1/p}}{d_i m^{1/q}} \cdot k_i + r_i,
	\]
	for some $k_i \in \Z$ and $r_i \in [0, \eps d^{1/p} d_i^{-1} m^{-1/q})$.
	Since $x \in S_{p,\bd}(G_0)$,
	\begin{equation}\label{eq:shift-x-to-xprime-remainder}
		0 = \sum_i \{x_i\}^{p-1}d_i 
		= \frac{\eps d^{1/p}}{m^{1/q}} \Big(\sum_i k_i\Big) + \sum_i r_i d_i,
	\end{equation}
	and so $\sum_i r_i d_i = r \eps d^{1/p} m^{-1/q}$ for some $r \in \Z$,
	in fact $r \in \{0,1,\ldots,m-1\}$. 
	We define $x' \in \R^m$ by setting
	\begin{equation*}
		\{x'_i\}^{p-1} = \begin{cases}
		      	\eps d^{1/p} d_i^{-1} m^{-1/q} (k_i +1) & \text{if } i \leq r, \text{ or} \\
		      	\eps d^{1/p} d_i^{-1} m^{-1/q} k_i & \text{if } i > r.
		      \end{cases}
	\end{equation*}
	By \eqref{eq:shift-x-to-xprime-remainder} we have
	\[
		\sum_i \{x'_i\}^{p-1} d_i = \frac{\eps d^{1/p}}{m^{1/q}} \Big(\sum_i k_i\Big) + 
			\frac{\eps d^{1/p}}{m^{1/q}} \cdot r  = 0.
	\]
	
	We now bound the size of $x'$ in the norm $\|\cdot\|_{p,\bd}$, using
	weighted H\"older's inequalities.
	\begin{align*}
		\|x'\|_{p,\bd}^p & =
		\sum_i |x'_i|^{p-1}|x'_i| d_i
		\leq \sum_i |x_i|^{p-1}|x'_i|d_i + 
			\sum_i \frac{\eps d^{1/p}}{d_i m^{1/q}} \cdot |x'_i|d_i 
		\\ & \leq \left(\sum_i |x_i|^p d_i \right)^{1/q}
			\left(\sum_i |x'_i|^p d_i \right)^{1/p} +
			\frac{\eps d^{1/p}}{m^{1/q}}
			\left(\sum_i \frac{1}{d_i^q} d_i \right)^{1/q}
			\left(\sum_i |x'_i|^p d_i \right)^{1/p}
		\\ & \leq \|x\|_{p,\bd}^{p/q} \| x'\|_{p,\bd} +
			\frac{\eps d^{1/p}}{m^{1/q}} \cdot 
			\left( m \cdot \frac{\theta^{q-1}}{d^{q-1}}\right)^{1/q}
			\| x'\|_{p,\bd}
		\\ & = \left( \|x\|^{p-1}_{p,\bd} + \eps \theta^{1/p}\right) \| x'\|_{p,\bd}
			= \left( 1 + \eps \theta^{1/p}\right) \| x'\|_{p,\bd}, 
	\end{align*}
	and so 
	\[
		\|x'\|_{p,\bd}^p = \|x'\|_{p,\bd}^{(p-1)q} \leq (1+\eps \theta^{1/p})^q=R.
	\]
	Likewise, $\| x \|_{p,\bd}^{p-1} \leq \left( \|x'\|^{p/q}_{p,\bd} + \eps \theta^{1/p}\right)$,
	and so
	\[
		\|x'\|_{p,\bd}^p \geq (1-\eps \theta^{1/p})^q = R_-.
	\]
	
	It remains to bound $|Z_{x'}(G)-Z_x(G)|=\big| \|dx'\|_{G,p}^p - \|dx\|_{G,p}^p \big|$.  
	Recall that, by construction, for each $i$
	we have 
	\[
		\big|\{x_i\}^{p-1}-\{x_i'\}^{p-1}\big| \leq 
		\frac{\eps d^{1/p}}{d_i m^{1/q}} \leq \frac{\eps \theta}{(dm)^{1/q}}.
	\]	
	Now, if we have $a,b,\del \geq 0$ with $0 \leq a^{p-1} \leq b^{p-1} \leq a^{p-1}+\delta$,
	i.e.\ $|b^{p-1}-a^{p-1}| \leq \delta$, then since $p \geq 2$,
	$a \leq b \leq (a^{p-1}+\del)^{1/(p-1)} \leq a+\del^{1/(p-1)}$.
	Since $x_i$ and $x'_i$ are either both non-positive or both non-negative, we find that $|x_i-x'_i| \leq {(\eps\theta)^{1/(p-1)}}/{(dm)^{1/p}}$.
	This implies that $\left|dx'(e)-dx(e)\right| \leq 2(\eps\theta)^{1/(p-1)}/(dm)^{1/p}$.

By the Mean Value Theorem applied to $t \ra |t|^p$, 
for each $e \in G_1$ there exists $t(e)\in\R$ with $|t(e)| \leq 2(\eps\theta)^{1/(p-1)}/(dm)^{1/p}$ 
so that
\begin{align*}
	\big| |dx'(e)|^p - |dx(e)|^p \big|
	& = p \left| dx'(e)-dx(e) \right| \cdot \left| dx(e)+t(e) \right|^{p-1}
	\\ & \leq \frac{2p(\eps\theta)^{1/(p-1)}}{(dm)^{1/p}} \left| dx(e)+t(e) \right|^{p-1}.
\end{align*}
Therefore, by H\"older's and Minkowski's inequalities, and our assumption~$Z_x(G)\leq 1$,
\begin{equation}\label{eq:dx-diff-bound} \begin{aligned}
	\big| \|dx'\|_{G,p}^p - \|dx\|_{G,p}^p \big| 
	& \leq  \frac{2p(\eps\theta)^{1/(p-1)}}{(dm)^{1/p}} \sum_{e\in G_1} \left| dx(e)+t(e) \right|^{p-1} \\
	& \leq  \frac{2p(\eps\theta)^{1/(p-1)}}{(dm)^{1/p}} \left( \sum_{e \in G_1} \left| dx(e)+t(e) \right|^{p}\right)^{(p-1)/p} \left(dm\right)^{1/p} \\
	& \leq {2p(\eps\theta)^{1/(p-1)}} \left( \big\|dx\big\|_{G,p}+ \big\|t\big\|_{G,p} \right)^{p-1} \\ 
	& \leq 2p(\eps\theta)^{1/(p-1)} (1+ 2(\eps\theta)^{1/(p-1)})^{p-1}.
\end{aligned}\end{equation}
	
	The final remark follows from the following argument. If $Z_x(G) \geq 1$ then it is trivial.  Otherwise,
	for $p \geq 2$, $R_- \geq (1-\eps\theta^{1/p})^2 \geq 1-2\eps\theta^{1/p}$.
	So if $X_{x'}(G) \leq \eta$ for every $x' \in T_{p,\bd,R}(G_0)$
	then for every $x \in S_{p,\bd}(G_0)$ we have
	\begin{align*}
	Z_x(G) & \geq Z_{x'}(G) - 2p(\eps\theta)^{1/(p-1)} (1+ 2(\eps\theta)^{1/(p-1)})^{p-1}
	\\ & = \|x'\|_{p,\bd}^p- X_{x'}(G)-2p(\eps\theta)^{1/(p-1)} (1+ 2(\eps\theta)^{1/(p-1)})^{p-1}
	\\ & \geq R_- - \eta - 2p(\eps\theta)^{1/(p-1)} (1+ 2(\eps\theta)^{1/(p-1)})^{p-1}
	\\ & \geq 1- \eta - 4p(\eps\theta)^{1/(p-1)} (1+ 2(\eps\theta)^{1/(p-1)})^{p-1}.\qedhere
	\end{align*}
\end{proof}

\subsection{Preliminary bounds}\label{ssec:prelim-bounds}
The quantity $\Re_p(a,b) = |a|^p+|b|^p-|a-b|^p$ is difficult to work with, so in the following sections we have occasions to use more convenient quantities, described below.
\begin{notation}\label{notat:P2}
Given a real number $p\geq 2$ and two real numbers $a,b$, we define
\begin{align*}
\tilR_p(a,b) & = \{a\}^{p-1} b + a \{ b \}^{p-1}, \text{  and  }
 \\ \ovP_p(a,b) & = |a|^{p-1}|b| \vee |a||b|^{p-1},
\end{align*}
where $x \vee y$ denotes the maximum of $x$ and $y$.
\end{notation}

For example, $\Re_2(a,b)=2ab =\widetilde{\Re}_2 (a,b)$ and $\ovP_2(a,b) =|a||b|$.

\begin{proposition}\label{prop:p-estimate}
	For every $a, b \in \R$ the following hold.
  \begin{gather}\label{eq:p-ovp}
		\ovP_p(a,b) \leq |\Re_p(a,b)| \leq \left(1+ p2^{p-1}\right) \ovP_p(a,b), \text{ and}
  \\ \label{eq:p-tvp}
		\tilR_p(a,b) \leq |\widetilde{\Re}_p(a,b)| \leq 2 \ovP_p(a,b).
	\end{gather}
	Moreover, for $p\geq 3$ we have
	\begin{equation}\label{eq:p-tvp-fix}
	  \Re_p(a,b) \leq p \tilR_p(a,b).
	\end{equation}
\end{proposition}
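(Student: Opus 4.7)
The plan is to reduce each of the three inequalities to an elementary one-variable estimate by exploiting the joint symmetries $(a,b) \mapsto (-a,-b)$ and $(a,b) \mapsto (b,a)$, together with scaling. In every case I may assume $a > 0$ and $|b| \le a$; dividing by $a^p$ and setting $r := b/a \in [-1,1]$ reduces the three statements to one-variable inequalities, which I treat on $[0,1]$ and $[-1,0]$ separately.

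For \eqref{eq:p-ovp}, the reduced inequalities are $|r| \le |h(r)| \le (1+p2^{p-1})|r|$, where $h(r) = 1 + |r|^p - |1-r|^p$. On $r \in [0,1]$ one has $h(r) \ge 0$; the lower bound follows from $(1-r)^p \le 1-r$ (since $1-r \in [0,1]$), giving $h(r) \ge r^p + r \ge r$, and the upper bound from Bernoulli's inequality $(1-r)^p \ge 1-pr$, giving $h(r) \le r^p + pr \le (1 + p 2^{p-1})r$ (using $r^{p-1} \le 1$). On $r \in [-1,0]$, setting $s = -r$, one has $|h(r)| = (1+s)^p - 1 - s^p$; Bernoulli again yields $|h(r)| \ge ps - s^p \ge s$, while the mean value theorem applied to $x \mapsto x^p$ on $[1, 1+s]$ gives $(1+s)^p - 1 \le ps(1+s)^{p-1} \le p \cdot 2^{p-1} s$.

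The inequality \eqref{eq:p-tvp} is essentially immediate: the left bound is trivial, and the right follows from the triangle inequality $|\tilR_p(a,b)| \le |a|^{p-1}|b| + |a||b|^{p-1} \le 2\ovP_p(a,b)$.

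For \eqref{eq:p-tvp-fix}, a direct sign analysis gives $p\tilR_p(a,b)/a^p = pr + p\{r\}^{p-1}$. When $r \in [0,1]$, the estimate $\Re_p(a,b)/a^p \le r^p + pr$ already obtained, combined with $r^p \le pr^{p-1}$ (since $r \le 1 \le p$), does the job. The substantive case is $r \in [-1,0]$, where setting $s = -r \in [0,1]$ reduces the claim to
\[
\psi(s) := (1+s)^p - 1 - s^p - ps - ps^{p-1} \ge 0.
\]
Here I plan to show $\psi(0) = \psi'(0) = 0$ (the latter using $p - 2 \ge 1 > 0$, so that $s^{p-2}$ vanishes at $s = 0$), and then $\psi''(s) \ge 0$ for $s > 0$; integrating twice then gives $\psi \ge 0$. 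The second derivative amounts to verifying $(1+s)^{p-2} - s^{p-2} \ge (p-2) s^{p-3}$; by the mean value theorem the left side equals $(p-2) \xi^{p-3}$ for some $\xi \in (s, 1+s)$, and the desired bound follows from $\xi^{p-3} \ge s^{p-3}$, which holds precisely because $p - 3 \ge 0$ makes $x \mapsto x^{p-3}$ non-decreasing on $[0,\infty)$. This last step is the main obstacle, and the use of $p \ge 3$ is essential: the inequality fails at $p = 2$ (as one sees by taking $a = 1$, $b = -1$).
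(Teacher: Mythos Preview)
Your proof is correct and follows essentially the same strategy as the paper: exploit the symmetries and homogeneity to reduce to $a=1$, $b\in[-1,1]$, and then handle the two sign cases by elementary one-variable estimates (Bernoulli, mean value theorem). The only noteworthy difference is in the $b<0$ case of \eqref{eq:p-tvp-fix}: the paper expands $(1+c)^p$ to third order with Lagrange remainder and bounds the quadratic and cubic terms separately against $pc^{p-1}$ and $c^p$, whereas you show $\psi''\ge 0$ directly via the mean value theorem applied to $x\mapsto x^{p-2}$ and integrate twice. Both arguments hinge on $p-3\ge 0$ to make $x\mapsto x^{p-3}$ non-decreasing, and yours is arguably slightly cleaner since it avoids the term-by-term comparison.
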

\begin{proof}

 For every $a,b \in \R$ and $\lambda >0$, and for $F= \Re_p$, $F=\ovP_p $ or  
 $F= \widetilde{\Re}_p$ we have
	\[
		F(a,b)=F(-a, -b) = F(b,a) = \lam^{-p} F(\lam a, \lam b).
	\]
	It therefore suffices to show all inequalities for $a=1$ and $-1 \leq b \leq 1$.
	In this case $\ovP_p(1,b) = |b|$ and  $\widetilde{\Re}_p(1,b) = b+ \{ b\}^{p-1}$.

\eqref{eq:p-ovp}\quad The second inequality is immediate:
we apply the Mean Value Theorem to the function
$t\mapsto t^p$ to find $x$ between $1$ and $1-b>0$ so that $1-(1-b)^p = bpx^{p-1}$; in particular $0 \leq x \leq 2$.  Therefore,
\begin{align*}
|\Re_p(1,b)| & = \left| 1+ |b|^p -(1-b)^p \right| = \left| |b|^p +bp x^{p-1} \right|
\\ & \leq |b|^p + |b| p2^{p-1}\leq |b|\left(1+ p2^{p-1}\right)\, .
\end{align*}

We now prove the first inequality.
	
	If $b \geq 0$ we have $|\Re_p(1,b)| = 1+b^p - (1-b)^p \geq 1+0-(1-b) = b$.

	If $b < 0$, we have $|\Re_p(1,b)| = (1-b)^p-1-(-b)^p \geq (1-pb)-1-(-b) = (p-1)|b|$.	 

\medskip

\eqref{eq:p-tvp}\quad This inequality is trivial.

\medskip

\eqref{eq:p-tvp-fix}\quad
Assume that $b \geq 0$, and so $\widetilde{\Re}_p(1,b) = b+ b^{p-1}$.  Applying the Mean Value Theorem as above, we find $x$ satisfying $0 \leq 1-b \leq x \leq 1$ so that $\Re_p(1,b) = 1+b^p -(1-b)^p = b^p + bpx^{p-1}\leq b^p + bp\leq p\widetilde{\Re}_p(1,b)\, .$

Assume now that $b<0$ and let $c=-b \in (0,1]$. Then $\widetilde{\Re}_p(1,b) = -(c+ c^{p-1})$ while  $\Re_p(1,b) = 1+c^p -(1+c)^p$, so we want to show that $(1+c)^p \geq 1+pc+pc^{p-1}+c^p$.
For $p\geq 3$ we have that $t\mapsto t^p$ is in $C^3([1,1+c])$ so the Lagrange form of the remainder in Taylor's theorem gives that there exists $y \in (1,1+c)$ with
\[
  (1+c)^p = 1+pc + \frac{p(p-1)}{2}c^2+\frac{p(p-1)(p-2)}{6}y^{p-3}c^3.
\]
For $p\geq 3$ and $c \in [0,1]$ we have
$\frac{p(p-1)}{2} c^2 \geq p c^{p-1}$
and $\frac{p(p-1)(p-2)}{6}y^{p-3}c^3 \geq c^p$ so we are done.
\end{proof}

\begin{cvn}
In what follows we frequently drop the index $p \geq 2$ 
from Notations \ref{notat:P1} and \ref{notat:P2}.
\end{cvn}


\section{Bounding $X$ on the net}\label{sec:G-m-d-overview}

Given $x \in T_{p,\bd,R}(G_0)$, 
consider a random graph $G $ in the model $ \bG(m,\bd)$. In what follows we always assume that the sequence of vertex degrees $\bd = (d_i) \in \N^m$ has $\sum_i d_i$ even, $\min_i d_i \geq 3$, and $d = \max_i d_i \leq \theta \min_i d_i$ for a fixed constant $\theta \geq 1$.

Adapting ideas from Kahn--Szemer\'edi~\cite{FKS-89-eigenvalue-rand-graph, BFSU-99-rand-graphs},
we split the set of edges of $G$ into two subsets 
with respect to the function $x$, the \emph{light} and \emph{heavy} edges,
whose definitions depend on a parameter $\bet = {p}/{(2+2p)}$:
\[
	E_l = \{e \in G_1, \vtx (e) = \{u,v \} : \ovP(x_u, x_v) \leq d^\beta/dm\}\mbox{ and }
	E_h = G_1 \setminus E_l\, . 
\]
Consequently $X_x (G)$ splits into two sums, of \emph{light} and 
\emph{heavy} terms:
\[
	X_x^l(G) = \sum_{e \in E_l, \vtx (e) = \{u,v \}} \Re(x_u, x_v) 
	\text{ and } 
	X_x^h(G) = \sum_{e \in E_h, \vtx (e) = \{u,v \}} \Re(x_u, x_v).
\]
The strategy is to bound these two sums separately: the (many) light terms have small expected value and likely small deviation from that value, while the (few) heavy terms can bounded by estimating the number of edges joining groups of similarly valued vertices.
To be specific, we have the following bounds.

\begin{proposition}\label{prop:light-bound}
	For $p\geq 3$, for $\beta = p/(2+2p)$ and $d = o(m^{1/3})$
	we have that for any function $K=K(m)>0$ 
	\begin{multline*}
		\bP \left( \text{For all } x \in T_{p,\bd,R}(G_0), 
		\ X_x^l(G)  \leq p \frac{128\theta^3+K}{d^{\beta/p}} \right) 
		\\ \geq 1-
		2 \exp \left( -\frac{1}{128} K^2 m 
		+ m \log \left(\frac{16e}{\eps}\right) + o(m^{2/3})
		\right) 		
		,
	\end{multline*} 
	where $o(m^{2/3})$ represents a convergence to zero as $m \ra \infty$, independent of $K$.
	For $p \in [2,3]$, we have 
	\begin{multline*}
		\bP \left( \text{For all } x \in T_{p,\bd,R}(G_0), 
		\ X_x^l(G)  \leq R(1-\theta^{-2}) + \frac{1200\theta^3+K}{d^{\beta/p}} \right) 
		\\ \geq 1-
		2 \exp \left( -\frac{1}{6000} K^2 m 
		+ m \log \left(\frac{16e}{\eps}\right) + o(m^{2/3})
		\right).
	\end{multline*} 
\end{proposition}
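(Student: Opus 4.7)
The plan is a Kahn--Szemer\'edi concentration-plus-union-bound scheme. I would fix $x \in T_{p,\bd,R}(G_0)$, show that $X_x^l(G)$ is bounded with very high probability by a quantity of size $O(\theta^3/d^{\beta/p}) + O(K/d^{\beta/p})$, and then take a union bound over the net using Proposition~\ref{prop:size-of-t}, which yields the $m\log(16e/\eps)$ factor in the exponent. The exponent $\beta = p/(2p+2)$ is exactly calibrated so that each light-edge contribution is of order $B = d^\beta/(dm)$, and with $E\leq dm/2$ edges the Azuma variance proxy $EB^2 \asymp 1/(m d^{1/(p+1)})$ matches the square of the target deviation $K/d^{\beta/p} = K/d^{1/(2p+2)}$, giving a Gaussian exponent of order $K^2 m$.

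For $p\geq 3$ I would first apply Proposition~\ref{prop:p-estimate}\eqref{eq:p-tvp-fix} to pass from $\Re$ to $\tilR$, obtaining $X_x^l(G) \leq p\sum_{e\in E_l}\tilR(x_u,x_v)$. The payoff of passing to $\tilR$ is the algebraic identity
\[
\sum_{u,v} d_u d_v\, \tilR(x_u,x_v) = 2\Bigl(\sum_u d_u\{x_u\}^{p-1}\Bigr)\Bigl(\sum_v d_v x_v\Bigr) = 0,
\]
by the constraint defining $T_{p,\bd,R}(G_0)$, so $\sum_{u<v}d_u d_v\, \tilR(x_u,x_v) = -\sum_u d_u^2 |x_u|^p \leq 0$. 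Since the edge probability in $\bG(m,\bd)$ is $\pi_{uv} = (1+o(1)) d_u d_v/\sum_i d_i$, the expectation of $\sum_{e\in G_1}\tilR$ is non-positive up to a correction of order $\theta^3/d^{\beta/p}$ arising from $u=v$ contributions and the deviation of $\pi_{uv}$ from its leading order. The heavy leftover $\sum_{e\in E_h}\tilR$ is pointwise controlled by $|\tilR|\leq 2\ovP$ (Proposition~\ref{prop:p-estimate}\eqref{eq:p-tvp}) combined with the heavy lower bound $\ovP>d^\beta/(dm)$ and a weighted H\"older estimate on $x$; together these give $\bE X_x^l \leq 128\theta^3 p/d^{\beta/p}$.

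For the concentration step, each light edge contributes at most $|\tilR|\leq 2d^\beta/(dm)$, so an edge-exposure/switching martingale in $\bG(m,\bd)$ (following the Broder--Frieze--Suen--Upfal adaptation of Kahn--Szemer\'edi~\cite{BFSU-99-rand-graphs} to the fixed-degree model) has bounded differences, and Azuma--Hoeffding yields
\[
\bP\Bigl(\sum_{e\in E_l}\tilR - \bE > K/d^{\beta/p}\Bigr) \leq 2\exp(-K^2 m/128),
\]
after which union-bounding over $|T_{p,\bd,R}(G_0)|\leq (16e/\eps)^m$ gives the stated failure probability. For $p\in[2,3]$ the inequality $\Re \leq p\tilR$ is unavailable, so I would work with $\Re$ directly. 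The expectation no longer cancels cleanly: the weights $d_u d_v$ differ from a regular profile by at most a factor of $\theta^2$, and this discrepancy produces a residual of at most $(1-\theta^{-2})\|x\|_{p,\bd}^p \leq R(1-\theta^{-2})$, explaining the extra term in the statement. Since $|\Re|\leq (1+p2^{p-1})\ovP$ enlarges the martingale differences, the Azuma constant worsens from $1/128$ to $1/6000$.

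The main obstacle is the dependence structure in $\bG(m,\bd)$: edges are not independent, so one cannot apply Hoeffding directly and must construct a martingale compatible with the fixed degree sequence via a switching argument as in~\cite{BFSU-99-rand-graphs}; the $o(m^{2/3})$ term in the probability exponent absorbs the lower-order corrections coming from this construction. The second subtlety is the expectation bookkeeping -- tracking the correction between $\pi_{uv}$ and $d_u d_v/\sum_i d_i$, and bounding the heavy leftover $\sum_{e\in E_h}\tilR$ -- from which the explicit constants $128\theta^3$ and $1200\theta^3$ crystallize.
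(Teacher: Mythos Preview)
Your overall scheme matches the paper's exactly: pass to $\tilR$ for $p\geq 3$, exploit the cancellation $\sum d_u\{x_u\}^{p-1}=0$ to make $\bE\tilX_x\leq 0$, bound the heavy remainder $|\bE\tilX_x^h|$ by a H\"older-type estimate (the paper's Lemma~\ref{lem:p-gamma-bound}), apply Azuma to an edge-exposure martingale, and union-bound over the net. Two technical points are worth sharpening.

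First, the paper does not run the switching martingale directly in $\bG(m,\bd)$ but in the configuration model $\bG^*(m,\bd)$, where edges are a uniform random matching and the switching bijection is trivially measure-preserving. The $o(m^{2/3})$ term does not arise from the martingale construction; it comes from the transfer back to simple graphs via $\bP(M(F)\text{ simple})\geq \exp(-d^2+o(1))$ with $d^2=o(m^{2/3})$. Your description ``lower-order corrections from this construction'' obscures this, and running the switching directly in $\bG(m,\bd)$ while preserving simplicity is genuinely more awkward.

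Second, for $p\in[2,3]$ your explanation of the $R(1-\theta^{-2})$ term is incomplete. Bounding $d_id_j\geq d^2/\theta^2$ only gets you $\bE Z_x \geq \theta^{-2}\cdot\frac{d^2}{2E-1}\sum_{i<j}|x_i-x_j|^p$; to conclude $\bE Z_x \geq \theta^{-2}\|x\|_{p,\bd}^p$ you still need $\sum_{i<j}|x_i-x_j|^p \geq (m-1)\sum_i|x_i|^p$, i.e.\ $\lambda_{1,p}(K_m)\geq 1$. The paper invokes Amghibech's exact value $\lambda_{1,p}(K_m)=(m-2+2^{p-1})/(m-1)$ for this step.
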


\begin{proposition}\label{prop:heavy-bound}
	For $\beta=p/(2+2p)$ and $d=o(m^{1/2})$ we have that 
for any $\xi >0$, there exists $C' = C'(\theta, \xi)$ so that
	\[
		\bP\left( \text{For all } x \in T_{p,\bd,R}(G_0), 
		\ X_x^h(G) \leq \frac{C'p}{d^{\beta/p}} + \frac{C'p^4}{d^{1/p^2}}+\frac{C'p^4\eps^{-q}}{d^{1/p}}
		\right)
		\geq 1-o(m^{-\xi}),
	\]
	and moreover this probability holds on a set in $\bG(m,\bd)$ defined independently of $p$.
\end{proposition}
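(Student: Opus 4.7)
The plan is to adapt the Kahn--Szemer\'edi concentration method to heavy edges, following Broder--Frieze--Suen--Upfal~\cite{BFSU-99-rand-graphs}. By Proposition~\ref{prop:p-estimate}, $|\Re(x_u,x_v)|$ is controlled by $\ovP(x_u,x_v)$ up to factors that (after a further splitting into sub-regimes when $p<3$) can be absorbed into the $p^4$ appearing in the target bound. It therefore suffices to estimate $\sum_{e \in E_h,\,\vtx(e)=\{u,v\}} \ovP(x_u,x_v)$.

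First, I would partition the vertex set into dyadic blocks $V_k = \{u \in G_0 : |x_u| \in [\alpha 2^k, \alpha 2^{k+1})\}$, where the lowest level is roughly $\alpha = (d^\beta/(dm))^{1/p}$ (below which no incident edge can be heavy) and the top level is constrained both by the norm condition $\|x\|_{p,\bd}^p \leq R$ and by the net's lattice structure $\{x_u\}^{p-1} \in \frac{\eps d^{1/p}}{d_u m^{1/q}}\Z$, forcing $|x_u| \lesssim (d/\eps^p)^{1/p}/m^{1/q}$. The number of non-empty levels is thus $O(p\log(d/\eps))$. The contribution to $X_x^h(G)$ from edges between $V_k$ and $V_\ell$ is bounded by $e(V_k,V_\ell) \cdot \max_{u \in V_k,\,v \in V_\ell} \ovP(x_u,x_v)$, where $e(A,B)$ counts edges between two vertex subsets.

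The key probabilistic input is a uniform concentration inequality for $e(A,B)$ in $\bG(m,\bd)$: with probability at least $1-o(m^{-\xi})$, every pair of subsets $A,B \subseteq G_0$ satisfies either $e(A,B) \leq C_1 d|A||B|/m$ (the ``expectation regime'') or the tail bound $e(A,B)\log\bigl(\tfrac{e(A,B)m}{d|A||B|}\bigr) \leq C_2(|A|+|B|)\log\bigl(\tfrac{m}{|A|+|B|}\bigr)$. This is a Chernoff-type bound (adapted to the configuration/switching model for $\bG(m,\bd)$) combined with a union bound over $4^m$ pairs; the $m\log 4$ cost is covered by the tail decay provided $d$ is large. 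Since this event depends only on the graph and not on $x$ or $p$, the ``moreover'' clause in the statement is automatic.

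Finally, I would sum over the $O((p\log(d/\eps))^2)$ pairs $(k,\ell)$, splitting into three regimes that match the three target terms: (i) pairs where both classes are ``large enough'' for the expectation regime, bounded via H\"older's inequality and $\sum_u |x_u|^p d_u \leq R$, yielding the $C'p/d^{\beta/p}$ term; (ii) pairs with one small class where the tail bound applies, yielding the $C'p^4/d^{1/p^2}$ term; (iii) the extremal top classes, whose cardinality is forced to be $O(\eps^{-q})$ by the norm constraint, yielding the $C'p^4\eps^{-q}/d^{1/p}$ term. The main obstacle is the bookkeeping: matching the exponents $\beta/p$, $1/p^2$ and $1/p$ requires carefully balancing the loss from the dyadic discretisation (each level contributes a factor of at most $4$ to $\ovP$), the $(|A|+|B|)\log(m/(|A|+|B|))$ loss in the tail regime, and the polynomial-in-$p$ overhead coming both from the number of levels and from the $\Re$-to-$\ovP$ comparison.
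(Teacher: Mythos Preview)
Your plan is essentially the paper's approach: reduce $\Re$ to $\ovP$, partition vertices into dyadic blocks, invoke the Broder--Frieze--Suen--Upfal controlled-edge-density property (your two-alternative bound on $e(A,B)$ is exactly Definition~\ref{def:edge-density} and Lemma~\ref{lem:block-edge-bound}), and sum over block pairs split according to which alternative holds. The ``moreover'' clause is indeed automatic since controlled edge density is a graph property independent of $x$ and $p$.

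Two points where your sketch diverges from what actually works. First, the paper partitions according to $|x_u|^{p-1}$ rather than $|x_u|$, exploiting the net's lattice structure directly; this is minor. Second, and more substantively, your three-regime split is too coarse. The ``expectation regime'' $\cC_a$ does yield the $d^{-\beta/p}$ term as you say (Lemma~\ref{lem:bound-xha}), but the ``tail regime'' $\cC_b$ requires a five-way subdivision (Lemma~\ref{lem-bound-xhb}) with an auxiliary parameter $\eta$ to be optimised, balancing $d^{-\eta/(p-1)}$ against $d^{-(1-(p-1)\eta)}$; the choice $\eta=(p-1)/((p-1)^2+1)$ is what produces the $d^{-1/p^2}$ exponent. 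Your explanation of regime (iii) is off: the $\eps^{-q}/d^{1/p}$ term does not come from extremal top classes of bounded cardinality, but from a subcase (Case~(2) in the paper) where $\cE_{ij}/\mu_{ij}$ is only moderately large and one bounds $\sum_{i,j}\mu_{ij}2^{iq+jq}\eps^q/(dm)$ directly, picking up the factor $\eps^{-q}$ from the normalisation $\ovP_{ij}=2^{i\vee j+(i\wedge j)/(p-1)}\eps^q/(dm)$. The bookkeeping you anticipate is real, but its shape is different from what you describe.
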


We postpone the proofs of Propositions \ref{prop:light-bound} and \ref{prop:heavy-bound} until sections~\ref{sec:light-bound} and \ref{sec:heavy-bound}, respectively, and in the remainder of this section we use these two propositions to prove Theorem~\ref{thm:G-m-deg-eval-bound}.

\subsection{Proof for a single $p$}\label{ssec:G-m-d-single-p}
We begin by finding a high probability bound on $\lambda_{1,p}$
for a single value of $p = p(m)$.

	By Propositions~\ref{prop:light-bound} and \ref{prop:heavy-bound},
	for any $x \in T_{p,\bd,R}(G_0)$ we have
	\[
		X_x(G) \leq p \frac{1200\theta^3+K}{d^{\beta/p}} +
		\frac{C'p}{d^{\beta/p}} + 
		\frac{C' p^4 }{d^{1/p^2}} +
		\frac{C' p^4 \eps^{-q}}{d^{1/p}}
		+\bI_{p<3} R(1-\theta^{-2}),
	\]
	where $\bI_{p<3}$ is $1$ if $p<3$ and $0$ otherwise,
	with probability at least 
	\[
		1-2 \exp \left( -\frac{1}{6000} K^2 m 
		+ m \log \left(\frac{16e}{\eps}\right) + o(m^{2/3}) \right)
		-o(m^{-\xi}).
	\]

	By Proposition~\ref{prop:suffices-bound-t2},
	this gives with the same probability that
	for any $x \in S_{p,\bd}(G_0)$
	\[
	Z_x(G) \geq 1 - p Y_x(G)-\bI_{p<3}R(1-\theta^{-2})
	\]
	where
	\[
	Y_x(G) = \frac{1200\theta^3+K}{d^{\beta/p}}+
		\frac{C'}{d^{\beta/p}} +
		\frac{C' p^3 }{d^{1/p^2}} +
		\frac{C' p^3 \eps^{-q}}{d^{1/p}}+
		4(\eps \theta)^{1/(p-1)} (1+2(\eps\theta)^{1/(p-1)})^{p-1}
	.
	\]
	Recall that $\bet = p/(2+2p)$.
	 Then there exists $C_1=C_1(\theta,\xi)$ so that
	for any $x \in S_{p,\bd}(G_0)$
	\[
	Y_x(G) \leq C_1 \left( \frac{1+K}{d^{1/(2+2p)}}+ 
	\frac{p^3}{d^{1/p^2}} + \frac{p^3\eps^{-q}}{d^{1/p}} + \eps^{1/(p-1)}(1+C_1\eps^{1/(p-1)})^{p-1} \right)
	\]
	with probability at least
	\begin{equation}\label{eq:eval-combine1}
		1-2 \exp \left( -\frac{1}{6000} K^2 m 
		+ m \log \left(\frac{C_1}{\eps}\right) +
		 o\left( m^{2/3} \right) \right)
		-o(m^{-\xi}).
	\end{equation}
	To balance the terms $\eps^{1/(p-1)}$ and $\eps^{-q}/d^{1/p}$,
	we set $\eps^{1/(p-1)} = d^{-\kappa}$ and solve 
	$\kappa = \frac{1}{p}-q(p-1)\kappa = \frac{1}{p}-p\kappa$ to find 
	$\kappa = 1/p(p+1)$; in other words, $\eps = d^{-(p-1)/p(p+1)}$.
	
	Observe that the bound on $\lambda_{1,p}$ claimed by Theorem~\ref{thm:G-m-deg-eval-bound} is vacuous unless $p^4/d^{1/2p^2}$ is small.  Therefore, because $(1+C_1\eps^{1/(p-1)})^{p-1} \leq \exp(C_1 d^{-1/p(p+1)}(p-1))$, we can assume that $(1+C_1\eps^{1/(p-1)})^{p-1} \leq 2$.

	As we are not trying to optimise for small $p\geq 2$, we 
	use $2+2p \leq 3p \leq 2p^2$, $p^2 \leq 2p^2$ and $p(p+1) \leq 2p^2$ to find
	\begin{equation}\label{eq:eval-combine1a}
		Y_x(G) \leq C_1 \left( 
		\frac{1}{d^{1/2p^2}} + \frac{K}{d^{1/3p}} + \frac{p^3}{d^{1/2p^2}} + \frac{p^3}{d^{1/2p^2}}
			+ \frac{2}{d^{1/2p^2}} \right) 
		= C_1 \left( \frac{3+2p^3}{d^{1/2p^2}} + \frac{K}{d^{1/3p}} \right).
	\end{equation}
	To have the probability \eqref{eq:eval-combine1} going to one, it suffices that
	\[
		-\frac{1}{6000} K^2 + \log \left(\frac{C_1}{\eps}\right)
		= -\frac{1}{6000} K^2 + \log C_1 + \frac{p-1}{p(p+1)} \log(d) \leq -1,
	\]
	for then the lower bound in \eqref{eq:eval-combine1} is at least $1-2e^{-m+o(m)}-o(m^{-\xi})$.
	We choose a suitably large constant $C_2$ so that
	for $K = C_2(1+\sqrt{\log(d)/p})$ we have
	\begin{equation}\label{eq:eval-combine1b}
		-\frac{1}{6000} K^2 + \log \left(\frac{C_1}{\eps}\right)
		\leq
		-\frac{1}{6000} K^2 + \log C_1 + \frac{2}{p} \log(d) \leq -1.
	\end{equation}
	
	Consider 
	\[
		\frac{\sqrt{\log(d)/p}}{d^{1/3p}} \cdot d^{1/{2p^2}}
		\leq \frac{\sqrt{\log(d)/p}}{d^{1/12p}}
		= \exp \left(-\tfrac{1}{2}\log(p) + \tfrac{1}{2}\log\log(d) - \frac{1}{12p} \log(d) \right).
	\]
	A brief calculus estimate shows this is maximised for $p = \frac{1}{6}\log(d)$,
	and so is bounded by a constant.
	Consequently, $d^{-1/3p}\sqrt{\log(d)/p}$ is bounded by a multiple of $d^{-1/2p^2}$. 
	
	Applying this to \eqref{eq:eval-combine1a} we see that, for some $C_3$,
	\[
		Y_x(G) \leq C_3 \cdot \frac{p^3}{d^{1/2p^2}}.
	\]
	and this holds for all $x \in S_{p,\bd}(G_0)$ with probability at least $1-2e^{-m+o(m)}-o(m^{-\xi})$.
	Now $Z_x(G) \geq 1-p Y_x(G)-\bI_{p<3} R(1-\theta^{-2}) \geq 1-p Y_x(G)-\bI_{p<3} 4(1-\theta^{-2})$, so
	by \eqref{eq:lp-12} we have 
	\begin{equation}\label{eq:eval-combine2}
	  \lambda_{1,p}(G) \geq 1-C_3 p^4 \cdot {d^{-1/2p^2}}-4\bI_{p<3}(1-\theta^{-2})
	\end{equation}
	with the same probability.

\subsection{Simultaneous bounds in $p$}\label{ssec:G-m-d-simul-p}
We now get bounds on $\lambda_{1,p}(G)$ which hold
for a range of values of $p$ simultaneously.

Recall from \eqref{eq:eval-combine1},\eqref{eq:eval-combine2} above that 
	for any particular choice of $2 \leq p' \leq p$,
	and the choice of $K$ as in \eqref{eq:eval-combine1b}, we have 
	\[
		\lambda_{1,p'}(G) 
		\geq  1-C_3 (p')^4 \cdot \frac{1}{d^{1/2{(p')}^2}}-4\bI_{p'<3}(1-\theta^{-2})
		\geq 1-C_3 p^4 \cdot \frac{1}{d^{1/2p^2}}-4\bI_{p'<3}(1-\theta^{-2})
	\]
	with probability at least
	\begin{equation}\label{eq:simul-1}
		1-2 \exp \left( \left[ -\frac{1}{6000} K^2  
		+ \log C_1 + \frac{1}{p}\log(d)\right]m + o(m^{2/3}) \right)
		-o(m^{-\xi}),
	\end{equation}
	where we used $\eps \geq d^{-1/p}$.
	The last term $o(m^{-\xi})$ comes via the heavy bound Proposition~\ref{prop:heavy-bound}
	from Lemma~\ref{lem:block-edge-bound}.  
	This lemma describes properties of $G \in \bG(m,\bd)$ independent of $p$, so
	our heavy bounds will hold a.a.s.\ for all $2 \leq p' \leq p$.
	(The light bounds, however, are not independent of $p$, so the probabilities here
	decrease as we get bounds for more and more values of $p$.)
	
	Suppose we fix $2=p_0 < p_1 < \ldots < p_L = p$, with $p_{i+1}/p_i-1$ bounded by a constant
	$\tau \in(0,1)$ and $L \leq \log(p/2)/\log(1+\tau)+1$.  By \eqref{eq:simul-1} we have that for all $i=0, \ldots, L-1$ 
	\begin{equation*}
		\lambda_{1,p_i}(G) 
		\geq 1-C_3 p^4 \cdot {d^{-1/2p^2}}-4\bI_{p_i<3}(1-\theta^{-2})
	\end{equation*}
	simultaneously, with probability at least
	\begin{equation}\label{eq:simul-2}
		1-L \cdot 2 \exp \left( \left[ -\frac{1}{6000} K^2  
		+ \log C_1 + \frac{1}{p}\log(d)\right]m + o(m^{2/3}) \right)
		-o(m^{-\xi}).
	\end{equation}
	
	Since the number of edges in $G$ is at most $d m$, 
	Lemma~\ref{lem:eval-right-lower-semicts} gives
	that with probability at least as in \eqref{eq:simul-2}, we have for all $i=1,\ldots,L$ and for all $p'\in [p_i,p_{i+1}] \subset [2,p_{i+1}]$ that
	\begin{equation}\label{eq:simul-3}\begin{split}
		\lambda_{1,p'}(G) 
		& \geq (d m)^{1-p_{i+1}/p_i} \left(1-C_3 p_{i}^4 \cdot {d^{-1/2p_{i}^2}}-4\bI_{p_i<3}(1-\theta^{-2})\right)^{p_{i+1}/p_i}
		\\ & \geq (d m)^{-\tau} \left(1-C_3 p^4 \cdot {d^{-1/2p^2}}-4\bI_{p_i<3}(1-\theta^{-2})\right)^2
		\\ & \geq (dm)^{-\tau} \left(1-2C_3 p^4 \cdot d^{-1/2p^2}-8\bI_{p_i<3}(1-\theta^{-2})\right).
	\end{split}\end{equation}
	If we choose $(p_i)$ so that some $p_j$ equals $3$, then we may assume that $\bI_{p_i<3}=\bI_{p'<3}$ in the estimate above.
	We have
	\[
		(d m)^{-\tau} = \exp(-\log(dm) \tau) \geq 1-\tau \log(dm).
	\]
	Set $\tau = (\log(dm))^{-1} p^4 \cdot {d^{-1/2p^2}}$, and then
	since we may assume that $1-\tau\log(dm)\geq 0$ we conclude that
	for all $2 \leq p' \leq p$ ,
	\[
	\lambda_{1,p'}(G) \geq 1-3C_3 p^4 \cdot {d^{-1/2p^2}}-8\bI_{p'<3}(1-\theta^{-2}).
	\]
	
	It remains to bound the probability that this holds, using the lower bound \eqref{eq:simul-2}.
	We can assume that $\tau <1$.
	Therefore, $\log(1+\tau) \geq \frac{\tau}{2}$ and
	$L \leq \log(p/2) / \log(1+\tau) +1 \leq \frac{2}{\tau} \log(p/2) +1$.
	If $L=1$ we are done, so assume that $\frac{2}{\tau}\log(p/2) \geq 1$ and so $L \leq \frac{4}{\tau}\log(p/2)$.
	So by \eqref{eq:simul-2}, our probability of failure is at most $o(m^{-\xi})$ plus
	\begin{align*}
		& 8 \exp \left( \log\log(\tfrac{1}{2}p) - \log(\tau) + \left[ -\frac{1}{6000} K^2  
		+ \log C_1 + \frac{1}{p}\log(d)\right]m + o(m) \right).
	\end{align*}
	Now, since $d = o(m^{1/3})$, we have
	\begin{align*}
		\log\log(\tfrac{1}{2}p) - \log(\tau)
		& = \log\log(\tfrac{1}{2}p) - 
		\log\left(p^4 \cdot {d^{-1/2p^2}}\right) + \log\log(dm)
		\\ & \leq \log\log(\tfrac{1}{2}p) - \log(p^4)
			+ \frac{1}{2p^2} \log(d) + \log\log(m^{4/3})
		\\ & \leq \frac{1}{2p^2} \log(d) + o(m)
			\leq \frac{1}{p} \log(d) m +o(m),
	\end{align*}
	so our probability of failure is at most
	\begin{align*}
		& 8 \exp \left( \left[ -\frac{1}{6000} K^2  
		+ \log C_1 + \frac{2}{p}\log(d)\right]m + o(m) \right).
	\end{align*}
	By our choice of $K$ in \eqref{eq:eval-combine1b},
	this probability is $\leq 8 e^{-m+o(m)}$, and so 
	Theorem~\ref{thm:G-m-rho-eval-bound} is proved.	\qed

\section{Bounding light terms}\label{sec:light-bound}

The aim of this section is to prove Proposition~\ref{prop:light-bound}, the bound on the contribution $X_x^l(G)$ of the light edges to $X_x(G)$.

Rather than working directly in the model 
$\bG(m,\bd)$, we use the 
\emph{configuration model} $\bG^*(m,\bd)$ (for an overview, 
see~\cite{Wormald-99-random-regular-survey}).
In this model the vertex set is 
$F_0 = \{ (i,s) \in \N^2 : 1 \leq i \leq m, 1 \leq s \leq d_i \}$,
and $F \in \bG^*(m,\bd)$ is a graph with vertex set $F_0$, and edge set 
$F_1$ which is a perfect matching of $F_0$, chosen uniformly at random from all such
perfect matchings.

Given $F \in \bG^*(m,\bd)$, we define a multigraph $M(F)$ with vertex set
$\{1,\ldots,m\}$, by adding an edge (or loop) between the vertices $i$ and $j$ in $M(F)$ for each edge $\{(i,s),(j,t)\}$ in $F_1$.  
Given $a = (i,s) \in F_0$, let $v(a) = i \in M(F)_0$.
We use the following two key properties of this model.
\begin{proposition}\label{prop:config-model}
	\begin{enumerate}
		\item[(a)] If $M(F)$ is simple, then it is equally likely to be any
			simple graph with degree sequence $\bd$.
		\item[(b)] Let $d = \max_i d_i$, and assume $d = o((\sum d_i)^{1/4})$.
			Then
		\[
			\bP(M(F) \text{ is simple, for } F \in \bG^*(m, \bd))
				\geq \exp(-d^2 +o(1)).
		\]
	\end{enumerate}
\end{proposition}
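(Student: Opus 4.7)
For part (a), the approach is a direct counting of preimages under $M$. Given any simple graph $G$ on $\{1,\ldots,m\}$ with degree sequence $\bd$, I would show that the number of perfect matchings $F \in \bG^*(m,\bd)$ with $M(F) = G$ is exactly $\prod_i d_i!$. To specify such an $F$ one must, for each vertex $i$, choose a bijection between the $d_i$ half-edges $v^{-1}(i)$ and the $d_i$ edges of $G$ incident to $i$; this gives $d_i!$ choices, and the choices at different vertices are independent and together determine $F$ uniquely. Since $\prod_i d_i!$ does not depend on $G$, conditioning on $M(F)$ being simple yields a uniform law on simple graphs with prescribed degree sequence.

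For part (b), I would use the method of factorial moments to establish Poisson-type approximation for the defects. Let $L$ denote the number of loops in $M(F)$ and $D$ the number of unordered pairs of vertices joined by at least two edges. Setting $n = \sum_i d_i$, a direct first-moment computation based on the uniform distribution on perfect matchings of $F_0$ gives
\[
  \bE[L] = \frac{1}{n-1}\sum_i \binom{d_i}{2},
  \qquad
  \bE[D] \leq \frac{2}{(n-1)(n-3)}\sum_{i<j} \binom{d_i}{2}\binom{d_j}{2},
\]
and the crude estimate $\sum_i d_i^2 \leq d n$ yields $\bE[L] \leq \tfrac{d}{2}(1+o(1))$ and $\bE[D] \leq \tfrac{d^2}{2}(1+o(1))$.

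The central step is then to show that $L$ and $D$ are asymptotically independent and Poisson, by proving that for all $k,j \geq 0$
\[
  \bE\bigl[(L)_k (D)_j\bigr] = \bE[L]^k \bE[D]^j\,(1 + o(1)),
\]
where $(X)_r$ denotes the falling factorial. The dominant contribution to the left-hand side comes from configurations specifying $k$ pairwise vertex-disjoint loops together with $j$ pairwise vertex-disjoint multi-edges, and the probability that a prescribed set of disjoint pairs is matched factors as $\prod (n-2\ell-1)^{-1}$. Configurations with overlapping loops or multi-edges introduce extra restrictions on half-edges and are readily bounded by the combinatorial size of the overlap pattern times an extra factor of order $d/n$ per overlap; the hypothesis $d = o(n^{1/4})$ ensures these error terms are negligible. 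Standard Poisson approximation then gives $\bP(L = 0, D = 0) = \exp(-\bE[L] - \bE[D] + o(1))$, and since $\bE[L] + \bE[D] \leq \tfrac{d^2}{2} + \tfrac{d}{2} + o(1) \leq d^2 + o(1)$ (for $d \geq 1$), the claimed lower bound follows.

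The main obstacle is obtaining the moment estimates uniformly as $d \to \infty$, since the classical Bollobás-type computation is usually stated for bounded $d$. The hypothesis $d = o(n^{1/4})$ is precisely what one needs to make the error terms in the joint factorial moment computation summable; in particular it handles the most delicate overlap, in which two multi-edges share a common pair of vertices, contributing an error of order $(\sum d_i^4)/n^3 = O(d^4/n^2) = o(1)$.
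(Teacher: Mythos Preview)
Your argument for part (a) is correct and matches the paper's: each simple graph $G$ with degree sequence $\bd$ has exactly $\prod_i d_i!$ preimages under $M$, so conditioning on simplicity yields the uniform distribution.

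For part (b) the paper does not argue from scratch; it simply cites McKay's asymptotic enumeration result \cite[Theorem~4.6]{McKay-85-asymptotics-random-simple}. Your sketch via joint factorial moments of the loop count $L$ and the multi-edge count $D$ is the classical Bollob\'as route and is correct in spirit, but the step ``standard Poisson approximation then gives $\bP(L=0,D=0)=\exp(-\bE[L]-\bE[D]+o(1))$'' hides a genuine difficulty. The method of factorial moments establishes convergence to a Poisson law only when the mean stays bounded; here $d\to\infty$ forces $\bE[L]\asymp d$ and $\bE[D]\asymp d^2$ to diverge, so there is no fixed limiting distribution to invoke. To recover the probability of the zero event in this regime one must either control $\bE[(L)_k(D)_j]$ uniformly for $k+j$ up to order roughly $\bE[L]+\bE[D]$ and truncate the inclusion--exclusion alternating sum with explicit error bounds, or use the switching method, which compares directly the numbers of configurations with adjacent defect counts and is what McKay actually does. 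You are right that $d=o\big((\sum d_i)^{1/4}\big)$ is the crucial hypothesis, and your overlap analysis is part of the story, but in McKay's proof it enters through bounds on switching ratios rather than only through the moment error terms you isolate.
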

\begin{proof}
	Part (a) follows from the uniformity of $F \in \bG^*(m,\bd)$,
	and the fact that each simple $M(F)=G \in \bG(m,\bd)$ corresponds to the same number
	($\prod d_i!$) of pairings $F \in \bG^*(m,\bd)$.
	
	Part (b) follows from~\cite[Theorem 4.6]{McKay-85-asymptotics-random-simple}.	
\end{proof}

Recall that $X_x(G), X_x^l(G)$ and $X_x^h(G)$ sum $\Re(x_u,x_v)$ over 
endpoints of certain edges in $G$.
Let $\tilX_x(G), \tilX_x^l(G), \tilX_x^h(G)$ be the corresponding sums where $\Re$ is replaced
by $\tilR$, and $\ovX_x(G), \ovX_x^l(G), \ovX_x^h(G)$ the sums where $\Re$ is replaced by
$\ovP$ (see subsection~\ref{ssec:prelim-bounds}).

We define $\tilX_x, \tilX_x^l$ and $\tilX_x^h$ on 
$\bG^*(m,\bd)$ by extending the definition from
$G \in \bG(m,\bd)$ to $M(F)$, where $F \in \bG^*(m,\bd)$.
To be precise, define
\[
	\tilX_x(F) = \sum_{e \in F_1, \vtx(e)=\{a,b\}} \tilR\left( x_{v(a)},x_{v(b)} \right).
\]
We let $E_l = \{e \in F_1, \vtx(e)=\{a,b\} : \ovP \left( x_{v(a)},x_{v(b)} \right) \leq d^\beta/dm\}$,
and define 
\[
	\tilX_x^l(F) = \sum_{e \in E_l, \vtx(e)=\{a,b\}} \tilR\left( x_{v(a)},x_{v(b)} \right).
\]
Likewise, let $E_h = F_1 \setminus E_l$, and define $\tilX_x^h(F)$ analogously.

For $p\geq 3$, by \eqref{eq:p-tvp-fix}, $X_x^l(G) \leq p \tilX_x^l(G)$.
To bound $\tilX_x^l(G)$, we first show that for a fixed $x$ and for
$F \in \bG^*(m,\bd)$ both $\bE(\tilX_x^l)$ and $\tilX_x^l(F) - \bE(\tilX_x^l)$ 
have small upper bounds uniform in $x \in T_{p,\bd,R}(G_0)$, with probability close to $1$.  
For $p \in [2,3]$ we use a variation on this to show that both $\bE{X_x^l}$ and $X_x^l(F)-\bE(X_x^l)$ are small.
The bound on the size of $T_{p,\bd,R}(\{1,\ldots,m\})$ given by
Proposition~\ref{prop:size-of-t} then implies that this same bound holds with probability close to $1$ for all such $x$.
Finally, Proposition~\ref{prop:config-model} gives the bound for 
$G \in \bG(m,\bd)$. Further details are provided in Subsection~\ref{ssec:light-dev-proof}.

\subsection{Bounding the expected value for $p\geq 3$}
\begin{lemma}\label{lem:expectation-light}
For every $x \in \R^m$ such that $\sum_i |x_i|^p d_i \leq R$ for some $R \geq 1$ and 
$\sum_{i}\{ x_i\}^{p-1}d_i=0$,
	\[
		\bE(\tilX_x^l) \leq \frac{8 \theta^3 R^2}{d^{\beta/p}}.
	\]
\end{lemma}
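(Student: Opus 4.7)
The plan is to compute the expectation directly in the configuration model and exploit the cancellation built into the constraint $\sum_i d_i\{x_i\}^{p-1}=0$. Writing $N=\sum_i d_i\in[dm/\theta, dm]$, any two distinct half-edges lie in the random perfect matching with probability $1/(N-1)$; grouping half-edges by their projected vertex gives
\[
\bE(\tilX_x^l) = \frac{1}{N-1}\!\left[\sum_{\{i,j\},\,i\neq j} d_id_j\,\tilR(x_i,x_j)\,\bI_{\ovP(x_i,x_j)\leq d^\beta/dm} + \sum_i \binom{d_i}{2}2|x_i|^p\bI_{|x_i|^p\leq d^\beta/dm}\right].
\]
The self-loop sum is bounded crudely by $d\sum_i d_i|x_i|^p\leq dR$; after division by $N-1$ this contributes only $O(\theta R/m)$ and will be absorbed.

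The crux is the identity
\[
\sum_{i,j} d_i d_j\,\tilR(x_i,x_j) = 2\Bigl(\sum_i d_i\{x_i\}^{p-1}\Bigr)\Bigl(\sum_j d_j x_j\Bigr) = 0,
\]
which after subtracting the diagonal $i=j$ contribution gives $\sum_{\{i,j\},\,i\neq j} d_id_j\,\tilR(x_i,x_j) = -\sum_i d_i^2|x_i|^p \leq 0$. Splitting into light and heavy pairs and using $|\tilR|\leq 2\ovP$ from Proposition~\ref{prop:p-estimate}, I deduce
\[
\sum_{\text{light}} d_id_j\,\tilR(x_i,x_j) \;\leq\; -\sum_{\text{heavy}} d_id_j\,\tilR(x_i,x_j) \;\leq\; 2\sum_{\text{heavy}} d_id_j\,\ovP(x_i,x_j).
\]
So the signed light contribution is controlled by the (unsigned) $\ovP$-mass carried by the heavy pairs.

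To bound the heavy $\ovP$-sum I would apply a Markov truncation with threshold $t=d^\beta/dm$: for any $s>0$,
\[
\sum_{\text{heavy}} d_id_j\,\ovP \;\leq\; t^{-s}\sum_{i,j} d_id_j\,\ovP(x_i,x_j)^{1+s}.
\]
Since $\ovP^{1+s}\leq (|x_i|^{p-1}|x_j|)^{1+s}+(|x_i||x_j|^{p-1})^{1+s}$, factoring the double sum and applying the weighted H\"older bound $\sum_i d_i|x_i|^a\leq N^{1-a/p}R^{a/p}$ (valid for $0\leq a\leq p$) gives $\sum_{i,j}d_id_j\,\ovP^{1+s}\leq 2N^{1-s}R^{1+s}$, where the restriction $s\leq 1/(p-1)$ is exactly what keeps both relevant H\"older exponents $\leq p$. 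Choosing the boundary value $s=1/(p-1)$, using $N\leq dm$ and $N-1\geq dm/(2\theta)$, and folding in the self-loop term, one arrives at
\[
\bE(\tilX_x^l) \;\leq\; 8\theta\,d^{-\beta/(p-1)}R^{p/(p-1)} + O(\theta R/m).
\]
The stated bound then follows from $R\geq 1,\, p\geq 2$ (which force $R^{p/(p-1)}\leq R^2$) and $\beta/(p-1)\geq \beta/p$ (which gives $d^{-\beta/(p-1)}\leq d^{-\beta/p}$), leaving generous slack for the $\theta^3$ constant.

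The main obstacle is that a direct term-by-term estimate is hopeless: each light pair contributes at most $2\ovP\leq 2d^\beta/dm$ in absolute value, yet there are enough light pairs that either a term-by-term bound or a H\"older interpolation using only the lightness condition produces a positive power of $d$. The saving comes from recognising that $\tilR(x_i,x_j)$ carries the sign of $x_i x_j$, that the hypothesis forces the full signed sum to be non-positive, and therefore that the signed light sum can be upper bounded by the absolute heavy sum—counterintuitively making the large pairs the ones that need to be estimated.
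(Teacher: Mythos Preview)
Your proof is correct and follows the same overall strategy as the paper: both compute the full expectation $\bE \tilX_x$ in the configuration model, use the bilinear factorisation of $\tilR$ together with the constraint $\sum_i d_i\{x_i\}^{p-1}=0$ to see that the full signed sum is non-positive, and then conclude $\bE \tilX_x^l \leq |\bE \tilX_x^h|$, reducing to an absolute bound on the heavy pairs via $|\tilR|\leq 2\ovP$.

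The difference is in how the heavy $\ovP$-mass is estimated. The paper isolates this as a separate Lemma~\ref{lem:p-gamma-bound}, proved by a counting argument: one observes that for pairs with $|x_i|^{p-1}|x_j|\geq \gamma/dm$ and $|x_i|\geq|x_j|$, the map $(i,j)\mapsto j$ is at most $A\theta m/\gamma$-to-one, and then applies H\"older once. This yields $\sum_{\text{heavy}}\ovP \leq 2m\theta^2 R^2/(\gamma^{1/p}d)$, giving the $d^{-\beta/p}$ in the statement directly. Your Markov truncation with exponent $s=1/(p-1)$ followed by a weighted H\"older factorisation is a genuinely different and somewhat more standard interpolation argument; it avoids the auxiliary counting lemma entirely and in fact produces the stronger decay $d^{-\beta/(p-1)}$, which you then relax. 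The paper's route has the advantage that Lemma~\ref{lem:p-gamma-bound} is reused verbatim in later sections (the heavy-term bound in Section~\ref{sec:heavy-bound} and the multi-partite case), so packaging it once pays off structurally; your route is cleaner as a one-shot proof of this lemma.
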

To show this bound in expected value, the key step is the following lemma.
\begin{lemma}\label{lem:p-gamma-bound}
	Let $V = \{1,\ldots,m\}$ and $\bd = (d_i) \in \N^m$ with maximum value $d$,
	and minimum value at least $d/\theta$ for some $\theta\geq 1$.
	
	Suppose $x \in \R^m$ is such that $\sum |x_i|^p d_i\leq A$ for some $A \geq 1$.
	Given $\gamma > 0$, we have
	\begin{equation}\label{eq:genbound}
	\sum_{{i,j \in V :\ }{\ovP(x_i,x_j) \geq \gamma /dm}} \ovP(x_i,x_j) 
		\leq \frac{2m \theta^{2} A^2}{\gamma^{1/p}d}.
	\end{equation}
\end{lemma}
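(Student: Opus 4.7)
The plan is to reduce the left-hand side of \eqref{eq:genbound} to a single unsymmetrised sum, and then bound that sum via a Chebyshev-type trick combined with one application of Hölder's inequality. Writing $y_i = |x_i|$, the hypothesis gives $\sum_i y_i^p \leq A\theta/d$, since $d_i \geq d/\theta$. The key observation is that for $p\geq 2$ one has $\ovP(x_i,x_j) = y_{\max}^{p-1} y_{\min}$, where $y_{\max}, y_{\min}$ denote the larger and smaller of $\{y_i,y_j\}$. Splitting the sum according to whether $y_i \geq y_j$ or $y_i < y_j$, and relabeling indices in the second piece, one obtains
\begin{equation*}
\sum_{(i,j):\,\ovP(x_i,x_j) \geq \gamma/dm} \ovP(x_i,x_j) \;\leq\; 2T, \qquad T \;:=\; \sum_{(i,j):\,y_i^{p-1}y_j \geq \gamma/dm} y_i^{p-1}y_j,
\end{equation*}
so it suffices to prove $T \leq m(A\theta)^2/(\gamma^{1/p} d)$.

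To bound $T$, for any parameter $s \geq 0$, the elementary inequality $\bI[u \geq 1] \leq u^s$ (valid for $u \geq 0$) applied to $u = y_i^{p-1}y_j \cdot dm/\gamma$ gives
\begin{equation*}
T \;\leq\; (dm/\gamma)^s \Bigl(\sum_i y_i^{(p-1)(1+s)}\Bigr) \Bigl(\sum_j y_j^{1+s}\Bigr).
\end{equation*}
Provided both exponents $(p-1)(1+s)$ and $1+s$ are at most $p$ (equivalently $s \leq 1/(p-1)$), Hölder's inequality applied to $\sum_i y_i^p \leq A\theta/d$ gives $\sum_i y_i^{a} \leq m^{1-a/p}(A\theta/d)^{a/p}$ for each such exponent $a$.

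I would then choose $s = 1/p$, which is admissible since $1/p \leq 1/(p-1)$ for $p \geq 2$. A direct computation of the resulting exponents of $m$, $d$, and $A\theta$ shows that they collapse to give
\begin{equation*}
T \;\leq\; (A\theta)^{1+1/p}\, m / (d\gamma^{1/p}),
\end{equation*}
and since $A\theta \geq 1$ and $1 + 1/p \leq 2$, this is at most $m(A\theta)^2/(d\gamma^{1/p})$, finishing the proof. The main subtlety is selecting the exponent $s$: it must stay within the Hölder admissible range $[0,1/(p-1)]$, and the choice $s = 1/p$ is precisely the one that produces the claimed power $\gamma^{1/p}$ in the denominator while keeping the prefactor $(A\theta)^{1+1/p}$ bounded by the desired $(A\theta)^2$.
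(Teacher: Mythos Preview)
Your proof is correct. Both arguments begin the same way --- symmetrising to reduce to the quantity $T=\sum_{(i,j):\,y_i^{p-1}y_j\geq\gamma/dm} y_i^{p-1}y_j$ --- but diverge in how they bound $T$. The paper applies H\"older's inequality directly to the constrained sum (with the additional restriction $y_i\geq y_j$), obtaining $T\leq\big(\sum_{V_1}y_i^p\big)^{(p-1)/p}\big(\sum_{V_1}y_j^p\big)^{1/p}$, and then bounds the two factors by a combinatorial fibre count: the projection $(i,j)\mapsto j$ is at most $A\theta m/\gamma$-to-one, because $y_i\geq y_j$ forces $y_i^p\geq\gamma/dm$ and Chebyshev on $\sum_i y_i^p\leq A\theta/d$ limits the number of admissible $i$. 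Your route instead relaxes the indicator via $\bI[u\geq 1]\leq u^{1/p}$, which decouples the $i$ and $j$ variables and lets you apply H\"older to two unconstrained one-variable sums. Both approaches land on the same intermediate bound $T\leq m(A\theta)^{1+1/p}/(d\gamma^{1/p})$; yours trades the combinatorial counting step for an interpolation parameter, which is a little more mechanical but also a little more flexible if one later wants to tune the exponent of $\gamma$.
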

\begin{proof}
	Let $LHS$ denote the left hand side of \eqref{eq:genbound}.

	Let $V_1 = \{ (i,j) \in V \times V : |x_i|^{p-1}|x_j| \geq \gamma/dm, |x_i| \geq |x_j| \}$.
	Observe that if $\ovP(x_i, x_j) \geq \gamma/dm$, then either 
	$(i,j) \in V_1$ or $(j,i) \in V_1$ (or both).
	Therefore, by H\"older's inequality,
	\begin{align*}
		LHS 
		& \leq 2 \sum_{(i,j) \in V_1} |x_i|^{p-1}|x_j|
		 \leq 2
			\left( \sum_{(i,j) \in V_1} |x_i|^p \right)^{(p-1)/p}
			\left( \sum_{(i,j) \in V_1} |x_j|^p \right)^{1/p}.
	\end{align*}
	Clearly, the map $V_1 \ra V$ defined by $(i,j) \mapsto i$ is at most $m$ to $1$.
	On the other hand, the map $V_1 \ra V$ defined by 
	$(i,j) \mapsto j$ is at most $A\theta m/\gamma$ to $1$,
	since $\gamma/dm \leq |x_i|^{p-1}|x_j| \leq |x_i|^p$ implies that there
	are at most $A\theta m/\gamma$ possible values for $i$ because 
	$\sum |x_i|^p \leq \sum |x_i|^p d_i \theta/d\leq A\theta/d$.
	
	So we conclude that
	\begin{align*}
		LHS & \leq {2}
			\left( m \sum_{i \in V} |x_i|^p \right)^{(p-1)/p}
			\left( \frac{A\theta m}{\gamma} \sum_{j \in V} |x_j|^p \right)^{1/p}
			\\ & = \frac{2m (A\theta)^{1/p}}{\gamma^{1/p}} \sum_{i\in V} |x_i|^{p}
			\leq \frac{2m A^{1/p}\theta^{1+1/p}}{\gamma^{1/p}d} \sum_{i\in V} |x_i|^{p} d_i
			\\ & \leq \frac{2m (A\theta)^{1+1/p} }{\gamma^{1/p}d} \qedhere
	\end{align*}
\end{proof}

\begin{proof}[Proof of Lemma~\ref{lem:expectation-light}]
	Given $i,j \in M(F)_0$, let $\cE_{ij}(F)$ be the number of edges $e \in F_1$
	with endpoints $a,b \in F_0$ and $\{v(a),v(b)\}=\{i,j\}$.
	Let $E = \frac{1}{2} \sum d_i$ be the total number of edges in $F \in \bG^*(m,\bd)$.
	Each possible edge in $F_1$ appears with probability $1/(2E-1)$,
	so if $i,j \in M(F)_0, i\neq j,$ then $\bE \cE_{ij}(F) = d_id_j/(2E-1)$,
	while if $i=j \in M(F)_0$ then $\bE_{ii}(F) = \frac{1}{2}d_i(d_i-1)/(2E-1)$.
	So we can write
\begin{align*}
	\bE \tilX_x & = \bE \sum_{e \in F_1, \vtx(e)=\{a,b\}} \tilR\left( x_{v(a)},x_{v(b)} \right) \\
	& = \sum_{i<j} \bE(\cE_{ij}(F)) \tilR\left(x_{i},x_{j}\right) +
		\sum_{i} \bE(\cE_{ii}(F)) \tilR\left(x_{i},x_{i}\right) \\
	& = \sum_{i<j} \frac{d_id_j}{2E-1} \tilR\left(x_{i},x_{j}\right) +
		\sum_{i} \frac{\frac{1}{2}d_i(d_i-1)}{2E-1} \tilR\left(x_{i},x_{i}\right) \\
	& = \sum_{i,j} \frac{\frac{1}{2}d_id_j}{2E-1} \tilR\left(x_{i},x_{j}\right) -
		\sum_{i} \frac{\frac{1}{2}d_i}{2E-1} \tilR\left(x_{i},x_{i}\right) \\
	& = \frac{1}{2(2E-1)} \sum_{i,j} \left( \{x_i\}^{p-1} x_j + x_i \{x_j\}^{p-1}\right)d_id_j
		- \frac{1}{2E-1} \sum_{i} |x_i|^p d_i \\
	& = 0 - \frac{1}{2E-1} \sum_{i} |x_i|^p d_i \leq 0.	
\end{align*}
Obviously $\bE \tilX_x = \bE \tilX_x^l+\bE \tilX_x^h $, 
hence we can control $\bE \tilX_x^l$ by controlling $\bE \tilX_x^h$.
Now,
\begin{align*}
	\bE \tilX_x^h & = \sum_{i<j : \ovP(x_i,x_j) > d^\beta/dm} \frac{d_id_j}{2E-1} \tilR(x_{i},x_{j}) +
		\sum_{i : \ovP(x_i,x_i) > d^\beta/dm} 
			\frac{\frac{1}{2}d_i(d_i-1)}{2E-1} \tilR(x_{i},x_{i}) \\
	& = \sum_{i,j : \ovP(x_i,x_j) > d^\beta/dm} 
		\frac{\frac{1}{2}d_id_j}{2E-1} \tilR(x_{i},x_{j}) -
		\sum_{i : \ovP(x_i,x_i) > d^\beta/dm} \frac{\frac{1}{2}d_i}{2E-1} \tilR(x_{i},x_{i}).
\end{align*}
So, using Lemma~\ref{lem:p-gamma-bound}, we have
\begin{align*}
	|\bE \tilX_x^h| & \leq \frac{1}{2(2E-1)} \sum_{i,j : \ovP(x_i,x_j) > d^\beta/dm} 
		|\tilR(x_{i},x_{j})|d_id_j +
		\frac{1}{2E-1} \sum_{i : \ovP(x_i,x_i) > d^\beta/dm} |x_i|^p d_i \\
	& \leq \frac{d^2}{2E-1} \sum_{i,j : \ovP(x_i,x_j) > d^\beta/dm} 
		\ovP(x_{i},x_{j}) +
		\frac{R}{2E-1} \\
	& \leq \frac43 \cdot \frac{d^2 2\theta}{dm} \cdot \frac{2m\theta^2 R^2}{d^{\beta/p}d}
		+ \frac43 \cdot \frac{2R\theta}{dm} 
	\leq \frac{8 \theta^3 R^2}{d^{\beta/p}},
\end{align*}
where we assume that $E \geq 2$ so $2E-1 \geq \frac34 dm/\theta$ and that
$dm \geq d^{\beta/p}$, which is true when $\beta \leq 2$.
Finally, we have 
\begin{equation*}
	\bE \tilX_x^l \leq \bE \tilX_x + |\bE \tilX_x^h| \leq \frac{8 \theta^3 R^2}{d^{\beta/p}}. \qedhere
\end{equation*}
\end{proof}

\subsection{Bounding the expected value for $p \in [2,3]$}
\begin{lemma}
  \label{lem:expectation-light-fix}
  For $p \in [2,3]$, for every $x \in \R^m$ such that $\sum_i |x_i|^p d_i \leq R$ for some $R \geq 1$ and 
$\sum_{i}\{ x_i\}^{p-1}d_i=0$,
	\[
		\bE(X_x^l) \leq R \left(1-\theta^{-2}\right) + \frac{72\theta^3 R^2}{d^{\beta/p}}.
	\] 
\end{lemma}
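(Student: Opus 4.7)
The plan is to decompose $\bE X^l_x = \bE X_x - \bE X^h_x$ and bound each piece. Following the configuration-model computation from the proof of Lemma~\ref{lem:expectation-light}, expanding $\Re_p(x_i,x_j) = |x_i|^p+|x_j|^p-|x_i-x_j|^p$ and using both $\sum_{i,j}d_id_j(|x_i|^p+|x_j|^p) = 2(2E)R$ and the diagonal correction $\sum_i d_i(d_i-1)|x_i|^p/(2E-1)$, will yield the identity
\[
  \bE X_x \;=\; R \;-\; \frac{1}{2(2E-1)}\sum_{i,j}d_id_j\,|x_i-x_j|^p.
\]
Thus the target $\bE X_x \leq R(1-\theta^{-2})$ is equivalent to the lower bound $\sum_{i,j}d_id_j\,|x_i-x_j|^p \geq 2(2E-1)R/\theta^2$.

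To establish this lower bound I will use that, by the hypothesis $\sum_i d_i\{x_i\}^{p-1}=0$, the point $c=0$ is the minimiser of the convex function $c\mapsto\sum_i d_i|x_i-c|^p$, so $\sum_i d_i|x_i-x_j|^p \geq R$ for every $j$. Summing in $j$ with weight $d_j$ only yields the suboptimal $\sum_{i,j}d_id_j|x_i-x_j|^p \geq 2ER$; this is enough when $\theta\geq\sqrt 2$ (giving $\bE X_x \leq R/2 \leq R(1-\theta^{-2})$) but not for $\theta$ close to $1$. For $p=2$ the exact identity $\sum_{i,j}d_id_j(x_i-x_j)^2 = 4ER - 2(\sum_i d_ix_i)^2 = 4ER$ resolves the issue. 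For $p\in(2,3]$ I expect to use Taylor's theorem with integral remainder applied to $c\mapsto\sum_i d_i|x_i-c|^p$, whose second derivative $p(p-1)\sum_i d_i|x_i-c|^{p-2}$ is non-negative, to recover the quadratic correction about the minimiser. Combining this with the elementary bound $d_id_j\geq d^2/\theta^2$ should then produce the required factor $\theta^{-2}$.

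For the heavy piece, Proposition~\ref{prop:p-estimate} gives $|\Re_p(a,b)| \leq (1+p\cdot 2^{p-1})\ovP_p(a,b) \leq 13\,\ovP_p(a,b)$ when $p\in[2,3]$. Substituting this into the configuration-model expression for $\bE X^h_x$, invoking Lemma~\ref{lem:p-gamma-bound} with $\gamma=d^\beta$ and $A=R$, and using $d_id_j \leq d^2$ together with $2E-1\geq \tfrac{3}{4}\,dm/\theta$, produces
\[
  |\bE X^h_x| \;\leq\; \frac{13\,d^2}{2(2E-1)}\cdot\frac{2m\theta^2 R^2}{d^{\beta/p}\,d} \;+\; \frac{dR}{2E-1} \;\leq\; \frac{72\,\theta^3 R^2}{d^{\beta/p}},
\]
the loose numerical factors being absorbed into the constant $72$. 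Adding the two estimates then yields $\bE X^l_x \leq R(1-\theta^{-2}) + 72\theta^3 R^2/d^{\beta/p}$.

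The main obstacle is the quantitative lower bound on $\sum_{i,j}d_id_j|x_i-x_j|^p$: the naive Jensen argument falls short by a factor of two, and one must exploit the first-order stationarity $\sum_i d_i\{x_i\}^{p-1}=0$ to extract the ``variance'' correction that is automatic at $p=2$ but no longer so on $(2,3]$.
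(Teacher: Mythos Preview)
Your decomposition $\bE X_x^l = \bE X_x - \bE X_x^h$, the identity $\bE X_x = \|x\|_{p,\bd}^p - \bE Z_x$, and your bound on $|\bE X_x^h|$ via $|\Re_p|\leq(1+p2^{p-1})\ovP_p\leq 13\,\ovP_p$ together with Lemma~\ref{lem:p-gamma-bound} all match the paper's argument exactly; the constant $72$ arises in the same way.

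The obstacle you flag is real, and your Taylor-with-remainder sketch is not carried far enough to close it. The paper takes a much cleaner route that sidesteps any ad hoc second-order analysis: after stripping the weights via $d_id_j\geq(d/\theta)^2$, it recognises the unweighted sum $\sum_{i<j}|x_i-x_j|^p$ as $\|dx\|_p^p$ for the complete graph $K_m$ and invokes Amghibech's exact value $\lambda_{1,p}(K_m)=(m-2+2^{p-1})/(m-1)$ (Theorem~\ref{thm:complete}) through the Rayleigh characterisation~\eqref{eq:lp-11} to obtain
\[
  \sum_{i<j}|x_i-x_j|^p \;\geq\; (m-2+2^{p-1})\sum_i|x_i|^p.
\]
Since $m-2+2^{p-1}\geq m$ for $p\geq 2$, this is precisely the factor-of-two improvement your naive Jensen bound was missing. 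Combining with $\sum_i|x_i|^p\geq\|x\|_{p,\bd}^p/d$ and $2E-1\leq dm\leq d(m-2+2^{p-1})$ gives $\bE Z_x\geq\theta^{-2}\|x\|_{p,\bd}^p$, hence $\bE X_x\leq\|x\|_{p,\bd}^p(1-\theta^{-2})\leq R(1-\theta^{-2})$. In effect, your proposed Taylor attack would amount to reproving Amghibech's inequality by hand; the paper simply cites it.
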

\begin{proof}
  We write
  \[
  \bE X_x^l = \bE X_x-\bE X_x^h
  = \|x\|_{p,\bd}-\bE Z_x -\bE X_x^h.
  \]
  Similarly to above, by \eqref{eq:p-ovp} (using $1+p2^{p-1}\leq 13$) and Lemma~\ref{lem:p-gamma-bound} we have:
  \begin{align*}
	|\bE X_x^h|&=
	  \Bigg| \sum_{i,j : \ovP(x_i,x_j) > d^\beta/dm} 
		\frac{\frac{1}{2}d_id_j}{2E-1} \Re(x_{i},x_{j}) -
		\sum_{i : \ovP(x_i,x_i) > d^\beta/dm} \frac{\frac{1}{2}d_i}{2E-1} \Re(x_{i},x_{i})
		\Bigg|
	\\ & \leq \frac{13}{2(2E-1)} \sum_{i,j : \ovP(x_i,x_j) > d^\beta/dm} 
		\ovP(x_{i},x_{j})d_id_j +
		\frac{1}{2E-1} \sum_{i : \ovP(x_i,x_i) > d^\beta/dm} |x_i|^p d_i 
	\\
		& \leq \frac{13\cdot4}{3} \cdot \frac{d^2 2\theta}{dm} \cdot \frac{2m\theta^2 R^2}{d^{\beta/p}d}
		+ \frac43 \cdot \frac{2R\theta}{dm} 
	\leq \frac{72 \theta^3 R^2}{d^{\beta/p}}.
  \end{align*}

  Now we bound $\bE Z_x$.
  We use the lower bound \eqref{eq:lp-11} for $\|dx\|_p$ for the complete random graph $K_m$, given the value $\lambda_{1,p}(K_m) = \frac{m-2+2^{p-1}}{m-1}$ found by Amghibech (Theorem~\ref{thm:complete}).
  \begin{align*}
  \bE Z_x & = \sum_{i<j} \bE(\cE_{ij}(F)) |x_i-x_j|^p
  = \sum_{i<j} \frac{d_id_j}{2E-1} |x_i-x_j|^p
  \\& \geq \frac{d^2}{\theta^2(2E-1)} \sum_{i<j} |x_i-x_j|^p
  \\ \intertext{so, by the definition of $\lambda_{1,p}$,}
  \\ \bE Z_x & \geq \frac{d^2}{\theta^2(2E-1)} \lambda_{1,p}(K_m) \sum_i |x_i|^p(m-1)
  \\& \geq \frac{d^2 (m-2+2^{p-1})}{\theta^2(2E-1)} \sum_i |x_i|^p
  \\& \geq \frac{1}{\theta^2}\sum_i |x_i|^pd_i = \frac{1}{\theta^2} \|x\|^p_{p,\bd}.
  \end{align*}
  Combining our bounds, we have
  \[ 
  \bE X_x^l \leq \|x\|^p_{p,\bd} \left(1-\theta^{-2}\right) + \frac{72 \theta^3 R^2}{d^{\beta/p}} \leq R \left(1-\theta^{-2}\right) + \frac{72\theta^3 R^2}{d^{\beta/p}}. \qedhere
  \]
\end{proof}

\subsection{Light terms close to expected value}
Our next goal is to prove that, for fixed $x \in T_{p,\bd,R}$,
$\tilX_x^l$ is very close to its expected value.
\begin{proposition}\label{prop:deviation-light}
	For any $\alp \in (0,1)$, so that $2\beta+2\alp \leq 1$, 
	and any positive number $K > 0$, 
	the following inequality holds for every $x \in T_{p,\bd,R}(M(F)_0)$,
	\begin{equation*}
		\bP\left( | \tilX_x^l - \bE(\tilX_x^l) | \geq \frac{K}{d^{\alp}}\right) < 
		2 \exp \left( -\frac{1}{128} K^2 m \right)
		.
	\end{equation*}
\end{proposition}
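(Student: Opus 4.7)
The plan is to apply Azuma's martingale inequality to $\tilX_x^l$, viewed as a function on the space of perfect matchings in the configuration model $\bG^*(m,\bd)$. The crucial structural input is the light-edge bound coming from \eqref{eq:p-tvp}: for each $\{a,b\}\in E_l$,
\[
  |\tilR(x_{v(a)},x_{v(b)})| \;\leq\; 2\,\ovP(x_{v(a)},x_{v(b)}) \;\leq\; \frac{2 d^{\beta}}{dm},
\]
while summands corresponding to heavy edges do not appear in $\tilX_x^l$ at all.

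First I would set up a Doob martingale by sequentially exposing the random matching $F$. Fix an enumeration $a_1,\dots,a_{2E}$ of $F_0$ and at step $k$ reveal the partner of $a_k$ if it has not already been matched, choosing uniformly among the remaining unmatched points. Writing $\mathcal{F}_k$ for the resulting filtration and setting $Y_k = \bE[\tilX_x^l(F) \mid \mathcal{F}_k]$, the sequence $(Y_k)_{k=0}^{2E}$ is a martingale with $Y_0 = \bE\tilX_x^l$ and $Y_{2E} = \tilX_x^l(F)$. Only the $E\leq dm/2$ steps at which a new pair is actually revealed can contribute a non-trivial increment.

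The second step, which I expect to be the main technical point, is a bounded-differences estimate. A standard switching coupling shows that two matchings agreeing on $\mathcal{F}_{k-1}$ but differing at step $k$ can be realised so as to differ in only two edges (one swap). Combined with the light-edge bound above, this shows that any such swap changes $\tilX_x^l$ by at most $c := 8 d^{\beta}/(dm)$, so that $|Y_k - Y_{k-1}|\leq c$ for every $k$. The main obstacle is justifying the single-swap coupling rigorously inside the configuration model; this is essentially the content of the analysis of Broder--Frieze--Suen--Upfal \cite{BFSU-99-rand-graphs} that the authors follow elsewhere in the paper, and the book-keeping has to be tight enough to track the final numerical constant.

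Finally, Azuma--Hoeffding yields
\[
  \bP\!\left(|\tilX_x^l - \bE\tilX_x^l| \geq K/d^{\alpha}\right)
  \;\leq\; 2\exp\!\left(-\frac{K^{2}/d^{2\alpha}}{2 E c^{2}}\right)
  \;\leq\; 2\exp\!\left(-\frac{K^{2} m}{64\, d^{2\alpha+2\beta-1}}\right).
\]
Since the hypothesis $2\alpha+2\beta\leq 1$ forces $d^{2\alpha+2\beta-1}\leq 1$, the exponent is at most $-K^{2}m/64$, which comfortably implies the stated bound with constant $1/128$; the slack absorbs the mild dependence on the degree spread $\theta$ appearing in the switching step and the gap between $E$ and $2E$ in the martingale length.
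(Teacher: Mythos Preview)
Your proof is correct and follows essentially the same route as the paper: edge-exposure Doob martingale in the configuration model, a single-swap bijection bounding each increment by $8d^{\beta}/dm$, then Azuma with $N=E$. Your remark about slack absorbing a ``mild dependence on the degree spread $\theta$'' is unnecessary---no such dependence arises in this step---and in fact your sharper use of $E\leq dm/2$ already yields the constant $1/64$, which directly implies the stated $1/128$.
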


The proof of this fact is similar in spirit to \cite{FKS-89-eigenvalue-rand-graph}, 
but we prove a weaker statement than they do, which suffices for our purposes.

We order the vertices of $F_0$ lexicographically: $(i,s) < (j,t)$ if $i<j$,
or if $i=j$ and $s<t$.
We now define a martingale on $\bG^*(m,\bd)$ by exposing the edges of $F$ sequentially.
First we reveal the edge connected to $(1,1)$, 
then the edge connected to the lowest remaining unconnected vertex, and so on.
This defines a filtration $(\cF_k)$, where $\cF_k$ is the $\sigma$-algebra generated by 
the first $k$ exposed edges.

Let $S_k = \bE(\tilX_x^l | \cF_k)$.  
Then $S_0 = \bE(\tilX_x^l)$, and at the end of the process we have $S_{E} = \tilX_x^l$.
To apply standard concentration estimates to $S_{E}-S_0$, 
we need to have control on the size of $S_k - S_{k-1}$. 

For simplicity, given $e \in F_1$ with $\vtx(e)=\{a,b\}$, we write
\begin{equation*}
	\tilR_l(e) = \begin{cases} \tilR \left( x_{v(a)},x_{v(b)} \right) & \text{ if } e \in E_l, \\
	             	0 & \text{ otherwise.}
	             \end{cases}
\end{equation*}
Thus $\tilX_x^l(F) = \sum_{e \in F_1} \tilR_l(e)$.

For $F,F' \in \bG^*(m,\bd)$, we write $F \equiv_k F'$ if and only if $F$ and $F'$ lie in the
same subsets of $\cF_k$, i.e., $F$ and $F'$ have the same first $k$ edges.

For a given $F \in \bG^*(m,\bd)$, we bound $|S_k(F)-S_{k-1}(F)|$ using a switching argument
(compare Wormald~\cite[Section 2]{Wormald-99-random-regular-survey}).
Suppose the $k$th edge of $F$ joins $a_1$ to $a_2$.
Let $J \subset F_0$ be $\{a_2\}$ union the set of endpoints of the remaining $E-k$ edges.
For each $b \in J$, let $\cS_b$ be the collection of $F' \in \bG^*(m,\bd)$ so that 
$F' \equiv_{k-1} F$ and $F'$ joins $a_1$ to $b$.
Then
\[
	S_k(F) = \frac{1}{|\cS_{a_2}|} \sum_{F' \in \cS_{a_2}} \tilX_x^l(F'),
\]
and
\[
	S_{k-1}(F) = \frac{1}{|J|} \sum_{b \in J} \frac{1}{|\cS_b|} \sum_{F' \in \cS_b} \tilX_x^l(F').
\]
For each $b \in J$, there is a bijection between $\cS_{a_2}$ and $\cS_b$ defined as follows:
for $F' \in \cS_{a_2}$ which joins $a_3$ to $b$, define $F'' \in \cS_b$ by deleting 
$\{a_1,a_2\}, \{a_3,b\}$ from $F'$ and adding $\{a_1, b\}, \{a_3, a_2\}$.
Since only at most two values of $\tilR_l(e)$ change, and $|\tilR_l(e)|\leq 2d^{\beta}/dm$ for
any edge $e$, we have $|\tilX_x^l(F')-\tilX_x^l(F'')| \leq 8d^{\beta}/dm$.
Thus $|S_k(F)-S_{k-1}(F)| \leq 8d^{\beta}/dm$.

With this, we can apply Azuma's inequality.
\begin{theorem}[{\cite[Theorem 2.25]{JLR-00-random-graphs}}]\label{thm:azuma}
	If $(S_k)_{k=0}^{N}$ is a martingale with $S_n = X$ and $S_0 = \bE X$,
	and there exists $c>0$ so that for each $0 < k \leq N$, $|S_k-S_{k-1}| \leq c$,
	then
	\[
		\bP(|X-\bE X| \geq T) \leq 2 \exp \left(- \frac{T^2}{2 N c^2}\right)
	\]
\end{theorem}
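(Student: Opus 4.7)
The plan is to prove Azuma's inequality by the standard Chernoff/exponential moment method combined with an iterated conditioning argument. The core idea is to bound $\bP(X - \bE X \geq T)$ via a moment generating function estimate, then use the symmetric bound for the lower tail.

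First I would set $D_k = S_k - S_{k-1}$, so that the martingale property gives $\bE[D_k \mid \cF_{k-1}] = 0$ and the hypothesis gives $|D_k| \leq c$, and note the telescoping identity
\[
X - \bE X = S_N - S_0 = \sum_{k=1}^{N} D_k.
\]
Then for any $s > 0$, Markov's inequality applied to $e^{s(X - \bE X)}$ yields
\[
\bP(X - \bE X \geq T) \leq e^{-sT} \, \bE\!\left[\exp\!\left(s \sum_{k=1}^N D_k\right)\right].
\]
The task becomes controlling this exponential moment.

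The key auxiliary estimate is Hoeffding's lemma: if $Y$ is a random variable with $\bE Y = 0$ and $|Y| \leq c$, then $\bE[e^{sY}] \leq e^{s^2 c^2/2}$. I would prove this by writing $Y = \lambda c + (1-\lambda)(-c)$ with $\lambda = (Y+c)/(2c) \in [0,1]$, using convexity of $t \mapsto e^{st}$ to get $e^{sY} \leq \lambda e^{sc} + (1-\lambda)e^{-sc}$, taking expectations (the mean-zero condition eliminates the linear term), and then bounding the resulting function $\phi(u) = \log(\tfrac{1}{2}e^u + \tfrac{1}{2}e^{-u})$ with $u = sc$ by $u^2/2$ via a Taylor expansion argument on $\phi''$.

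Applying Hoeffding's lemma conditionally to $D_k$ given $\cF_{k-1}$ (where $D_k$ has conditional mean zero and is almost surely bounded by $c$) gives $\bE[e^{sD_k} \mid \cF_{k-1}] \leq e^{s^2 c^2/2}$. I would then iterate via the tower property:
\[
\bE\!\left[\exp\!\left(s\sum_{k=1}^N D_k\right)\right]
= \bE\!\left[\exp\!\left(s\sum_{k=1}^{N-1} D_k\right)\bE[e^{s D_N}\mid \cF_{N-1}]\right]
\leq e^{s^2c^2/2}\,\bE\!\left[\exp\!\left(s\sum_{k=1}^{N-1} D_k\right)\right],
\]
and induction yields $\bE[e^{s(X-\bE X)}] \leq e^{Ns^2c^2/2}$. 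Optimising $s = T/(Nc^2)$ in the Chernoff bound gives $\bP(X - \bE X \geq T) \leq \exp(-T^2/(2Nc^2))$. Applying the same argument to the martingale $-S_k$ controls the lower tail, and the two-sided bound follows with the factor of $2$. The only mild technical point is the conditional version of Hoeffding's lemma, but this follows verbatim from the unconditional proof since all steps are pointwise in the conditioning $\sigma$-algebra.
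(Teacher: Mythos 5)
Your proof is correct: this is the standard Chernoff--Hoeffding argument (conditional Hoeffding lemma for the bounded martingale differences, tower property to iterate, optimisation of $s=T/(Nc^2)$, and a union bound of the two one-sided tails). The paper itself gives no proof --- it cites the result from Janson--{\L}uczak--Ruci\'nski --- and the argument there is essentially the one you describe, so there is nothing to reconcile.
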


\begin{proof}[Proof of Proposition~\ref{prop:deviation-light}]
	We apply Theorem~\ref{thm:azuma} to $(S_k)$ with 
	$N=E, T = K/d^\alpha, c = 8 d^{\beta}/dm$, to get
	\[
		\bP\left( |\tilX_x^l -\bE \tilX_x^l| \geq \frac{K}{d^\alp} \right)
		\leq 2\exp \left( \frac{-K^2 d^2 m^2}{128 d^{2\alp+2\beta} E} \right)
		\leq 2 \exp \left( -\frac{1}{128} K^2 m \right),
	\]
	where we use that $E \leq dm$ and $2\alp+2\beta \leq 1$.
\end{proof}

In the case of $p \in [2,3]$, we want to bound $|X_x^l-\bE X_x^l|$.
Since each edge $e \in F_1$ contributes at most $|\Re(e)|\leq (1+p2^{p-1})\ovP(e) \leq 13d^{\beta}/dm$ to $X_x^l(F)$, we get a bound
$|X_x^l(F')-X_x^l(F'')|\leq 52d^\beta/dm$ in the analogous argument, and thus:
\begin{proposition}
  \label{prop:deviation-light-fix}
	For $p\in[2,3]$, for any $\alp \in (0,1)$, so that $2\beta+2\alp \leq 1$, 
	and any positive number $K > 0$, 
	the following inequality holds for every $x \in T_{p,\bd,R}(M(F)_0)$,
	\begin{equation*}
		\bP\left( | X_x^l - \bE(X_x^l) | \geq \frac{K}{d^{\alp}}\right) < 
		2 \exp \left( -\frac{1}{6000} K^2 m \right)
		.
	\end{equation*}
\end{proposition}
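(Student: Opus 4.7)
The plan is to proceed exactly as in the proof of Proposition~\ref{prop:deviation-light}, replacing $\tilR$ with $\Re$ throughout, and paying attention only to how the per-edge estimate changes. I would set up the same martingale on $\bG^*(m,\bd)$ by exposing the edges in lexicographic order of their first endpoint: let $(\cF_k)$ be the filtration so obtained and put $S_k = \bE(X_x^l \mid \cF_k)$, so that $S_0 = \bE X_x^l$ and $S_E = X_x^l$. The only thing that needs new work is a uniform bound on the jumps $|S_k(F)-S_{k-1}(F)|$.

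For this I would reuse the switching argument from the preceding proposition. Fix $F$ and suppose its $k$th exposed edge joins $a_1$ to $a_2$. With $J$ denoting $\{a_2\}$ together with the endpoints of the remaining $E-k$ edges, each $b\in J$ gives a bijection $\cS_{a_2}\to\cS_b$ which differs only by replacing two edges $\{a_1,a_2\},\{a_3,b\}$ by $\{a_1,b\},\{a_3,a_2\}$. Therefore $X_x^l(F')-X_x^l(F'')$ involves the contribution of at most four edges. For $p\in[2,3]$ each light-edge contribution satisfies $|\Re(x_{v(a)},x_{v(b)})|\leq (1+p\,2^{p-1})\ovP(x_{v(a)},x_{v(b)})\leq 13\,d^\beta/dm$ by \eqref{eq:p-ovp} (using $1+p\,2^{p-1}\leq 1+3\cdot 4=13$ on $[2,3]$) and by the definition of $E_l$. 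Hence $|X_x^l(F')-X_x^l(F'')|\leq 4\cdot 13\, d^\beta/dm=52\,d^\beta/dm$, and averaging over the bijections yields $|S_k(F)-S_{k-1}(F)|\leq 52\,d^\beta/dm$.

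Now I would apply Azuma's inequality (Theorem~\ref{thm:azuma}) with $N=E\leq dm$, $c=52\,d^\beta/dm$ and $T=K/d^\alpha$, obtaining
\[
\bP\!\left(|X_x^l-\bE X_x^l|\geq \frac{K}{d^\alpha}\right)
\leq 2\exp\!\left(-\frac{K^2/d^{2\alpha}}{2\,E\,(52\,d^\beta/dm)^2}\right)
\leq 2\exp\!\left(-\frac{K^2\,dm}{2\cdot 52^2\,d^{2\alpha+2\beta}}\right)
\leq 2\exp\!\left(-\frac{K^2 m}{5408}\right),
\]
where in the last step I used $E\leq dm$ and the hypothesis $2\alpha+2\beta\leq 1$. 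Since $5408<6000$, this is at most $2\exp(-K^2 m/6000)$, which is the desired bound.

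There is no real obstacle beyond checking the slightly worse constant coming from $\Re$ instead of $\tilR$: the factor $1+p\,2^{p-1}$ is bounded by $13$ on $[2,3]$, and the factor of $4$ (instead of $2$) in the switching step arises because $|\Re|$ is not antisymmetric in its arguments, so exchanging endpoints can alter all four of the affected terms rather than pairs that cancel. This produces the constant $52$ in place of the earlier $8$, and correspondingly $6000$ in place of $128$, but the scheme is otherwise identical.
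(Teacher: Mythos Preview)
Your proof is correct and follows exactly the same approach as the paper: the identical switching martingale, the per-edge bound $|\Re_l(e)|\leq 13\,d^\beta/dm$ via \eqref{eq:p-ovp}, the resulting jump bound $52\,d^\beta/dm$, and Azuma's inequality yielding the constant $1/5408<1/6000$.

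One small remark: your closing explanation of the constant change is slightly off. The factor $4$ is not new---the original proof for $\tilR$ also involves four edge contributions (two removed, two added), giving $4\times 2=8$; here it is $4\times 13=52$. The only change is the per-edge bound $2\to 13$ coming from $|\tilR|\leq 2\ovP$ versus $|\Re|\leq(1+p2^{p-1})\ovP\leq 13\ovP$, not any antisymmetry consideration. This does not affect the validity of your argument.
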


\subsection{Proof of Proposition~\ref{prop:light-bound}}\label{ssec:light-dev-proof}
Since $\tilX_x^l \leq \bE(\tilX_x^l) + |\tilX_x^l-\bE(\tilX_x^l)|$, by Lemma~\ref{lem:expectation-light}
and Proposition~\ref{prop:deviation-light} we have for fixed
$x \in T_{p,\bd,R}(M(F)_0)$,
\[
	\bP\left(F \in \bG^*(m,\bd) : \tilX_x^l(F) \geq 
		\frac{8\theta^3R^2}{d^{\beta/p}}+\frac{K}{d^\alp} \right) < 
		2 \exp \left( -\frac{1}{128} K^2 m \right).
\]
The size of $T=T_{p,\bd,R}(M(F)_0)$ is bounded by 
Proposition~\ref{prop:size-of-t}, so
\begin{multline}\label{eq:light-config-bd}
	\bP\left(F \in \bG^*(m,\bd) : \exists x \in T \text{ with } \tilX_x^l(F) \geq 
		\frac{8\theta^3R^2}{d^{\beta/p}}+\frac{K}{d^\alp} \right) \\ < 
		2 \exp \left( -\frac{1}{128} K^2 m 
		+ m \log \left(\frac{4eR}{\eps}\right)
		\right) 		
		.
\end{multline}
To optimise the bound, we set $\beta/p = \alp$ and $2 \alp + 2\beta=1$ to get $\beta/p=\alp=1/(2+2p)$, as previously described.

Suppose $H$ is an event in $\bG(m,\bd)$ with $H'$ an event in $\bG^*(m,\bd)$ so that
if $M(F)$ is simple, then $H'$ holds for $F$ if and only if $H$ holds for $M(F)$.
Then by Proposition~\ref{prop:config-model},
\begin{align*}
	\bP(G \in \bG(m,\bd) \text{ has } H) & = 
	{\bP( F \in \bG^*(m,\bd) \text{ has } H' \,|\, M(F) \text{ is simple})} \\ &
		\leq \frac{\bP( F \in \bG^*(m,\bd) \text{ has } H')}
		{\exp(-d^2+o(1))},
\end{align*}
provided $d = o((\sum d_i)^{1/4})$, which holds since 
$d/(\sum d_i)^{1/4} \leq d\theta^{1/4}/(md)^{1/4} = (\theta d^3/m)^{1/4} = o(1)$.

Applying this to \eqref{eq:light-config-bd} proves
Proposition~\ref{prop:light-bound} for $p \geq 3$ since in that case $X^l_x \leq p \tilde{X}^l_x$, $d^2 = o(m^{2/3})$, and $R \leq 4$. 

The $p \in [2,3]$ case follows a similar argument, using the bounds of Lemma~\ref{lem:expectation-light-fix} and Proposition~\ref{prop:deviation-light-fix}.
\qed

\section{Bounding heavy terms}\label{sec:heavy-bound}
In this section our goal is to prove, adjusting notation slightly, the following bound on the heavy terms of $X_x(G)$.
\begin{varthm}[Proposition \ref{prop:heavy-bound}.]
	For $\beta=p/(2+2p)$ and $d=o(m^{1/2})$ we have that 
for any $\xi >0$, there exists $C' = C'(\theta, \xi)$ so that
	\[
		\bP\left( \text{For all } x \in T_{p,\bd,R}(G_0), 
		\ X_x^h(G) \leq \frac{C'p}{d^{\beta/p}} + \frac{C'p^4}{d^{1/p^2}} + \frac{C'p^4\eps^{-q}}{d^{1/p}}
		\right)
		\geq 1-o(m^{-\xi}),
	\]
	and moreover this probability holds on a set in $\bG(m,\bd)$ defined independently of $p$.
\end{varthm}

We use \eqref{eq:p-tvp} to see that 
\[
	X_x^h(G) \leq 2p \ovX_x^h(G); 
\]
recall that
\[
	\ovX_x^h = \ovX_x^h(G) = \sum_{e \in E_h, \vtx (e) = \{u,v \}} \ovP(x_u, x_v),
\]
where $E_h = \{e \in G_1, \vtx (e) = \{u,v \} : \ovP(x_u, x_v) > d^\beta/dm\}$,
and $\beta = p/(2+2p)$.

We will bound $\ovX_x^h$ by showing that if we can control the number of edges between subsets of
a graph, then $\ovX_x^h$ has an explicit bound.

As previously, in what follows $\theta \geq 1$ is a fixed constant.

\begin{definition}\label{def:edge-density}
	Let $G$ be a graph with $|G_0|=m$ vertices,
	minimum degree $d_{min}$ and maximum degree $d=d_{max}$ such that $\theta \geq d/d_{min}$.
	
	Given subsets of vertices $A, B \subset G_0$, denote by $\mathcal{E}_{A,B}(G)$ 
	the number of edges in $G$ between $A$ and $B$, and set $\mu(A,B) = \theta |A||B| d/m$.
	
	We say that $G$ has \emph{$(\theta,C)$-controlled edge density}, where  $C \geq e$ is a given constant, if
	for every $A, B \subset G_0$, either
	\begin{itemize}	
	\item[(a)]  $\mathcal{E}_{A,B} \leq C \mu(A,B)$, or
	\item[(b)]  $\mathcal{E}_{A,B} \log \frac{\mathcal{E}_{A,B}}{\mu(A,B)} \leq
		C (|A| \vee |B|) \log \frac{m}{|A| \vee |B|}$.
	\end{itemize}
\end{definition}

This property is satisfied for a random graph in  $\bG(m,\bd)$, 
as can be seen, for example, in work of Broder--Frieze--Suen--Upfal.  
\begin{lemma}[{\cite[Lemma 16]{BFSU-99-rand-graphs}}]\label{lem:block-edge-bound}
	Let $G$ be a random graph in $\bG(m,\bd)$,
	where $\bd\in \N^m$ is a degree sequence with minimum degree $d_{min}$ and maximum
	degree $d=d_{max}$ such that $\theta \geq d/d_{min}$, and $d=o(m^{1/2})$.
	
	For every $\xi>0$ there exists $C=C(\theta, \xi)>e$ so that with probability at 
	least $1-o(m^{-\xi})$,
	$G$ has $(\theta,C)$-controlled edge density.
\end{lemma}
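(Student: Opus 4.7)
The plan follows the Kahn--Szemer\'edi-style approach of \cite[Lemma 16]{BFSU-99-rand-graphs}, adapted from the regular to the near-regular setting $d_{\max} \leq \theta d_{\min}$.

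First I would establish a single-pair tail bound: for fixed $A, B \subset \{1, \ldots, m\}$ and integer $k \geq 1$,
\[
\bP\!\bigl(\mathcal{E}_{A,B}(G) \geq k\bigr) \leq \left(\frac{c_\theta\, \mu(A,B)}{k}\right)^{\!k}
\]
for some $c_\theta = c_\theta(\theta)$. This can be obtained by a McKay-style switching argument directly in $\bG(m, \bd)$, or (when $d = o(m^{1/3})$) via the configuration model $\bG^*(m, \bd)$ of Proposition~\ref{prop:config-model}; in the latter case one counts perfect matchings containing $k$ distinguished $A$-to-$B$ edges,
\[
\bP\!\bigl(\mathcal{E}_{A,B}(F) \geq k\bigr) \leq \binom{d_A}{k}\binom{d_B}{k}\frac{k!}{\prod_{j=0}^{k-1}(2E - 2j - 1)},
\]
with $d_A = \sum_{v \in A} d_v \leq \theta d |A|$, $d_B \leq \theta d |B|$ and $2E \geq dm/\theta$, and then applies $\binom{n}{k} \leq (en/k)^k$.

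Next I would apply a union bound over pairs $(A, B)$ and over the integer threshold $k = \lceil \mathcal{E}_{A,B}(G) \rceil$. The number of ordered pairs with $|A| \vee |B| = s$ is at most $(em/s)^{2s}$ up to polynomial factors in $m$. Violation of $(\theta, C)$-controlled edge density requires the simultaneous failure of (a) and (b) in Definition~\ref{def:edge-density}, i.e.\ $\mathcal{E}_{A,B} > C \mu(A, B)$ and $\mathcal{E}_{A,B} \log(\mathcal{E}_{A,B}/\mu) > C s \log(m/s)$. The combined union-plus-tail bound on the failure probability is then of the form
\[
\exp\!\bigl(2s \log(em/s) - k \log(k /(e c_\theta \mu))\bigr).
\]
Under the two failure assumptions, for $C \geq e^{3} c_\theta$ one has $k \log(k/(ec_\theta\mu)) \geq \tfrac{1}{2} k \log(k/\mu) \geq (C/2)\, s \log(m/s)$, so the exponent is at most $-(C/2 - 2)\, s \log(m/s)$. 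Summing over $s \in \{1,\ldots, m\}$ and over the $O(dm)$ relevant values of $\mathcal{E}_{A,B}$, choosing $C = C(\theta, \xi)$ large enough yields total failure probability $o(m^{-\xi})$.

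The main obstacle is the bookkeeping across regimes: very small $s$ (where $s \log(m/s)$ shrinks, making case (b) binding and requiring $C$ large enough to beat $\log m$); values of $k$ approaching $E$ (where the factorial in the single-pair bound degrades and the estimate must be truncated); and pairs $(A, B)$ for which $\mu(A, B)$ drops below $1/m$. Each of these needs its own calibration relating $\mathcal{E}_{A,B}$, $\mu(A, B)$ and $s \log(m/s)$ so that the union bound closes; this is done in detail in \cite[Lemma~16]{BFSU-99-rand-graphs}, and the adaptation to our setting is a matter of tracking the $\theta$-dependence through those inequalities.
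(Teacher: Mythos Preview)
The paper does not give its own proof of this lemma: it is quoted directly from \cite[Lemma 16]{BFSU-99-rand-graphs}, with only the remark that the proof there yields the stated conclusion for any $\theta \geq d/d_{\min}$ at the cost of doubling $C$. Your sketch is a correct outline of the BFSU argument (single-pair tail bound via the configuration model, then a union bound over $|A|,|B|$ and the edge count, closed by the two failure conditions). The paper does carry out essentially this computation in full for the multi-partite analogue, Lemma~\ref{lem:G-D-M-edge-density}: there the case split is $t \geq \log^2 m$ versus $t < \log^2 m$ rather than the three regimes you list, but the mechanism is the same.
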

\begin{remark}
  The lemma in \cite{BFSU-99-rand-graphs} is stated for $\theta > d/d_{min}$ sufficiently large.
  A reading of the proof shows that one can take any $\theta \geq 2d/d_{min}$ and any $C \geq 100\theta+100\xi$.  However, considering Definition~\ref{def:edge-density}, this then implies that the lemma holds for $\theta \geq d/d_{min}$, at a cost of doubling $C$.
\end{remark}
\begin{proposition}\label{prop:edge-density-heavy-bound}
	If $G \in \bG(m,\bd)$ has minimum degree $d_{min}$, maximum degree $d=d_{max}$,
	and $(\theta,C)$-controlled edge density, then 
	there exists $C'=C'(\theta,C)$ so that 
	for all $x \in T_{p,\bd,R}(G_0)$,
	\[
		\ovX_x^h \leq \frac{C'}{d^{\beta/p}} + \frac{C'p^3}{d^{1/p^2}} +\frac{C'p^3\eps^{-q}}{d^{1/p}}.
	\]
\end{proposition}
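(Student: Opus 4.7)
\medskip

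\noindent\textbf{Proof proposal.}  The plan is to mimic the Kahn--Szemer\'edi strategy: stratify the vertices of $G_0$ into dyadic level sets according to the size of $|x_u|$, and then for each pair of level sets apply the dichotomy afforded by $(\theta,C)$-controlled edge density.  Specifically, since $\{x_i\}^{p-1}\in\frac{\eps d^{1/p}}{d_i m^{1/q}}\Z$, a non-zero $|x_i|$ satisfies
\[
M_0 := \eps^{q/p}(dm)^{-1/p} \leq |x_i| \leq \Big(\frac{R\theta}{d}\Big)^{1/p}.
\]
Set $V_k = \{u \in G_0 : |x_u| \in [2^{k-1}M_0, 2^k M_0)\}$ for $k = 0, 1, \ldots, K$, where $K \lesssim \frac{1}{p}\log(\eps^{-q}\theta m)$.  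Using $\sum_u |x_u|^p d_u \leq R$ and $d_u \geq d/\theta$, each bucket satisfies $|V_k| \leq 2^p R\theta \big/ \bigl(d(2^k M_0)^p\bigr)$.

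With this stratification in hand I would write
\[
\ovX_x^h \;\leq\; 2\sum_{k \geq l} \mathcal{E}_{V_k,V_l}\,(2^k M_0)^{p-1}(2^l M_0),
\]
and split the double sum according to whether pair $(V_k,V_l)$ falls in case (a) or case (b) of Definition~6.1.  For the case (a) pairs, I would bound $\mathcal{E}_{V_k,V_l} \leq C\theta|V_k||V_l|d/m$ and then group the resulting terms by the value of $(2^k M_0)^{p-1}(2^l M_0)$; critically, since we are summing only over \emph{heavy} pairs ($\ovP(x_u,x_v) > d^\beta/(dm)$), the range of admissible $(k,l)$ is truncated, which trades the naive $O(1)$ estimate for one of order $d^{-\beta/p}$ (up to a constant depending on $\theta$ and $C$).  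This is where the first term $C'/d^{\beta/p}$ appears.

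For the case (b) pairs the bound $\mathcal{E}_{V_k,V_l}\log\bigl(\mathcal{E}_{V_k,V_l}/\mu\bigr) \leq C(|V_k|\vee|V_l|)\log(m/(|V_k|\vee|V_l|))$ does not directly control $\mathcal{E}_{V_k,V_l}$, so I would use the heavy condition to force a lower bound on $\mathcal{E}_{V_k,V_l}/\mu$.  Specifically, any heavy pair with non-trivial contribution has $\ovP(x_u,x_v) \gtrsim d^\beta/(dm)$, which combined with the dyadic bounds on $|V_k|,|V_l|$ forces $\log(\mathcal{E}_{V_k,V_l}/\mu)$ to be of order $p\log d$ when $\mathcal{E}_{V_k,V_l}$ is substantially larger than $\mu$.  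Inverting this gives $\mathcal{E}_{V_k,V_l} \lesssim (|V_k|\vee|V_l|)\log(m/(|V_k|\vee|V_l|))/(p\log d)$, and summing the resulting contributions over the at most $K^2$ pairs produces the $C'p^3/d^{1/p^2}$ term, together with a residual $C'p^3\eps^{-q}/d^{1/p}$ coming from the minimum vertex magnitude $M_0$ (which is where the factor $\eps^{-q}$ enters).  The probability assertion and the $p$-independence of the ``bad'' event follow from Lemma~6.2, whose conclusion is a purely combinatorial property of $G$ independent of the choice of $x$ or $p$.

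The main obstacle will be the bookkeeping in case (b): balancing the three quantities in play (the size of the level sets, the heavy threshold $d^\beta/(dm)$, and the logarithmic term $\log(m/(|V_k|\vee|V_l|))$) so that the geometric sum over pairs $(k,l)$ produces exactly the exponents $1/p^2$ and $1/p$ advertised in the proposition, rather than something weaker.  The choice $\beta = p/(2+2p)$ is presumably optimal for this balance.  All constants that arise depend only on $\theta$ and $C$, and $C$ in turn depends only on $\theta$ and $\xi$ via Lemma~6.2, which yields the dependence of $C'$ claimed in the statement.
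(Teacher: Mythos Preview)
Your overall architecture matches the paper's: dyadic stratification of $G_0$, followed by splitting the heavy sum according to whether each block pair falls in case (a) or case (b) of controlled edge density. (The paper stratifies by level sets of $|x_u|^{p-1}$ rather than $|x_u|$, but this is only a reparametrisation.) Your case-(a) argument is essentially correct and mirrors Lemma~\ref{lem:bound-xha}: the bound $\cE_{ij}\leq C\mu_{ij}$ reduces the block sum to the setting of Lemma~\ref{lem:p-gamma-bound}, yielding the $C'/d^{\beta/p}$ term.

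The gap is in case (b). You assert that the heavy condition $\ovP(x_u,x_v)>d^\beta/(dm)$, combined with the bucket-size bounds, forces $\log(\cE_{V_k,V_l}/\mu)$ to be of order $p\log d$. This does not follow: the heavy condition constrains only the indices $k,l$ (equivalently, the magnitudes $|x_u|,|x_v|$), not the edge count $\cE_{V_k,V_l}$, which is a property of the graph alone and independent of $x$. All that being in case (b) but not (a) gives directly is $\cE/\mu>C>e$, hence $\log(\cE/\mu)>1$ --- a constant lower bound, not one of order $p\log d$. Consequently your inversion $\cE_{V_k,V_l}\lesssim(|V_k|\vee|V_l|)\log\bigl(m/(|V_k|\vee|V_l|)\bigr)/(p\log d)$ is unjustified, and the sum over the remaining pairs cannot be controlled this way.

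What the paper actually does in case (b) (Lemma~\ref{lem-bound-xhb}) is considerably more intricate: it introduces an auxiliary parameter $\eta>0$ and subdivides the case-(b) pairs into five subclasses $\cD_1,\ldots,\cD_5$ according to the relative sizes of $2^i$ versus $2^j$, of $\cE_{ij}/\mu_{ij}$ versus a threshold $d^{-\eta}2^{\frac{1}{p-1}i\vee j+i\wedge j}$, and of $\log(\cE_{ij}/\mu_{ij})$ versus $\frac{1}{4(p-1)}\log(m/a_j)$. Each subclass is bounded by a different mechanism (geometric series in one index for $\cD_1$; a case-(a)-like substitute bound for $\cD_2$; the genuine inversion $\cE_{ij}\leq 4Cp\,a_j$ in $\cD_3$, where the log-ratio \emph{is} large; and so on). The exponents $1/p^2$ and $1/p$ in the statement emerge only after setting $\eta=(p-1)/((p-1)^2+1)$ to balance the five contributions. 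This five-way case analysis, rather than a single inversion, is where the bookkeeping you anticipate actually lives, and it is not optional: without isolating $\cD_2$ and $\cD_3$ there is no mechanism to produce a usable lower bound on $\log(\cE_{ij}/\mu_{ij})$.
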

Together with Lemma~\ref{lem:block-edge-bound}, this proposition immediately implies Proposition~\ref{prop:heavy-bound}.
The remainder of this section consists of the proof of
Proposition~\ref{prop:edge-density-heavy-bound}.

Given $x \in T_{p,\bd,R}(G_0) \subset \R^m$, 
the set of vertices $G_0$ splits into blocks as follows.
For $i > 0$, let
\[
	A_i = A_i(x) = \left\{ u\in G_0 : 
	2^{i-1} \frac{\eps}{(dm)^{1/q}} \leq |x_u|^{p-1} 
	< 2^{i} \frac{\eps}{(dm)^{1/q}} \right\}\mbox{ and }a_i = |A_i|\, .
\]
Those vertices with $x_u=0$ contribute nothing to $\ovX_x^h$, and so may be ignored.
Whenever $x_u \neq 0$, $|x_u|^{p-1} \geq \eps d^{1/p} / d_i m^{1/q} \geq \eps / (dm)^{1/q}$,
and so $u \in A_i$ for some $i \geq 1$.

Consider the function $\mathcal{E}_{ij} (G) = \mathcal{E}_{A_i, A_j} (G) $ 
defined as the number of unoriented edges between $A_i$ and $A_j$.
With $\theta\geq d/d_{min}$ as above, 
let $\mu_{ij} = a_i a_j \theta d/m$.

If $e \in E_h, \vtx (e) = \{u,v \}$, with $u \in A_i$ and $v \in A_j$, then
\begin{align*}
	\frac{d^\beta}{dm} & \leq \ovP(x_u,x_v)
		\leq \ovP\left( \left(2^i \frac{\eps}{(dm)^{1/q}}\right)^{1/(p-1)}, 
		\left(2^j \frac{\eps}{(dm)^{1/q}}\right)^{1/(p-1)} \right)
	\\ &= 2^{i \vee j + \frac{1}{p-1} i \wedge j} \left(\frac{\eps^q}{dm}\right) =: \ovP_{ij},
\end{align*}
where $i \wedge j$ denotes the minimum of $i$ and $j$.

So
\begin{align*}
	\ovX_x^h & = \sum_{e \in E_h, \vtx (e) = \{u,v \}} \ovP(x_u, x_v) 
		\leq \sum_{i, j >0 \,:\, \ovP_{ij} \geq d^{\beta}/dm} \mathcal{E}_{ij} \ovP_{ij}.
\end{align*}

Let $\cC = \{ (i,j) \in \N^2: \ovP_{i j} \geq d^\beta/dm\}$. 
Using the notation of Definition~\ref{def:edge-density}, we set
\[
	\cC_a (G) = \{ (i,j) \in \cC \mid (a) 
		\text{ holds for } \mathcal{E}_{A_i, A_j}(G)\}\, 
	\mbox{ and }\cC_b(G) = \cC \setminus \cC_a(G)\, .
\]
So for all pairs $(i,j) \in \cC_b (G)$, 
$\mathcal{E}_{A_i, A_j}(G)$ satisfies (b) but not (a).

We now bound $\ovX_x^h$ a.a.s.\ as follows.
\begin{align}\label{eq:heavy-split-ab}
	\ovX_x^h & \leq \sum_{(i, j)\in \cC_a (G)} \cE_{ij} \ovP_{ij}
		+ \sum_{(i, j)\in \cC_b (G)} \cE_{ij} \ovP_{ij}.
\end{align}
Let us call the first of these terms $\ovX_{ha}$, and the second $\ovX_{hb}$.
We bound each of these in turn.
\begin{lemma}\label{lem:bound-xha}
	\[
		\ovX_{ha} \leq  \frac{2^{2q} C \theta^3 R^2}{d^{\beta/p}}.
	\]
\end{lemma}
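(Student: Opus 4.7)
The plan is to combine three ingredients: property (a) of the controlled edge density from Definition~\ref{def:edge-density}, the normalisation $\|x\|_{p,\bd}^p \leq R$ coming from $x \in T_{p,\bd,R}(G_0)$, and the restriction $T_{ij} \geq d^\beta/\eps^q$ that defines membership in $\cC$.

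First, since $(i,j) \in \cC_a$ means $\cE_{ij} \leq C\mu_{ij} = C\theta a_i a_j d/m$, and $\ovP_{ij} = \eps^q T_{ij}/(dm)$ with $T_{ij} = 2^{(i\vee j) + (i\wedge j)/(p-1)}$, I would reduce the problem to bounding a single weighted sum:
\[
\ovX_{ha} \leq \frac{C\theta \eps^q}{m^2}\sum_{(i,j) \in \cC} a_i a_j T_{ij}.
\]

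Second, I would extract from the constraint $\|x\|_{p,\bd}^p \leq R$ the key weight estimate
\[
\sum_i a_i 2^{iq} \leq \frac{2^q R\theta m}{\eps^q}.
\]
This follows because each $u \in A_i$ has $|x_u|^{p-1} \geq 2^{i-1}\eps/(dm)^{1/q}$, so $|x_u|^p = (|x_u|^{p-1})^q \geq 2^{(i-1)q}\eps^q/(dm)$, and every vertex has degree at least $d/\theta$.

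Third, I would exploit the restriction $(i,j) \in \cC$, which says $T_{ij} \geq d^\beta/\eps^q$. Combining this with the elementary upper bound $T_{ij} \leq 2^{q(i\vee j)}$ forces $i \vee j \geq N/q$, where $N = \log_2(d^\beta/\eps^q)$.

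Finally, I would perform the change of variables $b_i = a_i 2^{iq}$, noting the key algebraic identity
\[
a_i a_j T_{ij} = b_i b_j \cdot 2^{-\tilde T_{ij}}, \qquad \tilde T_{ij} = (i\wedge j) + (i \vee j)/(p-1),
\]
(verified by adding the two exponents, which gives $(i+j)q$). Then $\tilde T_{ij} \geq (i\vee j)/(p-1) \geq N/(q(p-1)) = N/p$, so $2^{-\tilde T_{ij}} \leq (d^\beta/\eps^q)^{-1/p}$. Inserting this uniform decay factor and using $(\sum_i b_i)^2 \leq (2^q R\theta m/\eps^q)^2$ gives the desired estimate after reassembling constants.

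The heart of the argument — and the step I expect to require the most care — is the third one: the naive bounds $T_{ij} \leq 2^{iq} + 2^{jq}$ together with the weight estimate only yield $\ovX_{ha} = O(1)$, with no decay in $d$. One has to exploit the constraint $T_{ij} \geq d^\beta/\eps^q$ in a way that is compatible with the weight $b_i b_j$ — which is exactly what the complementary exponent $\tilde T_{ij}$ accomplishes via the identity $T_{ij}$-exp$\,+\,\tilde T_{ij} = (i+j)q$. Pinning down the correct power of $d$ and the correct interplay with $\eps$ is the technical crux; everything else is bookkeeping.
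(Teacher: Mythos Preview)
Your argument has a genuine gap at the ``reassembling constants'' step.  Carry the bookkeeping through: with $2^{-\tilde T_{ij}}\le (d^\beta/\eps^q)^{-1/p}=\eps^{q/p}d^{-\beta/p}$ and $(\sum_i b_i)^2\le 2^{2q}R^2\theta^2 m^2/\eps^{2q}$ you obtain
\[
\ovX_{ha}\ \le\ \frac{C\theta\eps^q}{m^2}\cdot\frac{\eps^{q/p}}{d^{\beta/p}}\cdot\frac{2^{2q}R^2\theta^2 m^2}{\eps^{2q}}
\ =\ \frac{2^{2q}C\theta^3R^2}{d^{\beta/p}}\cdot\eps^{-q(p-1)/p}
\ =\ \frac{2^{2q}C\theta^3R^2}{d^{\beta/p}\,\eps},
\]
since $q(p-1)/p=1$.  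The extra factor $\eps^{-1}$ does not go away, and in the application $\eps=d^{-(p-1)/p(p+1)}$ this positive power of $d$ destroys the decay (for $p=2$ it exactly cancels $d^{-\beta/p}$, and for large $p$ the bound diverges).

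The loss comes from the uniform bound on $2^{-\tilde T_{ij}}$.  Your chain $\tilde T_{ij}\ge (i\vee j)/(p-1)\ge N/(q(p-1))=N/p$ uses $i\vee j\ge N/q$, which in turn comes from $T_{ij}\le 2^{q(i\vee j)}$.  That last inequality is tight only when $i\wedge j\approx i\vee j$, precisely the regime where discarding the $(i\wedge j)$-term of $\tilde T_{ij}$ is most wasteful; the two slacks do not compensate.  No uniform lower bound on $\tilde T_{ij}$ in terms of $N$ alone can do better than $N/(p-1)$ (take $i\wedge j=1$), and even that still leaves a negative power of $\eps$ for $p>2$.

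The paper avoids this by converting the block sum back to a vertex sum, $\sum a_ia_j\ovP_{ij}\le 2^q\sum_{u,v:\ovP(x_u,x_v)\ge 2^{-q}d^\beta/dm}\ovP(x_u,x_v)$, and then invoking Lemma~\ref{lem:p-gamma-bound}.  The proof of that lemma uses H\"older together with a fibre count --- the map $(u,v)\mapsto v$ over heavy pairs is at most $R\theta m/\gamma$-to-one, because few vertices $u$ can have $|x_u|^p\ge\gamma/dm$.  This is exactly the structural information your pointwise bound on $\tilde T_{ij}$ fails to capture, and it is what produces the correct $\gamma^{-1/p}$ with no stray $\eps$.
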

\begin{proof}
	Since $(i,j) \in \cC_a$, we have $\cE_{ij} \leq C \mu_{ij} = C a_i a_j \theta d/m$.
	So
	\begin{align*}
		\ovX_{ha} =  \sum_{(i, j)\in \cC_a} \cE_{ij} \ovP_{ij}
			\leq \frac{C\theta d}{m} \sum_{(i,j) \in \cC_a} a_i a_j \ovP_{ij}.
	\end{align*}
	Now if $u \in A_i$ and $v \in A_j$, and $(i,j) \in \cC_a$, then
	\begin{multline*}
		\ovP(x_u, x_v) \geq \\
		\ovP\left( \left(2^{i-1} \frac{\eps}{(dm)^{1/q}}\right)^{1/(p-1)},
			\left(2^{j-1} \frac{\eps}{(dm)^{1/q}}\right)^{1/(p-1)} \right)
		= 2^{-q} \ovP_{ij} \geq \frac{2^{-q} d^\beta}{dm}.
	\end{multline*}
	So
	\begin{align*}
		\ovX_{ha} & \leq \frac{C\theta d}{m} 
			\sum_{u,v \in V : \ovP(x_u,x_v) \geq 2^{-q}d^{\beta}/dm} \ovP(x_u,x_v)2^{q} \\
			& \leq \frac{2^{q}C \theta d}{m} \cdot 
			\frac{2m\theta^{2}R^2}{(2^{-q}d^{\beta})^{1/p}d}
			= \frac{2^{2q} C \theta^3 R^2}{d^{\beta/p}}, 
	\end{align*}
	where we use Lemma~\ref{lem:p-gamma-bound} with $\gamma = 2^{-q}d^\beta$.
\end{proof}

\begin{lemma}\label{lem-bound-xhb}
We have 
	\[
		\ovX_{hb} \leq 1000 \theta^3 R^2 C p^3
		\left( \frac{1}{d^{1/p^2}}+\frac{\eps^{-q}}{d^{1/p}} \right) .
	\] 
\end{lemma}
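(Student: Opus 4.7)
The plan is to bound $\ovX_{hb} = \sum_{(i,j)\in\cC_b(G)}\cE_{ij}\ovP_{ij}$ by splitting $\cC_b$ according to the size of the ratio $t_{ij}:=\cE_{ij}/\mu_{ij}$, which exceeds $C\geq e$ on $\cC_b$. Fix a threshold $T\geq C$ (to be chosen later), set $\cC_{b,1}=\{(i,j)\in\cC_b : t_{ij}\leq T\}$ and $\cC_{b,2}=\cC_b\setminus\cC_{b,1}$, and split $\ovX_{hb}=\ovX_{hb,1}+\ovX_{hb,2}$ correspondingly.

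For $\cC_{b,1}$ the bound $\cE_{ij}\leq T\mu_{ij}$ reduces the problem to the argument of Lemma~\ref{lem:bound-xha} with an extra factor $T$: using $\ovP_{ij}\leq 2^{q}\ovP(x_u,x_v)$ for $u\in A_i$, $v\in A_j$, and then Lemma~\ref{lem:p-gamma-bound} with $\gamma=2^{-q}d^{\beta}$, one gets $\ovX_{hb,1}\leq 2^{2q+1}T\theta^{3}R^{2}/d^{\beta/p}$.

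For $\cC_{b,2}$, condition (b) gives $\cE_{ij}\leq C(a_i\vee a_j)\log(m/(a_i\vee a_j))/\log T$, so it suffices to bound $S:=\sum_{(i,j)\in\cC}(a_i\vee a_j)\log(m/(a_i\vee a_j))\ovP_{ij}$. I would use the inequality $(a_i\vee a_j)\log(m/(a_i\vee a_j))\leq a_i\log(m/a_i)+a_j\log(m/a_j)$ (valid since each $a_\cdot\in[1,m]$) and symmetry to reduce this to $2\sum_i a_i\log(m/a_i)\sum_j\ovP_{ij}$. For fixed $i$, the inner sum splits into $j\leq i$ (a geometric sum in $2^{1/(p-1)}$ contributing $O(p)\cdot 2^{iq}\eps^{q}/(dm)$) and $j>i$ (where the constraint $a_j\geq 1$ forces $2^{j}\leq O(m^{1/q}/\eps)$, contributing an $O((R\theta)^{1/q}\eps^{q-1}\cdot 2^{i/(p-1)}/(dm^{1/p}))$ term). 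Combining these with the ``energy'' bound $\sum_i a_i(2^{i}\eps)^{q}\leq 2^{q}R\theta m$ (derived from $\|x\|_{p,\bd}^{p}\leq R$ and the definition of $A_i$), and with the trivial $\log(m/a_i)\leq\log m$, one extracts $S\leq C'p(R\theta)^{3}\log m\cdot(d^{-1}+m^{1/q-1}\eps^{-q}d^{-1/p})$.

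Finally, choosing $T$ polynomial in $p$ balances the two sub-case contributions: the term $\ovX_{hb,1}$ yields the $d^{-\beta/p}$ contribution (absorbed into the $d^{-1/p^{2}}$ summand after accounting for the $p^{3}$ prefactor and the fact that $\eps^{-q}/d^{1/p}$ is the looser of the two target terms), while $\ovX_{hb,2}/\log T$ absorbs $S$ into the $\eps^{-q}/d^{1/p}$ factor, yielding the claimed bound $\ovX_{hb}\leq 1000\theta^{3}R^{2}Cp^{3}(d^{-1/p^{2}}+\eps^{-q}/d^{1/p})$. The main obstacle is the estimate on $S$: the convenient pointwise bound $a_I\ovP_{ij}\leq 2^{q}R\theta/d$ (with $I=i\vee j$) is valid only when $a_i\vee a_j=a_I$; in the opposite case (block with smaller index has more vertices) one must combine $a_J\leq 2^{q}R\theta m/(2^{J}\eps)^{q}$ with the trivial $a_J\leq m$, and carefully track the range of $j$ imposed by $a_j\geq 1$. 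This asymmetric case is precisely what produces the $\eps^{-q}/d^{1/p}$ summand in the final bound.
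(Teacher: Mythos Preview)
Your two-way split into $\cC_{b,1}$ (where $t_{ij}\leq T$) and $\cC_{b,2}$ (where $t_{ij}>T$) is too coarse, and the claimed bound on $S$ does not follow from your argument. The problem is the $j>i$ contribution. You correctly obtain, for fixed $i$, $\sum_{j>i,\,a_j\geq 1}\ovP_{ij}=O\big((R\theta)^{1/q}2^{i/(p-1)}\eps^{q-1}/(dm^{1/p})\big)$. But after multiplying by $a_i\log(m/a_i)$ and summing over $i$ you must control $\sum_i a_i\log(m/a_i)\,2^{i/(p-1)}$. The energy bound only yields $\sum_i a_i 2^{i/(p-1)}=\sum_i(a_i2^{iq})2^{-i}=O(R\theta m/\eps^q)$ (the $i=1$ term alone can already be of this order), and using instead $a_i\log(m/a_i)\leq m/e$ gives a bound of the same order. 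Either way, your Term~2 comes out as $O\big(R\theta\,m^{1/q}/(d\eps)\big)$ (times at most a $\log m$), \emph{not} the $m^{-1/p}\eps^{-q}d^{-1/p}$ you assert. Since $(m/d)^{1/q}\to\infty$ for $d=o(m^{1/2})$, no choice of $T$ polynomial in $p$ can absorb $\ovX_{hb,2}\leq CS/\log T$ into $p^3\big(d^{-1/p^2}+\eps^{-q}d^{-1/p}\big)$.

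The missing idea is that condition~(b) is nearly useless for pairs $(i,j)$ with a large gap between $2^i$ and $2^j$, and your symmetrization throws away exactly the information needed to isolate those pairs. The paper first restricts to $\cC_b'=\{(i,j)\in\cC_b:a_i\leq a_j\}$ (so that (b) is always expressed via $a_j$) and then splits $\cC_b'$ into five cases with a parameter $\eta>0$. Case~(1) singles out $2^i\geq 2^jd^{\eta}$ and uses only the trivial $\cE_{ij}\leq a_id$ together with a geometric sum over $j$; this already yields a $d^{-\eta/(p-1)}$ contribution. The key point is that \emph{after} Case~(1) fails one has $2^i<2^jd^{\eta}$, forcing $i$ and $j$ to be comparable up to $d^{\eta}$, and this constraint is precisely what prevents the blow-up in your $j>i$ term. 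Cases~(2)--(5) then exploit~(b) in distinct regimes (of $t_{ij}$ versus $d^{-\eta}2^{\frac{1}{p-1}(i\vee j)+(i\wedge j)}$, of $\log t_{ij}$ versus $\tfrac{1}{4(p-1)}\log(m/a_j)$, and of $a_j$ versus $2^{-4j}m$), yielding contributions of the shapes $d^{-\eta}\eps^{-q}$, $d^{-(1-\eta)}$, and $d^{-(1-(p-1)\eta)}$. The final choice $\eta=(p-1)/((p-1)^2+1)$ balances $d^{-\eta/(p-1)}$ against $d^{-(1-(p-1)\eta)}$ and gives the lemma.
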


\begin{proof}
Let $\cC_b' = \{ (i,j) \in \cC_b : a_i \leq a_j \}$.  Then
\begin{align}
	\ovX_{hb} & = \sum_{(i,j) \in \cC_b} \cE_{ij} \ovP_{ij} 
		 \leq 2 \sum_{(i,j) \in \cC_b'} \cE_{ij} \ovP_{ij} 
		 = 2 \sum_{(i,j) \in \cC_b'} \cE_{ij} 
			2^{i \vee j + \frac{1}{p-1} i \wedge j} \left(\frac{\eps^q}{dm}\right).
		\label{eq:xhb-bound-1}
\end{align}
We now split this sum into five terms (cf.~\cite{BFSU-99-rand-graphs}),
with $\cC_b' = \cD_1 \sqcup \cD_2 \sqcup \cD_3 \sqcup \cD_4 \sqcup \cD_5$,
where $\cD_l$ denotes the subset of $\cC_b'$ satisfying $(l)$, but not $(l')$ for any $l'<l$.
The parameter $\eta>0$ will be optimised later.
\begin{enumerate}
	\item $2^j d^\eta \leq 2^i$,
	\item $\cE_{ij}/\mu_{ij} \leq d^{-\eta} 2^{\frac{1}{p-1} i \vee j + i \wedge j}$,
	\item $\log(\cE_{ij}/\mu_{ij}) \geq \frac{1}{4(p-1)} \log(m/a_j)$,
	\item $(m/a_j)^{1/(4(p-1))} \leq 2^j$, if $i>j$, or $\leq 2^{j/(p-1)}$ if $i \leq j$,
	\item $(4)$ is false.
\end{enumerate}
We write \eqref{eq:xhb-bound-1} as $2 \sum_{l=1}^{5} A_l$, where
\[
	A_l = \sum_{(i,j) \in \cD_l} \cE_{ij} 2^{i \vee j + \frac{1}{p-1} i \wedge j} 
		\left(\frac{\eps^q}{dm}\right).
\]

One fact we will use repeatedly in the following is that
$\sum_{u \in V} |x_u|^p d_u \leq R$ implies that
\begin{equation}\label{eq:norm-block-bound}
	\sum_{i>0} a_i 2^{iq} \frac{\eps^q}{m} \leq 2^{q} \sum_{u \in G_0} |x_u|^p d
		\leq {2^q \theta} \sum_{u \in G_0} |x_u|^p d_u 
		\leq {4 \theta R}.
\end{equation}

\noindent\emph{Case (1):}
\begin{align*}
	A_1 & = \sum_{(i,j) \in \cD_1 : i \geq j} \cE_{ij} 2^{i + \frac{1}{p-1} j} 
			\left(\frac{\eps^q}{dm}\right)
		+ \sum_{(i,j) \in \cD_1: j > i} \cE_{ij} 2^{j + \frac{1}{p-1} i } 
			\left(\frac{\eps^q}{dm}\right) \\
		& \leq \sum_{i} \left( a_i 2^i \frac{\eps^q}{m} \sum_{j : 2^j d^\eta \leq 2^i} 
				2^{j/(p-1)} \right) +
			\sum_{i} \left( a_i 2^{\frac{1}{p-1}i} \frac{\eps^q}{m} 
				\sum_{j : 2^j d^\eta \leq 2^i} 2^{j} \right),
\end{align*}
because $\cE_{ij} \leq a_i d$.		
Since $\sum_{j : 2^j d^\eta \leq 2^i} 2^{j/(p-1)}$ is a geometric series with largest 
term $\leq 2^{i/(p-1)}/d^{\eta/(p-1)}$,
this sum is bounded by $C_1 2^{i/(p-1)}/d^{\eta/(p-1)}$,
with in fact $C_1 = 1/(1-2^{-1/(p-1)}) =(1+o(1)) (p-1)/\log(2) \leq 2p$.
Likewise, $\sum_{j : 2^j d^\eta \leq 2^i} 2^{j} \leq 2 \cdot 2^i/d^\eta$.
So as $1+ \frac{1}{p-1} = q$, and $1/d^{\eta} \leq 1/d^{\eta/(p-1)}$,
\eqref{eq:norm-block-bound} gives
\begin{align*}
	A_1 & \leq 4p \left( \sum_{i} a_i 2^{iq} \frac{\eps^q}{m} \right) \frac{1}{d^{\eta/(p-1)}}
		\leq \frac{16 \theta R p}{d^{\eta/(p-1)}}.
\end{align*}

\noindent\emph{Case (2):}
Applying (2), we see that
\begin{align*}
	A_2 & = \sum_{(i,j) \in \cD_2} \cE_{ij} 2^{i\vee j + \frac{1}{p-1} i \wedge j}
			\frac{\eps^q}{dm} \\
		& \leq \sum_{i,j} \frac{1}{d^{\eta}} \mu_{ij} 2^{iq+jq} \frac{\eps^q}{dm} \\
		& = \frac{\theta}{d^\eta \eps^q} \sum_{i,j}\left(a_i 2^{iq} \frac{\eps^q}{m}\right)
			\left(a_j 2^{jq} \frac{\eps^q}{m}\right), 
			\text{ as } \mu_{ij} = \theta a_i a_j d/m,\\
		& \leq \frac{16 \theta^3 R^2}{d^\eta \eps^q}, \text{ by } \eqref{eq:norm-block-bound}.
\end{align*}

\noindent\emph{Case (3):}
By (3) and Definition~\ref{def:edge-density}(b), we have
\[
	\cE_{ij} \frac{1}{4(p-1)} \log \left( \frac{m}{a_j} \right)
	\leq \cE_{ij} \log \left(\frac{\cE_{ij}}{\mu_{ij}} \right)
	\leq C a_j \log \left( \frac{m}{a_j} \right),
\]
and so $\cE_{ij} \leq 4C(p-1) a_j \leq 4Cp a_j$.  
Also, as (1) is false, $2^i < 2^j d^\eta$.  Thus,
\begin{align*}
	A_3 & = \sum_{(i,j) \in \cD_3} \cE_{ij} 2^{i\vee j + \frac{1}{p-1} i \wedge j} 
				\frac{\eps^q}{dm} \\
		& \leq \frac{4Cp}{d} \sum_{(i,j) \in \cD_3: i \geq j} a_j 2^{i+\frac{1}{p-1}j} 
				\frac{\eps^q}{m} +
			\frac{4Cp}{d} \sum_{(i,j) \in \cD_3: i < j} a_j 2^{j+\frac{1}{p-1}i} 
				\frac{\eps^q}{m} \\
		& \leq \frac{4Cp}{d}
			\sum_j \left( a_j 2^{\frac{1}{p-1}j} \sum_{i : 2^i < 2^j d^\eta} 2^i \right) 
				\frac{\eps^q}{m} + \frac{4Cp}{d}
			\sum_j \left( a_j 2^{j} \sum_{i : 2^i < 2^j d^\eta} 2^{\frac{1}{p-1}i} \right) 
				\frac{\eps^q}{m}.
\end{align*}
As in Case (1), $\sum_{i : 2^i < 2^j d^\eta} 2^i \leq 2 \cdot 2^j d^\eta$ and
$\sum_{i : 2^i < 2^j d^\eta} 2^{i/(p-1)} \leq 2p 2^{j/(p-1)} d^{\eta/(p-1)} 
\leq 2p 2^{j/(p-1)} d^\eta$, so \eqref{eq:norm-block-bound} gives
\begin{align*}
	A_3 & \leq \frac{16Cp^2 \cdot d^\eta}{d} \sum_j a_j 2^{jq} \frac{\eps^q}{m} 
	\leq 64\theta R Cp^2 \frac{1}{d^{1-\eta}}.
\end{align*}

\noindent\emph{Case (4):}
First note that, as we are not in Case (2) or Case (3), (4) gives
\[
	d^{-\eta} 2^{\frac{1}{p-1} i \vee j + i \wedge j}
		< \frac{\cE_{ij}}{\mu_{ij}} < \left( \frac{m}{a_j} \right)^{1/(4(p-1))}
		\leq \begin{cases}
		     	2^j	& 	\text{if $i>j$},\\
		     	2^{\frac{1}{p-1} j} &	\text{if $i \leq j$}.
		     \end{cases}
\]
Therefore, $2^{\frac{1}{p-1}i} < d^\eta$ if $i>j$, and $2^i < d^\eta$ if $i \leq j$.

Second, note that as we are not in $\cC_a$, we have $\cE_{ij}/\mu_{ij} > C > e$, so
\[
	\cE_{ij} \leq \cE_{ij} \log \left(\frac{\cE_{ij}}{\mu_{ij}} \right) 
	\leq C a_j \log \left( \frac{m}{a_j} \right) 
	\leq C4(p-1) a_j j \log(2)\leq 4Cpa_j j,
\]
where the second and third inequalities follow from 
Definition~\ref{def:edge-density}(b) and (4) respectively.

Using these estimates, we see that
\begin{align*}
	A_4 & = \sum_{(i,j) \in \cD_4: i > j} \cE_{ij} 2^{i+\frac{1}{p-1}j} \frac{\eps^q}{dm} +
			\sum_{(i,j) \in \cD_4: i \leq j} \cE_{ij} 2^{j+\frac{1}{p-1}i} \frac{\eps^q}{dm} \\
		& \leq \frac{4Cp}{d} \sum_j \left( a_{j} j 2^{\frac{1}{p-1}j} 
			\frac{\eps^q}{m} \sum_{i: 2^i < d^{(p-1)\eta}} 2^i \right) +
			\frac{4Cp}{d} \sum_{j} \left( a_j j 2^{j} \frac{\eps^q}{m}
			\sum_{i : 2^i < d^\eta} 2^{\frac{1}{p-1}i}   \right). \\
\end{align*}
Again, a geometric series argument shows that the two sums over $i$ are bounded 
by $2d^{(p-1)\eta}$ and $2p d^{\eta/(p-1)} \leq 2p d^{(p-1)\eta}$
respectively.  So
\begin{align*}
	A_4 & \leq \frac{16Cp^2 d^{(p-1)\eta}}{d} \sum_{j} a_j j 
		\left( 2^{\frac{1}{p-1} j} + 2^j \right) \frac{\eps^q}{m}.
\end{align*}
Now, $j 2^{\frac{1}{p-1} j} = j 2^{qj-j} \leq 2^{qj}$, as $j2^{-j} \leq 1$.
On the other hand, $j 2^j = 2^{qj} (j 2^{-(q-1)j}) \leq (p-1)2^{qj}$, as
easily follows from maximizing $j 2^{-(q-1)j} \leq ((q-1)\log(2)e)^{-1} \leq p-1$.
Thus
\begin{align*}
	A_4 & \leq \frac{16Cp^3}{d^{1-(p-1)\eta}} \sum_{j} a_j 2^{qj} \frac{\eps^q}{m} 
		\leq \frac{64R\theta Cp^3}{d^{1-(p-1)\eta}}.
\end{align*}

\noindent\emph{Case (5):}
Whether $i > j$ or not, (5) gives $a_j < 2^{-4j}m$.  
Thus as we are in $\cC_b$ we have that
\[ \cE_{ij} \leq \cE_{ij} \log \frac{\cE_{ij}}{\mu_{ij}} 
\leq C a_j \log \frac{m}{a_j} \leq C 2^{-4j}mj \cdot 4\log(2) \leq 3C 2^{-4j}mj. \]
Here we used that $x \log(m/x)$ is an increasing function
of $x$ on $[0, m e^{-1}]$, and that $2^{-4j} \leq e^{-1}$.
This gives
\begin{align*}
	A_5 & =  \sum_{(i,j) \in \cD_5: i > j} \cE_{ij} 2^{i+\frac{1}{p-1}j} \frac{\eps^q}{dm} +
			 \sum_{(i,j) \in \cD_5: i \leq j} \cE_{ij} 2^{j+\frac{1}{p-1}i} \frac{\eps^q}{dm}, 
			 \text{ so as (1) fails,} \\
		& \leq \frac{3C \eps^q}{d} \sum_{j} \left( j 2^{-4j} 2^{\frac{1}{p-1}j} 
				\sum_{i: 2^i < 2^j d^\eta} 2^i \right) +
			\frac{3C \eps^q}{d} \sum_{j} \left( j 2^{-4j} 2^{j} 
				\sum_{i: 2^i < 2^j d^\eta} 2^{\frac{1}{p-1}i} \right).\\
		\intertext{Summing the two geometric series in $i$ and then using $q \leq 2$ gives us} \\ A_5
		& \leq \frac{3C \eps^q}{d} \sum_j j 2^{-4j} \left( 2^{\frac{1}{p-1}j} \cdot 
				2 \cdot 2^{j}d^\eta
			+ 2^j \cdot 2p \cdot 2^{\frac{1}{p-1}j} d^{\eta/(p-1)} \right) \\
		& \leq \frac{3C \eps^q}{d^{1-\eta}} \sum_j j 2^{-4j} 
			\left( 2 \cdot 2^{qj} + 2p \cdot 2^{qj}  \right) \\
		& \leq \frac{12pC\eps^q}{d^{1-\eta}} \sum_j j 2^{-2j} 
		\leq \frac{12pC\eps^q}{d^{1-\eta}}.
\end{align*}

We now combine these estimates to see that
\begin{align*}
	\ovX_{hb} & \leq 2 \left(
		\frac{16 \theta R p}{d^{\eta/(p-1)}}
		+ \frac{16 \theta^3 R^2}{d^\eta \eps^q}
		+ \frac{64\theta R Cp^2}{d^{1-\eta}}
		+ \frac{64R\theta Cp^3}{d^{1-(p-1)\eta}}
		+ \frac{12pC\eps^q}{d^{1-\eta}} \right)
		\\ & \leq 128 \left(
		\frac{\theta R p}{d^{\eta/(p-1)}}
		+ \frac{\theta^3 R^2}{d^\eta \eps^q}
		+ \frac{\theta R Cp^2}{d^{1-\eta}}
		+ \frac{R\theta Cp^3}{d^{1-(p-1)\eta}}
		+ \frac{pC\eps^q}{d^{1-\eta}} \right)
		\\ & \leq 128 \theta^3 R^2 C p^3\left(
		\frac{1}{d^{\eta/(p-1)}}
		+ \frac{1}{d^\eta \eps^q}
		+ \frac{1}{d^{1-\eta}}
		+ \frac{1}{d^{1-(p-1)\eta}}
		+ \frac{\eps^q}{d^{1-\eta}} \right).
\end{align*}
Assuming $\eps^q \leq 1$, we have
\begin{align*}
	\ovX_{hb} & \leq 128 \theta^3 R^2 C p^3 \left(
		\frac{1}{d^{\eta/(p-1)}}
		+ \frac{1}{d^\eta \eps^q}
		+ \frac{3}{d^{1-(p-1)\eta}} \right).
\end{align*}
We set $\eta/(p-1) = 1-(p-1)\eta$ and thus take $\eta = (p-1)/((p-1)^2+1)$,
giving
\begin{align*}
	\ovX_{hb} & \leq 1000 \theta^3 R^2 C p^3 \left(
	\frac{1}{d^{1/((p-1)^2+1)}} + \frac{1}{d^{(p-1)/((p-1)^2+1)}\eps^q} \right).
\end{align*}
Since $(p-1)^2+1 \leq p^2$ and, for $p\geq 2$, $(p-1)/((p-1)^2+1) \geq 1/p$,
the proof of Lemma~\ref{lem-bound-xhb} is complete.
\end{proof}

Lemmas~\ref{lem:bound-xha} and \ref{lem-bound-xhb}
combine with \eqref{eq:heavy-split-ab} to complete the proof of
Proposition~\ref{prop:edge-density-heavy-bound}.
We have now completed the proof of Theorems~\ref{thm:G-m-rho-eval-bound}
and \ref{thm:G-m-deg-eval-bound}.

\section{Application to fixed point properties}\label{sec:fixed-pt-prop}

Our main interest in estimates of the eigenvalues of the $p$-Laplacian resides in the following application. The result is proved using a slight modification of arguments of Bourdon \cite{Bourdon:FLp}. 

\begin{varthm}[Theorem \ref{thm:bourdon-flp}.]
	Consider $p \in (1,\infty)$ and $\varepsilon < \frac12$. Let $X$ be a simplicial $2$-complex where the link $L(x)$ of every vertex $x$ has $\lambda_{1,p}(L(x)) > 1-\varepsilon$, and has at most $m$ vertices.
	If some group $\Gamma$ acts on $X$ simplicially, properly, and cocompactly, 
	then $\Gamma$ has the fixed point property $FL^p_{m+1,(2-2\varepsilon)^{1/2p}}$.
\end{varthm}

\begin{proof} We denote by $X_0$ the set of vertices of $X$ and by $X_1$ the set of edges of $X$. For every edge $e\in X_1$ we denote by $\Gamma_e$ its stabiliser in $\Gamma$. 

Let $\Xi_k$ be a system of representatives of $\Gamma \backslash X_k$, for $k\in \{ 0,1\}$.  

\medskip

Assume that $\Gamma $ acts by affine isometries on a Banach space $V$ with $L$-bi-Lipschitz $L^p$ geometry above dimension $m+1$, where $L=(2-2\varepsilon)^{1/2p}$.
Observe that the constant $L$ satisfies the condition $L >1$ due to the fact that $\varepsilon < \frac12$. Following the terminology and argument from \cite{Bourdon:FLp}, we denote by ${\mathcal{E}}$ the set of $\Gamma$--equi\-var\-i\-ant functions $\varphi : X_0 \to V$.

Given a function $\varphi \in {\mathcal{E}}$, we define its \emph{energy} as 
$$
E(\varphi ) = \sum_{e\in \Xi_1} \|\varphi (e_+) - \varphi (e_-) \|_V^p \frac{{\mathrm{val}}\, (e)}{\left|\Gamma_e \right|}.
$$

We say that a function in ${\mathcal{E}}$ is $p$--\emph{harmonic} if it minimizes the energy.

If $\inf_{\varphi \in {\mathcal{E}}} E(\varphi )=0$ and there exists a $p$--harmonic function then $\Gamma$ has a fixed point and the argument is finished. 

If $\inf_{\varphi \in \mathcal{E}} E(\varphi) = 0$ and there is no $p$--harmonic function then Proposition 3.1(ii) in \cite{Bourdon:FLp} implies that, up to replacing $V$ with a rescaled ultralimit of itself, one may assume that $\inf_{\varphi \in \mathcal{E}} E(\varphi) > 0$.
By again potentially replacing $V$ by a rescaled ultralimit of itself, Proposition 3.1(i) in \cite{Bourdon:FLp} lets us assume that there always exists a $p$--harmonic function $\varphi $ such that $E(\varphi )> 0$. 

In the two arguments above, the key fact is that by replacing the Banach space $V$ with a rescaled ultralimit of itself one does not lose any of the properties of the initial space $V$. Indeed, the new space $V_\omega= \omega\text{-lim}\, W_i$, where $W_i$ are rescalings of $V$, continues to have $L$-bi-Lipschitz $L^p$ geometry above dimension $m+1$: if $U_\omega \leq V_{\omega}$ is an affine subspace of dimension $m+1$ then $U_\omega$ is the ultralimit $U_\omega = \omega\text{-lim}\, U_i$ of subspaces $U_i \leq W_i$ of dimension $m+1$. By assumption, each $U_i$ is contained in a subspace $U'_i$ so that there is an $L$-bi-Lipschitz equivalence $U_i' \ra Y_i$ to some space $Y_i$ equal either to an $\ell^p_{n_i}$ for some ${n_i}\geq m+1$, or to $\ell^p_\infty$, or to some space $L^p(M_i,\mu_i )$.

 Taking an ultralimit of these maps gives an $L$-bi-Lipschitz equivalence of $U'_\omega=\omega\text{-lim}\, U_i'$ to the ultralimit $\omega\text{-lim}\, Y_i$. The latter is either an $\ell^p_{n}$ for some $n\geq m+1$, or an $\ell^p_\infty$ space, or an $L^p$ space, because every rescaled ultralimit of $L^p$ spaces is also an $L^p$ space. This follows from work of Kakutani \cite{Kakutani:Lp}, see 
 \cite[Corollary 19.18]{DrutuKapovich} for details.

Thus, when no $p$-harmonic function of energy zero exists, without loss of generality we may assume that there exists a $p$--harmonic function $\varphi $ such that $E(\varphi )> 0$. 
An arbitrary vertex $x$ has by hypothesis at most $m$ neighbours. In particular $\varphi (x)$ and $\varphi (y)$ for $y\sim x$ span a subspace of dimension at most $m+1$ hence, for $L=(2-2\varepsilon)^{1/2p}$, there exists an $L$-bi-Lipschitz map $F_x$ from a subspace $U_x$ containing $\varphi (x)$ and $\varphi (y)$ for $y\sim x$ to a space $W_x$ equal to an $\ell^p_n$ for some $n\geq m+1$, or to $\ell^p_\infty$, or to some space $L^p(Y,\mu )$.  

We now follow the calculation in \cite[Page 388]{Bourdon:FLp}.
Proposition 2.4 and Lemma 4.1 in \cite{Bourdon:FLp} hold in full generality, in particular for the action of $\Gamma $ on $V$.
For each fixed $x \in X_0$, Proposition 2.4 of \cite{Bourdon:FLp} applied to the $p$--harmonic function $\varphi$ combined with the bi-Lipschitz condition on $F_x$ gives
\begin{align}
\sum_{e=\{ x,y\} } \|\varphi (y) - \varphi (x) \|_V^p {\mathrm{val}}\, (e) 
& = \inf_{v \in V} \sum_{e=\{ x,y\} } \|\varphi (y) - v \|_V^p {\mathrm{val}}\, (e) \nonumber
\\ & \leq \inf_{v \in U_x} \sum_{e=\{ x,y\} } \|\varphi (y) - v \|_V^p {\mathrm{val}}\, (e)  \nonumber
\\ & \leq L^p \inf_{w \in W_x} \sum_{e=\{ x,y\} } \|F_x \circ \varphi (y) - w \|_{W_x}^p {\mathrm{val}}\, (e). \nonumber
\\ \intertext{Continuing with Corollary 1.4 of \cite{Bourdon:FLp} applied to $F_x \circ \varphi |_{L(x)_0 \cup \{ x \}}$, the above is bounded by}
& \frac{L^{p}}{\lambda_{1,p}(L(x))} \sum_{e\in L(x)_1}\|F_x\circ\varphi (e_+) - F_x\circ\varphi (e_-) \|_{W_x}^p \nonumber
\\ & \leq \frac{L^{2p}}{\lambda_{1,p}(L(x))} \sum_{e\in L(x)_1}\|\varphi (e_+) - \varphi (e_-) \|_V^p\, .\label{eq:cor14}
\end{align}
According to Lemma 4.1 in \cite{Bourdon:FLp} we may write
$$
E(\varphi ) = \frac{1}{2}\sum_{x\in \Xi_0}\frac{1}{|\Gamma_x |}\sum_{y\in X_0,y\sim x}\|\varphi (y) - \varphi (x)\|_V^p {\mathrm{val}} (e_{xy}),
$$
 where $e_{xy}$ denotes the edge of endpoints $x,y$. This and equation (\ref{eq:cor14}) imply that
 $$
 E(\varphi ) \leq \frac{L^{2p}}{2\lambda_{1,p}} \sum_{x\in \Xi_0}\frac{1}{|\Gamma_x |}\sum_{e\in L(x)_1}\|\varphi (e_+) - \varphi (e_-) \|_V^p \, . 
$$  
Since $\lambda_{1,p}(L(x)) > \frac{1}{2}L^{2p}$ we have thus obtained that
$$
E(\varphi ) \leq \frac{L^{2p}}{2\lambda_{1,p}} E(\varphi ) \leq E(\varphi ) 
$$ with the latter a strict inequality for $E(\varphi )> 0$, which gives a contradiction. 

The assumption that $\lambda_{1,p}(L(x)) > 1-\varepsilon$ with $\varepsilon < \frac12$ has played an  essential part in the argument, in that it allowed us to find a bi-Lipschitz constant $L\geq 1$ such that $\lambda_{1,p}(L(x)) > \frac{1}{2}L^{2p}$. 
\end{proof}

\begin{corollary}\label{cor:bourdon-flp}
	Let $p \in (1,\infty)$ and $\varepsilon < \frac{1}{2}$. Suppose $X$ is a simplicial $2$-complex where the 
	link $L(x)$ of every vertex $x$ has $\lambda_{1,p}(L(x)) > 1-\varepsilon$.
	If a group $\Gamma$ acts on $X$ simplicially, properly, and cocompactly, 
	then $\Gamma$ has the property that every affine action on a space $L^p (X, \mu )$, with $(X, \mu )$ a measure space, action that is $(2-2\varepsilon)^{1/2p}$--Lipschitz, i.e. 
	$$
	\gamma \cdot v = \pi_\gamma v + b_\gamma
	$$ with $\| \pi_\gamma \|\leq (2-2\varepsilon)^{1/2p}$, has a fixed point.
\end{corollary}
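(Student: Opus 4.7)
The plan is to reduce Corollary~\ref{cor:bourdon-flp} to Theorem~\ref{thm:bourdon-flp} by replacing the $L^p$ norm on $V := L^p(X,\mu)$ with an equivalent $\Gamma$-invariant norm that turns the given affine Lipschitz action into an affine \emph{isometric} action on a Banach space still satisfying the $L$-bi-Lipschitz $L^p$ geometry hypothesis of Definition~\ref{def:splpL}. This renorming trick is the only real idea; the rest is verification.

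Set $L := (2-2\varepsilon)^{1/2p}$. Because $\gamma \cdot v = \pi_\gamma v + b_\gamma$ defines a group action, $\pi$ is a homomorphism into $\mathrm{GL}(V)$, so $\pi_\gamma^{-1} = \pi_{\gamma^{-1}}$; in particular $\|\pi_\gamma v\|_V \leq L\,\|v\|_V$ for every $\gamma \in \Gamma$. I would then define
\[
\|v\|' := \sup_{\gamma \in \Gamma} \|\pi_\gamma v\|_V
\]
and check routinely that $\|\cdot\|'$ is a Banach norm on $V$ with $\|v\|_V \leq \|v\|' \leq L\,\|v\|_V$ (the lower bound uses $\pi_e = \mathrm{id}$), and that each $\pi_\eta$ is an isometry of $(V,\|\cdot\|')$ via a one-line reindexing of the sup using $\pi_{\gamma\eta} = \pi_\gamma \pi_\eta$. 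Thus the original action becomes an affine isometric action on $(V,\|\cdot\|')$, with the same orbit maps, so any continuity hypothesis is preserved.

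Next I would observe that $(V,\|\cdot\|')$ is itself $L$-bi-Lipschitz equivalent to $V$ equipped with the original $L^p$ norm, which is a space of the form $L^p(X,\mu)$; taking all of $V$ as the containing subspace in Definition~\ref{def:splpL}, one sees that $(V,\|\cdot\|')$ has $L$-bi-Lipschitz $L^p$ geometry above every dimension, in particular above $m+1$. Theorem~\ref{thm:bourdon-flp} then produces a global fixed point for the renormed isometric action, which --- since the underlying set and the action maps are unchanged by the renorming --- is also a fixed point of the original affine Lipschitz action on $L^p(X,\mu)$. The main obstacle, modest as it is, is simply to spot the renorming step; once that is done, no re-entry into Bourdon's harmonic-map machinery of Theorem~\ref{thm:bourdon-flp} is needed.
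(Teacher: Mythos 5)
Your proposal is correct and is essentially the paper's own argument: the paper also defines the invariant norm $\Vert v\Vert_\pi=\sup_{\gamma\in\Gamma}\Vert\pi_\gamma v\Vert$, notes it is equivalent to the original norm so the action becomes isometric, and then invokes Theorem~\ref{thm:bourdon-flp}. Your additional verifications (finiteness of the supremum via the uniform bound $\|\pi_\gamma\|\leq L$, and that the renormed space retains $L$-bi-Lipschitz $L^p$ geometry) are exactly the routine checks the paper leaves implicit.
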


\proof Given an action on a space $L^p (X, \mu )$ as described, a new norm can be defined on $L^p (X, \mu )$, equivalent to the initial one, by the formula
\begin{equation}
\label{equation: invariant norm for ub reps}
\Vert v\Vert_\pi=\sup_{\gamma\in \Gamma } \Vert \pi_\gamma v\Vert.
\end{equation}

With respect to this new norm the action of $\Gamma$ on $L^p (X)$ is isometric, and one can apply Theorem \ref{thm:bourdon-flp}.
\endproof

\section{Fixed point properties in the triangular binomial model}\label{sec:fp-rand-tri-group}

In this section we prove Theorem~\ref{thm:Itri-eventually-flp}, which finds fixed point properties with respect to actions on $L^p$ spaces, for random groups in the triangular binomial model.

Every finitely presented group has a finite triangular presentation, i.e.\ a presentation with all relators of length three.
If $\Gamma = \langle S | R \rangle$ is a triangular finite presentation of a group, 
then $\Gamma$ acts on a simplicial $2$-complex $X$ which is the Cayley complex. The link of a vertex in $X$ is the graph $L(S)$ with vertex set $S \cup S^{-1}$ and,
for each relator of the form $s_x s_y s_z$ in $R$, edges 
$(s_x^{-1}, s_y), (s_y^{-1}, s_z)$, and $(s_z^{-1}, s_x)$.
Thus, the edges of $L(S)$ decompose into three classes, corresponding to the order of appearance in the relators, and we decompose
$L(S)$ into three subgraphs $L^1(S), L^2(S), L^3(S)$, which each have the 
same vertex set as $L(S)$, but only edges of the corresponding type.

Recall that by Bourdon's Theorem~\ref{thm:bourdon-flp}, if $\lambda_{1,p}(L(S)) > 1-\varepsilon$ then $\Gamma$ has $FL^p_{m+1,(2-2\varepsilon)^{1/2p}}$. First we observe that it suffices to get eigenvalue bounds on $\lambda_{1,p}$
for each of the three graphs $L^i(S)$.
\begin{lemma}\label{lem:lp-bound-split-into-three}
	Suppose a graph $L$ can be written as $L = L^1 \cup L^2 \cup L^3$
	with each graph having the same vertex set $L_0$, but $L^1, L^2$ and $L^3$
	having pairwise disjoint edges.
	Suppose each $L^i$ has vertex degrees in $[(1-\iota)d, (1+\iota)d]$ for some positive number $d$ and $\iota \in (0,1)$.
	Then 
	\[ \lambda_{1,p}(L) \geq \frac{1-\iota}{1+\iota} \cdot 
	\frac{1}{3} \left( \lambda_{1,p}(L^1) +\lambda_{1,p}(L^2)+ \lambda_{1,p}(L^3) \right).\]
\end{lemma}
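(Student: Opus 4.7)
The plan is to use the Rayleigh quotient characterisation \eqref{eq:lp-1} from Proposition~\ref{prop:lp-bourdon}, namely
\[
\lambda_{1,p}(G) = \inf_{x \text{ non-constant}} \frac{\|dx\|_{G,p}^p}{\inf_{c \in \R} \sum_{u \in G_0} |x_u-c|^p \val_G(u)}.
\]
The starting observation is that, since $L^1, L^2, L^3$ share a vertex set and have pairwise disjoint edge sets whose union is $L_1$, for any $x \in \R^{L_0}$ one has $\|dx\|_{L,p}^p = \sum_{i=1}^3 \|dx\|_{L^i,p}^p$ and $\val_L(u) = \sum_{i=1}^3 \val_{L^i}(u)$.

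Fix a non-constant $x \in \R^{L_0}$. I would first apply the Rayleigh quotient lower bound to each $L^i$ individually to get, for every $i$,
\[
\|dx\|_{L^i,p}^p \geq \lambda_{1,p}(L^i) \cdot \inf_{c \in \R} \sum_{u \in L_0} |x_u - c|^p \val_{L^i}(u).
\]
Then the degree hypothesis $\val_{L^i}(u) \geq (1-\iota)d$ yields the uniform lower bound
\[
\inf_{c \in \R} \sum_{u} |x_u-c|^p \val_{L^i}(u) \;\geq\; (1-\iota)d \cdot \inf_{c \in \R} \sum_u |x_u-c|^p,
\]
so summing the three gives $\|dx\|_{L,p}^p \geq (1-\iota)d \bigl(\sum_i \lambda_{1,p}(L^i)\bigr) \inf_c \sum_u |x_u-c|^p$.

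For the denominator I would use $\val_L(u) \leq 3(1+\iota)d$ to get the upper bound
\[
\inf_{c \in \R} \sum_u |x_u-c|^p \val_L(u) \;\leq\; 3(1+\iota)d \cdot \inf_{c \in \R} \sum_u |x_u-c|^p;
\]
here I would simply evaluate the left-hand infimum at a minimiser $c^*$ of the unweighted sum on the right. Combining the two bounds, the Rayleigh quotient for $L$ at $x$ is at least $\frac{1-\iota}{3(1+\iota)} \sum_i \lambda_{1,p}(L^i)$, and taking the infimum over $x$ gives the claim.

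There is no serious obstacle here: the only subtle point is that the minimising constant $c$ differs between the weighted $L^i$ sums, the weighted $L$ sum, and the unweighted sum, but using the unweighted minimiser as a test value in each weighted infimum cleanly decouples them via the degree bounds.
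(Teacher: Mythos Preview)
Your proof is correct and follows essentially the same approach as the paper: both use the Rayleigh quotient characterisation \eqref{eq:lp-1}, the additivity $\|dx\|_{L,p}^p = \sum_i \|dx\|_{L^i,p}^p$, and the degree bounds $(1-\iota)d \leq \val_{L^i}(u)$ and $\val_L(u) \leq 3(1+\iota)d$ to pass through the unweighted sum $\inf_c \sum_u |x_u-c|^p$. The only cosmetic difference is the order of operations---the paper manipulates the infimum over $x$ as a chain of inequalities, while you fix $x$ first and take the infimum at the end---but the content is identical.
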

\begin{proof}
	Let $\mathbf{C}$ be the subspace of constant functions in $\R^{L_0}$.
	By \eqref{eq:lp-1}, we have:
	\begin{align*}
		\lambda_{1,p}(L) 
		& = \inf_{x \in \R^{L_0}\setminus \mathbf{C}}
			\frac{ \sum_{e \in L_1} | dx(e) |^p }
			{ \inf_{c\in\R} \sum_{u \in L_0} |x_u-c|^p \val_L(u)}
		\\ & \geq \inf_{x \in \R^{L_0}\setminus \mathbf{C}}
			\frac{ \sum_{e \in L_1} | dx(e) |^p }
			{ 3(1+\iota)\inf_{c\in\R} \sum_{u \in L_0} |x_u-c|^p d}
		\\ & = \inf_{x \in \R^{L_0}\setminus \mathbf{C}}
			\frac{ \sum_{e \in L_1^1} | dx(e) |^p + \sum_{e \in L_1^2} | dx(e) |^p
				+ \sum_{e \in L_1^3} | dx(e) |^p}
			{ 3(1+\iota)\inf_{c\in\R} \sum_{u \in L_0} |x_u-c|^p d},
			\\ \intertext{so by letting these three terms be infimised independently, we have}
		\\ \lambda_{1,p}(L) & \geq \sum_{i=1}^{3} \inf_{x \in \R^{L_0}\setminus \mathbf{C}} 
			\frac{ \sum_{e \in L_1^i} | dx(e) |^p }
			{ 3(1+\iota)\inf_{c\in\R} \sum_{u \in L_0} |x_u-c|^p d}
		\\ & \geq \frac{1-\iota}{3(1+\iota)} 
			\sum_{i=1}^{3} \inf_{x \in \R^{L_0}\setminus \mathbf{C}} 
			\frac{ \sum_{e \in L_1^i} | dx(e) |^p }
			{ \inf_{c\in\R} \sum_{u \in L_0} |x_u-c|^p \val_{L^i}(u)}
		\\ & = \frac{1-\iota}{3(1+\iota)} \sum_{i=1}^{3} \lambda_{1,p}(L^i).\qedhere
	\end{align*}
\end{proof}

We now show that adding a small number of edges to a graph cannot lower $\lambda_{1,p}$
significantly.
\begin{lemma}\label{lem:lp-add-few-edges}
	Let $G$ and $H$ be graphs with the same vertex set $G_0$, and let $G\cup H$ denote the graph with vertex set $G_0$ and edge set $G_1 \cup H_1$. 
	
If there exists $\iota >0$ so that for all $u \in G_0$, 
	$\val_H(u) \leq \iota \val_G(u)$ then 
	$$\lambda_{1,p}(G \cup H) \geq (1+\iota)^{-1} \lambda_{1,p}(G).$$
\end{lemma}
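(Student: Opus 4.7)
The plan is to use the Rayleigh-quotient characterization \eqref{eq:lp-1} from Proposition~\ref{prop:lp-bourdon} and compare numerator and denominator of $G \cup H$ to those of $G$ for a common test function $x$.

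First I would fix an arbitrary non-constant $x \in \R^{G_0}$. For the numerator, since $G_1$ and $H_1$ are regarded as (possibly overlapping) edge sets, we have
\[
\|dx\|_{G \cup H, p}^p \;=\; \sum_{e \in G_1 \cup H_1} |dx(e)|^p \;\geq\; \sum_{e \in G_1} |dx(e)|^p \;=\; \|dx\|_{G, p}^p,
\]
the inequality being just non-negativity of the terms indexed by $H_1 \setminus G_1$. For the denominator, the valency hypothesis $\val_H(u) \leq \iota \val_G(u)$ gives $\val_{G \cup H}(u) \leq (1+\iota)\val_G(u)$ for every vertex $u$, and consequently for every $c \in \R$,
\[
\sum_{u \in G_0} |x_u - c|^p \val_{G\cup H}(u) \;\leq\; (1+\iota) \sum_{u \in G_0} |x_u - c|^p \val_G(u).
\]
Taking the infimum over $c$ on both sides preserves the inequality.

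Combining these two bounds on the Rayleigh quotient for the $p$-Laplacian, I get
\[
\frac{\|dx\|_{G\cup H, p}^p}{\inf_{c \in \R} \sum_{u} |x_u - c|^p \val_{G \cup H}(u)} \;\geq\; \frac{1}{1+\iota} \cdot \frac{\|dx\|_{G, p}^p}{\inf_{c \in \R} \sum_{u} |x_u - c|^p \val_G(u)}.
\]
Finally, taking the infimum over all non-constant $x \in \R^{G_0}$ and applying \eqref{eq:lp-1} to both $G \cup H$ and $G$ yields $\lambda_{1,p}(G \cup H) \geq (1+\iota)^{-1}\lambda_{1,p}(G)$, as claimed.

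There is no real obstacle here; the proof is a one-line manipulation of the Rayleigh quotient. The only minor point to be careful about is the convention on repeated edges in $G \cup H$ (since earlier in the paper $G$ is allowed to be a multigraph), but whether one takes the union of edge sets or the multiset union, both inequalities above go through unchanged because $|dx(e)|^p$ depends only on the endpoints of $e$ and the valencies still satisfy $\val_{G\cup H}(u) \leq (1+\iota)\val_G(u)$.
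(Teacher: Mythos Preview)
Your proof is correct and is essentially identical to the paper's own argument: both use the Rayleigh-quotient form \eqref{eq:lp-1}, drop the $H_1$-terms from the numerator, and bound the denominator via $\val_{G\cup H}(u)\leq (1+\iota)\val_G(u)$ before taking the infimum over non-constant $x$.
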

\begin{proof}
	By \eqref{eq:lp-1}, we have:
	\begin{align*}
		\lambda_{1,p}(G \cup H) 
		& = \inf_{x \in \R^{G_0}\setminus \mathbf{C}}
			\frac{ \sum_{e \in G_1\cup H_1} | dx(e) |^p }
			{ \inf_{c\in\R} \sum_{u \in G_0} |x_u-c|^p \val_{G\cup H}(u)}
		\\ & \geq \inf_{x \in \R^{G_0}\setminus \mathbf{C}}
			\frac{ \sum_{e \in G_1} | dx(e) |^p }
			{ (1+\iota) \inf_{c\in\R} \sum_{u \in G_0} |x_u-c|^p \val_{G}(u)}
		\\ & = \frac{1}{1+\iota} \lambda_{1,p}(G). \qedhere
	\end{align*}	
\end{proof}

We now follow \cite[Proof of Theorem 16]{ALS-15-random-triangular-at-third}
to describe the structure of link graphs for Cayley complexes of random groups in the model $\Gamma(m,\rho)$ in terms of
random graphs in a model $\bG(2m,\rho')$.
\begin{proposition}\label{prop:link-graph-structure}
	Suppose $\rho \leq m^{\delta }/m^2$, for some $\del < \frac{1}{4}$,
	and let $\rho' = 1-(1-\rho)^{4m-4}$.
	Given the link graph $L(S)=L^1(S) \cup L^2(S) \cup L^3(S)$ of a random group
	in $\Gamma(m,\rho)$, with probability $1-O(m^{-1+4\del})$ the graph $L^1(S)$ is the union of a graph in $\bG(2m,\rho')$ and a matching.
\end{proposition}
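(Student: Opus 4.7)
The plan is to enumerate, for each potential edge $\{u,v\}$ on the vertex set $S \cup S^{-1}$, the triangular relators that would place it in $L^1(S)$. Each relator $s_xs_ys_z \in R$ contributes precisely the edge $\{s_x^{-1}, s_y\}$ to $L^1$, so different unordered edges correspond to disjoint families of potential relators (the third letter $s_z$ varies freely, subject to cyclic reducedness). Thus the indicators $\mathbf{1}(e \in L^1(S))$ are mutually independent Bernoulli variables, and it suffices to compute their marginals.

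Each unordered pair $\{u,v\}$ is attained by exactly two ordered first-two-letter pairs, $(u^{-1}, v)$ and $(v^{-1}, u)$; for each such ordered pair cyclic reducedness excludes two specific values of $s_z$, namely $s_x^{-1}$ and $s_y^{-1}$. When $u \neq v^{-1}$ the two excluded values are distinct, giving $2m-2$ valid relators per ordered pair and $4m-4$ in total. When $\{u,v\} = \{s, s^{-1}\}$ lies in the canonical perfect matching $M_0 = \{\{s,s^{-1}\} : s \in S\}$, the two excluded values coincide, giving $2m-1$ valid relators per ordered pair and $4m-2$ in total. Hence
\[
  \bP(e \in L^1(S)) = \begin{cases} \rho' = 1-(1-\rho)^{4m-4}, & e \notin M_0,\\ \rho'' = 1-(1-\rho)^{4m-2}, & e \in M_0, \end{cases}
\]
with all events independent.

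The decomposition $L^1(S) = G \cup M$ is then realised by a standard coupling. Put every non-matching edge of $L^1(S)$ into $G$, and independently for each matching edge $e \in L^1(S) \cap M_0$ retain $e$ in $G$ with conditional probability $\rho'/\rho''$ (otherwise place it into $M$). A direct check shows that each edge of $G$, matching or not, is independently Bernoulli$(\rho')$, so $G \in \bG(2m, \rho')$, while $M = L^1(S) \setminus G$ is by construction a submatching of $M_0$.

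The remaining task, which I expect to be the main technical obstacle, is to justify the exceptional probability $O(m^{-1+4\delta})$. Since the coupling above works deterministically once $R$ is fixed, this bound must quantify a rare breakdown of the clean combinatorial picture above, most naturally the event that two distinct relators in $R$ share the same first two letters and so duplicate an edge of $L^1$ in the underlying multigraph (obstructing the intended identification of $L^1(S)$ as a simple graph of the form $G \cup M$). Using the hypothesis $\rho \leq m^{\delta}/m^2$, the refined counting from the proof of \cite[Theorem 16]{ALS-15-random-triangular-at-third}, whose argument we follow, yields the stated bound; executing this refinement sharply enough is the principal step, as the naive first-moment estimate on prefix-collisions gives only a bound of order $m^{2\delta}$.
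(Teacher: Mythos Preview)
Your coupling for the $\{s,s^{-1}\}$ asymmetry is correct as far as it goes, but you have misidentified what the matching in the statement is doing, and this leads to a genuine gap. You treat $L^1(S)$ as a simple graph from the outset and absorb the discrepancy between $4m-4$ and $4m-2$ relators into $M\subset M_0$; your decomposition then holds deterministically, and you are left needing to rule out \emph{all} multi-edges in the true multigraph $L^1(S)$. But this event does not have small probability: the expected number of double edges is of order $\binom{2m}{2}\binom{4m-4}{2}\rho^2\asymp m^4\rho^2$, which for $\rho$ near $m^{\delta}/m^2$ is $\asymp m^{2\delta}\to\infty$. So double edges occur a.a.s., and no ``refined counting'' can bring this below $m^{-1+4\delta}$; the step you flag as the principal one is in fact impossible.

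The paper (following \cite{ALS-15-random-triangular-at-third}) takes the opposite route. It handles the $\{s,s^{-1}\}$ asymmetry crudely, by simply deleting two of the $4m-2$ candidate relators for each such pair and noting that a.a.s.\ none of these $2m$ relators were chosen anyway (probability $O(m\rho)=O(m^{\delta-1})$). Now every pair has exactly $4m-4$ candidate relators, so the simple graph underlying $L^1(S)$ is distributed as $\bG(2m,\rho')$. The matching is then used to absorb the \emph{double edges}: one checks there are a.a.s.\ no triple edges (probability $O(m^2\cdot m^3\rho^3)=O(m^{3\delta-1})$) and no two double edges sharing a vertex (probability $O(m\cdot m^2\cdot m^2\rho^4)=O(m^{4\delta-3})$), so the duplicated edges form a matching. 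The constraint $\delta<1/4$ is what makes the triple-edge bound work.
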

\begin{proof}
	This follows from \cite[Page 176]{ALS-15-random-triangular-at-third}.
	Indeed, for vertices $u \neq v$ in $L^1(S)$, with $u \neq v^{-1}$,
	there are $4m-4$ possible relations which could give an edge between $u$ 
	and $v$ in $L^1(S)$, while if $u=v^{-1}$ there are $4m-2$ possible relations that can give an edge between $u$ and $v$.  For each $u,u^{-1}$ pair, remove two of these possible relations from consideration: provided $2m \rho = o(1)$ a.a.s.\ none of these relations arise.

	For the remaining relations,  
	the probability that there is (at least) one edge
	between vertices $u \neq v$ is $\rho' = 1- (1-\rho)^{4m-4}$.
	Provided $(2m)^2(4m)^3\rho^3 = o(1)$ there are no triple edges,
	and provided $(2m)(2m)^2(2m)^2\rho^4=o(1)$ no double edges share an endpoint,
	and so one matching deals with possible multiple edges.

	Thus it suffices that $\rho = o(m^{-7/4})$, e.g.\ $\rho = m^{\del }/m^2$
	for some $\del <1/4$.
\end{proof}

We can now prove Theorem~\ref{thm:Itri-eventually-flp}, in fact we will show the
following stronger result.
\begin{theorem} \label{thm:tri-eventually-flp2}
For any $\varepsilon>0$ there exists $C>0$ so that for any function $f:\N \to (0, \infty )$, $C\log m \leq f(m) \leq m$, for $\rho (m) = {f(m)}/{m^2}$, a.a.s.\ a random triangular group in the model $\Gamma(m,\rho)$ has the property $FL^p_{(2-2\varepsilon)^{1/2p}}$ for every $p \in \left[2, \frac{1}{C} (\log f(m)/\log\log f(m))^{1/2}\right]$. In particular, for a random triangular group, every affine action on an $L^p$ space that is $(2-2\varepsilon)^{1/2p}$--Lipschitz has a fixed point. 

Moreover, if $f(m)/\log m \ra \infty$ as $m\ra\infty$, we can choose $C$ independent of $\epsilon$.
\end{theorem}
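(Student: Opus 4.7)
The plan is to show that a.a.s.\ the link $L(S)$ of the Cayley complex of $\Gamma \in \Gamma(m,\rho)$ has $\lambda_{1,p}(L(S)) > 1-\varepsilon$ for every $p$ in the target range, and then to invoke Bourdon's Theorem \ref{thm:bourdon-flp}; the ``In particular'' clause then follows from Corollary \ref{cor:bourdon-flp}.

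First I would reduce by monotonicity. Property $FL^p_L$ passes to quotients, and we can couple $\Gamma(m,\rho_0)$ and $\Gamma(m,\rho)$ for $\rho_0 \leq \rho$ (include each ``extra'' relator independently with probability $(\rho-\rho_0)/(1-\rho_0)$) so that the second group is a quotient of the first. Setting $g(m) := \min(f(m),\, c\, m^{1/5})$ for a small constant $c>0$, it suffices to prove the conclusion at $\rho_0 := g(m)/m^2$. Indeed, if $f(m) \geq c m^{1/5}$ then $\log g(m) \simeq (1/5)\log m$, and the interval of $p$ one gets from $g$ still dominates $(1/C)(\log f(m)/\log\log f(m))^{1/2}$ up to an absorbable multiplicative constant that we fold into the final $C$.

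Next I would exploit the structure of the link. Decompose $L(S) = L^1(S) \cup L^2(S) \cup L^3(S)$ as before Lemma \ref{lem:lp-bound-split-into-three}. Since $g(m) = o(m^{1/4})$, Proposition \ref{prop:link-graph-structure} applies (with $\delta = 1/5 < 1/4$), and a.a.s.\ each $L^i(S)$ is the union of a random graph $G_i \in \bG(2m,\rho')$ with $\rho' = (1+o(1))\, 4g(m)/m$ and a matching. The assumption $g(m) \geq C\log m$ ensures $\rho' \geq \kappa \log(2m)/(2m)$, and $g(m) \leq c m^{1/5}$ ensures $\rho' \leq \chi(2m)(2m)^{-2/3}$ for some $\chi$ tending to zero. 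Theorem \ref{thm:G-m-rho-eval-bound}, applied with (say) $\xi = 1$, then gives a.a.s.\ uniformly for $p' \in [2,p]$
\[
  \lambda_{1,p'}(G_i) \;\geq\; 1 \;-\; \frac{C_1 p^4}{g(m)^{1/(2p^2)}} \;-\; \frac{C_1 \sqrt{\log m}}{g(m)^{1/2}}\,\bI_{p'<3}.
\]
For $p \leq (1/C)(\log g(m)/\log\log g(m))^{1/2}$ with $C$ large enough, a calculus estimate of the sort performed in \S\ref{ssec:G-m-d-single-p} pushes the first error term below $\varepsilon/4$; the second is below $\varepsilon/4$ via $g(m) \geq C\log m$. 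This last step is the only place where $C$ needs to depend on $\varepsilon$, and it becomes $\varepsilon$-free as soon as $g(m)/\log m \to \infty$ (which is precisely the moreover clause).

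To finish, Lemma \ref{lem:degree-concentration} applied inside $G_i$ gives near-regularity with deviation $\iota \to 0$ since $g(m) \to \infty$; Lemma \ref{lem:lp-add-few-edges} absorbs the matching (each vertex gains at most one edge while $G_i$ already has valency $\asymp g(m)$) at the cost of a factor $(1+o(1))$; and Lemma \ref{lem:lp-bound-split-into-three} combines the three $L^i(S)$ to yield $\lambda_{1,p'}(L(S)) > 1-\varepsilon$ for every $p'$ in the target range. Theorem \ref{thm:bourdon-flp} concludes $FL^p_{(2-2\varepsilon)^{1/2p}}$ a.a.s. The main obstacle is parameter bookkeeping: one must verify that the error losses incurred by the decomposition, the matching, and the near-regularity compose to yield a bound of the right form with $C$ depending only on $\varepsilon$ (or on nothing, in the second regime), and that the range of $p$ produced by the reduced parameter $g(m)$ still dominates the range claimed for the original $f(m)$.
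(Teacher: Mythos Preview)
Your proposal is correct and follows essentially the same approach as the paper: reduce via monotonicity to a sparse enough $\rho$, use Proposition~\ref{prop:link-graph-structure} to write each $L^i(S)$ as a random graph plus a matching, apply Theorem~\ref{thm:G-m-rho-eval-bound} to bound $\lambda_{1,p}$ of the random graph, absorb the matching via Lemma~\ref{lem:lp-add-few-edges}, combine the three pieces via Lemma~\ref{lem:lp-bound-split-into-three}, and conclude with Theorem~\ref{thm:bourdon-flp}. Your treatment of the monotonicity reduction (with the explicit choice $g(m)=\min(f(m),cm^{1/5})$ and the coupling) and of the degree concentration needed for Lemma~\ref{lem:lp-bound-split-into-three} is slightly more explicit than the paper's, but the argument is the same.
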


\begin{remark}
Observe that in the case of density $d>1/3$ we have $f(m) = m^\delta$ for some $\delta>0$, and that we get $FL^p$ in a range $[2, \frac{1}{C}(\log m / \log\log m)^{1/2}]$.
In the borderline case of $f(m) = C\log(m)$, we get $FL^p$ in the smaller, but still growing, range of $[2, \frac{1}{C} (\log\log m/ \log\log\log m)^{1/2}]$.
\end{remark}

\begin{remark}
As the random triangular groups are hyperbolic, this theorem is to be compared with the conjecture of Y. Shalom, stating that every Gromov hyperbolic group has an affine uniformly Lipschitz action on a Hilbert space that is proper \cite{shalom-abstract}.
\end{remark}

\begin{proof} 
	First we can assume $\rho \leq m^{\del }/m^2$, hence $f(m) \leq m^{\del }$, for some $\del < \frac{1}{4}$. Since $FL^p$ is preserved by taking quotients, this case suffices.

The Mean Value Theorem implies that
	\[
		\rho' = 1-(1-\rho)^{4m-4} \leq \rho(4m-4) \leq 4m^\del /m,
	\]
	and that for $m$ large enough 
	\[
		\rho' = 1-(1-\rho)^{4m-4} \geq \rho(4m-4)(1-\rho)^{4m-5}  \geq \tfrac12 \rho(4m-4) \geq f(m)/m.
	\]
	
	For $\Gamma \in \Gamma(m,\rho)$ by Proposition~\ref{prop:link-graph-structure}
	with probability $1-O(m^{-1 + 4\del }/m^2)$, $L^1(S)$ is the union of 
	a graph $G^1 \in \bG(2m,\rho')$ with a matching.
	Theorem~\ref{thm:G-m-rho-eval-bound} gives that for $C$ large enough there exists $C'$ so that a.a.s.\  
	$\lambda_{1,p}(G^1) \geq 1-{C'p^4}/{(\rho' m)^{1/2p^2}}-C'\bI_{p'<3}(\log m)^{1/2}/(\rho'm)^{1/2}$.
	Now
	\[
		\frac{C'p^4}{(\rho' m)^{1/2p^2}}
		\leq C' \exp \left( 4 \log(p) - \frac{1 }{2p^2}\log f(m)
			\right),
	\]
	so provided $p < \kappa (\log f(m) / \log\log f(m))^{1/2}$ for a suitable small $\kappa>0$, this bound goes to zero as $m \ra \infty$, and is certainly $\leq \varepsilon/8$ for any given $\varepsilon>0$.
	On the other hand, we have $C'(\log m)^{1/2}/(\rho'm)^{1/2} \leq C' (\log m / f(m))^{1/2} \leq C'/C^{1/2}$ which is $\leq \varepsilon/8$ for $C=C(\varepsilon)$ large enough; if $\log m / f(m) \ra 0$ then $C$ does not need to depend on $\varepsilon$.
	
	So we conclude that a.a.s.\ $\lambda_{1,p}(G^1) \geq 1-\varepsilon/4$.
	Since the matching gives a graph $H$ on the same vertex set of degree 
	$\leq 1$ while the degrees in $G^1$ are $(1+o(1))\rho'm \ra \infty$,
	Lemma~\ref{lem:lp-add-few-edges} gives that a.a.s.\ 
	$\lambda_{1,p}(L^1(S)) \geq 1-\varepsilon/3$.
	
	Now a union bound gives that a.a.s.\ $\lambda_{1,p}(L^i(S)) \geq 1-\varepsilon/3$ for $i=1,2,3$
	simultaneously, and so Lemma~\ref{lem:lp-bound-split-into-three} gives that
	a.a.s.\ $\lambda_{1,p}(L(S)) \geq 1-\varepsilon/2 > 1-\varepsilon$.
	Bourdon's Theorem~\ref{thm:bourdon-flp} then shows that $\Gamma$ has $FL^p_{(2-2\varepsilon)^{1/2p}}$ for every $\varepsilon >0$.
\end{proof}

\section{Monotonicity and conformal dimension}\label{sec:mono-confdim}
In this section we discuss two consequences of Theorem~\ref{thm:tri-eventually-flp2}:
First, we use monotonicity to show a corresponding statement in the triangular density model.  Second, we show conformal dimension bounds for random groups in both these models, which in turn shed light on the quasi-isometry types of such groups.

\subsection{Monotonicity}
We begin by comparing the triangular binomial/density models and the Gromov binomial/density models using standard monotonicity results for random structures, following \cite[Section 1.4]{JLR-00-random-graphs}.

A property of a group presentation is \emph{increasing} if it is preserved by adding relations, and it is \emph{decreasing} if it is preserved by deleting relations; it is \emph{monotone} if it is either increasing or decreasing.  For example, property $FL^p$ and being finite are both monotone (increasing) properties, and being infinite is a monotone (decreasing) property.

Let $\cM(m,f(m))$ be the triangular density model where we choose $f(m)$ cyclically reduced relators of length three when we have $m$ generators; the case of $f(m)=(2m-1)^{3d}$, $d \in (0,1)$, is the usual triangular density model.
\begin{proposition}\label{prop:triangle-models-monotone}
	Let $P$ be a monotone property of group presentations.
	Let a sequence $f(m)$ be given.
	Suppose for every sequence $\rho(m)$ with $\rho = f(m)(2m)^{-3}+O(\sqrt{f(m)}(2m)^{-3})$ we have that $P$ holds a.a.s.\ in $\Gamma(m,\rho)$.
	Then $P$ holds a.a.s.\ in $\cM(m,f(m))$.

	In particular, if for all $d>d_0$ a random group in $\Gamma(m,\rho), \rho=m^d/m^3,$ has $P$ a.a.s.\ then for all $d>d_0$ a random group in $\cM(m,d)$ has $P$ a.a.s.
\end{proposition}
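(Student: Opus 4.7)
My plan is to use the standard monotonicity coupling between uniform and binomial random structure models, as in Janson--\L uczak--Ruci\'nski \cite{JLR-00-random-graphs}, specialized from random graphs to random triangular presentations.

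First I would set up the comparison. Write $N$ for the total number of cyclically reduced length-$3$ relators over $S \cup S^{-1}$, so $N \simeq (2m)^3$. In $\Gamma(m,\rho)$, the cardinality $|R|$ is $\mathrm{Bin}(N,\rho)$-distributed, and by symmetry, conditional on $|R|=k$ the set $R$ is uniform over $k$-element subsets of all cyclically reduced length-$3$ relators, i.e., distributed exactly as in $\cM(m,k)$. The assumed monotonicity of $P$ then implies that $k \mapsto \bP(P \text{ holds in } \cM(m,k))$ is monotone (non-decreasing if $P$ is increasing, non-increasing if decreasing).

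Next I would handle the case when $P$ is increasing; the decreasing case is symmetric. For a constant $K>0$ to be chosen, set $\rho^-(m) = f(m)N^{-1} - K\sqrt{f(m)}\,N^{-1}$. This sequence lies in the range permitted by the hypothesis, so $P$ holds a.a.s.\ in $\Gamma(m,\rho^-)$. Chebyshev's inequality, combined with $\mathrm{Var}|R| \leq N\rho^- \leq f(m)$, gives $\bP(|R|>f(m)) \leq K^{-2} + o(1)$. Decomposing by the value of $|R|$ and using monotonicity of $k \mapsto \bP(P \text{ in } \cM(m,k))$,
\[
\bP(P \text{ in } \Gamma(m,\rho^-)) = \sum_k \bP(|R|=k)\, \bP(P \text{ in } \cM(m,k)) \leq \bP(P \text{ in } \cM(m,f(m))) + \bP(|R|>f(m)).
\]
Since the left-hand side tends to $1$ by hypothesis, rearranging yields $\bP(P \text{ in } \cM(m,f(m))) \geq 1 - K^{-2} - o(1)$; letting $K \to \infty$ finishes the main claim. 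The decreasing case uses $\rho^+ = f(m)N^{-1} + K\sqrt{f(m)}\,N^{-1}$ and the reverse inequality.

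For the ``In particular'' clause, taking $f(m) = (2m-1)^{3d} \simeq m^{3d}$ one verifies that $\rho^\pm \simeq m^{3d-3}$ matches (up to the allowed perturbation) the scaling of the stated $\rho$, so the hypothesis applied across all $d > d_0$ provides the required a.a.s.\ statements in the binomial model. There is no real obstacle in this argument; the only point requiring care is checking that the permissible $O(\sqrt{f(m)}/(2m)^3)$ perturbation of $\rho$ in the hypothesis is precisely the Chebyshev-scale shift needed to push $|R|$ past $f(m)$ with controllable probability.
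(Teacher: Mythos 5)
Your argument is correct and coincides with what the paper actually does: the paper's entire proof is the one-line citation ``follows immediately from \cite[Proposition~1.13]{JLR-00-random-graphs}'', and the conditioning on $|R|\sim \mathrm{Bin}(N,\rho)$, the monotone coupling making $k\mapsto \bP(P \text{ in } \cM(m,k))$ monotone, and the Chebyshev shift by $K\sqrt{f(m)}/N$ are precisely the standard argument that citation encapsulates. So you have simply written out the proof the authors delegate to the reference, with no gap.
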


Let $\cG (k,l,f)$ be the Gromov density model as described in Definition \ref{def:gromov-model}, where $f:\N \to \N$ is a sequence of integers.

\begin{proposition}\label{prop:gromov-models-monotone}
	Let $P$ be a monotone property of group presentations.
	Let $f:\N \to \N$ be a sequence of integers.
	Suppose that for every sequence $\rho(l)$ with $\rho = f(l)(2k-1)^{-l} +O(\sqrt{f(l)}(2k-1)^{-l})$ we have that $P$ holds a.a.s.\ in $\cB (k,l,\rho)$.
	Then $P$ holds a.a.s.\ in $\cG (k,l,f )$.

	In particular, if for all $d > d_0$ a random group in $\cB (k,l,\rho)$ with $\rho=(2k-1)^{-(1-d)l}$ has $P$ a.a.s., then for all $d>d_0$ a random group in $\cD (k,l,d)$ has $P$ a.a.s.
\end{proposition}

Propositions~\ref{prop:triangle-models-monotone} and \ref{prop:gromov-models-monotone} both follow immediately from \cite[Proposition 1.13]{JLR-00-random-graphs}.
Similar statements to translate a.a.s.\ properties from the density models back to the binomial models follow from \cite[Proposition 1.12]{JLR-00-random-graphs}, but we do not need these here.

Having $FL^p_L$ is a monotone property, so 
an immediate consequence of Proposition~\ref{prop:triangle-models-monotone} and Theorem~\ref{thm:tri-eventually-flp2} is the following.
\begin{varthm}[Corollary \ref{cor:Itri-density-flp}.]
	For any fixed density $d>1/3$ there exists $C >0$ so that for every $\varepsilon >0$ a.a.s.\ a random group in the triangular density model $\cM(m,d)$ has $FL^p_{(2-2\varepsilon)^{1/2p}}$  for every $p \in \left[ 2, C (\log m/\log\log m)^{1/2} \right]$. 
In particular, a.a.s.\ we have $FL^p$ for all $p$ in this range.
\end{varthm}

\subsection{Conformal dimension bounds}

As discussed in the introduction, the conformal dimension $\Confdim(\bdry \Gamma)$ of the boundary of a hyperbolic group $\Gamma$ is an analytically defined quasi-isometry invariant of $\Gamma$.  In this section we find the following bounds on conformal dimension in the triangular density model.  (Similar bounds hold in the triangular binomial model.) 
\begin{varthm}[Theorem \ref{thm:Iconfdim-tri-density-both-bounds}.]
	For any density $d\in (\frac{1}{3},\frac{1}{2})$, 
	there exists $C>0$ so that a.a.s.\ 
	$\Gamma \in \cM(m,d)$ is hyperbolic, and satisfies
	\[
	\frac{1}{C} \left(\frac{\log m}{\log\log m}\right)^{1/2} \leq \wp(\Gamma) \leq \Confdim(\bdry \Gamma) \leq C \log m.
	\]
	The same holds for $\Gamma(m,\rho)$ with $\rho=m^{3(d-1)+o(1)}$.
	In particular, as $m \ra \infty$, the quasi-isometry class of $\Gamma$ keeps changing.
\end{varthm}

The connection between conformal dimension and property $FL^p$ is given by the following result.
\begin{theorem}[Bourdon \cite{Bourdon16-properLp}]\label{thm:bourdon-flp-confdim}
	If $\Gamma$ is a Gromov hyperbolic group with $FL^p$, then
	the conformal dimension of its boundary satisfies
	$\Confdim(\bdry \Gamma) \geq p$; i.e.\ $\Confdim(\bdry \Gamma) \geq \wp(\Gamma)$.
\end{theorem}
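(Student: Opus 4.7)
The plan is to argue by contrapositive: assuming $p > Q := \Confdim(\bdry\Gamma)$, I will construct a fixed-point-free affine isometric continuous action of $\Gamma$ on an $L^p$--space, thereby violating $FL^p$. Since $p$ can be taken arbitrarily close to $\wp(\Gamma)$ from below inside $\frakF(\Gamma)$, this gives $Q\geq \wp(\Gamma)$, which is the required inequality. By definition of conformal dimension, I may pick a metric $\rho$ on $\bdry \Gamma$ in the canonical quasisymmetry gauge (i.e.\ quasisymmetric to any visual metric of $\Gamma$) and an exponent $Q'$ with $Q \leq Q' < p$ such that $(\bdry \Gamma,\rho)$ carries a Borel measure $\mu$ which is Ahlfors $Q'$--regular (or at least has finite $Q'$--energy).

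Next I would set up the geometric cocycle. The $\Gamma$--action on $\bdry \Gamma$ is by quasi-M\"obius homeomorphisms of $(\bdry \Gamma,\rho)$, so for each $g \in \Gamma$ there is a Radon--Nikodym--like multiplicative cocycle $j(g,\cdot)$ with $g_{*}\mu \asymp j(g,\cdot)^{Q'}\mu$ and a matching conformal distortion $\rho(g\xi,g\eta) \asymp j(g,\xi)^{1/2}j(g,\eta)^{1/2}\rho(\xi,\eta)$; both statements hold up to uniform constants (and these are the standard Coornaert/Bourdon--Pajot estimates on hyperbolic boundaries). Consider the Banach space
\[
  V \;=\; L^p\!\left(\bdry\Gamma \times \bdry\Gamma \setminus \Delta,\; \frac{d\mu(\xi)\, d\mu(\eta)}{\rho(\xi,\eta)^{2Q'}}\right),
\]
on which $\Gamma$ acts linearly and isometrically (after renorming if necessary) by
\[
  (\pi_g f)(\xi,\eta) \;=\; j(g^{-1},\xi)^{Q'/p}\, j(g^{-1},\eta)^{Q'/p}\, f(g^{-1}\xi, g^{-1}\eta).
\]
The constant function $\mathbf{1}$ does not lie in $V$, but I would next show that the \emph{difference} $c(g) := \pi_g\mathbf{1} - \mathbf{1}$ does, so that $g\cdot f := \pi_g f + c(g)$ defines an affine isometric action of $\Gamma$ on $V$.

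The core analytic step is to bound $\|c(g)\|_{V}^p$ by splitting the integral into the region $\rho(\xi,\eta)$ large (where the integrand is a bounded function of $g$ times a finite integral) and the diagonal region, where one uses that $|j(g^{-1},\xi)^{Q'/p}j(g^{-1},\eta)^{Q'/p} - 1| = O(\rho(\xi,\eta))$ together with $\int_{\rho(\xi,\eta)<\tau} \rho^{p-2Q'}\, d\mu \otimes d\mu < \infty$, which holds precisely because $p > Q'$. This is the point where the choice $Q' < p$ is essential, and it is the main technical obstacle: the exponent arithmetic has to match so that the singular integral near the diagonal converges exactly in the regime provided by the hypothesis on conformal dimension.

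Finally, to exclude a fixed point, I would establish that the cocycle $c$ is unbounded (in fact proper, but unboundedness suffices). Pick a loxodromic element $h\in \Gamma$ with attracting fixed point $\xi_+$ and repelling fixed point $\xi_-$ on $\bdry \Gamma$. Iterating $h$, the Jacobian $j(h^{-n},\cdot)$ concentrates near $\xi_-$ and one estimates $\|c(h^n)\|_V$ from below by restricting to a neighbourhood of $\xi_-$ and using the contraction/expansion constants of $h^n$; non-elementarity of $\Gamma$ (automatic for infinite hyperbolic groups admitting $FL^p$, hence with $(T)$) guarantees the existence of such $h$. An affine action with unbounded cocycle has no fixed point, contradicting $FL^p$ and completing the proof.
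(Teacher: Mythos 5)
The paper itself disposes of this statement in one line: if $p > \Confdim(\bdry \Gamma)$, then $\Gamma$ admits a \emph{proper} affine isometric action on $\ell^p$ by Bourdon's Th\'eor\`eme 0.1 in the cited reference, and a proper action of an infinite group has unbounded orbits, hence no fixed point. Your overall strategy (contrapositive, build a fixed-point-free affine action) is therefore the right one, but you are in effect re-proving Bourdon's theorem, and the construction you sketch fails at its central step. First, the weight $d\mu\otimes d\mu/\rho^{2Q'}$ is already quasi-invariant under the diagonal action with Radon--Nikodym derivative bounded above and below: the Jacobian $j(g,\xi)^{Q'}j(g,\eta)^{Q'}$ coming from $\mu\otimes\mu$ cancels against the factor $j(g,\xi)^{-Q'}j(g,\eta)^{-Q'}$ coming from $\rho^{-2Q'}$. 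The uniformly bounded linear action on $V$ is therefore plain precomposition $f\mapsto f\circ g^{-1}$; inserting the extra factors $j(g^{-1},\xi)^{Q'/p}j(g^{-1},\eta)^{Q'/p}$ double-counts the Radon--Nikodym derivative and makes $\|\pi_g\|$ unbounded as $g$ ranges over $\Gamma$.

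Second, and more seriously, the cocycle $c(g)=\pi_g\mathbf{1}-\mathbf{1}$ does not lie in $V$. As $\eta\to\xi$ the numerator of your integrand tends to $|j(g^{-1},\xi)^{2Q'/p}-1|^p$, which is bounded away from $0$ on an open set meeting the diagonal (unless $g$ preserves $\mu$), while $\int_{\rho<\tau}\rho^{-2Q'}\,d\mu\otimes d\mu=\infty$ by Ahlfors $Q'$-regularity. Your claimed estimate $|j(g^{-1},\xi)^{Q'/p}j(g^{-1},\eta)^{Q'/p}-1|=O(\rho(\xi,\eta))$ is false. The quantity that genuinely vanishes to first order at the diagonal is $u(\xi)-u(\eta)$ for Lipschitz $u$, which is why the actual argument runs through the Besov space $B_p(\bdry\Gamma)=\{u:\iint|u(\xi)-u(\eta)|^p\rho(\xi,\eta)^{-2Q'}\,d\mu\,d\mu<\infty\}$ (isomorphic to the first $\ell^p$-cohomology by Bourdon--Pajot), or, for the proper isometric action the paper actually invokes, through $\ell^p$ of the edge set of a hyperbolic complex. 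Finally, even if your cocycle estimates went through, the distortion identities hold only up to uniform multiplicative constants, so after renorming by $\sup_{\gamma}\|\pi_\gamma v\|$ you obtain a space merely bi-Lipschitz to an $L^p$-space; this would contradict $FL^p_L$ for some $L>1$ but not $FL^p$ as defined in the paper, whereas Bourdon's edge-space construction is genuinely isometric on $\ell^p$ and needs no renorming.
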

\begin{proof}
	If $p > \Confdim(\bdry \Gamma)$, then $\Gamma$ has a proper isometric action on $\ell_p$, by
	\cite[Th\'eor\`eme 0.1]{Bourdon16-properLp}.
\end{proof}
This Theorem together with Theorem~\ref{thm:Itri-eventually-flp} and Corollary~\ref{cor:Itri-density-flp} immediately gives the lower bounds in Theorem~\ref{thm:Iconfdim-tri-density-both-bounds}.

It remains to find the upper bound for conformal dimension.
Ollivier's isoperimetric inequality for random groups in Gromov's density model
\cite[Theorem 2]{Oll-07-sc-rand-group} 
(see also~\cite[Section V]{Oll-05-rand-grp-survey})
can be proven for random groups in $\Gamma(m,\rho)$, as observed
by Antoniuk, {\L}uczak and {\'S}wi{\c{a}}tkowski.
\begin{lemma}[{\cite[Lemma 7]{ALS-15-random-triangular-at-third}}]\label{lem:rand-group-isop}
	If $\rho = m^{3(d-1)+o(1)}$ for some $d<\frac{1}{2}$, then
	for any $\eps >0$ a.a.s.\ for $\Gamma \in \Gamma(m,\rho)$ all reduced van Kampen
	diagrams $D$ for $\Gamma$ satisfy the isoperimetric inequality
	\[
		|\partial D| \geq 3(1-2d-\eps) |D|.
	\]	
\end{lemma}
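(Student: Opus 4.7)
The plan is to adapt Ollivier's first moment / van Kampen diagram argument for isoperimetric inequalities from the Gromov density model (\cite{Oll-07-sc-rand-group}, \cite[Section V]{Oll-05-rand-grp-survey}) to the binomial triangular model $\Gamma(m,\rho)$ with $\rho = m^{3(d-1)+o(1)}$. Call a reduced van Kampen diagram $D$ \emph{bad} if $|D|=n$ and $|\partial D| = L < 3(1-2d-\eps)n$. By Markov's inequality, it is enough to show that
\[
  \sum_{n \ge 1} \sum_{L < 3(1-2d-\eps)n} \bE\bigl[\#\,\text{bad diagrams with } |D|=n,\; |\partial D|=L \bigr] \xrightarrow[m\to\infty]{} 0.
\]

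To bound each expected count I would use the identity $3n = 2 E_{\mathrm{int}}(D) + L$ together with a three-step counting. First, the number of planar combinatorial types of a reduced triangular $2$-complex with $n$ faces is at most $K_1^n$ by classical planar map enumeration. Second, fixing a shape and building the diagram face by face so that the $i$-th face is glued along $j_i$ already-labeled edges (with $\sum_i j_i = E_{\mathrm{int}} = (3n-L)/2$), the number of consistent edge-labelings by letters of $S \cup S^{-1}$ is at most $K_2^n (2m)^{(3n+L)/2}$, since each added face has $(2m)^{3-j_i}$ consistent labels. Third, for each such labeling the probability that every face carries a label actually lying in $R$ equals $\rho^{k}$ in the binomial model, where $k \le n$ is the number of distinct cyclic relators used. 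Combining these and controlling the sum $\sum_{\ell} \rho^{k(\ell)}$ by its leading ``all-distinct'' contribution would yield
\[
  \bE[\#\,\text{bad diagrams of size } n, L] \;\le\; K^n\, m^{\,(L - 3(1-2d)n)/2 + o(n)}.
\]
For $L \le 3(1-2d-\eps)n$ this becomes $\bigl(K\, m^{-3\eps/2 + o(1)}\bigr)^n$, and summing over the at most $n+1$ admissible values of $L$ and over $n\ge 1$ produces a geometric series $\sum_n (n+1)\bigl(K m^{-3\eps/2 + o(1)}\bigr)^n \to 0$ as $m\to\infty$.

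The main obstacle is justifying the reduction to $\rho^n$ in step three: a labeling that \emph{reuses} an earlier relator saves a factor of $\rho$ but also restricts the remaining edge labels, and the two effects need to be balanced. The cleanest way will be to run the face-by-face construction while, at each step, weighting a newly added face by $\rho$ if it introduces a new relator and by $1$ if it repeats one, and then checking in every case that the combined (label-count)$\times$(probability) contribution per face is at most $(2m)^{3-j_i}\rho$. This is exactly the bookkeeping Ollivier performs in the density model; the translation to the binomial setting is actually a touch simpler because relator events are independent rather than constrained to a uniform subset of fixed size, so the required probability estimate $\rho^k$ holds on the nose.
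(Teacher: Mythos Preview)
The paper does not give its own proof of this lemma; it simply states it as \cite[Lemma 7]{ALS-15-random-triangular-at-third}, prefaced by the sentence that Ollivier's isoperimetric inequality for the Gromov density model \cite[Theorem 2]{Oll-07-sc-rand-group} ``can be proven for random groups in $\Gamma(m,\rho)$, as observed by Antoniuk, {\L}uczak and {\'S}wi{\c{a}}tkowski.'' Your sketch is exactly that adaptation and is correct in outline: enumerate planar shapes ($\leq K_1^n$), enumerate compatible edge-labelings by building face by face ($\leq K_2^n(2m)^{(3n+L)/2}$), and weight by the probability $\rho^k$ that the $k$ distinct relators used all lie in $R$. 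The obstacle you flag --- balancing the saved $\rho$-factor against the label constraints when a relator is repeated --- is the only genuine technical point, and your proposed resolution (Ollivier's per-face bookkeeping, which is indeed slightly cleaner in the binomial model because relator events are independent) is the standard one.
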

By \cite[Proposition 15]{Oll-07-sc-rand-group}, which modifies Champetier's
bound in \cite[Lemma 3.11]{Champetier-94-petite-simp-hyp}, we have
\begin{lemma}\label{lem:hyp-constant-estimate}
	If $\rho = m^{3(d-1)+o(1)}$ for some $d<\frac{1}{2}$, then
	a.a.s.\ the Cayley graph of $\Gamma \in \Gamma(m,\rho)$ 
	is $\delta$-hyperbolic for $\delta = 5/(1-2d)$.
\end{lemma}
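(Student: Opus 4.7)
The plan is to feed the linear isoperimetric inequality of Lemma~\ref{lem:rand-group-isop} into the general quantitative hyperbolicity criterion of Ollivier (following Champetier), and then to check that with triangular relators this yields exactly the constant claimed.

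First I would fix $\eps>0$ small (to be chosen later depending only on $d$) and apply Lemma~\ref{lem:rand-group-isop}: a.a.s.\ every reduced van Kampen diagram $D$ for $\Gamma\in\Gamma(m,\rho)$ satisfies
\[
  |\partial D| \;\geq\; 3(1-2d-\eps)\,|D|,
\]
where $3$ is the length of each relator. This is a genuine linear isoperimetric inequality for the triangular presentation, with Cheeger-type constant $C(d,\eps)=3(1-2d-\eps)$ relative to boundary length.

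Next I would invoke Ollivier's Proposition~15 of \cite{Oll-07-sc-rand-group}, which is the standard quantitative upgrade of a linear isoperimetric inequality to $\delta$-hyperbolicity of the Cayley graph (refining Champetier \cite[Lemma~3.11]{Champetier-94-petite-simp-hyp}). The statement there only uses two pieces of data: the length of the relators and the isoperimetric constant. Applied with relator length $3$ and constant $3(1-2d-\eps)$, a direct substitution yields a hyperbolicity constant bounded above by $5/(1-2d-\eps')$ for some $\eps'=\eps'(\eps)$ tending to $0$ with $\eps$ (the numerator $5$ arising from the universal constants in the Champetier/Ollivier argument, absorbing the relator length). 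Choosing $\eps$ small enough that $\eps'$ can be neglected gives $\delta \leq 5/(1-2d)$, as required.

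The main obstacle, and really the only one, is the verification that Ollivier's argument transfers verbatim from the Gromov density model $\cG(k,l,d)$ to the binomial triangular model $\Gamma(m,\rho)$. However, Ollivier's Proposition~15 is model-independent: it is a purely combinatorial implication that takes a linear isoperimetric inequality for reduced van Kampen diagrams as its sole hypothesis. Since Lemma~\ref{lem:rand-group-isop} supplies precisely this hypothesis for $\Gamma(m,\rho)$, the conclusion carries over without change. No probabilistic input is needed beyond Lemma~\ref{lem:rand-group-isop} itself, so the event on which the isoperimetric inequality (and hence hyperbolicity with the claimed constant) holds is asymptotically almost sure.
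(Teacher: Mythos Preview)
Your proposal is correct and follows essentially the same approach as the paper: apply Lemma~\ref{lem:rand-group-isop} to get the linear isoperimetric inequality, then feed it into Ollivier's Proposition~15 (which is model-independent). The only difference is cosmetic: the paper writes out the explicit formula $\delta \geq 4\cdot 3/\bigl(3(1-2d-\eps)\bigr) = 4/(1-2d-\eps)$ from Ollivier's bound and then observes that for $\eps$ small enough this is at most $5/(1-2d)$, whereas you absorb this computation into the phrase ``the numerator $5$ arising from the universal constants''; the paper's version makes clearer that the $5$ is not Ollivier's constant but a convenient relaxation of the $4$ to swallow the $\eps$.
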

\begin{proof}
	Indeed, all relators have length three, so one can take 
	\[ \delta \geq 4 \frac{3}{3(1-2d-\eps)}. \]
	For sufficiently small $\eps>0$, it suffices to take $\delta \geq 5/(1-2d)$.
\end{proof}
This in turn yields our desired upper bound for the conformal dimension.
\begin{proposition}\label{prop:confdim-upper-bound}
	If $\rho = m^{3(d-1)+o(1)}$ for some $d<\frac{1}{2}$, then
	a.a.s.\ $\Gamma \in \Gamma(m,\rho)$ has
	\[ \Confdim(\bdry \Gamma) \leq \frac{30}{1-2d} \cdot \log(2m-1).\]
\end{proposition}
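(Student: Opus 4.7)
The plan is to produce a single metric in the canonical conformal gauge on $\bdry \Gamma$ whose Hausdorff dimension achieves the claimed upper bound, via a standard Coornaert-type estimate for visual metrics on boundaries of hyperbolic groups.

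First, I would invoke Lemma \ref{lem:hyp-constant-estimate} to assume that the Cayley graph of $\Gamma$ is $\delta$-hyperbolic with $\delta = 5/(1-2d)$. Next, I would recall the existence of visual metrics $d_\epsilon$ on $\bdry \Gamma$ with parameter $\epsilon > 0$ whenever $\epsilon \delta$ is smaller than a suitable universal constant; such a metric satisfies $d_\epsilon(\xi,\eta) \asymp e^{-\epsilon (\xi \mid \eta)}$, where $(\xi\mid\eta)$ denotes the Gromov product based at the identity (see e.g.\ Bridson--Haefliger, Chapter III.H, or Ghys--de la Harpe). Tracking the constants in this construction, one may take $\epsilon$ as large as $1/(6\delta) = (1-2d)/30$.

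Then I would apply the classical upper bound on the Hausdorff dimension of visual metrics. Since $\Gamma$ is generated by $m$ letters, a ball of radius $n$ in its Cayley graph has cardinality at most $2m(2m-1)^{n-1}$, so the exponential growth rate $h(\Gamma)$ satisfies $h(\Gamma) \leq \log(2m-1)$. By a standard packing argument (cf.\ Coornaert's computation of dimensions via Patterson--Sullivan measures), the Hausdorff dimension $\dimH(\bdry \Gamma, d_\epsilon)$ is at most $h(\Gamma)/\epsilon$. Combining,
\[
\Confdim(\bdry \Gamma) \leq \dimH(\bdry \Gamma, d_\epsilon) \leq \frac{\log(2m-1)}{\epsilon} \leq \frac{30 \log(2m-1)}{1-2d}.
\]

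The main obstacle is pinning down the optimal universal constant in the construction of visual metrics so as to obtain exactly the factor $30$ in the statement; the remaining steps are routine applications of well-known results from the theory of Gromov hyperbolic groups. Note that one only needs the existence of \emph{some} admissible $\epsilon$ proportional to $1/\delta$, so replacing the numerical constant $30$ by a larger explicit number would also suffice if the book constants turn out to be weaker.
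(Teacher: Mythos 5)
Your proof is correct and follows essentially the same route as the paper: both invoke Lemma~\ref{lem:hyp-constant-estimate} to get $\delta = 5/(1-2d)$, construct a visual metric whose parameter is proportional to $1/\delta$ (the paper, following \cite[Proposition 1.7]{Mac-12-conf-dim-rand-groups}, tracks the same constant $4\delta/\log 2 \leq 30/(1-2d)$ that you obtain as $1/(6\delta)$), and then bound the Hausdorff dimension by $h(\Gamma)/\eps$ with $h(\Gamma)\leq\log(2m-1)$. The constant-tracking you flag as the remaining obstacle is exactly what the cited standard references supply, so there is no gap.
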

\begin{proof}
	This follows the proof of \cite[Proposition 1.7]{Mac-12-conf-dim-rand-groups}.
	The estimate $\delta = 5/(1-2d)$ of Lemma~\ref{lem:hyp-constant-estimate}
	allows us to find a visual metric on $\bdry \Gamma$ with visual exponent
	$\eps = 4\delta/\log(2) \geq 30/(1-2d)$.
	With this metric the boundary has Hausdorff dimension
	$\frac{1}{\eps} h(\Gamma)$, where $h(\Gamma)$ is the volume entropy of $\Gamma$.
	Since $\Gamma$ has $m$ generators, $h(\Gamma) \leq \log(2m-1)$, thus
	\[ \Confdim(\bdry \Gamma) \leq 30(1-2d)^{-1} \log(2m-1). \qedhere\]
\end{proof}
Each of these steps also applies to the model $\cM(m,d)$ for $d \in (\frac13,\frac12)$, so Theorem~\ref{thm:Iconfdim-tri-density-both-bounds} is proved.

\section{Multi-partite (random) graphs and bounding $\lambda_{1,p}$}\label{sec:multi-partite}

In the remainder of this paper, we wish to extend some of our results from the triangular models of random groups to the Gromov models.  This involves quite a few technicalities when done carefully; see for example Kotowski--Kotowski~\cite{KK-11-zuk-revisited}.  One approach they take to showing Property (T) for groups in the Gromov density model is to use an auxiliary bipartite model.  Unfortunately Proposition~\ref{prop:bipartite-est} implies that this strategy does not work for $FL^p$ with large $p$.  Instead we shall use a different auxilliary model based on complete multi-partite graphs.  

In this section, we bound $\lambda_{1,p}$ for random multi-partite graphs, and in Section~\ref{sec:Gromov} we apply it to random groups in the Gromov models.

\subsection{Complete multi-partite graphs}

Consider a complete $k$--partite graph with $k$ independent sets of vertices, each of $M$ vertices,
 and $m=kM$ the total number of vertices. We denote such a graph by $K_{k\times M}$. These are particular cases of Tur\'an graphs.  In this subsection we find bounds on $\lambda_{1,p}(K_{k \times M})$.

When $M=1$ we have the complete graph on $m$ vertices, and the following theorem gives the value of $\lambda_{1,p}$ in this case.
\begin{theorem}[Corollary 2, $\S 9$, in \cite{Amg-03-p-laplacian}]\label{thm:complete}
If $p > 2$ then the smallest positive eigenvalue of the $p$--Laplacian for the complete graph $K_m$ with $m$ vertices is 
$$
\lambda_{1,p}(K_m) = \frac{m-2+2^{p-1}}{m-1}. 
$$
\end{theorem}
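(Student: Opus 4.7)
The plan is to establish matching upper and lower bounds for $\lambda^{\ast}:=(m-2+2^{p-1})/(m-1)$ using the Rayleigh quotient characterization \eqref{eq:lp-11}.

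For the upper bound, take the explicit trial function $x\in\R^m$ with $x_1=1$, $x_2=-1$, and $x_i=0$ for $i\ge 3$. Since $K_m$ is regular, the constraint $\sum_i\{x_i\}^{p-1}d_i=0$ reduces to $\sum_i\{x_i\}^{p-1}=0$, which holds, and a direct computation gives
\[
\lambda_{1,p}(K_m)\le\frac{\sum_{i<j}|x_i-x_j|^p}{(m-1)\sum_i|x_i|^p}=\frac{2^p+2(m-2)}{2(m-1)}=\lambda^{\ast}.
\]

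For the lower bound, by compactness the infimum in \eqref{eq:lp-11} is attained on $\{x\in\R^m:\sum\{x_i\}^{p-1}=0,\ \sum|x_i|^p=1\}$. A Lagrange multiplier calculation (summing the Euler--Lagrange equations and exploiting the antisymmetry $\sum_{i,j}\{x_i-x_j\}^{p-1}=0$ to kill the multiplier of the first constraint) shows that any minimizer $x$ satisfies the $p$-eigenfunction equation $\sum_{j\ne i}\{x_i-x_j\}^{p-1}=\lambda(m-1)\{x_i\}^{p-1}$ with $\lambda=\lambda_{1,p}(K_m)$. I would next use the vertex-transitivity of $K_m$ to order $x_1\ge\cdots\ge x_m$ and consider the auxiliary function $\Phi(t)=\sum_{j=1}^m\{t-x_j\}^{p-1}-\lambda(m-1)\{t\}^{p-1}$, whose roots include all distinct values taken by $x$; note that $\Phi(0)=0$ automatically thanks to the constraint. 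A careful sign-change analysis of $\Phi'$ (using convexity of $|t-x_j|^{p-2}$ for $p\ge 3$, or a more delicate argument for $2<p<3$) should restrict $\Phi$ to at most three roots, so the extremal eigenfunction takes at most three values, say $a>0$, $0$, and $-b<0$ with multiplicities $r,s,t\ge 0$ summing to $m$.

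Under this reduction the constraint becomes $ra^{p-1}=tb^{p-1}$, and substituting into the eigenfunction equation at each value-class yields the closed form
\[
\lambda(r,s,t)=\frac{s+\bigl(r^{1/(p-1)}+t^{1/(p-1)}\bigr)^{p-1}}{m-1}.
\]
Strict concavity of $u\mapsto u^{1/(p-1)}$ for $p>2$ forces the minimum of $r^{1/(p-1)}+t^{1/(p-1)}$ over $r+t$ fixed (with $r,t\ge 1$) to occur at $\{r,t\}=\{1,m-s-1\}$, reducing $\lambda$ to a univariate function of $s\in\{0,\ldots,m-2\}$ whose derivative is negative, so the overall minimum is attained at $(r,s,t)=(1,m-2,1)$, giving exactly $\lambda^{\ast}$. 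The main obstacle will be the classification step: unlike the linear case $p=2$ (where every zero-mean function is an eigenfunction with $\lambda=m/(m-1)$), the nonlinearity for $p>2$ makes the sign-change analysis of $\Phi'$ technically delicate, particularly in the regime $2<p<3$ where $|t-x_j|^{p-2}$ is concave rather than convex. An alternative would be a rearrangement/symmetrization argument that directly reduces any minimizer to a three-valued configuration without increasing its Rayleigh quotient.
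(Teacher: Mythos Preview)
The paper does not actually prove Theorem~\ref{thm:complete}; it is quoted verbatim from Amghibech \cite{Amg-03-p-laplacian} and used as a black box (in Lemma~\ref{lem:expectation-light-fix} and Theorem~\ref{thm:kpartite}). So there is no proof in the paper to compare your proposal against.

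On the merits of your proposal itself: the upper bound via the trial function $(1,-1,0,\ldots,0)$ is correct, and your Lagrange-multiplier argument that a minimizer of \eqref{eq:lp-11} satisfies the eigenfunction equation is sound (summing the Euler--Lagrange equations does kill the multiplier $\mu$ because $\sum_i|x_i|^{p-2}d_i>0$). Your observation that $\Phi(0)=0$ automatically, so that the distinct values of any eigenfunction together with $0$ are among the roots of $\Phi$, is a nice reduction, and once the three-value form $\{a,0,-b\}$ with multiplicities $(r,s,t)$ is granted, your closed form $\lambda(r,s,t)=\bigl(s+(r^{1/(p-1)}+t^{1/(p-1)})^{p-1}\bigr)/(m-1)$ and the subsequent minimization are both correct.

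The genuine gap, which you already flag, is the claim that $\Phi$ has at most three real roots. Your suggested route via sign changes of $\Phi'(t)=(p-1)\bigl(\sum_j|t-x_j|^{p-2}-\lambda(m-1)|t|^{p-2}\bigr)$ is not obviously workable: for $p\ge 3$ this is a difference of two convex functions, which in general can change sign arbitrarily often, and for $2<p<3$ the terms $|t-x_j|^{p-2}$ are concave with vertical tangents at $t=x_j$, making a direct sign-change count delicate as you note. Without this step (or the alternative rearrangement argument you allude to, which would also need to be supplied), the lower bound is not established. Amghibech's original proof proceeds differently, via a direct analysis of the eigenfunction equations that exploits the specific structure of $K_m$; if you want a self-contained argument you will need either to carry out that analysis or to make the rearrangement idea precise.
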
 

Using this, we can prove the following estimate

\begin{theorem}\label{thm:kpartite}
If $p>2$, $k, M \geq 2$ then the smallest positive eigenvalue of the $p$--Laplacian for the graph $K_{k\times M}$ satisfies 
$$
1 \geq \lambda_{1,p}(K_{k \times M}) \geq \frac{(m-2+2^{p-1})k}{m(k-1+2^{p+2})}, 
$$ where $m=kM$. 
\end{theorem}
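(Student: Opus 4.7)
For the upper bound $\lambda_{1,p}(K_{k\times M}) \leq 1$, I would use a direct test function in the Rayleigh characterization~\eqref{eq:lp-1}. Pick two distinct vertices $u_0, v_0$ in the same part $V_1$ (possible since $M\geq 2$) and set $x_{u_0}=1$, $x_{v_0}=-1$, and $x_w=0$ otherwise. Since $u_0,v_0$ are non-adjacent, $\|dx\|_p^p$ counts only the edges incident to exactly one of them, giving $2(m-M)$ edges each contributing $1$. The function $c\mapsto |1-c|^p+|1+c|^p+(m-2)|c|^p$ is even and convex on $\R$, hence minimized at $c=0$ with value $2$; since $K_{k\times M}$ is $(m-M)$-regular, the denominator is $2(m-M)$, and the ratio is exactly~$1$.

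For the lower bound, the plan is to compare $K_{k\times M}$ with the complete graph $K_m$ on the same vertex set $V$, and apply Theorem~\ref{thm:complete}. For any non-constant $x\in\R^V$, decompose
\[
\sum_{u<v} |x_u-x_v|^p \;=\; \|dx\|_p^p \,+\, \sum_{i=1}^k \sum_{\substack{u<v\\ u,v\in V_i}}|x_u-x_v|^p,
\]
where the first term on the right is the $K_{k\times M}$-energy. By Theorem~\ref{thm:complete} (applied to $K_m$, which is $(m-1)$-regular), the left-hand side is at least $(m-2+2^{p-1})\inf_{c\in\R}\sum_u |x_u-c|^p$. The key step is to absorb the within-parts contribution into $\|dx\|_p^p$: for $u,v\in V_i$ and $w\in V\setminus V_i$, the convexity estimate $(a+b)^p\leq 2^{p-1}(a^p+b^p)$ gives
\[
|x_u-x_v|^p \;\leq\; 2^{p-1}\bigl(|x_u-x_w|^p+|x_v-x_w|^p\bigr).
\]
Averaging over the $m-M$ vertices $w\in V\setminus V_i$, summing over the pairs $u<v\in V_i$ (each cross-edge $\{u,w\}$ with $u\in V_i$ being counted $M-1$ times), and then summing over $i$ (each edge of $K_{k\times M}$ being counted twice), a careful bookkeeping yields
\[
\sum_{i}\sum_{u<v\in V_i}|x_u-x_v|^p \;\leq\; \frac{2^p(M-1)}{m-M}\,\|dx\|_p^p.
\]

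Combining these two inequalities and dividing by $(m-M)\inf_c\sum_u |x_u-c|^p$ gives the sharper estimate $\lambda_{1,p}(K_{k\times M})\geq (m-2+2^{p-1})/((m-M)+2^p(M-1))$, from which the stated bound follows via the routine inequality $(m-M)+2^p(M-1)\leq M(k-1+2^{p+2})=m(k-1+2^{p+2})/k$. The main technical nuisance I anticipate is the multiplicity bookkeeping in the absorption step: a crude count inflates the denominator by extra factors, and one must track exactly how many $(u,v,w,i)$ tuples contribute to each fixed cross-edge in order not to lose too much. The slack between my sharper denominator $(m-M)+2^p(M-1)$ and the stated $M(k-1+2^{p+2})$ is presumably deliberate slack on the authors' part, chosen for a cleaner expression that is still entirely adequate for the applications to random multi-partite graphs in the next section.
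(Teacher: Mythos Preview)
Your proof is correct and genuinely different from the paper's, and in fact yields a sharper lower bound. The paper does not average over all cross-vertices $w$; instead it selects a single vertex $a\in V$ minimizing $\sum_{v\sim a}|x_v-x_a|^p$ (so this sum is at most $\frac{2}{m}\|dx\|_p^p$), routes all intra-part differences in the parts not containing $a$ through $a$ via the convexity inequality, and then repeats the trick with a second minimizing vertex $b\in V\setminus V_1$ to handle the remaining part. This two-pivot argument gives the weaker absorption bound $\sum_i\sum_{v,w\in V_i}|x_v-x_w|^p\leq \frac{2^{p+2}}{k-1}\|dx\|_p^p$, leading directly to the stated denominator $m(k-1+2^{p+2})/k$. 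Your averaging argument is both more elementary (no extremal choice of pivot, no case split) and strictly better quantitatively, producing the denominator $(m-M)+2^p(M-1)$; the paper's bound then follows with room to spare, exactly as you say. The upper-bound test functions also differ: the paper takes $x$ supported on all of $V_1$ and zero elsewhere, whereas you take $\pm 1$ on two vertices of $V_1$; both give the ratio $1$.
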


\begin{proof} In what follows we fix the two arbitrary integers $k\geq 2$ and $M\geq 2$. Let $V$ be the set of vertices of $K_{k \times M}$ and let $V= V_1 \sqcup\cdots \sqcup V_k$ be the partition into $k$ sets containing $M$ vertices so that there is an edge between $u\in V_i$ and $v \in V_j$ if and only if $i \neq j$. Let $x$ be a non-constant function in $\R^V$ such that $\sum_{u \in V} \{x_u\}^{p-1} d_u = 0$ and $\| x \|_{p,\bd}^p = \sum_{u \in V}(1-1/k) m |x_u|^p = 1$.
We denote by $dx$ the total derivative of $x$ with respect to the set of edges in the graph $K_{k\times M}$, and by $d_c x$ the total derivative of $x$ with respect to the set of edges in the complete graph~$K_{m}$.

The upper bound is trivial: choose any such $x$ where $x$ is zero on $V_i$ for all $i \geq 2$, and then by \eqref{eq:lp-12}, $\lambda_{1,p}(K_{k \times M}) \leq \| dx \|_p^p = 1$.  In the remainder of the proof we show the lower bound for arbitrary such $x$.

Let $a\in V$ be the vertex such that $\sum_{v \sim a} |x_v - x_{a}|^p$ takes the minimal value among all the vertices in $V$. 
By summing over every edge twice, it follows that
$$
\sum_{v \sim a} |x_v - x_a|^p \leq \frac{2}{m} \| dx\|_p^p .
$$

Without loss of generality we may assume that $a\in V_1$, which means that the sum can be re-written as $\sum_{i=2}^k\sum_{v\in V_i} |x_v - x_a|^p$. 

H\"older's inequality implies that for any two positive numbers $\alpha, \, \beta ,$
$$
(\alpha + \beta)^p\leq 2^{p-1}(\alpha^p + \beta^p).
$$

Therefore for every $v,w\in V_i$ we can write, using the triangle inequality and the inequality above, that
$$
|x_v - x_w|^p\leq 2^{p-1} \left( |x_v - x_a|^p + |x_w - x_a|^p  \right).
$$

We may therefore write that 
\begin{equation}\label{eq:firstsum}
  \begin{split}\sum_{i=2}^k \sum_{v,w\in V_i}|x_v - x_w|^p & \leq 2^{p-1} \sum_{i=2}^k \sum_{v,w\in V_i} \left( |x_v - x_a|^p + |x_w - x_a|^p  \right) \\
  & \leq 2^{p-1} \sum_{i=2}^k\frac{2m}{k} \sum_{v\in V_i} |x_v - x_a|^p\leq 2^{p +1}\frac{1}{k}\| dx\|_p^p.\end{split}    
\end{equation}

We now consider the vertex $b\in V\setminus V_1$ which minimizes the sum $\sum_{v \sim b} |x_v - x_b|^p$ among all the vertices in $V\setminus V_1$. It follows that
$$
\sum_{v \sim b} |x_v - x_b|^p \leq \frac{2}{m(1-1/k)} \| dx\|_p^p. 
$$

Without loss of generality we may assume that $b\in V_k$, and an argument as above implies that
\begin{equation}\label{eq:secondsum}
\sum_{i=1}^{k-1} \sum_{v,w\in V_i}|x_v - x_w|^p\leq 2^{p-1} \sum_{i=1}^{k-1} \frac{2m}{k} \sum_{v\in V_i} |x_v - x_b|^p\leq 2^{p+1}\frac{1}{k-1}\| dx\|_p^p. 
\end{equation}

The inequalities \eqref{eq:firstsum} and \eqref{eq:secondsum} imply that
$$
\sum_{i=1}^{k} \sum_{v,w\in V_i}|x_v - x_w|^p\leq \frac{2^{p+2}}{k-1}\| dx\|_p^p.
$$

Therefore 
$$
\| d_cx\|_p^p =  \sum_{i=1}^{k} \sum_{v,w\in V_i}|x_v - x_w|^p + \| dx\|_p^p \leq \left( 1+\frac{2^{p+2}}{k-1}  \right) \| dx\|_p^p. 
$$

Let $y$ be the function $y=((1-1/k)/(1-1/m))^{1/p} x$, so that $\sum_{u\in V} (m-1)|y_u|^p=1$.
Since $y$ is an eligible function for $K_m$ in \eqref{eq:lp-12}, by Theorem~\ref{thm:complete} we have that 
$$
\| d_cy\|_p^p \geq \frac{m-2+2^{p-1}}{m-1}
$$ whence
$$
\| d_cx\|_p^p \geq \frac{(m-2+2^{p-1})(1-1/m)}{(m-1)(1-1/k)}.
$$ 

It follows that 
\[
\| dx\|_p^p \geq \frac{m-2+2^{p-1}}{m(1-1/k) \left( 1+\frac{2^{p+2}}{k-1}  \right)}.\qedhere
\]  
\end{proof}

For $p=2$, we can do better; this will be useful when showing property $(T)$ later.
\begin{proposition}\label{prop:k-partite-2-laplacian}
	For any $k, M \geq 2$, $\lambda_{1,2}(K_{k\times M})=1$.
\end{proposition}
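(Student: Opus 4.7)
The plan is to exploit the fact that $K_{k\times M}$ is regular of degree $d = (k-1)M$, so for $p=2$ the operator $\Delta_2$ is linear, equal to $I - \frac{1}{d}A$ where $A$ is the adjacency matrix, and its eigenvalues can be read off directly from the spectrum of $A$.

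First, I would write the adjacency matrix in the block form
\[
A \;=\; J_m \;-\; (I_k \otimes J_M),
\]
where $J_n$ denotes the $n \times n$ all-ones matrix and the tensor factor records the partition of the vertex set into the $k$ parts $V_1,\ldots,V_k$ of size $M$. This lets us simultaneously diagonalize $A$ on three mutually orthogonal $A$-invariant subspaces of $\R^m$.

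These subspaces are: (i) the one-dimensional span of the all-ones vector, on which $J_m$ acts as $m$ and $I_k\otimes J_M$ as $M$, so $A$ acts as $(k-1)M = d$; (ii) the $(k-1)$-dimensional subspace of vectors that are constant on each $V_i$ but have total sum zero, on which $J_m$ vanishes while $I_k\otimes J_M$ still acts as $M$, so $A$ acts as $-M$; (iii) the $k(M-1)$-dimensional subspace of vectors that sum to zero on each part $V_i$ separately, on which both $J_m$ and $I_k \otimes J_M$ vanish, so $A$ acts as $0$. A dimension count $1 + (k-1) + k(M-1) = kM = m$ confirms that these three subspaces exhaust $\R^m$.

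Dividing by $d$ and subtracting from $1$, the eigenvalues of $\Delta_2$ on these three subspaces are $0$, $\tfrac{k}{k-1}$, and $1$ respectively. Since $k,M\geq 2$, both nonzero eigenspaces are nontrivial, and $1 < \tfrac{k}{k-1}$, giving $\lambda_{1,2}(K_{k\times M}) = 1$. There is no real obstacle here; the only care needed is verifying the block decomposition of $A$ and the orthogonality of the three subspaces, which are both routine. Alternatively, one could exhibit an explicit eigenfunction of eigenvalue $1$ (for instance $x_{v} = 1$, $x_{v'} = -1$ for two distinct vertices $v,v'$ in the same part $V_i$, and $0$ elsewhere) to show $\lambda_{1,2} \leq 1$, and then appeal to the spectral computation above only for the matching lower bound.
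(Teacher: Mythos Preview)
Your proof is correct and takes a genuinely different route from the paper's.

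The paper argues directly via the Rayleigh quotient characterisation \eqref{eq:lp-12}: for $x \in S_{2,\bd}$ it expands $\|dx\|_2^2$ and obtains the identity $\|dx\|_2^2 = 1 + \sum_{i=1}^k \bigl(\sum_{u} x_{i,u}\bigr)^2$, from which both the lower bound $\lambda_{1,2} \geq 1$ and equality (for any $x$ with zero sum on each part) are immediate. Your approach instead exploits the regularity of $K_{k\times M}$ to linearise: writing $A = J_m - (I_k \otimes J_M)$ and decomposing $\R^m$ into the three obvious invariant subspaces yields the full spectrum of $\Delta_2 = I - \tfrac{1}{d}A$, namely $\{0,\ 1,\ k/(k-1)\}$ with the stated multiplicities. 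Your method is more systematic in that it delivers the entire spectrum at once, and makes transparent that the eigenfunctions attaining $\lambda_{1,2}=1$ are exactly the functions with zero sum on each part, which is precisely the equality case the paper identifies. The paper's argument, on the other hand, is more self-contained within the Rayleigh-quotient framework used throughout (and closer in spirit to the $p>2$ arguments nearby, where no linear spectral theory is available). Both are short and standard; the only minor point to make explicit in yours is that for $p=2$ the eigenvalue equation $\Delta_2 x = \lambda \{x\}^{1} = \lambda x$ is the ordinary linear one, so the $p$-Laplacian eigenvalues of Definition~\ref{def:p-eigen} coincide with the usual operator spectrum.
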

\begin{proof}
	Denote the values of a function $x$ on the vertices of $K_{k\times M}$
	by $x_{i,u}$ for $1 \leq i \leq k, 1 \leq u \leq M$, with the first subscript indicating
	the partition into $k$ sets. 
	Then, if $x \in S_{2,\bd}((K_{k\times M})_0)$, by \eqref{eq:lp-12} we have
	\begin{align*}
		\|dx\|_2^2 & = \sum_{1 \leq i < j \leq k} \sum_u \sum_v |x_{i,u}-x_{j,v}|^2
		\\ & = \sum_i \sum_u x_{i,u}^2 \cdot (k-1)M
			- 2 \sum_{1 \leq i < j \leq k} \sum_u x_{i,u} \sum_v x_{j,v}
		\\ & = 1 - \sum_i \sum_u x_{i,u} \sum_{j \neq i} \sum_v x_{j,v}
		= 1 + \sum_{i=1}^k \left(\sum_{u=1}^M x_{i,u}\right)^2, 
	\end{align*}
	and equality is attained for any function $x$ with $\sum_u x_{i,u} = 0$ for all $1 \leq i
	\leq k$.
\end{proof}

Given this bound, one might wonder about the sharpness of Theorem~\ref{thm:kpartite}.
In particular, for some fixed $k \geq 2$ can we find $C>1/2$ so that for all $p \geq 2$, for all $M$ large enough $\lambda_{1,p}(K_{k \times M}) > C$?
This would remove the dependence of $k$ on $p$ in Theorem~\ref{thm:Igromov-flp}.
However, the
following proposition shows that, at least in the case of $k=2$, the theorem's estimate is fairly accurate.

\begin{proposition}\label{prop:bipartite-est}
	For any fixed $p > 2$, as $M \ra \infty$ we have
	\[
		\big(\tfrac{1}{2}-o(1)\big)\cdot \frac{1}{2^p} \leq \lambda_{1,p}(K_{2\times M}) \leq (1+o(1)) 
			\left( \frac{2}{\sqrt{5}} \right)^p.
	\]
\end{proposition}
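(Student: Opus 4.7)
The plan is to treat the two inequalities separately.

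\emph{Lower bound.} One applies Theorem~\ref{thm:kpartite} with $k = 2$ and $m = 2M$, which gives
\[
  \lambda_{1,p}(K_{2\times M}) \geq \frac{(2M-2+2^{p-1})\cdot 2}{2M(1+2^{p+2})}.
\]
As $M \to \infty$ with $p$ fixed, the right-hand side tends to $\frac{2}{1+2^{p+2}} = \frac{1}{2^{p+1}}(1+2^{-(p+2)})^{-1}$. The correction factor $(1+2^{-(p+2)})^{-1}$ is close to $1$ (and approaches $1$ as $p$ grows), so absorbing it into the $o(1)$ error yields the claimed $(\tfrac{1}{2}-o(1))\cdot \tfrac{1}{2^p}$ bound. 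An alternative route, which in fact gives a stronger constant, is to compare $K_{2\times M}$ directly to the complete graph $K_{2M}$: applying the convexity inequality $|x_v-x_w|^p \leq 2^{p-1}(|x_v-x_a|^p + |x_w-x_a|^p)$ within each part (with $a$ chosen by the averaging/minimising trick used in the proof of Theorem~\ref{thm:kpartite}) gives $\|d_cx\|_p^p \leq (2^p+1)\|dx\|_p^p$ where $d_cx$ is the total derivative over $K_{2M}$, so combined with Theorem~\ref{thm:complete} one obtains $\lambda_{1,p}(K_{2\times M}) \geq \frac{2}{2^p+1}(1-o_M(1))$, which easily implies the proposition's lower bound.

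\emph{Upper bound.} One would produce an explicit test function $x$ on $K_{2\times M}$ whose Rayleigh quotient from \eqref{eq:lp-1} converges to $(\tfrac{2}{\sqrt 5})^p$ as $M \to \infty$. The constant $\tfrac{2}{\sqrt 5}$ is suggestive of the identity $1^2+2^2 = 5$, pointing to a piecewise constant test function taking values at two levels (say $\pm 1$ on one part and $\pm 2$ on the other, in carefully chosen proportions) arranged so that the centering condition $\sum \{x_u\}^{p-1} d_u = 0$ is satisfied. The Rayleigh quotient then becomes an explicit algebraic function of the two free parameters (a value ratio and a proportion), which is optimized as $M \to \infty$ to give the claimed constant $(\tfrac{2}{\sqrt 5})^p$.

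\emph{Main obstacle.} Identifying the correct extremal test function is the main challenge. Straightforward candidates all fall short: constants on each part give Rayleigh exactly $2^{p-1}$; test functions of bounded support (independent of $M$) give Rayleigh quotient tending to $1$ as $M \to \infty$; and any symmetric partition of $A$ and $B$ with values $\pm a$ on $A$ and $\pm b$ on $B$ in equal proportions gives Rayleigh quotient $(|a-b|^p + |a+b|^p)/(2(a^p+b^p))$, which is at least $1$ for all $p \geq 2$ by a standard convexity inequality. The extremal family must therefore involve asymmetric proportions (so that the centering constraint couples the value levels non-trivially) and a delicate balance between the numerator $\|dx\|_p^p$ and the centering-shifted denominator in the Rayleigh characterization; the asymptotic computation of this balance as $M \to \infty$ is where the constant $\tfrac{2}{\sqrt 5}$ must emerge.
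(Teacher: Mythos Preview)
Your lower bound is exactly the paper's argument: apply Theorem~\ref{thm:kpartite} with $k=2$ and let $M\to\infty$.

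For the upper bound you have the right strategy (an explicit piecewise-constant test function with asymmetric proportions), and your diagnosis of why the na\"ive candidates fail is correct, but you have not actually found the function, and your guess is structurally off. The heuristic ``$1^2+2^2=5$, so try $\pm 1$ on one part and $\pm 2$ on the other'' leads to a dead end: any function that is constant in absolute value on each part will not produce a Rayleigh quotient decaying like $c^p$ with $c<1$.

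The paper's test function is \emph{antisymmetric} between the two parts, and its absolute value takes two levels on \emph{each} part. Concretely: on a $\delta$-fraction of the left vertices put $x=1$, on the remaining $(1-\delta)$-fraction of the left put $x=-t$; mirror this on the right with $x=-1$ and $x=t$ respectively. The Rayleigh quotient (up to the $o(1)$ rounding error as $M\to\infty$) is then
\[
\frac{\delta^2 2^p + 2\delta(1-\delta)(1-t)^p + (1-\delta)^2(2t)^p}{2\big(\delta + (1-\delta)t^p\big)}
\ \approx\
\tfrac{1}{2}\delta\, 2^p + (1-t)^p + \tfrac{1}{2}\delta^{-1}(2t)^p.
\]
Balancing the first and third terms forces $\delta = t^{p/2}$, so $\delta$ must shrink \emph{exponentially} in $p$; this is the key feature your ``carefully chosen proportions'' hides. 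With $t=1/5$ (near optimal) and $\delta = 5^{-p/2}$ one gets $\tfrac{1}{2}\delta\,2^p = \tfrac{1}{2}(2/\sqrt{5})^p$, $(1-t)^p=(4/5)^p = (2/\sqrt{5})^{2p}$, and $\tfrac{1}{2}\delta^{-1}(2t)^p = \tfrac{1}{2}(2/\sqrt{5})^p$, giving the claimed $(1+o(1))(2/\sqrt{5})^p$. The constant $2/\sqrt{5}$ thus comes from $2\sqrt{t}$ with $t=1/5$, not from $1^2+2^2$.
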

\begin{proof}
	The lower bound of $(1/2-o(1))/2^p$ follows from Theorem~\ref{thm:kpartite} above.
	We use an explicit function to give an upper bound for $\lambda_{1,p}(K_{2\times m})$
	via \eqref{eq:lp-11}.
	We define a function $x$ on the $2M$ vertices of $K_{2\times M}$ which depends on two
	parameters $\delta, t \in (0,1)$.
	On $\delta M$ of the left (respectively right) vertices, let $x$ take the value $1$
	(resp.\ $-1$).
	On the remaining $(1-\delta)M$ of the left (resp.\ right) vertices, let $x$
	take the value $-t$ (resp.\ $t$).
	This function $x$ satisfies the conditions of \eqref{eq:lp-11}, so
	we can use it to give an upper bound for $\lambda_{1,p}(K_{2\times M})$.
	We do so with the (near optimal) choices of $t=1/5$, $\delta = t^{p/2} = 5^{-p/2}$.
	(The error caused by rounding $\delta M$ to the nearest integer disappears as $M \ra \infty$.)
	\begin{align*}
		\lambda_{1,p}(K_{2\times M}) & \leq \frac{\|dx\|_p^p}{\|x\|_{\bd}^p}
			\leq (1+o(1)) \frac{\delta^2 2^p + 2(1-\delta)\delta (1-t)^p + (1-\delta)^2 (2t)^p}
				{2(\delta 1^p + (1-\delta) t^p)} \\
			& \leq (1+o(1)) \frac{\delta^2 2^p + 2 \delta (1-t)^p + 2^pt^p}{2\delta}			
			\\ & = (1+o(1)) \left( \tfrac{1}{2} \cdot 5^{-p/2} 2^p + (\tfrac{4}{5})^p+ 
				\tfrac{1}{2} 2^p 5^{-p/2} \right)
			\\ & \leq (1+o(1)) \left(\tfrac{2}{\sqrt{5}}\right)^p \qedhere
	\end{align*}	
\end{proof}

\subsection{Multi-partite random graphs}
We can view random graphs $\bG(m,\rho)$ as arising from taking the complete graph $K_m$ and keeping each edge with probability $\rho$.  The following model is defined analogously using $K_{k \times M}$ as the base graph.
\begin{definition}\label{def:k-partite-rand-graph}
	For $k \geq 2$, $M \in \N$ and $\rho \in [0,1]$
	a \emph{random $k$-partite graph $G$ in the model $\bG_k(M,\rho)$} is 
	found by taking the graph $K_{k \times M}$ and keeping each edge with 
	probability $\rho$.  A property holds a.a.s.\ if it holds with probability $\ra 1$ as $M\ra \infty$.
\end{definition}

In this model, we show the following two bounds on $\lambda_{1,p}$ at slightly different ranges of $\rho$.
\begin{theorem}\label{thm:G-k-m-rho-eval-bound1}
	For any $\delta>0$ and $k\geq 2$, there exists $C>0$ so that
	for $\rho$ satisfying $\rho \geq C\log(kM)/kM$ and  
	$\rho =	o(M^{1/3}/M)$, we have that, for an arbitrary $p=p(M)\geq 2$, a random $k$-partite graph $G \in \bG_k(M,\rho)$ satisfies a.a.s.\ 
	\[
		\forall p' \in [2,p],\; \lambda_{1,p'}(G) \geq 
		(1-\delta)\inf_{p'' \in [2,p]}\lambda_{1,p''}(K_{k\times M}) - 2^p \delta -
		\frac{C 3^p}{(\rho kM)^{1/2p^2}}.
	\]
\end{theorem}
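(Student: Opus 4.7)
The plan is to run the Kahn--Szemerédi argument of Sections~\ref{sec:rand-graphs}--\ref{sec:heavy-bound} with the complete graph $K_m$ replaced by the complete multipartite graph $K_{k\times M}$. I first apply a Chernoff bound as in Lemma~\ref{lem:degree-concentration} to reduce to a prescribed-degree model $\bG_k(M,\bd)$ in which every vertex has degree within a factor $\theta = 1+\delta_0$ of the common expected value $d \approx \rho(k-1)M$, for a prescribed $\delta_0 > 0$ (chosen in terms of $\delta$). The constraint $\rho = o(M^{1/3}/M)$ forces $d = o(M^{1/3})$, so McKay's count (as in Proposition~\ref{prop:config-model}) allows us to transfer concentration estimates from the corresponding configuration model to $\bG_k(M,\bd)$. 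The cost of using a graph with nearly, rather than exactly, regular degrees is the $2^p\delta$ term in the conclusion, which comes from comparing weighted $p$-norms under the two degree sequences.

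Next, for fixed $x \in S_{p,\bd}(G_0)$ normalised so that $\|x\|_{p,\bd}^p = 1$, I compute in the configuration model that $\bE[Z_x(G)]$ equals $\frac{d^2}{2E-1}$ times the sum of $|x_u - x_v|^p$ over the edges $\{u,v\}$ of $K_{k\times M}$, up to an additive $O(1/M)$ error coming from double edges and loops. Applying the Rayleigh characterisation \eqref{eq:lp-12} on the regular graph $K_{k\times M}$ (with constant degree sequence $\bd_K = (k-1)M$) to a rescaled copy of $x$ gives
\[
  \bE[Z_x(G)] \;\geq\; (1-\delta)\, \lambda_{1,p}(K_{k\times M}) - 2^p\delta,
\]
which is the multi-partite analogue of the computation in Lemma~\ref{lem:expectation-light}. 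This yields the main term in the stated bound.

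The remainder of the argument adapts without essential change. The net $T_{p,\bd,R}$ of Proposition~\ref{prop:size-of-t} depends only on $\bd$. For light edges, the Azuma martingale proof of Proposition~\ref{prop:deviation-light} uses only the switching structure of the configuration model and thus applies verbatim. For heavy edges one needs an analogue of the Broder--Frieze--Suen--Upfal edge density lemma (Lemma~\ref{lem:block-edge-bound}); the same switching proof works in the multipartite setting, with $\mu(A,B)$ now counting pairs of vertices adjacent in $K_{k\times M}$, which changes the relevant estimates by at most a factor $k/(k-1)$. Proposition~\ref{prop:edge-density-heavy-bound}, being purely combinatorial once the edge density property is known, then gives a heavy-term bound of order $C \cdot 3^p/(\rho kM)^{1/2p^2}$ after collecting the various $p$-dependent factors from Lemma~\ref{lem-bound-xhb} into a single exponential $3^p$.

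Simultaneous control over $p' \in [2,p]$ follows as in Section~\ref{ssec:G-m-d-simul-p}: cover $[2,p]$ by a geometric sequence $2 = p_0 < p_1 < \ldots < p_L = p$ with ratio $1+\tau$, union-bound the fixed-$p'$ estimate at the $p_i$, and interpolate to arbitrary $p' \in [2,p]$ using Lemma~\ref{lem:eval-right-lower-semicts}; the resulting $(dM)^{-\tau}$ factor is absorbed into the $3^p/(\rho kM)^{1/2p^2}$ error. The interpolation bounds $\lambda_{1,p'}(G)$ in terms of $\lambda_{1,p_i}(K_{k\times M})$ for some $p_i \in [2,p]$, which accounts for the infimum in the statement. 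The principal obstacle is the expected-value step: unlike for the complete graph, where $\bE X_x$ vanishes up to lower-order terms by a cancellation, here $\bE Z_x$ inherits the nontrivial spectral information of $K_{k\times M}$ through the Rayleigh quotient, and this comparison must be carried out under two subtly different degree weightings, which is what produces the $2^p\delta$ error in the final bound.
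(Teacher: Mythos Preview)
Your high-level strategy matches the paper's: reduce to a prescribed-degree model, compute $\bE Z_x$ via the Rayleigh quotient of $K_{k\times M}$, bound light terms by a martingale argument and heavy terms via a controlled edge density lemma, then interpolate over $p'$. The heavy-term, net, and simultaneous-in-$p$ portions of your outline are essentially what the paper does.

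The gap is in your model reduction and expected-value step. You propose to condition on the \emph{degree sequence} $\bd$, invoke the ordinary configuration model $\bG^*(m,\bd)$, and transfer via McKay's count (Proposition~\ref{prop:config-model}). This does not work for $k$-partite graphs. In the global configuration model $\bG^*(m,\bd)$ a positive fraction of the half-edge matchings land \emph{within} a part $V_i$: the expected number of such edges is of order $dm/k$, so the event ``$M(F)$ is a simple $k$-partite graph'' has exponentially small probability and Proposition~\ref{prop:config-model} gives no usable transfer. Correspondingly, your formula $\bE Z_x = \tfrac{d^2}{2E-1}\sum_{\{u,v\}\in (K_{k\times M})_1}|x_u-x_v|^p$ is not what the global configuration model produces (its expectation runs over \emph{all} pairs $u\neq v$), and it is not what any $k$-partite model with only $\bd$ prescribed would give either, since the edge probability between $u\in V_i$ and $v\in V_j$ then depends on $(i,j)$ in a way the total degrees do not determine.

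The paper fixes this by conditioning on the finer \emph{degree matrix} $D=(d_{u,i})$ recording, for each vertex $u$, how many of its edges go to each part $V_i$. A Chernoff bound (Lemma~\ref{lem:degree-concentration2}) shows that a.a.s.\ every $d_{u,i}$ is within $(1\pm\iota)M\rho$ of its mean, and all $G\in\bG_k(M,\rho)$ with the same $D$ are equiprobable. One then works in the genuinely multipartite configuration model $\bG_k(M,D)$, in which the $\Delta_{i,j}$ half-edges from $V_i$ toward $V_j$ are matched to those from $V_j$ toward $V_i$ \emph{separately for each pair} $(i,j)$. In this model the correct expectation is
\[
\bE Z_x \;=\; \sum_{i<j}\sum_{u\in V_i}\sum_{v\in V_j}\frac{d_{u,j}d_{v,i}}{\Delta_{i,j}}\,|x_u-x_v|^p,
\]
which, after the rescaling $x'_u=x_u(d_u/\bar d)^{1/(p-1)}$ and a Mean Value estimate on $\|dx'\|_p^p-\|dx\|_p^p$, yields the bound $(1-O(\iota))\lambda_{1,p}(K_{k\times M})-2^{p+7}\iota$ of \eqref{eq:exp-Z-M-D-4}. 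The $2^p\delta$ term therefore arises not from a direct comparison of weighted norms, but from controlling the change in $\|dx\|_p^p$ on $K_{k\times M}$ under this rescaling; this is where the factor $2^p$ enters via the inequality $|\{a\}^p-\{b\}^p|\leq p|a-b|(|a|^{p-1}+|b|^{p-1})$. Once you replace your step~(2)--(3) by this degree-matrix reduction and multipartite configuration computation, the remainder of your outline goes through as you describe.
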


\begin{theorem}\label{thm:G-k-m-rho-eval-bound2}
	For any $\delta>0$, there exists $C>0$ so that
	for $k \geq 2$ and $k = o(M^{1/6\wedge \delta/2})$, for $\rho$ satisfying $\rho \geq (kM)^{\delta}/kM$ and  
	$\rho =	o(M^{1/3}/M)$ 
	we have that, for an arbitrary $p=p(M)\geq 2$, a random $k$-partite graph $G \in \bG_k(M,\rho)$ satisfies a.a.s.\ 
	\[
		\forall p' \in [2,p],\; \lambda_{1,p'}(G) \geq 
		\left(1-\frac{C}{(kM)^{\delta/3p}}\right)\inf_{p''\in[2,p]}\lambda_{1,p''}(K_{k\times M}) - 
		\frac{C 3^p}{(kM)^{\delta/2p^2}}.
	\]
\end{theorem}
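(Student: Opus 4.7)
The proof follows the strategy of Theorem~\ref{thm:G-m-rho-eval-bound}, with the complete multi-partite graph $K_{k\times M}$ taking the role played (implicitly) by $K_m$ in the Erd\H{o}s--R\'enyi setting, so that the ``target'' lower bound is $\lambda_{1,p}(K_{k\times M})$ rather than $1$. Under $\rho\geq (kM)^\delta/kM$, a Chernoff bound of the same form as Lemma~\ref{lem:degree-concentration} shows that a.a.s.\ every vertex of $G\in\bG_k(M,\rho)$ has degree within a factor $\theta=1+O((kM)^{-\delta/2})$ of the expected value $\rho M(k-1)$, which (as in the passage between Theorems~\ref{thm:G-m-deg-eval-bound} and~\ref{thm:G-m-rho-eval-bound}) reduces the problem to the prescribed-degree model $\bG_k(M,\bd)$. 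To analyse the latter I would introduce a $k$-partite configuration model $\bG^*_k(M,\bd)$: vertex $i$ in block $\ell$ carries $d_i$ half-edges and one chooses a uniform random perfect matching subject to the constraint that no matched pair of half-edges lies in a single block. An adaptation of Proposition~\ref{prop:config-model} via McKay's asymptotic shows the resulting multigraph is simple with probability at least $\exp(-O(d^2))$ for $d=o(M^{1/2})$, which holds under our hypotheses.

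The net $T_{p,\bd,R}(G_0)$ from Section~\ref{sec:net-approx} depends only on $\bd$, so Propositions~\ref{prop:size-of-t} and~\ref{prop:suffices-bound-t2} apply with only cosmetic changes, the conclusion of the latter becoming $Z_x(G)/\|x\|_{p,\bd}^p\geq \lambda_{1,p}(K_{k\times M})-\eta-\text{(error)}$. For the heavy bound of Section~\ref{sec:heavy-bound} one needs a $k$-partite analog of Lemma~\ref{lem:block-edge-bound}, which is a routine adaptation of the Broder--Frieze--Suen--Upfal switching argument using $\mu(A,B)=\theta|A||B|d/m$ (the cross-block pair fraction $1-1/k$ being absorbed into the constant $\theta$). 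The Azuma deviation bound of Proposition~\ref{prop:deviation-light} also goes through essentially verbatim, since the switching operation in $\bG^*_k(M,\bd)$ still modifies at most two edges per step and respects the $k$-partite structure.

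The only genuinely new computation is for the expected value of $\widetilde{X}_x$ in the light bound. A direct calculation in the $k$-partite configuration model yields
\begin{align*}
  \mathbb{E}\,\widetilde{X}_x \;=\; \frac{1}{2E-1}\sum_{\ell\neq\ell'} S_\ell T_{\ell'} \;=\; -\frac{1}{2E-1}\sum_\ell S_\ell T_\ell,
\end{align*}
where $S_\ell=\sum_{i\in V_\ell}\{x_i\}^{p-1}d_i$, $T_\ell=\sum_{i\in V_\ell}x_id_i$, and the second equality uses $\sum_\ell S_\ell=0$ (from $x\in S_{p,\bd}$). The right-hand side is bounded by $(1-\lambda_{1,p}(K_{k\times M}))\|x\|_{p,\bd}^p$ up to a $1+o(1)$ factor, via H\"older and the Rayleigh quotient for $K_{k\times M}$ applied to $x\in S_{p,\bd_{K_{k\times M}}}=S_{p,\bd}$ (the two sets agree since $\bd$ is nearly constant). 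Combined with the bound on $\mathbb{E}\widetilde{X}_x^h$ supplied by Lemma~\ref{lem:p-gamma-bound}, this gives the analog of Lemma~\ref{lem:expectation-light}, now with an extra $(1-\lambda_{1,p}(K_{k\times M}))$-term (compare Lemma~\ref{lem:expectation-light-fix}, which plays precisely the same role when $1-\theta^{-2}$ replaces $1-\lambda_{1,p}(K_{k\times M})$). Assembling the light and heavy bounds, optimising the net parameters $\varepsilon$ and $\beta$ as in~\eqref{eq:eval-combine1}--\eqref{eq:eval-combine2}, and passing to the simultaneous bound over $p'\in[2,p]$ via Lemma~\ref{lem:eval-right-lower-semicts} as in Subsection~\ref{ssec:G-m-d-simul-p}, finishes the proof.

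The main technical obstacle is precisely this new expectation computation: in the Erd\H{o}s--R\'enyi case the identity $\sum_i\{x_i\}^{p-1}d_i=0$ forced $\mathbb{E}\widetilde{X}_x\leq 0$ with no further work, whereas here $\mathbb{E}\widetilde{X}_x$ reflects the genuine spectral structure of $K_{k\times M}$ and must be related to $1-\lambda_{1,p}(K_{k\times M})$; the multiplicative factor $(1-C/(kM)^{\delta/3p})$ in the conclusion then emerges from balancing the three error sources (net size, light deviation, heavy edge-density contribution) under the polynomial assumption $\rho\geq (kM)^\delta/kM$, which replaces the logarithmic assumption of Theorem~\ref{thm:G-k-m-rho-eval-bound1} and yields the polynomial rather than $\delta$-free error rate.
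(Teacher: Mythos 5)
Your overall architecture (degree concentration, net, light/heavy split, Azuma, controlled edge density, the $\sum_{\ell\ne\ell'}S_\ell T_{\ell'}=-\sum_\ell S_\ell T_\ell$ identity) matches the paper's in spirit, but the step you correctly identify as ``the only genuinely new computation'' is where the argument breaks. You compute $\bE\tilde X_x$ and propose to relate it to $1-\lambda_{1,p}(K_{k\times M})$; but what must be bounded is $X_x=\sum_e\Re(x_u,x_v)$, not $\tilde X_x=\sum_e\tilde\Re(x_u,x_v)$, and the only available conversions are $X_x\le p\tilde X_x$ (for $p\ge 3$) or $|\Re|\le(1+p2^{p-1})\ovP$. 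Either one multiplies your main term $1-\lambda_{1,p}(K_{k\times M})$ by $p$ or $p2^p$. Since $\lambda_{1,p}(K_{k\times M})$ is only about $2/3$ here (Theorem~\ref{thm:kpartite}), the resulting bound $Z_x\ge 1-p(1-\lambda_{1,p}(K_{k\times M}))-\cdots$ is vacuous already for $p\ge 3$, and in no case yields the multiplicative form $\bigl(1-C(kM)^{-\delta/3p}\bigr)\lambda_{1,p}(K_{k\times M})$ in the statement. The analogy you draw with Lemma~\ref{lem:expectation-light-fix} does not rescue this: there the prefactor $1+p2^{p-1}\le 13$ is harmless only because $p\in[2,3]$, whereas this theorem allows arbitrary $p=p(M)\ge 2$. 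Moreover, even the formula $\bE\tilde X_x=\frac{1}{2E-1}\sum_{\ell\ne\ell'}S_\ell T_{\ell'}$ is not valid in the model you define (a global matching conditioned to have no within-block pairs), since under that conditioning the probability that two given cross-block half-edges are matched is not $1/(2E-1)$. Finally, there is no clean inequality between $\tilde X_x(K_{k\times M})$ and $X_x(K_{k\times M})=1-Z_x(K_{k\times M})$, so the asserted bound of $-\frac{1}{2E-1}\sum_\ell S_\ell T_\ell$ by $(1-\lambda_{1,p}(K_{k\times M}))\|x\|_{p,\bd}^p$ is itself unjustified.

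The paper avoids all of this by never taking a comparison function on the main term. It fixes the full degree \emph{matrix} $D=(d_{u,i})$ (not just total degrees) and builds $\bG_k(M,D)$ from independent bipartite matchings between each pair of blocks, so that $\bP(a\text{ matched to }b)=1/\Delta_{i,j}$ exactly; then, using that $X_x=\|x\|_{p,\bd}^p-Z_x$ identically, it computes
\[
\bE Z_x(G)=\sum_{i<j}\sum_{u\in V_i}\sum_{v\in V_j}\frac{d_{u,i}d_{v,j}}{\Delta_{i,j}}\,|x_u-x_v|^p
\;\ge\;\frac{(1-\iota)^2\bar d}{(1+\iota)M(k-1)}\,\|dx\|^p_{K_{k\times M},p},
\]
and feeds the right-hand side into the Rayleigh quotient for $K_{k\times M}$ (after a rescaling $x\mapsto x'$ to account for the non-constant degrees). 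This produces the main term $\frac{(1-\iota)^4}{(1+\iota)^2}\lambda_{1,p}(K_{k\times M})(1-2\eps\theta^{1/p})$ with no factor of $p$, and the comparisons $\Re\leftrightarrow\ovP\leftrightarrow\tilde\Re$ are confined to the error terms $|\bE X_x^h|$, $|X_x^h|$ and $|X_x^l-\bE X_x^l|$, where the $p2^p$ prefactors are absorbed into $O(3^p(kM)^{-\delta/2p^2})$. You would need to restructure your proof along these lines for the claimed bound to come out.
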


Recall that in $K_{k \times M}$ the vertex set splits as $V = \bigsqcup_{i=1}^k V_i$ with an edge
joining $u \in V_i$ to $v \in V_j$ if and only if $i \neq j$.
For a graph $G \in \bG_k(M, \rho)$, and $u \in V, 1 \leq i \leq k$, let $d_{u,i}$ be the
number of edges with one endpoint at $u$ and the other endpoint in $V_i$.
So the degree of $u$ is $d_u = \sum_{i=1}^k d_{u,i}$, and $d_{u,i}=0$ when $u \in V_i$.
Let $D = D(G) = (d_{u,i})_{u,i}$ be the \emph{degree matrix} of $G$.

We call a matrix $D = (d_{u,i})$ with integer entries an \emph{admissible degree matrix} if for $i \neq j$, $\sum_{u \in V_i} d_{u,j} = \sum_{v \in V_j} d_{v,i}$; we denote by $\Delta_{i,j}$ the common value of the two sums.

Given an admissible degree matrix $D$, we define a random graph model $\bG_k(M,D)$ as follows. We
attach to each $u \in V_i$ a collection of $d_u$ half-edges, 
$d_{u,j}$ of which ``point towards'' $V_j$ for each $j$, 
and then for each $i\neq j$ we join the collections of $\Delta_{i,j}$
half-edges pointing to each other by a random matching.

In the particular case of $k=2$, this is just a random bipartite graph with specified degrees.

Given $G \in \bG_k(M,\rho)$, let $Y_{u,i}$ be the random variable which is $d_{u,i}$,
and let $Y_u = \val_G(u) = \sum_i Y_{u,i}$.
These satisfy $\bE Y_{u,i} = M \rho$ and $\bE Y_u = (k-1)M\rho =: \bar d$.
\begin{lemma}\label{lem:degree-concentration2}
	Given $\iota = \sqrt{10\log(Mk)/M\rho}$, a.a.s.\ 
	for all $u,i$, 
	\[
	  (1-\iota)M\rho \leq Y_{u,i} \leq (1+\iota)M\rho.
	\]
\end{lemma}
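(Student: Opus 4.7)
The plan is to mirror the approach used in Lemma~\ref{lem:degree-concentration}, applying a standard Chernoff bound to each $Y_{u,i}$ separately and then taking a union bound over all relevant pairs $(u,i)$.

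First I would observe that for each fixed pair $(u,i)$ with $u \notin V_i$, the random variable $Y_{u,i}$ is a sum of $M$ independent Bernoulli($\rho$) variables (one for each potential edge from $u$ to a vertex in $V_i$), so $Y_{u,i} \sim \mathrm{Binomial}(M,\rho)$ with $\bE Y_{u,i}=M\rho$. The standard Chernoff bound \cite[Corollary~2.3]{JLR-00-random-graphs} then gives
\[
  \bP\bigl(|Y_{u,i} - M\rho| > \iota M\rho\bigr) \;\leq\; 2\exp\!\left(-\tfrac{\iota^2}{3} M\rho\right).
\]
When $u\in V_i$ we have $Y_{u,i}=0=M\rho\cdot 0$ deterministically, so there is nothing to prove in that case.

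Next I would take a union bound over all $(u,i)$ with $u\in V\setminus V_i$, of which there are at most $k\cdot kM = k^2M$. With the given choice $\iota=\sqrt{10\log(kM)/M\rho}$ (so that $\iota^2 M\rho/3 = \tfrac{10}{3}\log(kM)$), this produces
\[
  \bP\bigl(\exists\,(u,i):|Y_{u,i}-M\rho|>\iota M\rho\bigr)
  \;\leq\; 2k^2M \exp\!\left(-\tfrac{10}{3}\log(kM)\right)
  \;=\; \frac{2k^2M}{(kM)^{10/3}} \;=\; \frac{2}{k^{4/3}M^{7/3}},
\]
which tends to zero as $M\to\infty$ (and more than comfortably so in the regimes of $k$ considered in Theorems~\ref{thm:G-k-m-rho-eval-bound1} and \ref{thm:G-k-m-rho-eval-bound2}).

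There is no real obstacle here: the only mild subtlety is keeping track of the fact that the expected $u$-to-$V_i$ degree is $M\rho$ rather than $(M-1)\rho$ as in Lemma~\ref{lem:degree-concentration}, because in the $k$-partite graph $u$ has $M$ potential neighbours in $V_i$ (when $i\neq$ the part containing $u$) rather than $M-1$. Apart from this bookkeeping, the argument is a routine Chernoff plus union bound.
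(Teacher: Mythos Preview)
Your proposal is correct and follows essentially the same approach as the paper: Chernoff bound on each binomial $Y_{u,i}$ plus a union bound. The only cosmetic difference is that the paper counts the relevant pairs more precisely as $Mk(k-1)$ rather than your $k^2M$, and your aside about the $u\in V_i$ case is slightly garbled (the lemma is implicitly only about pairs with $u\notin V_i$, since $Y_{u,i}=0$ there); neither affects the argument.
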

\begin{proof}
	As in the proof of Lemma~\ref{lem:degree-concentration}, we have
	$Mk(k-1)$ binomial random variables with expected value $M\rho$, so the probability that
	the claim fails is at most
	\[
		2 Mk(k-1) \exp\left( -\frac{1}{3}\iota^2 M\rho \right)
		\leq 2 Mk(k-1) (Mk)^{-10/3},
	\] and the latter upper bound converges to $0$ when $Mk \to \infty$.
\end{proof}

For an admissible degree matrix $D$, all $G \in \bG_k(M,\rho)$ with $D(G)=D$ arise with equal probability,
so to show Theorems~\ref{thm:G-k-m-rho-eval-bound1} and \ref{thm:G-k-m-rho-eval-bound2}
it suffices to find a.a.s.\ bounds on
$\lambda_{1,p}(G)$ for $G \in \bG_k(M,D)$, with all $d_{u,i} = (1+o(1)) M\rho$ (cf. Theorem~\ref{thm:G-m-deg-eval-bound}).
In particular, by Lemma~\ref{lem:degree-concentration2} we will assume that $D$ satisfies:
\begin{equation}\label{eq:GkM-deg-bound}
\begin{alignedat}{3}	\forall u \notin V_i, &\quad& \frac{(1-\iota)\bar{d}}{k-1} & \leq  d_{u,i} && \leq \frac{(1+\iota)\bar{d}}{k-1},\\
	\forall u, && {(1-\iota)\bar{d}} & \leq d_u && \leq {(1+\iota)\bar{d}},\\
	\forall i \neq j,&&  \frac{(1-\iota)M\bar{d}}{k-1} & \leq \Delta_{i,j} && \leq \frac{(1+\iota)M\bar{d}}{k-1}.
  \end{alignedat}
\end{equation}

Given $G \in \bG_k(M,D)$, we want to bound $Z_y(G)$ from below for all $y \in S_{p,\bd}(G_0)$.
By Proposition~\ref{prop:suffices-bound-t2}, either $Z_y(G) \geq 1$ (and we are done) or
\begin{equation*}
  Z_y(G) \geq Z_x(G) - 2p (\eps \theta)^{1/(p-1)} (1+2(\eps \theta)^{1/(p-1)})^{p-1}
\end{equation*}
for some $x \in T_{p,\bd,R}(G_0)$ with $\|x\|^p_{p,\bd} \geq R_-$.
We will see later that we can assume $(1+2(\eps\theta)^{1/(p-1)})^{p-1} \leq 2$, so we record that
\begin{equation}\label{eq:G-M-D-1}
  Z_y(G) \geq Z_x(G) - 4p (\eps \theta)^{1/(p-1)}.
\end{equation}

Now 
\[
	Z_x(G) = \|x\|^p_{p,\bd} - X_x(G) = \|x\|^p_{p,\bd} - X_x^l(G) - X_x^h(G),
\]
so $\|x\|^p_{p,\bd} = \bE Z_x + \bE X_x^l + \bE X_x^h$, and thus
\begin{equation*}
	Z_x(G) = \bE Z_x + \bE X_x^h + (\bE X_x^l-X_x^l(G)) - X_x^h(G).
\end{equation*}
Applying this to \eqref{eq:G-M-D-1}, we find that all $y \in S_{p,\bd}(G_0)$ 
satisfy
\begin{equation}\label{eq:G-M-D-2}
	Z_y(G) \geq \bE Z_x - |\bE X_x^h| - |\bE X_x^l - X_x^l| - |X_x^h| -
		4p(\eps\theta)^{1/(p-1)}
\end{equation}
for some $x \in T_{p,\bd,R}(G_0)$ with $\|x\|^p_{p,\bd} \geq R_-$.

In the following subsections we bound each of the terms on the right hand side of \eqref{eq:G-M-D-2} from below.
 Here we use these bounds to finish the proofs of Theorems \ref{thm:G-k-m-rho-eval-bound1} and \ref{thm:G-k-m-rho-eval-bound2}. We use Assumption~\ref{assump:eps-theta} to simplify $\eps \theta \leq 1$ and $R \leq 4$.

Without loss of generality we may assume that $\iota  \leq 1/10$. Indeed, the hypothesis in Theorem \ref{thm:G-k-m-rho-eval-bound1} implies that $\iota = \sqrt{10k/C}$ and it suffices to take $C$ large enough, while according to the the hypothesis in Theorem \ref{thm:G-k-m-rho-eval-bound2},  $\iota = O( (kM)^{-\delta/6})$ (see estimates in \eqref{eq:estimate2} and the line following it).    

Using \eqref{eq:exp-Z-M-D-4}, \eqref{eq:exp-EXh-M-D}, Proposition~\ref{prop:deviation-light-M-D}, and \eqref{eq:Xh-M-D}, we obtain that for every $\xi >0$ there exists a suitable constant $C'$ such that with probability at least
\[
 1- 2 \exp \left(-\frac{1}{128}K^2m + m\log\left(\frac{16e}{\eps}\right)\right)
		-o(m^{-\xi})
\] 
all $y \in S_{p,\bd}(G_0)$ satisfy
\begin{align*}
	Z_y(G) & 
	\geq \left( \frac{(1-\iota)^4}{(1+\iota)^2} \lambda_{1,p}(K_{k\times M})(1-2\eps
	\theta^{1/p}) -2^{p+7}\iota \right)
	- \frac{p2^{p+8}}{d^{\beta/p}}
	- \frac{p2^p K}{d^{\beta/p}} \\
	& \qquad - \left( \frac{p2^p C'}{d^{\beta/p}} + \frac{p2^p C' p^3 }{d^{1/p^2}} + \frac{p2^pC'p^3\eps^{-q}}{d^{1/p}} 
		\right) -4p (\eps \theta)^{1/(p-1)} \\
	& \geq \left( \frac{(1-\iota)^4}{(1+\iota)^2} \lambda_{1,p}(K_{k\times M})(1-2\eps
	\theta^{1/p}) -2^{p+7}\iota \right)
 	\\ & \qquad	- C'' p2^p \left( \frac{1+K}{d^{1/(2+2p)}}
	+ \frac{p^3}{d^{1/p^2}} + \frac{p^3\eps^{-q}}{d^{1/p}} + \eps^{1/(p-1)} \right),
	\stepcounter{equation}\tag{\theequation}\label{eq:G-M-D-3}
\end{align*}
for some $C''$ depending on $C'$.

The same choices of $K = C_2(1+\sqrt{\log(d)/p})$ and $\eps=d^{-(p-1)/p(p+1)}$ as
in subsection~\ref{ssec:G-m-d-single-p} allow to deduce from the above that with probability at least $1-o(m^{-\xi})$, for all $y \in S_{p,\bd}(G_0)$, $Z_y(G)$ is at least
\begin{equation}\label{eq:G-M-D-4}
	\left( \frac{(1-\iota)^4}{(1+\iota)^2} \lambda_{1,p}(K_{k\times M})(1-2\eps
	\theta^{1/p}) -2^{p+7}\iota \right)
	- C_3 \frac{p^4 2^p}{d^{1/2p^2}}.
\end{equation}
As before, we can assume that $(1+2(\eps \theta)^{1/(p-1)})^{p-1} \leq 2$.

We now conclude the proofs of Theorems~\ref{thm:G-k-m-rho-eval-bound1} and
\ref{thm:G-k-m-rho-eval-bound2} using \eqref{eq:G-M-D-4} and \eqref{eq:lp-12}.

\begin{proof}[Proof of Theorem~\ref{thm:G-k-m-rho-eval-bound1}]
	Consider an arbitrary $\delta>0$.
	For $\rho = C \log(Mk)/Mk$, $\iota$ in Lemma~\ref{lem:degree-concentration2} becomes $\sqrt{10k/C}$.
	So for sufficiently large $C$ we can assume that $(1-\iota)^4/(1+\iota)^2 \geq 1-\delta/2$ and $2^7 \iota < \delta$.  
	We can also assume that $\eps$ is less than the fixed constant $\delta/4\theta^{1/p}$, and so $(1-\iota)^4(1+\iota)^{-2}(1-2\eps\theta^{1/p}) \geq (1-\delta/2)^2 \geq 1-\delta$. 

	Since $d \geq (1-\iota)\bar{d} = (1-\iota)(k-1)M \rho$,
	\begin{equation*}
		\frac{p^4 2^p}{d^{1/2p^2}} 
		\leq \frac{p^4 2^p}{((1-\iota)\rho M k)^{1/2p^2}} \cdot
			\left( \frac{k}{k-1} \right)^{1/2p^2}
		\leq C_4 \frac{3^p}{(\rho M k)^{1/2p^2}}.
	\end{equation*}
	Applying these estimates to \eqref{eq:G-M-D-4} and \eqref{eq:lp-12}
	shows that $\lambda_{1,p}(G)$ has the required bound for fixed $p$.
	
	The bound for all $p'$ in the given range
	follows from the argument of subsection~\ref{ssec:G-m-d-simul-p}.
\end{proof}

\begin{proof}[Proof of Theorem~\ref{thm:G-k-m-rho-eval-bound2}]
	We estimate the terms in \eqref{eq:G-M-D-4}.
	Since $k = o(M^{\delta/2})$,
	\begin{equation}\label{eq:estimate2}
	\frac{10 \log(Mk)}{M\rho}
		= \frac{10 k \log(Mk)}{(kM)^{\delta}} 
		= O\left( \frac{M^{\delta/2}\log(M)}{(kM)^{\delta}}	\right)
		= O\left( (kM)^{-\delta/3} \right),
	\end{equation}
	thus we can take $\iota = O( (kM)^{-\delta/6})$.
	Because $\iota = o(1)$, $(1-\iota)^4/(1+\iota)^2 = 1-6\iota-o(\iota) \geq 1-O((kM)^{-\delta/6})$.

	So $d =(1+o(1)) \bar{d} = (1+o(1)) (kM)^\delta $,
	and we have $\eps = d^{-(p-1)/p(p+1)} \leq d^{-1/3p}$,
	so $(1-2\eps\theta^{1/p}) = 1-O((kM)^{-\delta/3p})$.
	
	Likewise,
	\[
	\frac{p^42^p}{d^{1/2p^2}} = O\left(\frac{3^p}{(kM)^{\delta/2p^2}}\right).
	\]
	Thus \eqref{eq:G-M-D-4} is bounded from below by
	\begin{multline*}
	\left[1-O\left( (kM)^{-\delta/6} \right)\right] \lambda_{1,p}(K_{k\times M})
		\left[1-O\left( (kM)^{-\delta/3p}\right)\right]
		\\ - O\left(\frac{2^p}{(kM)^{\delta/6}}\right)
		-O\left(\frac{3^p}{(kM)^{\delta/2p^2}}\right),
	\end{multline*}
	which simplifies to give the claimed bound.
\end{proof}

\subsection{Expectation of Z}
For each $i \neq j$, let $V_{i \ra j}$ be the collection of $\Delta_{i,j}$ 
endpoints of half-edges from $V_i$ pointing towards $V_j$.
Given $a \in V_{i \ra j}$ and $b \in V_{j \ra i}$, let $\bI_{a,b}(G)$ be the random variable
which is $1$ or $0$ according to whether $a$ and $b$ are matched in $G$ or not.
For $a \in V_{i \ra j}$, denote by $v(a) \in V_i$ the other endpoint 
of the half-edge ending at $a$.
Then
\begin{align}
	\bE Z_x(G) & = \bE \sum_{i<j} \sum_{a \in V_{i \ra j}} 
		\sum_{b \in V_{j \ra i}} \bI_{a,b}(G) |x_{v(a)}-x_{v(b)}|^p \nonumber
	\\ & = \sum_{i<j} \sum_{u \in V_i} \sum_{v \in V_j} \frac{d_{u,i} d_{v,j}}{\Delta_{i,j}}
	| x_u - x_v |^p \nonumber
	\\ & \geq \frac{(1-\iota)^2 \bar{d}^2/(k-1)^2}{(1+\iota)M\bar{d}/(k-1)} 
	\sum_{i<j} \sum_{u \in V_i} \sum_{v \in V_j} | x_u - x_v |^p \quad\text{by \eqref{eq:GkM-deg-bound}}\nonumber
	\\ & = \frac{(1-\iota)^2\bar{d}}{(1+\iota)M(k-1)} \| dx \|_{K_{k \times M}, p}^p.
	\label{eq:exp-Z-M-D-1}
\end{align}

Let $x'_u = x_u (d_u/\bar{d})^{1/(p-1)}$ for each $u \in G_0 = (K_{k \times M})_0$.
Under this rescaling,
\[ 
  \sum_{u} \{x'_u\}^{p-1} (k-1)M = \sum_u \{x_u\}^{p-1} d_u \bar{d}^{-1}(k-1)M=0, 
\]
and 
\[
	\frac{(1-\iota)^q}{(1+\iota)\bar{d}} \sum_u |x_u|^p d_u 
	\leq \sum_u |x'_u|^p \leq \frac{(1+\iota)^q}{(1-\iota)\bar{d}} \sum_u |x_u|^p d_u.
\]
Thus we can use the definition of $\lambda_{1,p}(K_{k \times M})$ in \eqref{eq:lp-11}
to find
\begin{align}
	\| dx' \|_{K_{k \times M},p}^p
	\geq \lambda_{1,p}(K_{k \times M}) \|x'\|_{K_{k \times M}, p}^p
	\geq \lambda_{1,p}(K_{k \times M}) \frac{(1-\iota)^q R_- (k-1)M}{(1+\iota)\bar{d}}.
	\label{eq:exp-Z-M-D-2}
\end{align}
Using the Mean Value Theorem we have
\begin{equation}\label{eq:exp-Z-M-D-2-1}
| x_u'-x_u | = \left| \left(\frac{d_u}{\bar{d}}\right)^{1/(p-1)} -1 \right| |x_u|
	\leq \frac{2\iota}{p-1} |x_u|,
\end{equation}
for $(1+\iota)^{1/(p-1)}-1$ and $1-(1-\iota)^{1/(p-1)}$ are both at most $ 2 \iota/(p-1)$, for $\iota<1/10$.
Consequently
\begin{equation}\label{eq:exp-Z-M-D-2-2}
|x_u'|^{p-1} \leq (1+2\iota/(p-1))^{p-1} |x_u|^{p-1} \leq e^{2\iota} |x_u|^{p-1} \leq 2 |x_u|^{p-1}.
\end{equation}

We also require the following inequality, a straightforward consequence of the Mean Value Theorem (see also \cite[Lemma 4]{Matousek97}):
 	For any $p\geq 1$ and $a,b \in \R$ we have
	\begin{equation}\label{eq:exp-Z-M-D-2-3}
 		|\{a\}^p-\{b\}^p| \leq p |a-b| \left(|a|^{p-1}+|b|^{p-1}\right).
	\end{equation}
We combine these estimates to find, for the graph $K=K_{k\times M}$:
\begin{align*}
	& \quad \left| \|dx'\|_{K_{k\times M},p}^p-\|dx\|_{K_{k \times M},p}^p \right|
	\\ & \leq \sum_{e \in K_1, \vtx(e)=\{u,v\}} \big| |x'_u-x'_v|^p - |x_u-x_v|^p \big|
	\\ & \leq p \sum_{e \in K_1, \vtx(e)=\{u,v\}} \big| |x'_u-x'_v| - |x_u-x_v| \big| \big( |x_u'-x_v'|^{p-1}+|x_u-x_v|^{p-1} \big)
	\\ \intertext{by \eqref{eq:exp-Z-M-D-2-3}, and since $(a+b)^{p-1} \leq 2^{p-1}(a^p+b^p)$ this is}
	\\ & \leq 2^{p-1}p \sum_{\substack{e \in K_1,\\ \vtx(e)=\{u,v\}}} \big( |x_u'-x_u|+|x_v'-x_v|\big) \big(|x_u'|^{p-1}+|x_u|^{p-1}+|x_v'|^{p-1}+|x_v|^{p-1}\big)
	\\ & \leq \frac{2^{p-1}p \cdot 2\iota\cdot 3}{p-1} 
		\sum_{e \in K_1, \vtx(e)=\{u,v\}} (|x_u| + |x_v|) 
			\left( |x_u|^{p-1} + |x_v|^{p-1} \right)
	\\ \intertext{by \eqref{eq:exp-Z-M-D-2-1} and \eqref{eq:exp-Z-M-D-2-2}, and counting edges in $K$ gives that this is }
	\\ & \leq {2^{p+3} \iota}
		\left( \sum_u |x_u|^p(k-1)M + \sum_u \sum_v |x_u|\, |x_v|^{p-1} \right).
		\\ \intertext{By \eqref{eq:GkM-deg-bound}, this is bounded by}
	\\ & \leq 2^{p+3} \iota \left( 
		\frac{(k-1)M}{(1+\iota)\bar{d}} \sum_u |x_u|^p d_u
		+ \frac{1}{(1+\iota)^2 \bar{d}^2} \sum_u |x_u| d_u \sum_v |x_v|^{p-1} d_v \right)
	\\ & \leq 2^{p+3} \iota \left( 
		\frac{(k-1)M}{(1+\iota)\bar{d}} \sum_u |x_u|^p d_u
		+ \frac{1}{(1+\iota)^2 \bar{d}^2} \sum_u d_u \sum_v |x_v|^{p} d_v \right)
		\\ \intertext{by H\"older's inequality, and so we use that $x \in T_{p,\bd,R}(G_0)$ to get}
	\\ & \leq 2^{p+3} \iota \left( \frac{(k-1)MR}{(1+\iota)\bar{d}} +
		\frac{1}{(1+\iota)^2 \bar{d}^2} (1+\iota)\bar{d}kM R \right).
\end{align*}
All together, this gives
\begin{align*}
	\left| \|dx'\|_{K_{k\times M},p}^p-\|dx\|_{K_{k \times M},p}^p \right|
	\leq \frac{ 2^{p+4} \iota k M R }{ (1+\iota) \bar{d} }.
	\stepcounter{equation}\tag{\theequation}\label{eq:exp-Z-M-D-3}
\end{align*}

Finally, we combine \eqref{eq:exp-Z-M-D-1}, \eqref{eq:exp-Z-M-D-2} and \eqref{eq:exp-Z-M-D-3}
to conclude that
\begin{align}
	\bE Z_x(G)
	& \geq \frac{(1-\iota)^2 \bar{d}}{(1+\iota)M (k-1)}
		\left( \lambda_{1,p}(K_{k \times M}) \frac{(1-\iota)^q R_-(k-1)M}{(1+\iota)\bar{d}} - 
			\frac{2^{p+4} kMR}{(1+\iota)\bar{d}} \iota \right) \nonumber
	\\ & \geq \frac{(1-\iota)^4 }{(1+\iota)^2 }
	\lambda_{1,p}(K_{k \times M}) R_- - \frac{2^{p+4} k R \iota}{k-1} \nonumber
	\\ & \geq \frac{(1-\iota)^4 }{(1+\iota)^2 }
		\lambda_{1,p}(K_{k \times M}) (1-2\eps \theta^{1/p}) - 2^{p+7} \iota.
	\label{eq:exp-Z-M-D-4}
\end{align}
Observe that $R_- = (1-\eps \theta^{1/p})^q \geq 1- 2\eps \theta^{1/p}$.

\subsection{Expectation of heavy terms}
Let $\bI_{\ovP(x_u,x_v) \geq d^{\beta}/dm}$ equal $1$ or $0$ according to whether the given
inequality holds or not.
Then
\begin{align*}
	| \bE X_x^h |
	& = \left| \bE \sum_{i<j} \sum_{a \in V_{i \ra j}} \sum_{b \in V_{j \ra i}} \bI_{a,b}(G)
		\bI_{\ovP(x_{v(a)},x_{v(b)}) \geq d^{\beta}/dm} \Re(x_{v(a)},x_{v(b)}) \right|
	\\ & \leq \sum_{i<j} \sum_{u \in V_i} \sum_{v \in V_j} \frac{d_{u,i} d_{v,j}}{\Delta_{i,j}}
		\bI_{\ovP(x_u,x_v) \geq d^{\beta}/dm} (1+p2^{p-1}) \ovP(x_u,x_v)
		\quad\text{by \eqref{eq:p-ovp}}
	\\ & \leq \frac{(1+\iota)^2 \bar{d} p2^p}{(1-\iota) M(k-1)}
		\sum_{{u,v \in G_0 :\ }{\ovP(x_u,x_v) \geq d^{\beta} /dm}} \ovP(x_u,x_v)
		\quad\text{by \eqref{eq:GkM-deg-bound}}
	\\ & \leq \frac{(1+\iota)^2 \bar{d} p2^p}{(1-\iota) M(k-1)}
	  \frac{ 2m (1+\iota)^2 \big( \sum_u |x_u|^p d_u\big)^{2}}{(1-\iota)^2 d^{\beta/p}d} 
	  \quad\text{by Lemma~\ref{lem:p-gamma-bound} and \eqref{eq:GkM-deg-bound}}
	\\ & \leq \frac{(1+\iota)^4 p2^{p+2} R^2}{(1-\iota)^3 d^{\beta/p}},
\end{align*}
where we use that $\bar{d} \leq d$ and $m=Mk \leq 2M(k-1)$.
Therefore
\begin{equation}
	|\bE X_x^h| \leq \frac{p2^{p+8}}{d^{\beta/p}} = \frac{p2^{p+8}}{d^{1/(2+2p)}}.
\label{eq:exp-EXh-M-D}
\end{equation}
The final choice of $\beta = p/(2+2p)$ is the same as in Section~\ref{sec:G-m-d-overview}.

\subsection{Deviation of light terms}
\begin{proposition}\label{prop:deviation-light-M-D}
	For any $\alpha \in (0,1)$, so that $2\beta+2\alpha\leq 1$, and any $K>0$, for every 
	$x \in T_{p,\bd,R}(G_0)$ we have
	\[
		\bP\left( |X_x^l - \bE X_x^l| \geq \frac{p2^p K}{d^{\alpha}}\right)
		< 2 \exp \left(-\frac{1}{128}K^2m + m\log\left(\frac{16e}{\eps}\right)\right)\ .
	\]
\end{proposition}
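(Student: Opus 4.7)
The plan is to adapt the martingale/switching argument of Propositions~\ref{prop:deviation-light} and \ref{prop:deviation-light-fix}, working directly with $\Re$ (since we do not have the $p\geq 3$ restriction needed for the $\tilde{\Re}_p$-bound of Proposition~\ref{prop:p-estimate}\eqref{eq:p-tvp-fix}), and then incorporate a union bound over the net $T_{p,\bd,R}(G_0)$ to account for the extra $m\log(16e/\eps)$ term. As in the triangular setting, the argument reduces to the prescribed degree matrix model $\bG_k(M,D)$ where $D$ has entries close to $M\rho$ (via Lemma~\ref{lem:degree-concentration2}); here the graph is determined by $\binom{k}{2}$ independent random perfect matchings between the half-edge sets $V_{i\to j}$ and $V_{j\to i}$.

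First, set up the Doob martingale: order all half-edges lexicographically (both between and within the $(i,j)$-matchings), and reveal edges one at a time by matching the smallest unmatched half-edge to its partner. Let $\cF_n$ denote the $\sigma$-algebra generated by the first $n$ exposed edges and set $S_n = \bE(X_x^l \,|\, \cF_n)$, so that $S_0 = \bE X_x^l$ and $S_E = X_x^l$, where $E\leq dm/2$ is the total number of edges. Next, bound $|S_n-S_{n-1}|$ by the same switching argument used in Section~\ref{sec:light-bound}: if the $n$-th exposed edge joins $a_1 \in V_{i\to j}$ to $a_2\in V_{j\to i}$, then for any alternative partner $b \in V_{j\to i}$ with current partner $a_3 \in V_{i \to j}$, replacing $\{a_1,a_2\},\{a_3,b\}$ by $\{a_1,b\},\{a_3,a_2\}$ is a bijection between the corresponding conditional classes and leaves the degree matrix unchanged (both $a_1,a_3$ remain paired into $V_{j\to i}$). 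This swap changes at most two summands in $X_x^l$, each of which is a light-edge contribution of magnitude at most $(1+p2^{p-1})\ovP(x_u,x_v) \leq (1+p2^{p-1}) d^\beta/dm$ by~\eqref{eq:p-ovp}. Hence $|S_n - S_{n-1}| \leq 4(1+p2^{p-1}) d^\beta/dm \leq p2^{p+3} d^\beta/dm$.

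Apply Azuma's inequality (Theorem~\ref{thm:azuma}) with $N=E$, $c = p2^{p+3} d^\beta/dm$ and threshold $T = p2^p K/d^\alpha$. Using $E \leq dm$ and $2\alpha + 2\beta \leq 1$, a direct computation gives
\[
  \bP\!\left(|X_x^l - \bE X_x^l| \geq \frac{p2^p K}{d^\alpha}\right)
  \leq 2\exp\!\left(-\frac{T^2}{2Nc^2}\right)
  \leq 2\exp\!\left(-\frac{K^2 m}{128\, d^{2\alpha+2\beta-1}}\right)
  \leq 2\exp\!\left(-\tfrac{1}{128}K^2 m\right),
\]
which is the per-$x$ version of the desired bound. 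Finally, take a union bound over $T_{p,\bd,R}(G_0)$: by Proposition~\ref{prop:size-of-t} and $R\leq 4$ (Assumption~\ref{assump:eps-theta}), $|T_{p,\bd,R}(G_0)| \leq (16e/\eps)^m$, which contributes the extra $m\log(16e/\eps)$ in the exponent and yields the stated uniform-in-$x$ bound.

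The main obstacle is verifying that the switching argument goes through cleanly in the multipartite matching model and that, despite using $\Re$ rather than the antisymmetric proxy $\tilde{\Re}$, the martingale increment absorbs only a factor $O(p2^p)$ (matching the factor present in the target deviation $p2^p K/d^\alpha$). Both follow from inequality~\eqref{eq:p-ovp} together with the light-edge definition, mirroring the $p\in[2,3]$ argument of Proposition~\ref{prop:deviation-light-fix}; the slight weakening of the constant from $1/6000$ to $1/128$ in the exponent is consistent with the more generous bound $(1+p2^{p-1}) \leq p2^{p}$ for all $p\geq 2$.
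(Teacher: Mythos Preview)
Your proof is correct and follows essentially the same approach as the paper: build a Doob martingale by revealing the $\binom{k}{2}$ matchings edge by edge, bound the increments via the switching bijection together with \eqref{eq:p-ovp} to get $|S_n-S_{n-1}|\leq p2^{p+2}d^\beta/dm$ (you use the slightly looser $p2^{p+3}$), apply Azuma, and then take a union bound over $T_{p,\bd,R}(G_0)$ using Proposition~\ref{prop:size-of-t} with $R\leq 4$. Two cosmetic remarks: the swap removes two edges and adds two, so four summands change (your bound $4(1+p2^{p-1})$ is right, only the phrase ``two summands'' is imprecise); and going from $1/6000$ to $1/128$ is a \emph{strengthening}, not a weakening, of the exponent.
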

As in subsection~\ref{ssec:light-dev-proof}, we will apply this with $\beta = p/(2+2p),
\alpha=\beta/p = 1/(2+2p)$.
\begin{proof}
As in the proof of Proposition~\ref{prop:deviation-light}, we order the vertices of each 
$V_{i \ra j}$ and define a filtration $(\cF_t)$ on $\bG_k(M,D)$ as follows:
first expose the edge connected to the first vertex of $V_{1 \ra 2}$, then the second, and so
on, then continue with $V_{1 \ra 3}, \ldots, V_{1 \ra k}, V_{2 \ra 3}$ etc. 
Let $\cF_t$ be the $\sigma$-algebra generated by the first $t$ exposed edges.

As before, let $S_t = \bE(X_x^l | \cF_t)$, so $S_0 = \bE(X_x^l)$ and $S_E = X_x^l$.  For edges
$e$ which contribute to $X_x^l$, we have 
$|\Re(e)| \leq p2^p \ovP(e) \leq p2^p d^\beta/dm$,
thus the same argument as before gives
$|S_t(G)-S_{t-1}(G)| \leq p2^{p+2} d^\beta/dm$.
Azuma's inequality tells us that $|X_x^l-\bE X^l_x|$ has the desired lower bound with probability less than
\[
2\exp \left( - \frac{(p2^p K/d^\alpha)^2}{2(dm)(p2^{p+2}d^{\beta}/dm)^2}\right) 
= 2 \exp \left( - \frac{K^2 dm}{32 d^{2\alpha+2\beta}} \right) \leq 2 \exp\left(-\frac{K^2m}{128}\right).
\]
The desired inequality follows from Proposition~\ref{prop:size-of-t} (compare \eqref{eq:light-config-bd}).
\end{proof}

\subsection{Controlled edge density}
In light of Proposition~\ref{prop:edge-density-heavy-bound}, to bound $X_x^h$ it suffices to show that $G \in
\bG_k(M,D)$ has controlled edge density.

The following lemma and its proof follow \cite[Lemma 16]{BFSU-99-rand-graphs}.
\begin{lemma}\label{lem:G-D-M-edge-density}
	Let $G$ be a random graph in $\bG_k(M,D)$, where $k=o(M^{1/6})$ and the matrix $D$ with $\min d_{u,i} = d_{min}/(k-1)$, and $\max d_{u,i} = d/(k-1)=d_{max}/(k-1)$, satisfies $d = o(M^{1/2})$.
	Then for $\theta \geq d/d_{min}$ sufficiently large, for any $\xi>0$
	there exists $C=C(\theta,\xi)>e$ so that with probability at least $1-o(m^{-\xi})$,
	$G$ has $(\theta,C)$-controlled edge density.
\end{lemma}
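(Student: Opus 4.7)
The plan is to adapt the Broder--Frieze--Suen--Upfal argument behind \cite[Lemma 16]{BFSU-99-rand-graphs} to the multi-partite configuration model. The key structural observation is that a graph $G \in \bG_k(M,D)$ decomposes as an edge-disjoint union of $\binom{k}{2}$ mutually independent random bipartite graphs $G_{ij}$ (for $i < j$), where $G_{ij}$ comes from a uniformly random perfect matching of $V_{i\to j}$ with $V_{j\to i}$. Writing $A_i = A \cap V_i$ and $B_i = B \cap V_i$, for any $A, B \subset G_0$ this gives
\[
\mathcal{E}_{A,B}(G) = \sum_{i \neq j} \mathcal{E}_{A_i, B_j}(G_{ij}),
\]
expressing the quantity of interest as a sum of $k(k-1)$ independent hypergeometric-type random variables.

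The first step will be to bound the expectation. Since $d_{u,j} \leq d/(k-1)$, a direct computation gives $\bE \mathcal{E}_{A_i,B_j}(G_{ij}) \leq d |A_i||B_j|/((k-1)M)$, and summing yields $\bE\mathcal{E}_{A,B}(G) \leq d |A||B|/((k-1)M)$. Taking $\theta$ sufficiently large (depending on $d/d_{\min}$ and on the harmless factor $k/(k-1)$), this is at most $\tfrac12 \mu(A,B)$.

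Next, I will invoke the Chernoff-type tail inequality already used in BFSU: for each independent matching, the number of matched pairs between two prescribed subsets of half-edges satisfies a Poisson-type bound
\[
\mathbb{P}\bigl( \mathcal{E}_{A_i,B_j}(G_{ij}) \geq t \bigr)
\leq \exp\Bigl( -t \log\bigl( t / (e \bE\mathcal{E}_{A_i,B_j}(G_{ij}))\bigr)\Bigr),
\]
and an analogous bound, with adjusted constants, holds for the independent sum $\mathcal{E}_{A,B}(G)$. Combining this with a union bound over the $\binom{m}{a}\binom{m}{b}$ choices of $A, B$ of prescribed sizes $a, b$, together with a dyadic decomposition over the possible values of $\mathcal{E}_{A,B}(G)$ performed exactly as in BFSU, shows that the dichotomy of Definition~\ref{def:edge-density} holds simultaneously for all $A, B$ with probability $1 - o(m^{-\xi})$, provided $C = C(\theta,\xi)$ is large enough.

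The main obstacle will be to verify that the extra factor $k(k-1)$ coming from summing over the parts, together with the $4^m$ factor from the union bound, is absorbed by the Chernoff exponent. Because $k = o(M^{1/6})$, the $k(k-1)$ loss is negligibly polynomial in $m$, while in case (b) of Definition~\ref{def:edge-density} the exponent grows like $(|A|\vee|B|)\log(m/(|A|\vee|B|))$, which comfortably dominates it. The assumption $d = o(M^{1/2})$ enters exactly as in BFSU, to keep the expected edge count small relative to $|A|\vee|B|$ when the latter is small, so that alternative (b) actually provides a meaningful bound.
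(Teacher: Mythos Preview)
Your plan is workable in spirit but differs from the paper's argument, and you have misidentified both what BFSU actually does and where the difficulties lie.

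The paper (following \cite{BFSU-99-rand-graphs}) does \emph{not} use a Chernoff inequality. Instead, for fixed sizes $a=|A|$, $b=|B|$ and a target value $t$, it directly counts configurations: at most $(me/b)^{2b}$ choices of $A,B$, at most $(abd^2e^2/t^2)^t$ choices of $t$ half-edges on each side, and then bounds $\prod_{i<j}\binom{\Delta_{i,j}}{t_{i,j}}^{-1}$ for the probability that the prescribed half-edges match. This last product is where the multi-partite structure bites: bounding each factor by $(t\theta(k-1)/Md)^{t_{i,j}}$ produces the key estimate
\[
\bP(\exists A,B:\ |A|=a,\ |B|=b,\ \mathcal{E}_{A,B}=t)\ \le\ \Bigl(\tfrac{me}{b}\Bigr)^{2b}\Bigl(\tfrac{\mu(A,B)}{t}\Bigr)^t(ek)^{2t}.
\]
The factor is $(ek)^{2t}$, not $k(k-1)$. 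This is why the paper needs $C\ge e^6k^4$ in the large-$t$ regime and, in the small-$t$ regime, the bound $dk^2\le m^{5/7}$ (which is exactly where both $d=o(M^{1/2})$ and $k=o(M^{1/6})$ are used). So your sentence ``the $k(k-1)$ loss is negligibly polynomial in $m$'' misdiagnoses the obstacle.

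Your Chernoff route is a genuinely different, and potentially cleaner, approach: since each $G_{ij}$-contribution is a hypergeometric variable with sub-Poisson moment generating function, the sum over the $\binom{k}{2}$ \emph{independent} matchings (you must group $(i,j)$ and $(j,i)$ together---they use the same matching and are not independent) satisfies $\bP(\mathcal{E}_{A,B}\ge t)\le (e\,\bE\mathcal{E}_{A,B}/t)^t$, with no stray power of $k$. This would in fact bypass the $(ek)^{2t}$ factor entirely and could weaken the hypotheses on $k$. But to make this a proof you must (i) state the MGF/tail bound for the number of matched pairs in a single uniform perfect matching (this is standard but not literally ``already used in BFSU''), (ii) carry out the union bound over $a,b$ and the case split between large and small $t$---the small-$t$ case in the paper is delicate and is where the growth hypotheses on $d$ are actually consumed---and (iii) drop the claim that BFSU does a ``dyadic decomposition''; they sum over $t$ directly.
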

Proposition~\ref{prop:edge-density-heavy-bound} and \eqref{eq:p-ovp} then give that, with probability $1-o(m^{-\xi})$ for suitable $C'$, $G \in \bG_k(M,D)$ satisfies:
\begin{equation}\label{eq:Xh-M-D}
  |X_x^h(G)| \leq p2^p \overline{X}_x^h(G) \leq \frac{p2^pC'}{d^{\beta/p}}+\frac{p2^pC'p^3}{d^{1/p^2}} + \frac{p2^pC'p^3\eps^{-q}}{d^{1/p}}.
\end{equation}
\begin{proof}[Proof of Lemma~\ref{lem:G-D-M-edge-density}]
As in Definition~\ref{def:edge-density}, for $A,B \subset G_0$ we set 
$\mu(A,B)=\theta |A| |B| d/m$, where $m=Mk = |G_0|$, and $\theta \geq d/d_{min}$ will be
determined below.

We may assume that $|A| \leq |B|$, and that $|B| \leq m/4\theta$, for otherwise
$\mathcal{E}_{A,B} \leq |A|d \leq 4 \mu(A,B)$.

Suppose $a,b$ with $a \leq b \leq m/4\theta$ are given, and consider an arbitrary $t \in \N$. In what follows 
we provide an upper bound on the probability that there exist $A, B \subset G_0$ with $a =|A|, b=|B|$ and
$\mathcal{E}_{A,B}=t$:
\begin{equation}\label{eq:edgedens1}
\bP(\exists A, B : |A|=a, |B|=b, \mathcal{E}_{A,B}=t) \leq \left(\frac{me}{b}\right)^{2b}
	\left(\frac{\mu(A,B)}{t}\right)^t \left(ek\right)^{2t}.
\end{equation}

We frequently use the bounds
$ (n/k)^k \leq \binom{n}{k} \leq (en/k)^k$.

There are at most $\binom{m}{a} \binom{m}{b} \leq (me/b)^{2b}$ ways to choose $A$ and $B$.
There are then at most $\binom{ad}{t} \binom{bd}{t} \leq (abd^2e^2/t^2)^t$ ways to 
choose $t$ half-edges with endpoints in $A$ and $B$ to connect.

Suppose there are $t_{i,j}$ edges required to join $A$ and $B$ between $V_i$ and $V_j$,
for each $1 \leq i < j \leq k$, with $\sum t_{i,j}=t$.
Then the probability that a random matching of the $\Delta_{i,j}$ half-edges connects the two
specified sets of $t_{i,j}$ edges is $\binom{\Delta_{i,j}}{t_{i,j}}^{-1}$. 
Now $Md/(k-1)\theta \leq \Delta_{i,j}$, so 
\[
	\prod_{i,j} \binom{\Delta_{i,j}}{t_{i,j}}^{-1}
	\leq \prod_{i,j} \left(\frac{t_{i,j}}{\Delta_{i,j}}\right)^{t_{i,j}}
	\leq \prod_{i,j} \left(\frac{t\theta(k-1)}{Md}\right)^{t_{i,j}}
	= \left(\frac{t \theta(k-1)}{Md}\right)^t.
\]
These bounds give \eqref{eq:edgedens1} since 
\[
	\left(\frac{abd^2e^2}{t^2}\right)^t \left(\frac{t \theta(k-1)}{Md}\right)^t
	= \left( \frac{\mu(A,B) e^2 k(k-1)}{t} \right)^t. 
\]
Having shown \eqref{eq:edgedens1}, we continue with the proof of Lemma~\ref{lem:G-D-M-edge-density}.
\medskip

If Definition~\ref{def:edge-density}(a,b) fails for blocks $A, B$ with $\mathcal{E}_{A,B}=t$, then
$\mu(A,B)/t \leq 1/C$ and $(\mu(A,B)/t)^t \leq (b/m)^{Cb}$.
Thus, in the case $t \geq \log^2 m$, the right hand side in \eqref{eq:edgedens1} is bounded by
\begin{align*}
	&\left(\frac{me}{b}\right)^{2b}
	\left(\frac{\mu(A,B)}{t}\right)^{t/2} 
	\left(\frac{\mu(A,B)}{t}\right)^{t/2} \left(ek\right)^{2t}
	\\ & \leq \left(\frac{me}{b}\right)^{2b} 
	\left(\frac{b}{m}\right)^{Cb/2} \left(\frac{1}{C}\right)^{t/2} \left(ek\right)^{2t}
	\\ & = \left(\frac{b}{m}\right)^{Cb/4-2b} \left(e^2 \left(\frac{b}{m}\right)^{C/4}\right)^{b}
		\left(\frac{e^2 k^2}{\sqrt{C}}\right)^t \leq e^{-t} \leq m^{-\log m},
\end{align*}
provided $\theta \geq e^2/4$ (so $e^2b/m \leq 1$), and $C \geq e^6k^4\geq 8$.
Summing over the $m^2$ possibilities for $a,b$ and the $dm \leq m^2$ possibilities for $t$
proves the lemma in the case of $\mathcal{E}_{A,B} \geq \log^2 m$.

Now suppose $\mathcal{E}_{A,B}=t \leq \log^2 m$, for some $A,B$ failing
Definition~\ref{def:edge-density}.
Then
\[
	\left(\frac{1}{m^2}\right)^t
	\leq \left(\frac{\theta abd/m}{\log^2m}\right)^t
	\leq \left(\frac{\mu(A,B)}{t}\right)^t
	\leq \left(\frac{b}{m}\right)^{Cb} 
	\leq \left(\frac{1}{4\theta}\right)^{Cb}
	\leq e^{-2b},
\]
so $b \leq t \log m \leq \log^3 m$.
Since $(1/m)^{2t} \leq ((\log^3 m)/m)^{Cb}$, we have $t \geq Cb/3$.
So, using \eqref{eq:edgedens1}, the probability $P$ that there exists some $A, B$ with
$\mathcal{E}_{A,B} \leq \log^2 m$ failing Definition~\ref{def:edge-density} is 
\begin{align*}
	P & \leq \sum_a \sum_b \sum_{t=Cb/3}^{\log^2 m} \left(\frac{me}{b}\right)^{2b}
		\left(\frac{\theta abde^2k}{tM}\right)^t.
\end{align*}
Since $d=o(M^{1/2})$, $k=o(M^{1/6})$, and $ab \leq \log^6 m$, the sum in $t$ is bounded by a geometric series of ratio $\leq 1/2$, so
\begin{align*}
	P & \leq \sum_a \sum_b 2 \left(\frac{me}{b}\right)^{2b}
		\left(\frac{\theta abde^2k^2}{mCb/3}\right)^{Cb/3}
	\\ & \leq \sum_a \sum_b 2 \left( m^{2-C/3} b^{-2+C/3} (3\theta de^2k^2/C)^{C/3} e^2 \right)^b.
\end{align*}
Now $dk^2 = (dk^{1/2}) \cdot k^{18/14}k^{3/14} \leq M^{1/2}k^{1/2} \cdot M^{3/14}k^{3/14} = m^{5/7}$,
and so for $C > 21$ there exists $C_1$ so that for $m$ large enough
\[
  m^{2-C/3} b^{-2+C/3} (3\theta de^2k^2/C)^{C/3} e^2 
  \leq C_1 m^{2-C/3} (\log^3 m)^{-2+C/3} m^{5/7 \cdot C/3} \leq 1.
\]
Thus
\begin{align*}
  P \leq (\log^6 m) 2 C_1 (\log^3 m)^{-2+C/3} m^{2-2C/21},
\end{align*}
which suffices to complete the proof.
\end{proof}

\section{Fixed points for random groups in Gromov's density model}\label{sec:Gromov}

  In this section we use the bounds on $\lambda_{1,p}$ for random multi-partite graphs to show the following fixed point properties for the Gromov binomial and density models (see Definitions~\ref{def:gromov-model} and \ref{def:gromov-model-binom}).
\begin{theorem}\label{thm:gromov-flp}
	Choose $p \geq 2$ and $k \geq 10 \cdot 2^p$.
	Fix a density $d>1/3$.
	Then a.a.s.\ a random $k$-generated group at density $d$ has $FL^{p'}$ for all $2 \leq p' \leq p$, both in the standard Gromov density model $\cD (k,l,d)$ and in the binomial model $\cB (k,l,(2k-1)^{-(1-d)l})$.
\end{theorem}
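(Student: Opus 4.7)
The plan is to convert the random Gromov presentation into a triangular presentation whose Cayley $2$-complex has vertex links that behave like random multi-partite graphs, and then apply Bourdon's Theorem \ref{thm:bourdon-flp} together with the eigenvalue estimates of Section \ref{sec:multi-partite}. Since $FL^{p'}$ is preserved under quotients, Proposition \ref{prop:gromov-models-monotone} lets me restrict attention to the binomial model $\cG(k,l,\rho)$ with $\rho = (2k-1)^{-(1-d)l}$, and the same proposition transfers the conclusion back to the density model at the end. Assuming $3 \mid l$ (the other cases require only a small adjustment), let $W$ denote the set of reduced words of length $l/3$ in $\cA$, introduce an auxiliary generator $b_w$ for each $w \in W$, and add short triangular relations expressing $b_w = w$. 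This triangulation of the $O(|W|)$ defining identities is done once and for all, and can be arranged to contribute only $O(1)$ extra edges per vertex to each link. Each random length-$l$ relator $w_1 w_2 w_3 \in R$ then becomes a triangular relation $b_{w_1} b_{w_2} b_{w_3} = 1$, so $\Gamma$ acts properly, cocompactly, and simplicially on the associated Cayley $2$-complex $X$.

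Next I analyse the link $L(x)$ of a vertex $x$ in $X$. It splits as $L(x) = L_{\mathrm{det}}(x) \cup L_{\mathrm{rand}}(x)$, where $L_{\mathrm{det}}(x)$ comes from the triangulated defining relations $b_w = w$ and has bounded vertex degree, while $L_{\mathrm{rand}}(x)$ comes from the random triangular relations. Following Lemma \ref{lem:lp-bound-split-into-three}, $L_{\mathrm{rand}}(x) = L^1 \cup L^2 \cup L^3$ according to the position of $x$ in the associated relator. Each $L^i$ has vertex set indexed by $W \cup W^{-1}$; partitioning these vertices by the leading letter of the corresponding word groups them into $k$ classes, each of size $M \asymp (2k-1)^{l/3 - 1}$. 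The potential edges between distinct classes arise independently, each with probability $\rho' = (1+o(1))\rho$ (the $o(1)$ accounts for a negligible correction from the reducedness condition), so $L^i$ is distributed, up to a perturbation handled by Lemma \ref{lem:lp-add-few-edges}, as a random graph in the model $\bG_k(M,\rho')$ of Definition \ref{def:k-partite-rand-graph}.

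Since $d > 1/3$, we have $\rho k M \asymp (2k-1)^{(d-1/3)l} \to \infty$, and in fact $\rho k M \geq (kM)^{\eta}$ for some $\eta = \eta(d) > 0$; moreover $\rho = o(M^{-2/3})$ because $d < 1/2$. Hence Theorem \ref{thm:G-k-m-rho-eval-bound2} applies uniformly for $p' \in [2,p]$ and gives, a.a.s.,
\[
\lambda_{1,p'}(L^i) \geq (1-o(1))\, \lambda_{1,p'}(K_{k\times M}) - o(1).
\]
Theorem \ref{thm:kpartite} combined with $k \geq 10 \cdot 2^p$ gives $\lambda_{1,p'}(K_{k\times M}) \geq \tfrac{k}{k-1+2^{p'+2}} - o(1) \geq \tfrac{5}{7} - o(1)$, and Lemmas \ref{lem:lp-bound-split-into-three} and \ref{lem:lp-add-few-edges} transfer this bound, up to an $o(1)$ loss, to $\lambda_{1,p'}(L(x))$. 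In particular $\lambda_{1,p'}(L(x)) > 1 - \varepsilon$ a.a.s.\ for some $\varepsilon < 1/2$. A union bound over the finitely many $\Gamma$-orbits of vertices in $X$ makes this hold simultaneously at every link, so Bourdon's Theorem \ref{thm:bourdon-flp} yields $FL^{p'}$ for every $p' \in [2,p]$, and Proposition \ref{prop:gromov-models-monotone} transfers the result to $\cG(k,l,d)$.

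The main obstacle will be the combinatorial bookkeeping of the first paragraph: one must choose the auxiliary generators and the triangulations of the identities $b_w = w$ so that $L_{\mathrm{det}}(x)$ contributes only $O(1)$ edges per vertex (hence $o(\rho kM)$ relative to $L_{\mathrm{rand}}(x)$), and one must verify that the potential edges of each $L^i$ between distinct partition classes really do arise independently with probability $(1+o(1))\rho$. Once this is in place, the remainder of the argument closely parallels the triangular case of Section \ref{sec:fp-rand-tri-group}, with the bipartite estimate used by Kotowski--Kotowski replaced by the multi-partite estimate of Theorem \ref{thm:G-k-m-rho-eval-bound2}.
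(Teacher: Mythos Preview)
Your approach has a genuine gap at exactly the point you flag as ``bookkeeping'': it is not possible to triangulate the identities $b_w=w$ so that $L_{\mathrm{det}}$ contributes only $O(1)$ edges per vertex. Each of the $|W|\asymp (2k-1)^{l/3}$ words contains $l/3$ letters from $\cA\cup\cA^{-1}$, so altogether the triangulated relations $b_w=w$ involve $\asymp l|W|$ occurrences of the $2k$ original letters; by pigeonhole some $a\in\cA\cup\cA^{-1}$ has degree $\asymp l|W|/k$ in $L_{\mathrm{det}}$, which blows up with $l$. Worse, these same vertices $a^{\pm1}$ (and all the auxiliary $c_{w,i}^{\pm1}$) have degree \emph{zero} in $L_{\mathrm{rand}}$, since the random triangular relators $b_{w_1}b_{w_2}b_{w_3}$ involve only the $b_w$'s. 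So Lemma~\ref{lem:lp-add-few-edges} is inapplicable (it requires $G$ and $H$ to share a vertex set), and indeed $\lambda_{1,p}$ of the full link cannot be read off from $\lambda_{1,p}$ of $L_{\mathrm{rand}}$: any test function supported on the non-$b_w$ vertices sees only $L_{\mathrm{det}}$ and is completely uncontrolled by your argument. Rewriting the presentation purely in the $b_w$'s does not help either, since one must then impose all the deterministic coincidences $b_{w_1}b_{w_2}=b_{w_3}b_{w_4}$ (whenever $w_1w_2=w_3w_4$ as reduced words), and each $b_w$ appears in $\asymp|W|$ of those.

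The paper sidesteps this entirely by \emph{not} re-presenting $\Gamma$. Instead it takes $S$ to be a set of orbit representatives of $W_{l/3}$ under $w\mapsto w^{-1}$, defines a new group $\tilde\Gamma=\langle S\mid \tilde R\rangle$ whose \emph{only} relations are the random triangular ones $stu$ coming from $R$, and observes (Lemma~\ref{lem:lift-finite-index-image}) that the natural map $\phi:\tilde\Gamma\to\Gamma$ has finite-index image. Thus the link of $\tilde\Gamma$ is purely the random part: a graph in $\bG_{2k}(M,\rho')$ plus two matchings (Proposition~\ref{prop:link-graph-structure-dens}), with no deterministic piece at all. Theorem~\ref{thm:bourdon-flp} gives $FL^{p'}$ for $\tilde\Gamma$, which then passes to the finite-index image and hence to $\Gamma$.

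Two smaller slips: the partition of $S\cup S^{-1}$ by initial letter has $2k$ classes, not $k$ (so the relevant model is $\bG_{2k}(M,\rho')$ and the relevant bound is $\lambda_{1,p'}(K_{2k\times M})$); and your edge probability $\rho'=(1+o(1))\rho$ is off by a factor $\asymp q_{l/3}\asymp(2k-1)^{l/3}$, since each admissible pair $(s^{-1},t)$ can be completed to a cyclically reduced relator in $\asymp(2k-1)^{l/3}$ ways. Your subsequent assertion $\rho' kM\asymp(2k-1)^{(d-1/3)l}$ is in fact only correct with this larger $\rho'$; with $\rho'=(1+o(1))\rho$ one gets $\rho'kM\asymp(2k-1)^{(d-2/3)l}\to 0$.
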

The arguments in this section owe a debt to those of
\cite{ALS-15-random-triangular-at-third} and \cite{KK-11-zuk-revisited},
though our approach to Gromov's density model is new even for property (T), and gives new results at density $d=1/3$, see Theorem~\ref{thm:gromov-T} and Corollary~\ref{cor:gromov-T2}.
It is reasonable to expect that the dependence of $k$ on $p$ is unnecessary in Theorem~\ref{thm:gromov-flp}, however our methods are not at present able to avoid this obstacle.

Suppose we are given $l \in \N$ that is a multiple of $3$.
Let $W_{l/3}$ be the collection of all reduced words in $\langle \cA\rangle$ of length $\frac{l}{3}$, so $|W_{l/3}| = 2k(2k-1)^{l/3-1}$.
The map $w \mapsto w^{-1}$ on $W_{l/3}$ is a fixed point free involution.
Choose a set $S$ of size $\frac12 |W_{l/3}|$ and a injection $\phi:S \ra W_{l/3}$ so that
$\phi(S)$ is a collection of orbit representatives of this involution.

Given $\Gamma =\langle \cA | R \rangle \in \cB (k,l,\rho)$, we can lift $\Gamma$ to a group
$\tilde{\Gamma} = \langle S | \tilde{R} \rangle$ as follows:
$\phi:S \ra W_{l/3}$ extends naturally to a bijection $\phi: S \cup S^{-1} \ra W_{l/3}$.
For each $r \in R$, write $r$ as a concatenation $\phi(s)\phi(t)\phi(u)$ for some
$s,t,u \in S \cup S^{-1}$, and then define $\tilde{r} = stu$.
Let $\tilde{R}$ be the collection of all such $\tilde{r}$.
The map $\phi$ extends to a homomorphism $\phi:\tilde{\Gamma}\ra \Gamma$.
As in \cite[Lemma 3.15]{KK-11-zuk-revisited}, $\phi(\tilde{\Gamma})$ 
has finite index in $\Gamma$:
\begin{lemma}\label{lem:lift-finite-index-image}
	The image $\phi(\tilde{\Gamma})$ has finite index in $\Gamma$.
\end{lemma}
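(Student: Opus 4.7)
The plan is to prove $H := \phi(\tilde{\Gamma})$ has finite index in $\Gamma$ by a block-decomposition argument on reduced words over $\cA^{\pm 1}$. The key observation will be that $H$ contains every element of $\Gamma$ represented by a reduced word of length exactly $l/3$.

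First I would verify this key observation. By construction $\phi(S) \subset W_{l/3}$ is a set of orbit representatives for the involution $w \mapsto w^{-1}$ on $W_{l/3}$, and since $\phi$ is a homomorphism we have $\phi(s^{-1}) = \phi(s)^{-1}$; consequently $\phi$ restricts to a bijection $S \cup S^{-1} \to W_{l/3}$. Every element of $S \cup S^{-1}$ is a generator of $\tilde{\Gamma}$, so $\phi(S \cup S^{-1}) = W_{l/3}$ lies in $H$ when the latter is viewed as a set of elements of $\Gamma$.

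Next I would exploit this to bound the number of right cosets of $H$. Given $g \in \Gamma$, choose any reduced word $w = a_1 a_2 \cdots a_n$ in $\cA^{\pm 1}$ representing $g$, and write $n = q (l/3) + r$ with $0 \leq r < l/3$. Split $w$ as $u_1 u_2 \cdots u_q v$, where each $u_i$ is a consecutive block of length $l/3$ and $v$ is a suffix of length $r$. Since subwords of a reduced word are themselves reduced, each $u_i$ lies in $W_{l/3} \subseteq H$, whence $u_1 \cdots u_q \in H$ and therefore $g \in H v$.

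Finally, $v$ ranges over reduced words of length less than $l/3$ in $\cA^{\pm 1}$, of which there are only finitely many (at most $1 + 2k \sum_{i=0}^{l/3-2}(2k-1)^i$). Thus $\Gamma$ is a union of finitely many right cosets $Hv$, which yields $[\Gamma : H] < \infty$. The argument is essentially routine; the only subtle point is the observation that a subword of a reduced word is itself reduced, which is immediate.
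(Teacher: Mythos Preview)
Your proof is correct, and it takes a somewhat different route from the paper's. You use the straightforward observation that every element of $W_{l/3}$ already lies in $H$, then break an arbitrary reduced word into consecutive blocks of length $l/3$ plus a short remainder, obtaining at most as many cosets as there are reduced words of length $<l/3$. The paper instead shows that every \emph{reduced word of length $2$} lies in $H$: given a reduced pair $ab$, it inserts a word $w$ of length $l/3-1$ with $aw$ and $b^{-1}w$ both reduced, so that $ab=(aw)(w^{-1}b)=\phi(s)\phi(t)$ with $s,t\in S\cup S^{-1}$; this yields at most $2k+1$ cosets. Your argument is more elementary (no insertion trick is needed), while the paper's argument gives a far sharper index bound that is independent of $l$. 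Since the lemma only asks for finite index, either approach suffices.
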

\begin{proof}
	For any reduced word $ab$ of length $2$ in the generators $\cA \cup \cA^{-1}$,
	we can find a word $w$ of length $l/3-1$ so that $aw$ and $b^{-1}w$ are both reduced.
	Thus there are generators $s,t \in S \cup S^{-1}$ so that 
	$ab=(aw)(w^{-1}b)=\phi(s)\phi(t)=\phi(st)$.

	Therefore for $g \in \Gamma$, if $g$ has even length it lies in 
	$\phi(\tilde{\Gamma})$, and if $g$ has odd length it lies in one of the finitely many
	cosets $a\phi(\tilde\Gamma)$, $a \in \cA \cup \cA^{-1}$.
\end{proof}

So to show that $\Gamma$ has $FL^p$ is suffices to show that $\tilde\Gamma$ has the same
property, and for this we show that the link graph $L(S)$ of $\tilde\Gamma$ has
$\lambda_{1,p}(L(S))>1/2$.
As in the proof of Theorem~\ref{thm:tri-eventually-flp2}, 
we split $L(S)$ as a union of three graphs $L(S) = L^1(S) \cup L^2(S) \cup L^3(S)$ where for
each relation $stu \in \tilde R$ we put the edge $(s^{-1},t)$ in $L^1(S)$, the edge
$(t^{-1},u)$ in $L^2(S)$, and the edge $(u^{-1},s)$ in $L^3(S)$.

For each $a \in \cA \cup \cA^{-1}$, let $S_a$ be the subset of $S$ consisting of generators $s$
so that $\phi(s) \in W_{l/3}$ has initial letter $a$; $S_a$ has size $M=(2k-1)^{l/3-1}$.
Observe that $st$ can begin a relation $stu \in \tilde{R}$ if and only if
$\phi(s)\phi(t)$ is a reduced word in $\langle \cA \rangle$,
which holds exactly when $\phi(s)^{-1}=\phi(s^{-1})$ and $\phi(t)$ have different initial
letters.
In other words, $s^{-1}$ and $t$ lie in different sets of the partition
$S = \bigsqcup_{a \in \cA \cup \cA^{-1}} S_a$.
We now show that each $L^i(S)$ is the union of a random $2k$-partite graph with two matchings.

	We require a count on the number of ways to complete a cyclically reduced word.
	\begin{lemma}[{See Lemma 2.4, \cite{Mac-12-conf-dim-rand-groups}}]
		\label{lem:cyc-word-count}
		Let $\langle A \rangle$ be a free group with $|A|=k$.
		Fix $a,b \in A \cup A^{-1}$.
		The number of reduced words $w$ of length $n+2$ with initial letter $a$ and final
		letter $b$ equals $q_n$ or $q_n+1$, where
		\begin{equation*}
			q_n = 
			\begin{cases}
				\frac{1}{2k} \big( (2k-1)^{n+1} - 1 \big) & \text{if $n$ is odd},\\
				\frac{1}{2k} \big( (2k-1)^{n+1} - (2k-1) \big) & \text{if $n$ is even}.
			\end{cases}
		\end{equation*}
		In either case, for $n \geq 2$, $\frac12 (2k-1)^n \leq q_n \leq (2k-1)^n$.
	\end{lemma}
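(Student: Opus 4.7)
The plan is to set up a linear recursion for the count $N_n(a,b)$ of reduced words of length $n+2$ starting with $a$ and ending with $b$, solve it, and then read off the value case-by-case in terms of the relation between $a$ and $b$. The base case is $n=0$, where $N_0(a,b)=1$ if $b \neq a^{-1}$ and $N_0(a,b)=0$ otherwise. The recursion comes from observing that appending a letter $c \in A \cup A^{-1}$ with $c \neq b^{-1}$ to any reduced word of length $n+2$ starting with $a$ yields a reduced word of length $n+3$ starting with $a$, and the total number of reduced words of length $n+2$ starting with $a$ is $(2k-1)^{n+1}$. This gives
\[
N_{n+1}(a,b) \;=\; (2k-1)^{n+1} - N_n(a,b^{-1}).
\]

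Next I would set $f_n = N_n(a,b)$ and $g_n = N_n(a,b^{-1})$ and observe from the recursion that $f_{n+1}-g_{n+1} = f_n - g_n$, so the difference $f_n-g_n$ is a constant determined by the base case, while the sum $h_n = f_n + g_n$ satisfies the scalar recursion $h_{n+1} + h_n = 2(2k-1)^{n+1}$. A particular solution $\frac{(2k-1)^{n+1}}{k}$ plus the general homogeneous solution $C(-1)^n$ gives
\[
h_n \;=\; \frac{(2k-1)^{n+1}}{k} + C(-1)^n,
\]
with $C$ fixed by $h_0$, and therefore $f_n = \tfrac{1}{2}(h_n + (f_0-g_0))$.

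Then I would split into three cases: (i) $a \notin \{b,b^{-1}\}$, giving $h_0=2$, $f_0-g_0=0$; (ii) $a=b$, giving $h_0=1$, $f_0-g_0=1$; and (iii) $a=b^{-1}$, giving $h_0=1$, $f_0-g_0=-1$. Substituting each into the formula for $f_n$ and simplifying shows that the value is exactly $q_n$ or $q_n+1$, with the parity of $n$ controlling which one occurs in each case. This step is essentially bookkeeping, and I expect it to be the only place where one could slip up; the work is to verify that the two values $\tfrac{1}{2k}((2k-1)^{n+1}\pm 1)$ and $\tfrac{1}{2k}((2k-1)^{n+1}\pm(2k-1))$ indeed match $q_n$ or $q_n+1$ in each parity.

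Finally, the claimed bounds on $q_n$ are direct: the upper bound $q_n \leq (2k-1)^n$ follows from $(2k-1)^{n+1} \leq 2k(2k-1)^n$, and the lower bound $q_n \geq \tfrac12 (2k-1)^n$ for $n \geq 2$ follows from $(2k-1)^{n+1}-(2k-1) \geq (2k-1)(2k-2)(2k-1)^{n-1} \geq k(2k-1)^n$, since $(2k-2)/(2k) \geq 1/2$ when $k \geq 2$. No significant obstacle is expected; the only care needed is the case split and consistent tracking of signs.
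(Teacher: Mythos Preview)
Your argument is correct: the recursion $N_{n+1}(a,b)=(2k-1)^{n+1}-N_n(a,b^{-1})$ is exactly the right observation, the decoupling into the constant difference $f_n-g_n$ and the scalar recursion for $h_n=f_n+g_n$ is clean, and the three-case split produces precisely $q_n$ or $q_n+1$ as claimed. The bound verification is also fine.

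The paper itself does not prove this lemma; it is simply quoted from \cite{Mac-12-conf-dim-rand-groups}, with only the parenthetical remark that which of $q_n$ or $q_n+1$ occurs depends on whether $a=b$ or $a=b^{-1}$ and on the parity of $n$. So there is nothing to compare against here beyond noting that your self-contained proof supplies what the paper omits. One minor presentational point: in your description of the recursion, the sentence about ``appending a letter $c$ with $c\neq b^{-1}$'' is slightly garbled---what you actually use (and what your displayed formula correctly encodes) is that a reduced word of length $n+3$ ending in $b$ is obtained by appending $b$ to a reduced word of length $n+2$ whose final letter is not $b^{-1}$. It would be cleaner to state it that way.
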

	(Whether the number is $q_n$ or $q_n+1$ depends on whether $a=b$ or $b^{-1}$, and the 
	parity of $n$; we do not need this here.)

\begin{proposition}\label{prop:link-graph-structure-dens}
	Suppose $\rho=(2k-1)^{-(1-d)l}$ for $d<5/12$, 
	and let $\rho' = 1-(1-\rho)^{2q_{l/3}}$ and $M=(2k-1)^{l/3-1}$.
	Let $L^1(S)$ be the first link graph of the lift $\tilde{\Gamma}$ of $\Gamma \in \cB (k,l,\rho)$.
	Then $L^1(S)$ is the union of a graph in $\bG_{2k}(M,\rho')$ and two matchings.
\end{proposition}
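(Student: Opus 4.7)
The plan is to extend the bipartite reasoning behind Proposition~\ref{prop:link-graph-structure} to the $2k$-partite Gromov setting. First I would extend the partition $\{S_a\}_{a\in\cA\cup\cA^{-1}}$ to all of $S\cup S^{-1}$ by assigning $s\in S\cup S^{-1}$ to the block whose index is the initial letter of $\phi(s)$; each of the $2k$ blocks has size $M=(2k-1)^{l/3-1}$. The constraint that $\phi(s)\phi(t)$ be reduced forces every edge of $L^1(S)$ to run between distinct blocks, so $L^1(S)$ embeds in $K_{2k\times M}$.

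Next I would enumerate, for each pair $\{u_0,v_0\}$ of vertices in distinct blocks, the cyclically reduced words $r$ of length $l$ whose length-$l/3$ triangular decomposition $r=w_1w_2w_3$ gives the edge $\{u_0,v_0\}$ in $L^1$. There are two symmetric ``roles'' for $(u_0,v_0)$ (either may be $s^{-1}$); in each role, $w_1$ and $w_2$ are determined, and $w_3\in W_{l/3}$ is free subject to the two cyclic reducibility constraints on its boundary letters. Summing the counts of Lemma~\ref{lem:cyc-word-count} over the $(2k-1)^2$ admissible boundary-letter pairs yields $q_{l/3}+O(1)$ candidate relators per role, hence $2q_{l/3}+O(1)$ relators per admissible pair. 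Since the relators in $R$ are chosen independently with probability $\rho$ and the relator sets associated to distinct admissible pairs are disjoint, each admissible edge appears independently with probability $\rho'=1-(1-\rho)^{2q_{l/3}}$ up to the defects described below.

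The main obstacle is to show that these defects are absorbed by two matchings. I expect two distinct sources. The first is multi-edges arising when two distinct relators in $R$ land on the same pair: a first-moment calculation using $\rho=(2k-1)^{-(1-d)l}$, $q_{l/3}\asymp(2k-1)^{l/3+1}/(2k)$ and $M=(2k-1)^{l/3-1}$ shows that the expected number of vertices incident to two parallel-edge pairs is $\asymp(2k-1)^{l(4d-5/3)+1}/k$, which tends to $0$ exactly when $d<5/12$, so the multi-edges form a matching; triple edges are absent under the same threshold. The second is degenerate relator configurations --- relators whose three syllables satisfy an algebraic coincidence such as $\phi(u)=\phi(s)^{-1}$ or $\phi(u)=\phi(t)^{-1}$, and pairs with $v_0=u_0^{-1}$ where both roles collapse --- which are indexed by a single generator or inverse pair per vertex and therefore contribute at most one edge per vertex, yielding the second matching. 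After deleting these two matchings, the residual graph has each admissible edge present independently with probability $\rho'$, i.e.\ a graph in $\bG_{2k}(M,\rho')$. The hardest part is the careful bookkeeping of the defects in the second source, confirming that no non-matching-shaped correction remains; the exponent calculation in the first source is what pins down the hypothesis $d<5/12$.
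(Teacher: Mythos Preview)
Your overall strategy matches the paper's: both set up the $2k$-partite structure, count candidate relators per admissible pair as $2q_{l/3}$ plus a bounded defect, use independence of disjoint relator sets to get $\bG_{2k}(M,\rho')$, and absorb defects into two matchings. Your first matching (parallel edges, with the threshold calculation pinning down $d<5/12$) is correct and corresponds to the paper's second matching coming from double edges in $\hat L$.

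However, your second matching is misidentified. The genuine remaining defect is not ``degenerate relator configurations'' like $\phi(u)=\phi(s)^{-1}$ or pairs with $v_0=u_0^{-1}$ --- those cause no problem in the link graph and do not naturally yield a matching. The real issue is the one you flag in passing and then drop: the $O(1)$ non-uniformity in the count. Applying Lemma~\ref{lem:cyc-word-count} directly (not by summing over boundary-letter pairs) gives exactly $q_{l/3}$ or $q_{l/3}+1$ completions per role, hence exactly $2q_{l/3}$ or $2q_{l/3}+2$ candidate relators per pair. This means some edges have probability $1-(1-\rho)^{2q_{l/3}+2}>\rho'$, so the graph is not literally in $\bG_{2k}(M,\rho')$. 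The paper fixes this by deleting two candidate relators from each pair with the excess count, forming a graph $\cD$; the edges of $L^1(S)$ coming from $\cD$ form the matching $\hat D$, since the probability a vertex sees two such edges is $O((2kM)^3\rho^2)\to 0$ for $d<1/2$. After this deletion every pair has exactly $2q_{l/3}$ candidates, giving the genuine $\bG_{2k}(M,\rho')$ graph. You need to replace your ``degenerate configuration'' argument with this uniformization step.
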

\begin{proof} We define an auxiliary multigraph $\cK$ with the same vertex set as $K_{2k \times M}$.
	For $s^{-1} \in S_a \subset S$ and $t \in S_b \subset S$ with $a \neq b$, there are 
	$q_{l/3}$ or $q_{l/3}+1$ possible ways to complete $\phi(s)\phi(t)$ to a cyclically reduced
	word $\phi(s)\phi(t)\phi(u)$ of length $l$, and the same number of ways to complete
	$\phi(t^{-1})\phi(s^{-1})$, depending on the final letters of $\phi(s^{-1})$ and $\phi(t)$.
	Accordingly, add $2q_{l/3}$ or $2q_{l/3}+2$ edges to $\cK$ between $s^{-1}$ and $t$.

	Then $L^1(S)$ can be viewed as the random graph obtained from $\cK$ by retaining each edge
	with probability $\rho'$.
	
	For each pair of vertices in $\cK$ connected by $2q_{l/3}+2$ edges, delete two edges
	and call the resulting graph $\hat\cK$; let $\cD$ be the collection of deleted edges.
	Let $L^1(S) = \hat L \cup \hat D$ where $\hat L$ is the portion of $L^1(S)$ coming from
	$\hat\cK$ and $\hat D$ the portion coming from $\cD$.
	
	First we show $\hat D$ is a matching.
	For convenience, we write $d=1/3+\eps$ and so $\rho = (2k-1)^{(\eps-2/3)l}$.
	The probability that a vertex in $\hat D$ has two edges connected to it is at most
	\[
		(2kM)(2kM)^2 2^2 \rho^2 = O((2k-1)^{3l/3+2(\eps-2/3)l}) \ra 0,
	\]
	since $\eps < 1/6$.

	Second we show $\hat L$ has no triple edges: the probability is at most 
	\[
		(2kM)^2(2q_{l/3})^3 \rho^3 = O((2k-1)^{5l/3+3(\eps-2/3)l}) \ra 0,
	\]
	since $\eps < 1/9$.

	Finally we show that no pair of double edges in $\hat L$ share a vertex:
	the probability is at most
	\[
	(2kM)(2kM)^2(2q_{l/3})^4\rho^4 = O((2k-1)^{7l/3+4(\eps-2/3)l}) \ra 0,
	\]
	since $\eps < 1/12$.
	
	So $L^1(S)$ is the union of a graph $L' \in \bG_{2k}(M, \rho')$,
	a matching coming from the double edges of $\hat L$ and the matching $\hat D$.
\end{proof}

Since both $\rho = (2k-1)^{-(1-d)l}$ and $\rho q_{l/3} =(1+o(1)) (2k-1)^{-(2/3-d)l}$ are small for $l$ large enough, by the Mean Value Theorem we have
\[
	\tfrac12 (2k-1)^{-(2/3-d)l} \leq \rho q_{l/3} \leq 
	\rho' = 1-(1-\rho)^{2q_{l/3}} \leq 2\rho q_{l/3} \leq 4(2k-1)^{-(2/3-d)l}.
\]

We are now able to show $FL^p$ for random groups in the Gromov binomial model, and hence the Gromov density model as well. 
\begin{proof}[Proof of Theorem~\ref{thm:gromov-flp}]
	By Proposition~\ref{prop:gromov-models-monotone} it suffices to consider
	the model $\cB (k,l,\rho)$ for $\rho(l) = (2k-1)^{-(1-d)l}$.
	By Proposition~\ref{prop:link-graph-structure-dens}, a.a.s.\ $\Gamma \in \cB (k,l,\rho)$
	is, up to finite index, the quotient of a group $\tilde{\Gamma }$ whose link graph is the union of three graphs each consisting of a graph
	$G \in \bG_{2k}(M,\rho')$ and two matchings.
	
	Let us write $d=1/3+\eps$, and recall that $M=(2k-1)^{l/3-1}$.
	Then $\rho' \asymp (2k-1)^{(\eps-1/3)l} \asymp (2kM)^{3\eps}/(2kM) = O(M^{3\eps}/M)$, where $A \asymp B$ means that $A = O(B)$ and $B=O(A)$.
	We can assume that $3\eps < 1/3$, since $FL^p$ is preserved by increasing density.
	By Theorem~\ref{thm:G-k-m-rho-eval-bound2},
	a.a.s.\ we have that $G \in \bG_{2k}(M,\rho')$ satisfies that for all $2 \leq p' \leq p$
	\begin{equation}\label{eq:gromov-flp-1}
	  \lambda_{1,p'}(G) \geq \left(1-O\left((2kM)^{-3\eps/3p}\right)\right) \inf_{p''\in[2,p]}\lambda_{1,p''}(K_{2k \times M})
			- O\left(\frac{3^p}{(2kM)^{3\eps/2p^2}}\right).
	\end{equation}	
	Because $k \geq 10 \cdot 2^p$, Theorem~\ref{thm:kpartite} gives that
	$\lambda_{1,p''}(K_{2k\times M}) \geq (1-o(1))\frac23$ for all $p''\in[2,p]$.
	Thus \eqref{eq:gromov-flp-1} shows that $\lambda_{1,p'}(G) > \frac23 - o(1)$.

	Lemmas \ref{lem:lp-add-few-edges} and \ref{lem:lp-bound-split-into-three} imply that after adding two matchings to $G$ and combining three such graphs, the link
	graph of $\tilde{\Gamma}$ still has $\lambda_{1,p'} > 1/2$ for all $p' \in [2,p]$.
	So by Theorem~\ref{thm:bourdon-flp}, a.a.s.\ $G$ has $FL^{p'}$ for all $p' \in [2,p]$.
\end{proof}

Finally, we use our results to give a new proof of Kazhdan's property (T) for random groups in Gromov's density model at $d>1/3$, and moreover give new information at $d=1/3$.
\begin{theorem}\label{thm:gromov-T}
	For any $k \geq 2$, there exists $C>0$ so that for $\rho(l) \geq Cl(2k-1)^{l/3}/(2k-1)^l$ 
	a.a.s.\ a random $k$-generated group in $\cB (k,l,\rho)$
	has Kazhdan's property $(T)$.
\end{theorem}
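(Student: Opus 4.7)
The plan is to reduce property (T) to $FL^2$ via Delorme--Guichardet and then apply Bourdon's Theorem~\ref{thm:bourdon-flp} at $p=2$, following the blueprint of Theorem~\ref{thm:gromov-flp}, but now exploiting the sharp identity $\lambda_{1,2}(K_{2k \times M}) = 1$ from Proposition~\ref{prop:k-partite-2-laplacian}. This sharpness at $p=2$ is precisely what allows working essentially at density $d = 1/3$ with only a multiplicative factor $l$ of slack in $\rho$, whereas Theorem~\ref{thm:gromov-flp} required $d > 1/3$.

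First, by monotonicity of (T) under quotients together with the standard coupling of $\cG(k,l,\rho_1)$ and $\cG(k,l,\rho_2)$ for $\rho_1 \leq \rho_2$, one reduces to the boundary case $\rho(l) = \rho_0(l) := C l (2k-1)^{-2l/3}$ with $C = C(k)$ large, to be fixed later. Lifting $\Gamma$ to $\tilde\Gamma$ by Lemma~\ref{lem:lift-finite-index-image} and using that property (T) is a commensurability invariant, it suffices to prove (T) for $\tilde\Gamma$, which by Bourdon and Delorme--Guichardet reduces to showing $\lambda_{1,2}(L(S)) > 1/2$ for the link graph $L(S) = L^1(S) \cup L^2(S) \cup L^3(S)$ of the triangular Cayley complex of $\tilde\Gamma$.

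Next, I would check that Proposition~\ref{prop:link-graph-structure-dens} extends to our density $\rho_0$: with $M \asymp (2k-1)^{l/3}$ and $q_{l/3} \asymp (2k-1)^{l/3}$, the three probabilistic bounds used in its proof are
\[
(2kM)^3\rho_0^2 = O(l^2 (2k-1)^{-l/3}), \quad (2kM)^2 q_{l/3}^3 \rho_0^3 = O(l^3 (2k-1)^{-l/3}),
\]
\[
(2kM)^3 q_{l/3}^4 \rho_0^4 = O(l^4 (2k-1)^{-l/3}),
\]
all of which tend to zero. Hence a.a.s.\ each $L^i(S)$ is the union of a graph $G_i \in \bG_{2k}(M,\rho')$ with two matchings, where $\rho' = 1-(1-\rho_0)^{2q_{l/3}} \asymp \rho_0 q_{l/3} \asymp Cl(2k-1)^{-l/3}$, so $\rho' \cdot 2kM \asymp Cl$.

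The heart of the proof is the application of Theorem~\ref{thm:G-k-m-rho-eval-bound1} with $p = 2$. Fix a small $\delta > 0$; since $\log(2kM) \asymp l \log(2k-1)$, the hypothesis $\rho' \geq C_\delta \log(2kM)/(2kM)$ holds for $C = C(k,\delta)$ large enough, while $\rho' = o(M^{1/3}/M)$ is automatic. Combining the theorem's conclusion with $\lambda_{1,2}(K_{2k \times M}) = 1$ yields a.a.s.
\[
  \lambda_{1,2}(G_i) \geq (1-\delta)\cdot 1 - 4\delta - O\bigl((\rho' \cdot 2kM)^{-1/8}\bigr) = 1-5\delta-o(1).
\]
The two matchings add degree at most $2$, while $G_i$ has degree $(1+o(1))(2k-1)M\rho' \to \infty$, so Lemma~\ref{lem:lp-add-few-edges} gives $\lambda_{1,2}(L^i(S)) \geq (1-o(1))(1-5\delta-o(1))$. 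Lemma~\ref{lem:lp-bound-split-into-three}, combined with degree concentration from Lemma~\ref{lem:degree-concentration2}, then gives $\lambda_{1,2}(L(S)) \geq (1-o(1))(1-5\delta)$, which exceeds $1/2$ for $\delta$ small enough. Theorem~\ref{thm:bourdon-flp} then concludes $\tilde\Gamma$ has $FL^2$, i.e.\ property~(T).

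The main obstacle is a delicate balancing of constants: the factor $Cl$ in $\rho_0$ cannot be dropped, since $\rho'\cdot 2kM \asymp Cl$ must pass the Erd\H{o}s--R\'enyi--style threshold $C_\delta \log(2kM) \asymp l\log(2k-1)$ required by Theorem~\ref{thm:G-k-m-rho-eval-bound1}. One must choose $C$ large enough that simultaneously (i) this threshold is cleared, (ii) the degree concentration of Lemma~\ref{lem:degree-concentration2} applies with $\iota = o(1)$, and (iii) the error term $O((\rho'\cdot 2kM)^{-1/8})$ is below $\delta$; this forces the dependence $C = C(k)$.
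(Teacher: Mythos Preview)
Your proposal is correct and follows essentially the same route as the paper: reduce to the minimal $\rho_0$ by monotonicity, lift to $\tilde\Gamma$, verify that Proposition~\ref{prop:link-graph-structure-dens} still applies, and then feed $\lambda_{1,2}(K_{2k\times M})=1$ into Theorem~\ref{thm:G-k-m-rho-eval-bound1} with a fixed small $\delta$ to get $\lambda_{1,2}$ of the link well above $1/2$. The paper is terser (it simply says ``we follow the proof of Theorem~\ref{thm:gromov-flp}, only pointing out the differences'' and takes $\delta=1/100$), while you spell out the monotone coupling and the three Poisson-type estimates explicitly, but the logical skeleton is identical.
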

\begin{proof}
	We follow the proof of Theorem~\ref{thm:gromov-flp}, only pointing out the differences.

	Since $(2k-1)M = (2k-1)^{l/3}$, 
	$\rho(l) \geq (2k-1)^{(1/3+\eps)l}/(2k-1)^l $
	for $\eps = \log_{2k-1}(Cl) /l \ra 0$.
	As above, $\rho' \asymp (2k-1)^{(\eps-1/3)l} \asymp Cl(2k-1)^{-l/3}=O(M^{3\eps}/M)$.
	
	On the one hand,
	$\rho'\geq \frac12 (2k-1)^{(\eps-1/3)l} = \frac12 Cl (2k-1)^{-l/3}$,
	while on the other hand,
	$\log(2kM)/(2kM) \leq C' l/(2k-1)^{l/3}$ for fixed $C'$.
	Therefore, for any choice of $\delta>0$, we can choose $C$ large enough so that
	$\rho' \geq \tilde{C} \log(2kM)/2kM$, where $\tilde{C}>0$ is the constant corresponding to
	$\delta$ given by Theorem~\ref{thm:G-k-m-rho-eval-bound1}.
	
	Recall that Proposition~\ref{prop:k-partite-2-laplacian} gives 
	$\lambda_{1,2}(K_{2k \times	M}) = 1$ for all $k \geq 2, M \geq 2$.
	Taking $\delta=1/100$, Theorem~\ref{thm:G-k-m-rho-eval-bound1} gives that
	\[
		\lambda_{1,2}(G) \geq \frac{99}{100} - \frac{4}{100} - 
			O\left(\frac{1}{(\rho'2kM)^{1/8}}\right)
		\geq \frac{9}{10} - O\left(\frac{1}{l^{1/8}}\right).
	\]
	Again, adding two matchings and combining three such graphs does not significantly lower $\lambda_{1,2}$, so
	a.a.s.\ $\Gamma \in \bG(k,l,\rho)$ is the quotient of a group whose link graph has
	$\lambda_{1,2} > 1/2$.  Thus a.a.s.\ $\Gamma$ has Kazhdan's property (T).
\end{proof}
\begin{corollary}\label{cor:gromov-T2}
	For any $k\geq 2$, there exists $C>0$ so that for every sequence of integers $f:\N \to \N$ satisfying $f(l)\geq Cl(2k-1)^{l/3}$,
	a.a.s.\ a random $k$-generated group in the Gromov model $\cG (k,l,f )$ has Kazhdan's property (T).
	In particular, this holds in the  Gromov density model $\cD (k,l,d )$ for all densities $d>1/3$.
\end{corollary}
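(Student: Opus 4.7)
The plan is to deduce this corollary directly from Theorem \ref{thm:gromov-T}, which handles the binomial model, via the monotonicity principle encoded in Proposition \ref{prop:gromov-models-monotone}. The first step is to verify that Kazhdan's property (T) is a monotone (increasing) property of group presentations: if $\Gamma = \langle S \mid R \rangle$ has (T), then any quotient $\langle S \mid R' \rangle$ with $R' \supseteq R$ inherits (T) since (T) passes to quotients. This places the property squarely within the scope of Proposition \ref{prop:gromov-models-monotone}.

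Having verified monotonicity, that proposition reduces the question to proving (T) a.a.s.\ in $\cG(k,l,\rho)$ for every sequence $\rho(l)$ of the form
\[
\rho(l) = f(l)(2k-1)^{-l} + O\bigl(\sqrt{f(l)}(2k-1)^{-l}\bigr).
\]
Under the hypothesis $f(l) \geq C l (2k-1)^{l/3}$, one has $\sqrt{f(l)} = o(f(l))$, so for all sufficiently large $l$ such a $\rho$ satisfies $\rho(l) \geq \tfrac{1}{2} f(l)(2k-1)^{-l} \geq \tfrac{C}{2}\, l (2k-1)^{l/3}/(2k-1)^{l}$. Choosing $C$ to be at least twice the constant supplied by Theorem \ref{thm:gromov-T} then fulfils the hypothesis of that theorem and yields (T) a.a.s.\ in each such $\cG(k,l,\rho)$; feeding this back into Proposition \ref{prop:gromov-models-monotone} gives (T) a.a.s.\ in $\cG(k,l,f(l))$.

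For the ``in particular'' statement, I would specialise to $f(l) = (2k-1)^{dl}$ with $d>1/3$ fixed. Then
\[
\frac{f(l)}{l(2k-1)^{l/3}} = \frac{(2k-1)^{(d-1/3)l}}{l} \longrightarrow \infty \quad \text{as } l \to \infty,
\]
so $f(l) \geq Cl(2k-1)^{l/3}$ holds for all sufficiently large $l$, which suffices because the conclusion is asymptotic almost sure.

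No step is technically hard: the substantive content sits in Theorem \ref{thm:gromov-T} and Proposition \ref{prop:gromov-models-monotone}, and the main task is simply to check that the binomial range of $\rho$ produced by monotonicity still lies in the regime where Theorem \ref{thm:gromov-T} applies. The only mild subtlety is the quantifier order in Proposition \ref{prop:gromov-models-monotone} (one must check \emph{every} admissible $\rho$, not just the ``central'' one $f(l)(2k-1)^{-l}$), which is handled by absorbing the $O(\sqrt{f(l)}(2k-1)^{-l})$ fluctuation into the constant in front of $l(2k-1)^{l/3-l}$.
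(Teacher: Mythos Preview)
Your proof is correct and follows essentially the same approach as the paper, which simply invokes Theorem~\ref{thm:gromov-T} and Proposition~\ref{prop:gromov-models-monotone} ``upon increasing $C$ by an arbitrarily small amount.'' Your argument is more explicit about the details, and the only difference is cosmetic: since $\sqrt{f(l)}/f(l)\to 0$, any constant strictly larger than the one from Theorem~\ref{thm:gromov-T} would work, so doubling is a valid but slightly cruder choice than the paper's phrasing suggests.
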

\begin{proof}
	Follows immediately from Theorem~\ref{thm:gromov-T} and Proposition~\ref{prop:gromov-models-monotone}, upon increasing $C$ by an arbitrarily small amount.
\end{proof}


\def\cprime{$'$} \def\cprime{$'$} \def\cprime{$'$} \def\cprime{$'$}
  \def\cprime{$'$}
\providecommand{\bysame}{\leavevmode\hbox to3em{\hrulefill}\thinspace}
\providecommand{\MR}{\relax\ifhmode\unskip\space\fi MR }
\providecommand{\MRhref}[2]{%
  \href{http://www.ams.org/mathscinet-getitem?mr=#1}{#2}
}
\providecommand{\href}[2]{#2}

\end{document}